\documentclass[11pt,twoside]{article}
\usepackage{simplewick}
\usepackage{times}
\usepackage{amsmath,amssymb,amsthm}
\usepackage{color}
\usepackage{hep}
\usepackage{mathrsfs}
\usepackage{leftidx}
\usepackage{graphicx}
\usepackage{float}
\usepackage{enumerate}
\usepackage{titletoc}
\usepackage{epstopdf}
\usepackage{ulem}

\pagestyle{myheadings}
\textwidth=16truecm 
\textheight=23truecm 
\oddsidemargin=0mm
\evensidemargin=0mm
\headheight=10mm
\headsep=3mm
\footskip=4mm
\topmargin=0mm
\topmargin=0mm
\allowdisplaybreaks
\def \no{\nonumber}

\def\R {\Bbb R}

\def\p{\partial}
\def\ve{\varepsilon}
\def\f{\frac}
\def\na{\nabla}

\def\al{\alpha}
\def\t{\tilde}
\def\q{\quad}
\def\vp{\varphi}
\def\O{\Omega}
\def\th{\theta}
\def\g{\gamma}
\def\G{\Gamma}

\def\dl{\delta}
\def\p{\partial}
\def\ve{\varepsilon}
\def\f{\frac}

\def\na{\nabla}

\def\al{\alpha}

\def\t{\tilde}
\def\o{\omega}
\def\O{\Omega}
\def\vp{\varphi}
\def\th{\theta}

\def\g{\gamma}
\def\G{\Gamma}

\def\dl{\delta}

\def\m{\sqrt{\mu_m}}
\def\q{\qquad}

\def\ds{\displaystyle}
\allowdisplaybreaks

\begin{document}
	\footskip=0pt
	\footnotesep=2pt
	\let\oldsection\section
	\renewcommand\section{\setcounter{equation}{0}\oldsection}
	\renewcommand\thesection{\arabic{section}}
	\renewcommand\theequation{\thesection.\arabic{equation}}
	\newtheorem{claim}{\noindent Claim}[section]
	\newtheorem{theorem}{\noindent Theorem}[section]
	\newtheorem{lemma}{\noindent Lemma}[section]
	\newtheorem{proposition}{\noindent Proposition}[section]
	\newtheorem{definition}{\noindent Definition}[section]
	\newtheorem{remark}{\noindent Remark}[section]
	\newtheorem{corollary}{\noindent Corollary}[section]
	\newtheorem{example}{\noindent Example}[section]

\title{Global smooth solutions of 2D quasilinear wave equations
with higher order null conditions
and short pulse initial data}
\author{Ding
Bingbing$^{1,*}$, \quad Xin Zhouping$^{2,*}$, \quad Yin
Huicheng$^{1,}$\footnote{Ding Bingbing (13851929236@163.com, bbding@njnu.edu.cn) and Yin Huicheng (huicheng@nju.edu.cn, 05407@njnu.edu.cn) were
supported by the NSFC (No.12331007, No.12071223). Xin Zhouping(zpxin@ims.cuhk.edu.hk)
is partially supported by the Zheng Ge Ru Foundation, Hong Kong RGC Earmarked Research Grants
CUHK-14301421, CUHK-14300819, CUHK-14302819, CUHK-14301023, Basic and Applied Basic Research Foundations of Guangdong Province 2020131515310002, and the Key Projects of National Nature Science Foundation of China (No.12131010, No.11931013).}\vspace{0.5cm}\\
\small 1.  School of Mathematical Sciences and Mathematical Institute, Nanjing Normal University,\\
\small  Nanjing, 210023, China.
\\
\vspace{0.5cm}
\small 2. Institute of Mathematical Sciences, The Chinese University of Hong Kong, Shatin, NT, Hong Kong.}
\date{}
\maketitle

\centerline {\bf Abstract} \vskip 0.3 true cm

For the short pulse initial data
with a first order outgoing constraint condition and optimal orders of smallness, we establish the global existence of smooth solutions
to 2D quasilinear wave equations with higher order null conditions. Such kinds of wave equations
include 2D relativistic membrane equations, 2D membrane equations,
and some 2D quasilinear equations which come
from the nonlinear Maxwell equations in electromagnetic theory or
from the corresponding Lagrangian functionals as perturbations of the Lagrangian densities of linear wave operators. The main ingredients of the analysis here include looking for a new good unknown, finding some key identities based on the higher order null conditions and the resulting null frames, as well as overcoming the difficulties due to the slow decay of solutions to the 2-D wave equation, so that the solutions can be estimated precisely.

\vskip 0.2 true cm
{\bf Keywords:} Quasilinear wave equations, short pulse initial data, higher order null condition,

\qquad \qquad\quad  inverse foliation density, good unknown, null frame

\vskip 0.2 true cm {\bf Mathematical Subject Classification:} 35L05, 35L72

\vskip 0.4 true cm
\tableofcontents

\section{Introduction}\label{in}
\subsection{The problem and main results}

Consider the 2D quasilinear wave equation
\begin{equation}\label{quasi}
\sum_{\al,\beta=0}^2g^{\al\beta}(\p\phi)\p_{\al\beta}^2\phi=0
\end{equation}
with the short pulse initial data
\begin{align}\label{id}
\phi|_{t=1}=\delta^{2-\varepsilon_0}\phi_0(\f{r-1}{\delta},\omega),\
\p_t\phi|_{t=1}=\delta^{1-\varepsilon_0}\phi_1(\f{r-1}{\delta},\omega),
\end{align}
where $(x^0, x)=(t, x^1, x^2)\in [1,\infty)\times\R^2$,
$\p=(\p_0,\p_1,\p_2)=(\p_{x^0}, \p_{x^1}, \p_{x^2})$,
$g^{\al\beta}(\p\phi) =g^{\beta\al}(\p\phi)$ are smooth
functions of their arguments, $\dl>0$ is small, $0<\ve_0<1$ is a fixed constant, $r=|x|=\sqrt{(x^1)^2+(x^2)^2}$,
$\o=(\o_1,\o_2)=\f{x}{r}\in\mathbb S^1$, $(\phi_0,\phi_1)(s,\o)$ are smooth functions defined
in $\mathbb R\times \mathbb S^1$
with compact support in $(-1,0)$ for the variable $s$. It is pointed out that when $\ve_0=\f12$ in \eqref{id},
such a class of initial data are first introduced by D. Christodoulou \cite{C3}
for studying the formation of trapped surfaces in the Einstein vacuum spacetime
(see also \cite{K-R}).

In this paper, we assume that for fixed integer $k\geq 2$, it holds that
\begin{equation}\label{g}
g^{\al\beta}(\p\phi)=m^{\al\beta}+\sum_{0\leq\gamma_1, \cdots, \gamma_k\leq2}g^{\al\beta,
	\gamma_1,\cdots,\gamma_k}\p_{\gamma_1}\phi\cdots\p_{\gamma_k}\phi+h^{\al\beta}(\p\phi)
\end{equation}
for small $\p\phi$,
where $m^{00}=-1$, $m^{ii}=1$ for $1\le i\le 2$, $m^{\al\beta}=0$ for $\al\neq\beta$,
the constants $g^{\al\beta,
\gamma_1,\cdots,\gamma_k}$ are equal for all the permutations of $(\al,\beta)$ and $(\gamma_1,\cdots,\gamma_k)$ respectively,
and $h^{\al\beta}(\p\phi)=O(|\p\phi|^{k+1})$.

A lot of physical or geometric models admit the form \eqref{g},
which include:

(1) The 2D relativistic membrane equation (corresponding to $k=2$ in \eqref{g})
\begin{equation}\label{HCC-1-0}
\p_t\big(\ds\f{\p_t\phi}{\sqrt{1-(\p_t\phi)^2+|\na_x\phi|^2}}\big)
-div\big(\ds\f{\nabla_x\phi}{\sqrt{1-(\p_t\phi)^2+|\na_x\phi|^2}}\big)=0\quad\text{with $\na_x\phi=(\p_1\phi,\p_2\phi)$}.
\end{equation}
Note that \eqref{HCC-1-0} is  the Euler-Lagrange equation of the area functional
$\int_{\mathbb R\times\mathbb R^2}\sqrt{1+|\nabla_x \phi|^2-(\p_t\phi)^2}dtdx$
for the embedding of $(t,x)\to (t,x,\phi(t,x))$ in the Minkowski spacetime.

(2) The 2D nonlinear membrane equation (corresponding to $k=2$ in \eqref{g})
\begin{equation}\label{HCCF-2-0}
\p_t^2\phi-div\big(\ds\f{\nabla_x\phi}{\sqrt{1+|\na_x\phi|^2}}\big)=0,
\end{equation}
where $\phi(t,x)$ stands for the position of  membrane at point $(t,x)$.

(3) The nonlinear wave equation (corresponding to $k=p$ in \eqref{g}, $p\in\mathbb N$)
\begin{equation}\label{Son-1}
-(1+(\p_t\phi)^{p+1})\p_t^2\phi+\Delta\phi=0,
\end{equation}
which comes from the variation of the Lagrangian functional
\begin{equation*}\label{Son-0}
L(\phi)=-\ds\f12(\p_t\phi)^2+\f12|\nabla_x\phi|^2
-\f{(\p_t\phi)^{p+3}}{(p+3)(p+2)}.
\end{equation*}
Especially, \eqref{Son-1} with $p=1$ may be regarded as a model equation from the nonlinear
version of Maxwell equations in the electromagnetic theory (see \cite[Section 1.3]{MY});

(4) The nonlinear wave equation (corresponding to $k=2p$ in \eqref{g}, $p\in\mathbb N$)
\begin{equation}\label{Son-00}
-\p_t^2\phi+\Delta\phi-\p_t\bigg(\big(-\ds\f12(\p_t\phi)^2+\f12|\nabla_x\phi|^2\big)^{p}\p_t\phi\bigg)
+div\bigg(\big(-\ds\f12(\p_t\phi)^2+\f12|\nabla_x\phi|^2\big)^{p}\nabla_x\phi\bigg)=0,
\end{equation}
which is the Euler-Lagrangian equation of
$L(\phi)=-\ds\f12(\p_t\phi)^2+\f12|\nabla_x\phi|^2
+\f{1}{p+1}\big(-\ds\f12(\p_t\phi)^2+\f12|\nabla_x\phi|^2\big)^{p+1}$, which can be thought as a
suitable perturbation of the Lagrangian density of the linear wave operator.

(5) As a generalization of \eqref{Son-1} and \eqref{Son-00}, the nonlinear wave
equation  (corresponding to $k=2p$ in \eqref{g}, $p\in\mathbb N$)
\begin{equation}\label{Son-1-0}
\begin{split}
&-(1+(\p_t\phi)^{2p+1})\p_t^2\phi+\Delta\phi-\p_t\bigg(\big(-\ds\f12(\p_t\phi)^2+\f12|\nabla_x\phi|^2\big)^{p}\p_t\phi\bigg)\\
&+div\bigg(\big(-\ds\f12(\p_t\phi)^2+\f12|\nabla_x\phi|^2\big)^{p}\nabla_x\phi\bigg)=0
\end{split}
\end{equation}
satisfies the $(2p)^{\text{th}}$ null condition but violates the $(2p+1)^{\text{th}}$ (see the definition of the $k^{\text{th}}$
null condition in \eqref{null}). Note that
\eqref{Son-1-0} corresponds to the Euler-Lagrangian equation of
\begin{equation*}
L(\phi)=-\ds\f12(\p_t\phi)^2+\f12|\nabla_x\phi|^2
+\f{1}{p+1}\big(-\ds\f12(\p_t\phi)^2+\f12|\nabla_x\phi|^2\big)^{p+1}-\f{(\p_t\phi)^{2p+3}}{(2p+3)(2p+2)}\quad
(p\in\mathbb N).
\end{equation*}

For the study of the global existence of smooth solutions  to \eqref{quasi}
with \eqref{g}, it is often crucial to introduce the following definition of the $k^{\text{th}}$
null condition $(k\ge 2, k\in\mathbb N)$
for \eqref{quasi}:
\begin{equation}\label{null}
\sum_{0\le \al,\beta,\gamma_1, ..., \gamma_k\le 2}g^{\al\beta,\gamma_1,\cdots,\gamma_k}
\xi_\al\xi_\beta\xi_{\gamma_1}\cdots{\xi_{\gamma_k}}\equiv 0\quad \text{holds for $\xi_0=-1$ and
	$(\xi_1, \xi_2)\in \mathbb S^1$}.
\end{equation}
From now on, the $k^{\text{th}}$ null condition with $k\ge 2$ will be called higher order  null condition in this paper.

We emphasize that for small $\p\phi$ and general $g^{\al\beta}(\p\phi)$ with
\begin{equation}\label{H-01X}
\ds g^{\al\beta}(\p\phi)=m^{\al\beta}+\sum_{0\le \gamma\le 2} g^{\al\beta,\gamma}\p_\gamma\phi+\sum_{0\le\gamma_1,\g_2\le 2}g^{\al\beta,\gamma_1\g_2}\p_{\gamma_1}\phi\p_{\g_2}\phi+O(|\p\phi|^3),
\end{equation}
the  first and second null conditions have been introduced for \eqref{quasi}
(see \cite{C1, K1, A, A2}), namely,
there hold that for $\xi_0=-1$ and $(\xi_1,\xi_2)\in\mathbb S^1$,
\begin{equation}\label{Euler-0}
\ds\sum_{0\le\al,\beta,\gamma\le 2} g^{\al\beta,\gamma}\xi_\al\xi_\beta\xi_\gamma\equiv0\quad\text{and}
\sum_{0\le\al,\beta,\gamma_1,\gamma_2\le 2} g^{\al\beta,\gamma_1\gamma_2}\xi_\al\xi_\beta\xi_{\gamma_1}\xi_{\gamma_2}\equiv0, \quad\text{respectively.}
\end{equation}
Moreover, it has been shown that for small smooth  initial data with compact supports
\begin{align}\label{id-1}
\phi|_{t=1}=\delta \psi_0(x),\
\p_t\phi|_{t=1}=\delta\psi_1(x),\ (\psi_0(x),\psi_1(x))\not\equiv 0,
\end{align}
\eqref{quasi} with \eqref{H-01X} and \eqref{id-1} admits
a global smooth solution $\phi$ if and only if \eqref{Euler-0} holds true,
otherwise, the solution $\phi$ may blow up in finite
time and further the shock is formed (see \cite{A1,A,A2,C1,C2,CM,0-Speck,H,J2, K1, LS, J, S2}).

For 2D potential equation of irrotational isentropic
Chaplygin gases
\begin{equation}\label{quasi-Chap}
\begin{split}
&\ds\sum_{\al,\beta=0}^2g^{\al\beta}(\p \phi)\p_{\al\beta}^2\phi= -\p_t^2\phi+\triangle\phi-2\sum_{i=1}^2\p_i\phi\p_t\p_i\phi
+2\p_t\phi\triangle\phi-\sum_{i,j=1}^2\p_i\phi\p_j\phi\p_{ij}^2\phi
+|\nabla\phi|^2\triangle\phi=0,
\end{split}
\end{equation}
whose coefficients fulfill \eqref{Euler-0}.
Under the short pulse initial data \eqref{id} and the first order outgoing constraint condition
\begin{align}\label{Y-0}
(\p_t+\p_r)\phi|_{t=1}=O(\delta^{2-\varepsilon_0}),
\end{align}
it is shown in \cite{Ding3} that when $0<\ve_0<\f12$, \eqref{quasi-Chap} has a global smooth solution $\phi$
with $|\p\phi|\le C\delta^{1-\varepsilon_0}t^{-1/2}$ for all $t\ge 1$.
As illustrated in \cite{Ding3}, in order to keep the strict hyperbolicity of \eqref{quasi} or \eqref{quasi-Chap},
the smallness of $\p\phi$ is certainly required
and can be derived from \eqref{id} and \eqref{Y-0}.

Motivated by the results of \cite{MY, Ding6, Lu1} in 3D case, we expect to find the optimal order $\varepsilon_k^*$
of smallness in \eqref{id} such that if \eqref{g} satisfies the $k^{\text{th}}$ null condition
but does not fulfill the $(k+1)^{\text{th}}$ null condition, then the smooth solution of \eqref{quasi} with \eqref{id} and \eqref{Y-0}
exists globally when $\varepsilon_0\in(0,\varepsilon_{k}^*)$ and may blow up in finite time when $\varepsilon_0\in[\varepsilon_{k}^*,1)$.

Note that it follows easily from the forms of \eqref{id} and \eqref{Y-0} that for $0\leq p\leq 1$,
\begin{align}\label{Y-0-a}
(\p_t+\p_r)^p\O^j\p^\al\phi|_{t=1}=O(\delta^{2-\varepsilon_0-|\al|}),
\end{align}
where $\O=x^1\p_2-x^2\p_1$. Without loss of generality and for simplicity, it is assumed that in \eqref{quasi},
\begin{equation}\label{g00}
g^{00}(\p\phi)=-1.
\end{equation}

Our main result in the paper is:
\begin{theorem}\label{main}
	Let $\ve_k^*=\f{k}{k+1}$ with $k\ge2$.
Under the conditions \eqref{g}, \eqref{null},  \eqref{Y-0} and \eqref{g00},
it holds that for small $\delta>0$,
when $\ve_0\in(0,\ve_k^*)$, \eqref{quasi} with \eqref{id}
admits a global smooth solution $\phi$ which satisfies
$$\phi\in C^\infty([1,+\infty)\times\mathbb R^2), \quad\text{$|\p\phi|\le C\delta^{1-\varepsilon_0}t^{-1/2}$
and $|\phi|\le C\delta^{1-\varepsilon_0}t^{1/2}$ for $t\ge 1$,}$$
where $C>0$ is a uniform constant independent of $\dl$ and $\ve_0$.
\end{theorem}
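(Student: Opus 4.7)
The plan is to adapt the Christodoulou--Klainerman--Rodnianski geometric null-frame method to 2D and to a $k$-th order null condition, executing a bootstrap with weighted pointwise and $L^2$ estimates along an outgoing acoustical foliation. To begin, I would construct an optical function $u$ as the solution of the eikonal equation $g^{\al\b}(\p\phi)\p_\al u\,\p_\b u=0$ with $u|_{t=1}=1-r$, together with the associated inverse foliation density $\mu=-1/(g^{\al\b}\p_\al u\,\p_\b t)$ and the null frame $(L,\un L,X)$, where $L$ is tangent to the outgoing null cones $C_u=\{u=\text{const}\}$, $\un L$ is transversal, and $X$ is the angular tangent vector. In this frame \eqref{quasi} takes a decomposed form in which the principal null-frame contribution $(\un L\phi)^k\un L^2\phi$ to the nonlinearity is cancelled by \eqref{null}, forcing every nonlinear term to contain at least one good derivative ($L$ or $X$).

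Next I would impose bootstrap assumptions matching the stated decay, namely $|\p\phi|\le C_0\dl^{1-\ve_0}t^{-1/2}$ and $|\phi|\le C_0\dl^{1-\ve_0}t^{1/2}$, supplemented by the improved outgoing bound $|L\phi|\le C_0\dl^{2-\ve_0}t^{-1/2}$ inherited from \eqref{Y-0-a}, together with their $L^2$ analogues obtained by commuting with $Z\in\{\p_\al,\O,S\}$. A central new ingredient, as foreshadowed in the abstract, is the construction of a good unknown $\wt\phi=\phi+F(\p\phi)$ chosen so that, in the null-frame decomposition, the most dangerous commutator terms produced when $Z^I$ acts on the coefficient $(\p\phi)^k$ are cancelled algebraically, leaving the residual forcing in a form that manifestly respects the higher order null structure.

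The energy estimates would then be obtained by applying the multiplier $T=L+\f12\mu\un L$ to the equation for $Z^I\wt\phi$ and integrating over the region bounded by $\{t=1\}$ and $C_u$. By the higher order null condition, each nonlinear forcing factorizes as a product of $k$ ordinary-derivative factors of size $\sim\dl^{1-\ve_0}t^{-1/2}$ together with at least one good-derivative factor that gains either an extra $\dl$ or an extra $t^{-1}$. Tracking $\dl$-weights and $t$-powers carefully, this nonlinear feedback into the energy is both time-integrable and sub-dominant in $\dl$ exactly in the range $\ve_0<k/(k+1)=\ve_k^*$, which is the same range that ensures $\mu$ stays bounded away from $0$ so that no shock forms. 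A 2D Klainerman--Sobolev inequality adapted to the tubes $\{u_1\le u\le u_2\}$ then converts the $L^2$ bounds back into the pointwise bootstrap assumptions with a strictly better constant, closing the argument and yielding the claimed global smooth solution with its decay.

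The hard part will be the slow 2D decay: unlike the 3D analogues \cite{MY,Ding6,Lu1}, where $\p\phi$ decays as $t^{-1}$ and $L\phi$ as $t^{-2}$ leaving ample integrable margin, here one only has $\p\phi\sim\dl^{1-\ve_0}t^{-1/2}$ and $L\phi\sim\dl^{2-\ve_0}t^{-1/2}$, so essentially no power of $t$ is to spare. Consequently when $Z^I$ differentiates $(\p\phi)^k$, numerous commutator terms of borderline size must each be forced --- through the good unknown $\wt\phi$, the key null-frame identities, and the outgoing $\dl$-gain --- into a form that has sufficient decay at the top derivative level. Executing this bookkeeping uniformly in $\dl$, while simultaneously controlling the evolution equation for $\mu$ to keep it bounded away from $0$, is where the bulk of the technical work will lie.
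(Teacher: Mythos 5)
Your sketch correctly identifies the overall machinery (acoustical foliation via an eikonal function, inverse foliation density $\mu$, null frame, bootstrap + weighted energies, Klainerman--Sobolev, and the role of the $k$-th null condition in cancelling the worst $(\un L\phi)^k\un L^2\phi$ term), and it correctly isolates the slow 2D decay and the tightness of the $\delta$-budget near $\ve_0=\ve_k^*$ as the locus of difficulty. But the proposal is missing two load-bearing ingredients and misidentifies a third, so as written it would not close.

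First, you do not address the top-order derivative loss for the foliation quantities. In the Christodoulou framework, when one commutes $Z^{m+1}$ through $\mu\Box_g$, the error $\Phi^{m+1}_\g$ contains $\slashed dZ^m\mathrm{tr}\chi$ and $\slashed\nabla^2 Z^m\mu$, which are \emph{one derivative beyond} what the $Z^{m+1}$-energy controls directly. Any argument that simply commutes vector fields and performs energy estimates will see this loop fail to close at the top order. The paper devotes Section~\ref{L2chimu} to deriving separate transport equations for $\slashed d\mathrm{tr}\chi$ and $\slashed\triangle\mu$ along $\mathring L$, renormalizing them with specially chosen quantities (the $\hat F^m$ and $\tilde F^m$ there), and exploiting the null structure again (identities \eqref{gLLL2}, \eqref{LG}, and the relation \eqref{dxdp} $\slashed d^Xx^i\slashed d_X\varphi_i=-(\mathring L^\al+2g^{0\al})\mathring L\varphi_\al$) to make these transport equations close with the optimal $\delta$-weights. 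Your sketch has no analogue of this step, and without it the energy hierarchy cannot be closed.

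Second, the ``good unknown'' you propose, $\wt\phi=\phi+F(\p\phi)$, is a normal-form/Shatah-type field shift, and it is not what is needed nor what the paper does. The paper's good unknown is $\mathscr A=g^{\al\beta,\g a\g_3\cdots\g_k}\o_\al\o_\beta\o_\g(m_{ab}\slashed d^Xx^b\slashed d_X\phi)\o_{\g_3}\cdots\o_{\g_k}$, a specific \emph{tangential} combination of $\slashed d\phi$ that appears naturally when the null condition is used to rewrite $G_{\mathring L\mathring L}^\gamma T\varphi_\gamma$ in the transport equation \eqref{lmu} for $\mu$. Its role is to give $\|Z^{m+1}\mathring L\mu\|_{L^2}$ the extra order of smallness in $\delta$ needed to keep the energies closable as $\ve_0\to\ve_k^*$; a shift of the unknown field $\phi$ by a function of its gradient does not touch $\mu$ and would not serve the same purpose. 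Relatedly, the $\delta$-bookkeeping only works with the geometry-adapted commutators $Z\in\{\varrho\mathring L,T,R\}$, where the number of $T$'s in $Z^\jmath$ carries a $\delta^{-1}$ penalty; commuting with the Minkowski fields $\{\p_\al,\O,S\}$, as you propose, obscures exactly the distinction the scheme must track.

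Third, your argument stops at the outgoing tube and never controls the solution inside the light cone. The acoustical foliation and bootstrap you describe live in the thin region $A_{2\delta}=\{0\le t-r\le 2\delta\}$; global existence also requires controlling $\phi$ in $B_{2\delta}=\{t-r\ge 2\delta\}$. The paper does this by first upgrading the boundary estimate to $|\p^\al L^q\phi|\lesssim\delta^{2-\ve_0}t^{-1/2-q}$ on $\tilde C_{2\delta}$ (Proposition~\ref{11.1}, whose iterative proof is precisely where $\ve_0<\ve_k^*$ enters through the sequence $a_n(\ve_0)$), and then solving a global Goursat problem in $B_{2\delta}$ with ghost-weighted energies and a Klainerman--Sobolev inequality adapted to $D_t$. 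Your proposal has no analogue of this interior step, so even granting the tube estimates it would not yield a solution on all of $[1,\infty)\times\R^2$.
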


\begin{remark}\label{blowup}
	As a direct application of Theorem \ref{main}, \eqref{Son-1} with $p=k\ge 2$ has a global solution when the initial data are identical to those in Theorem \ref{main} and $\varepsilon_0 \in (0, \varepsilon_{k}^*)$. Conversely,
for certain initial short pulse data,	
it follows from \cite{Lu1} that the solution of \eqref{Son-1} with $p=k\ge 1$  may blow up
in finite time when $\varepsilon_0\in[\varepsilon_{k}^*,1)$.
As previously mentioned, \eqref{Son-1}  with $p=k$ is a special equation satisfying the $k^{\text{th}}$
null condition but violates the ${(k+1)}^{\text{th}}$ null condition. Therefore, we conjecture that, for $\varepsilon_0\in[\varepsilon_{k}^*,1)$ with $k\geq2$, if
	$h^{\al\beta}(\p\phi)=\sum_{0\leq\gamma_1, \cdots, \gamma_{k+1}\leq2}h^{\al\beta,\g_1,\cdots,\g_{k+1}}(\p_{\g_1}\phi)\cdots(\p_{\g_{k+1}}\phi)+O(|\p\phi|^{k+2})$ in \eqref{g}
and $\sum_{0\leq\gamma_1, ..., \gamma_{k+1}\leq2}h^{\al\beta,\gamma_1,\cdots,\gamma_{k+1}}
	\xi_\al\xi_\beta\xi_{\gamma_1}\cdots{\xi_{\gamma_{k+1}}}\not\equiv0$ for $\xi_0=-1$ and $(\xi_1, \xi_2)\in \mathbb S^1$, then there exist a class of initial data specified by \eqref{id} such that the smooth solution of \eqref{quasi} will undergo finite-time blowup.
This intriguing phenomenon will be one of the topics we intend to explore in our future research.
\end{remark}

On the other hand, it is easy to verify that \eqref{HCC-1-0} satisfies the second order null condition but \eqref{HCCF-2-0} does not,
\eqref{Son-00} satisfies the $(2p)^{\text{th}}$ null condition, \eqref{Son-1-0} fulfills the $(2p)^{\text{th}}$ null condition
but does not fulfills the $(2p+1)^{\text{th}}$ null condition.
Collecting Theorem \ref{main} and the analogous arguments in  \cite{MY, Ding6, Lu1}
as for \eqref{Son-1} with \eqref{id} and \eqref{Y-0}, we have

\begin{corollary}

Under the conditions \eqref{id} and \eqref{Y-0-a},

(1) \eqref{HCC-1-0} has a global smooth solution $\phi$ when $0<\ve_0<\ve_2^*=\f23$.

(2)  the smooth solution $\phi$ of \eqref{Son-00} exists globally when $0<\ve_0<\ve_{2p}^*=\f{2p}{2p+1}$.

(3) assume that there exists a point $(s_0,\o_0)\in(-1,0)\times\mathbb{S}^1$ such that
\begin{equation}\label{shock-formation assumption of data-0}
\begin{split}
&\p_s\phi_0(s_0,\o_0)\p_s^2\phi_0(s_0,\o_0)>1\qquad \text{for $\ve_1^*=\f12<\ve_0<1$},\\
&\p_s\phi_0(s_0,\o_0)\p_s^2\phi_0(s_0,\o_0)>2\qquad \text{for $\ve_0=\ve_1^*$},
\end{split}
\end{equation}
the smooth solution $\phi$ of \eqref{HCCF-2-0} will blow up in finite time and further the shock is formed.

(4) when $0<\ve_0<\ve_{2p}^*=\f{2p}{2p+1}$, the smooth solution $\phi$ of \eqref{Son-1-0} exists globally;
when $\ve_{2p}^*\le\ve_0<1$, if there exists a point $(s_0,\o_0)\in(-1,0)\times\mathbb{S}^1$ such that
\begin{equation}\label{shock-00}
\begin{split}
&\phi_1^{2p}(s_0,\o_0)\p_s\phi_1(s_0,\o_0)>\f{2}{2p+1}\qquad \text{for $\ve_{2p}^*<\ve_0<1$},\\
&\phi_1^{2p}(s_0,\o_0)\p_s\phi_1(s_0,\o_0)>\f{(p+1)2^{2p+1}}{(2^{2p}-1)(2p+1)}\qquad \text{for $\ve_0=\ve_{2p}^*$},
\end{split}
\end{equation}
then the smooth solution $\phi$ of \eqref{Son-1-0} can blow up in finite time and further the shock is formed.
\end{corollary}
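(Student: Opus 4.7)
The statement splits naturally into a direct-application part (items (1), (2), and the subcritical half of (4)) and an independent shock-formation part (item (3) and the critical/supercritical half of (4)). For the subcritical items the plan is to Taylor expand $g^{\al\beta}(\p\phi)$ in each equation, put it in the form \eqref{g}, and verify the $k^{\text{th}}$ null condition \eqref{null} with the stated $k$. For \eqref{HCC-1-0} the expansion of $(1-(\p_t\phi)^2+|\na_x\phi|^2)^{-1/2}$ produces coefficients that are polynomials in the symbol $m^{\mu\nu}\p_\mu\phi\p_\nu\phi$, and since this symbol vanishes on null covectors the $2^{\text{nd}}$ null condition holds, giving $k=2$. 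For \eqref{Son-00} the coefficients are polynomials in the same symbol and so satisfy every null condition through order $2p$. For \eqref{Son-1-0} the extra $(\p_t\phi)^{2p+1}$ factor attached to $\p_t^2\phi$ is exactly the piece of \eqref{Son-1} with $p$ replaced by $2p+1$, which was already noted to satisfy the $(2p)^{\text{th}}$ null condition but violate the $(2p+1)^{\text{st}}$, while the other coefficients only see $m^{\mu\nu}\p_\mu\phi\p_\nu\phi$; combining, one obtains $k=2p$. With these verifications in hand, Theorem \ref{main} applies directly and delivers the global smooth solution in the stated range of $\ve_0$.

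For the blow-up part I would adapt the 2D shock-formation blueprint of \cite{Ding3}, modeled on the 3D analyses of \cite{MY, Ding6, Lu1}. Introduce the optical function $u$ associated with $g^{\al\beta}(\p\phi)$, normalized by $u|_{t=1}=r-1$, the null frame $(L,\un L,e)$, and the inverse foliation density $\mu$ defined by $\mu^{-1}=-g^{\al\beta}\p_\al t\,\p_\beta u$. The good-unknown energy estimates used in Theorem \ref{main} propagate $|\p\phi|\lesssim\delta^{1-\ve_0}t^{-1/2}$ up to the first singular time, so the core of the argument is a transport equation for $L\mu$ along outgoing generators whose driving term is, to leading order,
\[
c_{k+1}\,(L\phi)^{k+1},\qquad c_{k+1}=\sum_{0\le\al,\beta,\g_1,\cdots,\g_{k+1}\le 2} h^{\al\beta,\g_1,\cdots,\g_{k+1}}L_\al L_\beta L_{\g_1}\cdots L_{\g_{k+1}}\not\equiv 0
\]
by the failure of the $(k+1)^{\text{st}}$ null condition. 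Along the null geodesic through $(1,(1+s_0)\omega_0)$, this reduces to a Riccati-type ODE for $\mu$ (equivalently for a suitable transversal derivative of $\phi$) whose coefficient is explicitly computable by integrating the initial profile over $s\in(-1,0)$.

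The main obstacle is to pin down the sharp constants in \eqref{shock-formation assumption of data-0} and \eqref{shock-00}. For $\ve_0>\ve_k^*$ the forcing of size $\delta^{(k+1)(1-\ve_0)}$ beats the $t^{-1/2}$ free decay on a $\delta$-independent time scale, and an elementary Riccati comparison with strict inequality forces $\mu\to 0$ in finite time, producing the thresholds $1$ in \eqref{shock-formation assumption of data-0} and $\tfrac{2}{2p+1}$ in \eqref{shock-00}. At the critical value $\ve_0=\ve_k^*$ the forcing and the decay balance exactly, the comparison ODE becomes non-autonomous with a logarithmic effective time, and one must evaluate the driving integral against the compactly supported profile on $s\in(-1,0)$ to obtain the strictly larger constants $2$ and $\tfrac{(p+1)2^{2p+1}}{(2^{2p}-1)(2p+1)}$. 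Once $\mu\to 0$ is established, the standard closing argument---keeping $\p\phi$, the angular derivatives, and $\un L\phi$ uniformly bounded while $L\p\phi$ diverges---yields the genuine shock formation and completes the corollary.
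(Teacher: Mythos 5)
Your proposal takes essentially the same approach as the paper, which itself derives this corollary in a single sentence: verify the relevant null condition for each model equation so that Theorem~\ref{main} delivers the global-existence items, and invoke the analogous shock-formation arguments of \cite{MY,Ding6,Lu1} (applied to the 2D setting) for items (3) and the supercritical half of (4). One small indexing slip: the $(\p_t\phi)^{2p+1}\p_t^2\phi$ term in \eqref{Son-1-0} corresponds to \eqref{Son-1} with $p$ replaced by $2p$ (since \eqref{Son-1} has coefficient $(\p_t\phi)^{p+1}$), not $2p+1$; and your heuristic that the Riccati/transport comparison ``produces'' the specific threshold constants $1$, $2$, $\tfrac{2}{2p+1}$, $\tfrac{(p+1)2^{2p+1}}{(2^{2p}-1)(2p+1)}$ is not actually carried out -- but the paper likewise asserts these constants by reference rather than deriving them, so this does not put your argument below the paper's own level of detail.
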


\subsection{Some remarks}
\begin{remark}\label{R1.1}
Note that
\eqref{Y-0-a} can be easily fulfilled for any given smooth function $\phi_0$
and the choice of $\phi_1(s,\o)=-\p_s\phi_0(s,\o)$.
\end{remark}

\begin{remark}\label{R1.3}
Under the outgoing constraint conditions \eqref{Y-0-a} with large enough $p$ and some number $\ve_0$, for the short pulse initial data,
the authors in \cite{MPY,Wang} have
established  the existence of global smooth solutions
for the 3D semi-linear wave system satisfying the first null condition
and for $n$-dimensional ($n=2,3$) relativistic membrane equations, respectively.	
Note that, as illustrated in \cite{Ding3}, the largeness of  $p$ and some suitable $\ve_0$ for \eqref{Y-0-a}
play the  key roles in the arguments of \cite{MPY,Wang}.
Note that the equation of the form \eqref{quasi} studied in \cite{Wang} is a special case with $k=2$
and $h^{\al\beta}(\p\phi)\equiv0$ in \eqref{null}. In addition, the particular divergence structure in \cite{Wang} makes it possible to introduce an elaborate vector field which approximates $\p_t+\p_r$ effectively and to obtain the global existence of the solution with the help of the method in \cite{MPY}. However, it seems difficult to adopt such an approach in \cite{MPY, Wang} to treat the general \eqref{quasi} with the short pulse initial data \eqref{id} under the weaker outgoing constraint condition \eqref{Y-0} and the optimal scope of $\varepsilon_0$. On the other hand, some recent progress on the global large data solutions of \eqref{HCC-1-0}
can be referred to \cite{A-W} and \cite{LOS}.
\end{remark}

\begin{remark}\label{R1.5}
 Our analysis is strongly motivated by the geometric approach pioneered by D. Christodoulou \cite{C2}, whose original purpose was to study the formation of shocks in multi-dimensional hyperbolic systems with genuinely nonlinear conditions, and subsequently for second order wave equations that do not satisfy the corresponding null conditions. See also \cite{CM,0-Speck,LS,MY,J,S2} for further details.
\end{remark}

\begin{remark}\label{R1.5-1}
 Compared to \cite{Ding4} and \cite{Ding3}, the main new difficulty in this paper is how to get the precise estimates of related quantities which makes possible to get global solution when $\ve_0$ approaches $\ve_k^*$. To this end, near the outgoing light cone surface, by identifying new suitable unknown variables (e.g. \eqref{DA}, \eqref{mC}, \eqref{mD}) and utilizing some fundamental identities (e.g. \eqref{gLLL2}, \eqref{LG}) that arise under the higher-order null conditions and the corresponding null frames, we are able to estimate accurately the solution and determine the optimal smallness exponent for the short pulse initial data (for example, the higher order null conditions help us rewrite $(\p_{\varphi_\gamma}g^{\al\beta})\mathring L_\al\mathring L_\beta\mathring L_\gamma$ in \eqref{GT} as \eqref{gLLL2}, and the resulting good unknown $\mathscr A$ is defined in \eqref{DA}. The dedicate estimate of $\mathscr A$ in \eqref{bZmA} yields \eqref{Zmu} with the aid of \eqref{lmu}, which is crucial for closing energy estimates in \eqref{D22} and \eqref{D22L} when $\ve_0$ approaches $\ve_k^*$). Within the interior of the outgoing light cone, based on precise boundary estimates of the solution on  $\tilde C_{2\delta}=\{(t,x):t\geq t_0,t-r=2\delta\}$ (see Proposition \ref{11.1}) due to the condition $0<\ve_0<\ve_k^*$, we derive global spacetime weighted energy estimates for the related Goursat problem.
\end{remark}

\subsection{Sketch of the proof and some notations}

We now sketch the proof of Theorem \ref{main}.
As in \cite{Ding4, Ding3}, the proof of Theorem 1.1 relies on some a priori uniform energy estimates and the continuous induction
argument. Let $A_{2\delta}=\{(t,x):t\geq 1+2\delta,0\leq t-r\leq 2\delta\}$
and $B_{2\dl}=\{(t, x): t\ge 1+2\dl, t-r\ge 2\dl\}$, which are near the outermost outgoing conic surface  $C_0=\{(t, x): t\ge 1+2\dl, t=r\}$
and inside the light cone $\{(t,x): t\ge 1+2\dl, t\ge r\}$, respectively.
Based on the local existence of solution $\phi$ with some desired properties
for $t\in[1,1+2\delta]$, both the global uniform energy estimates
in $A_{2\delta}$ and $B_{2\dl}$ will be established.

We start with estimates of the solution $\phi$ of \eqref{quasi} in $A_{2\delta}$.
Using the geometric framework of D. Christodoulou in \cite{C2},
we shall prove that the outgoing characteristic conic surfaces never intersect as in  \cite{Ding4, Ding3}.
To this end,
as in \cite{C2, J}, the {\it ``optical function"} $u$ of \eqref{quasi} can be defined as
\begin{equation}\label{H0-3}
\left\{
\begin{aligned}
&\ds\sum_{\al,\beta=0}^2g^{\al\beta}(\p
\phi)\p_\al{u}\p_\beta{u}=0,\\
&u(1+2\delta,x)=1+2\delta-r,
\end{aligned}
\right.
\end{equation}
and subsequently  the inverse foliation density is
\begin{equation}\label{H0-4}
\mu=
-\ds(\sum_{\al=0}^2g^{\al0}\p_\al u)^{-1},
\end{equation}
where $g^{\al\beta}=g^{\al\beta}(\p\phi)$. Set the metric $g=\ds\sum_{\al,\beta=0}^2g_{\al\beta}dx^\al dx^\beta$
with $(g_{\al\beta})$ being the inverse matrix of $(g^{\al\beta})$ and
\begin{equation}\label{H0-5}
\mathring L=-\mu \ds\sum_{\al,\beta=0}^2g^{\al\beta}\p_\al u\p_\beta.
\end{equation}
Extend the local coordinate $\th$ on the standard circle $\mathbb S^1$ by
\begin{equation}\label{H0-6}
\left\{
\begin{aligned}
&\mathring L\vartheta=0,\\
&\vartheta|_{t=1+2\dl}=\th.
\end{aligned}
\right.
\end{equation}
As in \cite{J,MY}, perform the change of coordinates:
$(t, x^1, x^2)\longrightarrow (s, u, \vartheta)$ near $C_0$ with
\begin{equation}\label{H0-7}
\begin{array}{l}
s=t,\quad u=u(t,x),\quad \vartheta=\vartheta(t,x).\\
\end{array}
\end{equation}
Then $X:=\f{\p}{\p\vartheta}$ is a tangent vector on the curved circle $S_{s,u}:=\{(s',u',\vartheta):s'=s,u'=u\}$.
Under the suitable bootstrap assumptions on the smallness and time decay rate of  $\p\phi$ (see $(\star)$
in Section \ref{BA}), we will show that the inverse foliation density
$\mu$ satisfies $|\mathring L\mu|\lesssim \delta^{k(1-\varepsilon_0)}s^{-(k+2)/2}\mu+\delta^{k-(k+1)\varepsilon_0}s^{-(k+1)/2}$.
By $\mu(1+2\delta,x)\sim 1$ and through the integration along integral curves of $\mathring L$, $\mu\sim 1$ can
be derived for small $\dl>0$ and $\ve_0\in (0,\ve_k^*)$. The positivity of $\mu$ implies that the outgoing characteristic conic surfaces never
intersect as long as the smooth solution $\phi$ to \eqref{main} with \eqref{id}
exists. Set $\varphi=(\varphi_0, \varphi_1, \varphi_2):=(\p_t\phi, \p_1\phi, \p_2\phi)$.
Then $\varphi$ solves a quasilinear wave system:
\begin{equation}\label{Y-1}
\mu\Box_{g}\varphi_\gamma=F_\gamma(\varphi, \p\varphi),\quad\gamma=0,1,2,
\end{equation}
where $\Box_g=\f{1}{\sqrt{\det g}}\p_\al(\sqrt{\det g}g^{\al\beta}\p_\beta)$, and $F_\gamma$ are smooth functions
in their arguments. Let $\Psi^{m+1}_\g=Z^{m+1}\varphi_\g$,
where $Z$ stands for one of some chosen first order vector fields. It follows from \eqref{Y-1}
and direct computation on the commutator $[\mu\Box_{g}, Z^{m+1}]$ that
\begin{equation}\label{0Y-1}
\mu\Box_{g}\Psi_\g^{m+1}=\Phi_\g^{m+1}
\end{equation}
with $\Phi_\g^{m+1}$ containing the $(m+2)$-th order derivatives of $\varphi$. In fact,
both $\slashed\nabla Z^{m}\textrm{tr}\chi$ and $\slashed\nabla^2Z^{m}\mu$ appear in the expression of $\Phi_\g^{m+1}$,
where $\chi_{XX}=g(\mathscr D_{X} \mathring L, X)$ is the second fundamental form of $S_{s,u}$
with $\mathscr D$ being the Levi-Civita connection of $g$, and $\textrm{tr}\chi$ is the trace of $\chi$ on $S_{s,u}$.
Following the analogous procedures in Section 3-Section 11 of \cite{Ding3} or
in Section 2-Section 9 of \cite{Ding4}, we can derive the energy estimates for \eqref{0Y-1}
to obtain the appropriate smallness orders of $\dl$ and time decay rates of $\Psi_\g^{m+1}$. However, in order
to look for the optimal exponent $\ve_k^*$ related to the $k-$th null condition, it is necessary to derive
the optimal smallness order of $\dl$ and precise time decay rate for all related quantities.
This gives rise to certain essential difficulties.
Some of these key difficulties and our new strategies to overcome them are sketched as follows (which
are different from those in \cite{Ding3}):

$\bullet$  Control $\|Z^{m+1}\mu\|_{L^2}$ precisely by the corresponding  energies and fluxes so that the
highest $(m+1)-$order  energies of $\varphi$
admit the required optimal smallness orders of $\dl$ and further the bootstrap assumptions can be closed.
In this process, note that $\|Z^{m+1}\mu\|_{L^2}$
can be estimated from  $\|Z^{m+1}\mathring L\mu\|_{L^2}$ by integrating along integral curves of $\mathring L$.
In addition, due to the $k-$th null condition, $\mathring L\mu$ can be composed of $\mathring L\phi$
and $\slashed d\phi$ with  $\slashed d$ being a differential on $S_{s,u}$. Thus,
in order to estimate $\|Z^{m+1}\mathring L\mu\|_{L^2}$, one needs to treat both $\|Z^{m+1}\mathring L\phi\|_{L^2}$
and $\|\slashed dZ^{m+1}\phi\|_{L^2}$. Unfortunately, although it has been found that $\|Z^{m+1}\mathring L\phi\|_{L^2}$
admits a better time decay rate than $\|\slashed dZ^{m+1}\phi\|_{L^2}$ (see Lemma \ref{Ldphi}), the optimal smallness order
of $\dl$ cannot be obtained. Thus more careful analysis on the structure of $\mathring L\mu$ is needed. A key observation here is that
$\mathscr A
=g^{\al\beta,\g a\g_3\cdots\g_k}\o_\al\o_\beta\o_\g(m_{ab}\slashed d^Xx^b\slashed d_X\phi)\o_{\g_3}\cdots\o_{\g_k}$ is a good unknown
since $\|Z^{m+1}\mathscr A\|_{L^2}$ admits the higher order smallness  of $\dl$ and better time decay rate than $\|\slashed dZ^{m+1}\phi\|_{L^2}$.
Note that $Z^{m+1}\mathring L\mu$ consists of mainly $Z^{m+1}\mathscr A$ and $Z^{m+1}\mathring L\phi$,
moreover, $\|Z^{m+1}\mathscr A\|_{L^2}$ and $\|Z^{m+1}\mathring L\phi\|_{L^2}$ are bounded by $\|R^m\slashed\triangle\mu\|_{L^2}$, where $R$ is a specific vectorfield of $Z$ which is projected by $\O$ on $S_{s,u}$ and $\slashed\triangle$ is a Laplace operator of $S_{s,u}$.
Thus the remaining task is to treat $\|R^m\slashed\triangle\mu\|_{L^2}$.

$\bullet$ Deal with $\|R^m\slashed\triangle\mu\|_{L^2}$.
Note that $\Phi_{\g}^{m+1}$ encompasses both $\slashed dZ^m\textrm{tr}\chi$ and $\slashed\nabla^2Z^m\mu$. Consequently, as demonstrated in \cite{Ding4, Ding3}, we can estimate the $L^2$ norms of $\slashed dZ^m\textrm{tr}\chi$ and $Z^m\slashed\triangle\mu$ simultaneously, not solely $\|R^m\slashed\triangle\mu\|_{L^2}$.
By deriving the transport equations for $\slashed\triangle\mu$ and $\textrm{tr}\chi$ along $\mathring L$ and making full use of the $k-$th null condition, the estimates of $\|\slashed dZ^m\textrm{tr}\chi\|_{L^2}$
and $\|Z^m\slashed\triangle\mu\|_{L^2}$ can be precisely controlled by the energies $E_{i,\leq m+2}$ ($i=1,2$) and the fluxes  $F_{1,\leq m+2}$
(see \eqref{eil}-\eqref{fil} and \eqref{Zmu}-\eqref{dnchi}).
In the process, one of the main observations is that $\sum_{j=1}^3\slashed d\varphi_j\cdot\slashed dx^j$ may be represented by $\mathring L\vp$ (see \eqref{dxdp}). This leads to that the estimate on the basic terms $\|\slashed dZ^{m+1}\varphi_j\|_{L^2}$ in
$\|\slashed dZ^m\text{tr}\chi\|_{L^2}$ can be bounded by  $\int_0^uF_{1,m+2}(s,u')du'$ (see \eqref{D22}-\eqref{D22L}).
Then all required quantities are eventually estimated through the optimal smallness orders of $\dl$
and better time decay rates, which are the keys to treat the case when $\ve_0$ is near  $\ve_k^*$ and further to
close the arguments of bootstrap assumptions on $\vp$.

On the other hand, in order to study the global Goursat problem for \eqref{quasi} and derive the a priori
uniform estimates of $\phi$ in $B_{2\dl}$ by energy methods,
the properties for $|\p^\al L^q\phi|\lesssim\delta^{2-\varepsilon_0}t^{-1/2-q}$ ($L=\p_t+\p_r$) on $\t C_{2\delta}$
with the higher order smallness $O(\delta^{2-\varepsilon_0})$ and the time decay rate of $t^{-1/2-q}$
are crucial since the related boundary integrals on $\t C_{2\dl}$ in the energy estimates require the
the smallness order of $O(\delta^{2-\varepsilon_0})$.  However, all the estimates on $\phi$ in  $A_{2\dl}$
are obtained under the modified frame $\{\mathring L, T, R\}$,
where $\mathring L$, $T$, $R$ are some suitable vectors approximating $L$, $-\p_r=\f{\underline L-L}{2}$ $(\underline L=\p_t-\p_r)$,
$\O=x^1\p_2-x^2\p_1$ respectively on the time $t=1+2\delta$. To get the $L^\infty$ estimates on $\phi$ under the
derivatives $\{L,\underline L, \O\}$
and further obtain the required smallness and time decay rate of $L\phi$, we need to express the vector fields $\{L,\underline L, \O\}$
by $\{\mathring L, T, R\}$. As in \cite{Ding4}, the most noteworthy expression is $L=c_{L}\mathring L+c_TT+c_{\O}\O$,
where $c_L$, $c_T$ and $c_{\O}$ are
smooth functions.  One expects that the desired derivative $L\phi$ and its derivatives can admit the smallness of higher
orders with respect to $\dl$
and faster time decay rate. Unfortunately, $T\phi$ and its derivatives have not enough smallness and time decay rate as needed,
it is hoped as in \cite{Ding4} that the coefficient $c_T$ can make up for these deficiencies. The generality of equation \eqref{quasi} and
the optimal scope of $\ve_0\in (0, \ve_k^*)$
make it more difficult to analyze the smallness orders and the time decay rates of the related coefficients $c_T$, $c_\O$ and so on.
Thanks to the $k-$th null condition \eqref{null}, we can find
their governing differential equations along $\mathring L$ and subsequently
get the desired estimates of  $c_L$, $c_T$ and $c_{\O}$.

The rest of the paper is organized as follows. In Section \ref{p-0}, first, some preliminaries on
the related differential geometry and definitions
are given. Based on the studies of the local solution to \eqref{quasi}, the crucial bootstrap assumptions
are given and further some important quantities are estimated in $A_{2\dl}$.
In Section \ref{EE}, we first carry out the energy estimates for the linearized equation $\mu\Box_g\Psi=\Phi$
and define some suitable higher order weighted energies and fluxes as in \cite{J}. Subsequently,
the higher order $L^2$ estimates of some related quantities with the precise smallness orders of $\dl$
and time decay rates are obtained.
In Section \ref{L2chimu}, the top order $L^2$ estimates on the derivatives of $\chi$ and $\mu$ are established.
On the other hand, the estimates for the error terms are derived in Section \ref{ert}.
In Section \ref{YY} and Section \ref{inside}, the global uniform estimates of the solution to \eqref{quasi} in $A_{2\dl}$
and $B_{2\dl}$ are obtained, respectively. Therefore, the proof of Theorem \ref{main}  can be completed
by the continuous induction argument.
In addition, some computations on the deformation tensor and commutator relations are given in Appendix A.
The proof on the  local solution properties of \eqref{quasi} for $1\le t\le 1+2\dl$ is given in Appendix B.

Through the whole paper, unless stated otherwise, Greek indices $\{\al, \beta, \gamma, \cdots\}$ corresponding to the
time-space coordinates are chosen in $\{0, 1, 2\}$, Latin indices $\{i, j, a, b, c, \cdots\}$ corresponding to the
spatial coordinates are $\{1, 2\}$, and the Einstein summation convention to sum over repeated upper
and lower indices is used. In addition, the convention $f_1\lesssim f_2$ means that there exists a generic positive constant
$C$ independent of the parameter $\dl>0$ in \eqref{id} and the variables $(t,x)$ such that $f_1\leq Cf_2$. The coefficients $g^{\al\beta}(\p\phi)$ are denoted as $g^{\al\beta}$ by convenience.
Denote $g=g_{\al\beta}dx^\al dx^\beta$ by a Lorentz metric, where $(g_{\al\beta})$ is the inverse matrix of $(g^{\al\beta})$.

Finally, the following conventional notations will be used too:
\begin{align*}
&t_0:=1+2\delta,\\
&\o^i:=\o_i=\f{x^i}{r},\ i=1,2,\quad\o_\perp:=(-\o^2, \o^1),\\
&\p_x=\nabla_x=(\p_1, \p_2,),\quad L:=\p_t+\p_r,\quad\underline L:=\p_t-\p_r,\\
&\O:=\epsilon_i^jx^i\p_j,\quad S:=t\p_t+r\p_r=\f{t-r}{2}\underline L+\f{t+r}{2}L,\\
&H_i:=t\p_i+x^i\p_t=\o^i\big(\f{r-t}{2}\underline L+\f{t+r}{2}L\big)+\f{t\o^i_\perp}{r}\O,\\
&\Sigma_{t}:=\{(s,x): s=t, x\in\mathbb R^2\},\\
&\varphi=(\varphi_0, \varphi_1, \varphi_2)=(\p_0\phi, \p_1\phi, \p_2\phi),
\end{align*}
where $\epsilon_{1}^2=1$, $\epsilon_{2}^1=-1$, $\epsilon_i^i=0$ for $i=1,2$.

\section{Preliminaries and some estimates under bootstrap assumptions}\label{p-0}

\subsection{Geometry and basic equalities under null frames}\label{p}

We give some preliminaries on the
related geometry and definitions, which will be
utilized as basic tools to establish the a priori global estimates on the
smooth solution $\phi$ to \eqref{quasi} in $A_{2\dl}$. This part is completely similar to \cite{Ding4, Ding3, J},
so we just list the notations and state the facts without going into the details.

The definitions of the optical function $u$, the inverse foliation density $\mu$ and
the definition of $\vartheta$ have been
given in \eqref{H0-3}, \eqref{H0-4} and \eqref{H0-6}, respectively. Note that
$$
\tilde{L}=-g^{\al\beta}\p_\al u\p_\beta
$$
is a tangent vector field of the outgoing light cone $\{ u=C\}$ and $\tilde{L}t=\mu^{-1}$ holds.

Set
$$
\mathring{L}=\mu\tilde{L},\quad \mathring{\underline L}=\mu\mathring L+2T\quad\text{with $T=\mu\tilde T$
and $\tilde T=-g^{\al 0}\p_\al-\mathring L$}.
$$
Then
$\mathring{L}$, $\mathring{\underline L}$ and  $X=\f{\p}{\p\vartheta}$
form a  null frame with respect to the metric $g$.

In the new coordinate $(s, u, \vartheta)$ of \eqref{H0-7}, the following subsets can be defined (see Figure \ref{pic:p1}):

\begin{definition}
\begin{equation}\label{error}
\begin{split}
&\Sigma_s^{u}:=\{(s',u',\vartheta): s'=s,0\leq u'\leq  u\},\quad u\in [0, 4\delta],\\
&C_{u}:=\{(s',u',\vartheta): s'\geq 1+2\delta, u'=u\},\\
&C_{u}^s:=\{(s',u',\vartheta): 1+2\delta\leq s'\leq s, u'=u\},\\
&S_{s, u}:=\Sigma_s\cap C_{u},\\
&D^{s, u}:=\{(s', u',\vartheta): 1+2\delta\leq s'<s, 0\leq u'\leq u\}.
\end{split}
\end{equation}
\end{definition}

	\begin{figure}[htbp]
	\centering
	\includegraphics[scale=0.35]{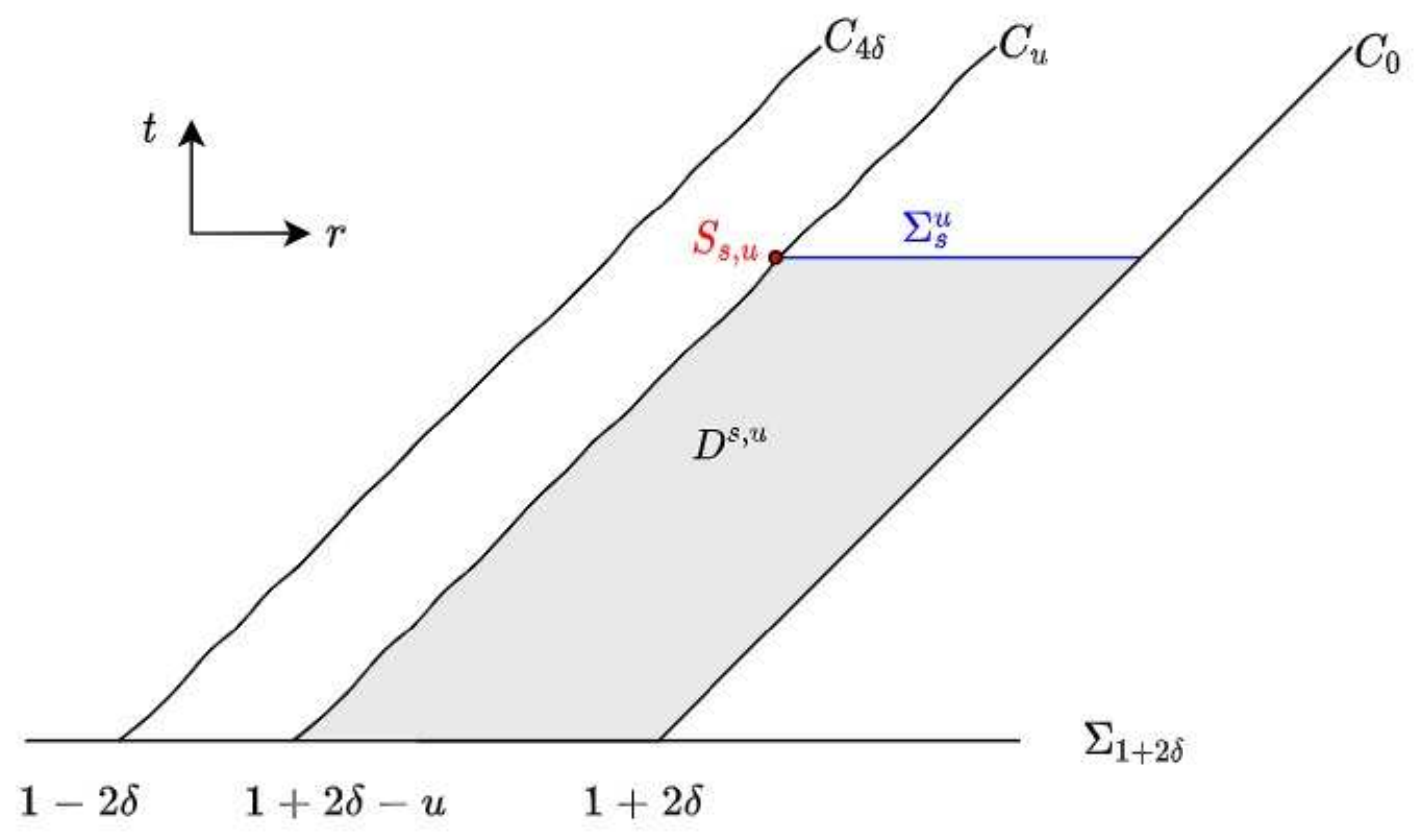}
	\caption{The indications of some domains}\label{pic:p1}
\end{figure}

The following geometric notations are needed.
\begin{definition} For the metric $g$ on the spacetime,
	\begin{itemize}
		\item $\underline g=(g_{ij})$ is defined as the induced metric of $g$ on $\Sigma_t$, i.e., $\underline g(U,V)=g(U,V)$ for any tangent vectors $U$ and $V$ of $\Sigma_t$;
		\item $\slashed \Pi_\al^\beta:=\delta_\al^\beta-\delta_\al^0\mathring L^\beta+\mathring L_\al\tilde{T}^\beta$ is the
projection tensor field on $S_{s,u}$ of type $(1,1)$, where $\delta_\al^\beta$ is Kronecker delta;
		\item define $\slashed\xi:=\slashed\Pi\xi$ as the tensor field on $S_{s,u}$ for the $(m,n)$-type spacetime tensor field
$\xi$, whose components are $$\slashed\xi^{\al_1\cdots\al_m}_{\beta_1\cdots\beta_n}:=(\slashed\Pi\xi)^{\al_1\cdots\al_m}_{\beta_1\cdots\beta_n}
=\slashed\Pi_{\beta_1}^{\beta_1'}\cdots\slashed\Pi_{\beta_n}^{\beta_n'}\slashed\Pi_{\al_1'}^{\al_1}\cdots
\slashed\Pi_{\al_m'}^{\al_m}\xi^{\al_1'\cdots\al_m'}_{\beta_1'\cdots\beta_n'}.$$
		Specially, $\slashed g=(\slashed g_{\al\beta})$ is the induced metric of $g$ on $S_{s,u}$;
		\item $\slashed g^{XX}$ is defined as the reciprocal of $\slashed g_{XX}$ with $\slashed g_{XX}=g(X, X)$;
		\item $\mathscr D$ and $\slashed\nabla$ denote the Levi-Civita connection of $g$ and $\slashed g$, respectively;
		\item $\Box_g:=g^{\al\beta}\mathscr{D}^2_{\al\beta}$, $\slashed\triangle:=\slashed g^{XX}\slashed\nabla^2_{X}$;
		\item $\mathcal L_V\xi$ denotes the Lie derivative of $\xi$ with respect to $V$ and $\slashed{\mathcal L}_V\xi:=\slashed\Pi(\mathcal L_V\xi)$ for any tensor field $\xi$ and vector $V$;
		\item when $\xi$ is a $(m,n)$-type spacetime tensor field, the square of its norm is defined as
		$$
		|\xi|^2:=g_{\al_1\al_1'}\cdots g_{\al_m\al_m'}g^{\beta_1\beta_1'}\cdots g^{\beta_n\beta_n'}\xi_{\beta_1\cdots\beta_n}^{\al_1\cdots\al_m}\xi_{\beta_1'\cdots\beta_n'}^{\al_1'\cdots\al_m'}.
		$$
	\end{itemize}
\end{definition}

In the null frame $\{\mathring L, \mathring{\underline L}, X\}$,
{\it{the second fundamental forms}} $\chi$ and $\sigma$ are defined as
\begin{equation}\label{chith}
\chi_{XX}:=g(\mathscr D_X \mathring L, X),\quad\sigma_{XX}:=g(\mathscr D_X\tilde T,X).
\end{equation}
Meanwhile, define the {\it one-form tensors} $\zeta$ and $\eta$ as
\begin{equation}\label{zetaeta}
\zeta_X:=g(\mathscr D_X \mathring L,\tilde T),\quad\eta_X:=-g(\mathscr D_X T, \mathring L).
\end{equation}
For any vector field $V$, denote its associate {\it{deformation tensor}} by
\begin{equation}\label{dt}
\leftidx{^{(V)}}\pi_{\al\beta}:=g(\mathscr{D}_{\al}V,\p_{\beta})+g(\mathscr{D}_{\beta}V,\p_{\al}).
\end{equation}
For $t\ge t_0$, the following {\it{error terms}} are defined as
\begin{equation}\label{errorv}
\begin{split}
&\check{L}^0:=0,\ \check{L}^i:=\mathring L^i-\f{x^i}{\varrho},\ \check{T}^i:=\tilde T^i+\f{x^i}{\varrho},\
\check{\chi}_{XX}:=\chi_{XX}-\f{1}{\varrho}\slashed g_{XX},
\end{split}
\end{equation}
here and below $\varrho=t-u$.
Note that in the new coordinate system $(s, u, \vartheta)$, it holds that
$\mathring L=\f{\p}{\p s}$ and  $T=\f{\p}{\p u}-\Xi$ with $\Xi=\Xi^XX$ for some smooth function $\Xi^X$.
In addition, \cite[Lemma 3.66]{J} gives

\begin{lemma}
In domain $D^{s, u}$, the Jacobian determinant of the map $(s, u, \vartheta)\rightarrow
(x^0, x^1, x^2)$ is
\begin{equation}\label{Jacobian}
\det\f{\p(x^0, x^1, x^2)}{\p(s, u, \vartheta)}=\mu(\det\underline g)^{-1/2}\sqrt{\slashed g_{XX}}.
\end{equation}
\end{lemma}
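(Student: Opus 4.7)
The plan is to reduce the $3\times 3$ Jacobian to a $2\times 2$ spatial determinant by showing that its $x^0$-row is trivial, and then to identify that spatial determinant via the induced metric on $\Sigma_s$ expressed in the coordinates $(u,\vartheta)$.

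First, I would verify that $\partial x^0/\partial s = 1$ and $\partial x^0/\partial u = \partial x^0/\partial\vartheta = 0$. Since $\mathring L = \partial/\partial s$ in the new coordinates and $\mathring L\, t = \mu\tilde L\, t = 1$, one has $\mathring L^0 = 1$. Since $X = \partial/\partial\vartheta$ is tangent to $S_{s,u}\subset\Sigma_s$, $X^0 = 0$. Finally, $\partial/\partial u = T + \Xi^X X$, and $T^0 = \mu\tilde T^0 = \mu(-g^{00} - \mathring L^0) = 0$ thanks to the normalization $g^{00} = -1$ in \eqref{g00}; combined with $X^0 = 0$, this gives $\partial x^0/\partial u = 0$. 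Hence
\begin{equation*}
\det \frac{\partial(x^0, x^1, x^2)}{\partial(s, u, \vartheta)} = \det \frac{\partial(x^1, x^2)}{\partial(u, \vartheta)}\bigg|_{s}  =: \hat J.
\end{equation*}

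Next, to evaluate $\hat J$, I would compute the induced metric $\underline g$ on $\Sigma_s$ in the coordinates $(u,\vartheta)$. Because $\partial/\partial u$ and $\partial/\partial\vartheta = X$ both have vanishing $x^0$-component, their $\underline g$-pairings coincide with their $g$-pairings. Using the eikonal equation to get $g(\mathring L, X) = -\mu X(u) = 0$, then $g(T, X) = \mu g(\tilde T, X) = -\mu g^{\alpha 0}g_{\alpha\gamma}X^\gamma - \mu g(\mathring L, X) = -\mu X^0 = 0$, and
\begin{equation*}
g(T, T) = \mu^2\bigl(g^{\alpha 0}g^{\beta 0}g_{\alpha\beta} + 2g^{\alpha 0}\mathring L_\alpha + g(\mathring L, \mathring L)\bigr) = \mu^2\bigl(g^{00} + 2\mathring L^0\bigr) = \mu^2,
\end{equation*}
one arrives at
\begin{equation*}
\underline g^{(u,\vartheta)} = \begin{pmatrix} \mu^2 + (\Xi^X)^2\,\slashed g_{XX} & \Xi^X\,\slashed g_{XX} \\ \Xi^X\,\slashed g_{XX} & \slashed g_{XX} \end{pmatrix}, \qquad \det \underline g^{(u,\vartheta)} = \mu^2\,\slashed g_{XX}.
\end{equation*}

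Finally, the tensorial change-of-variables identity $\det \underline g^{(u,\vartheta)} = \hat J^{\,2}\det\underline g$ forces $\hat J = \mu(\det\underline g)^{-1/2}\sqrt{\slashed g_{XX}}$, with the positive sign fixed by the orientation of $\vartheta$ inherited from $\theta$ in \eqref{H0-6}, which is exactly \eqref{Jacobian}. The only nonroutine ingredient is the cancellation $g^{00} + 2\mathring L^0 = 1$ that produces the factor $\mu^2$ in $g(T, T)$; everything else follows directly from the null frame definitions and the eikonal equation, so I do not anticipate any substantial obstacle.
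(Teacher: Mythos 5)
Your proof is correct and is the standard argument for this type of Jacobian identity (the paper itself gives no proof, merely citing [J, Lemma 3.66]). Reducing to the $2\times 2$ spatial block by showing the $x^0$-row is $(1,0,0)$ — which uses $\mathring L^0 = 1$, $X^0 = 0$, and $\tilde T^0 = -g^{00}-\mathring L^0 = 0$ under the normalization $g^{00}=-1$ — and then reading $\hat J$ off from $\det\underline g^{(u,\vartheta)} = \mu^2\slashed g_{XX}$ via the metric transformation law, is exactly how such lemmas are established in the Christodoulou–Speck framework. The computation $g(T,T)=\mu^2$ from $g^{00}+2\mathring L^0 = 1$ and $g(\mathring L,\mathring L)=0$ is the right hinge of the argument, and your identification $\underline g_{ij}=g_{ij}$ for the Cartesian coordinates on $\Sigma_s$ is consistent with Definition 2.2. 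No gaps.
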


On $\mathbb{S}^1$, the vector field $\Omega=x^1\p_2-x^2\p_1$
is taken as the tangent derivative.
In order to project $\Omega$ on $S_{s, u}$, as in (3.39b) of \cite{J}, we define
\begin{equation*}
R:=\slashed\Pi\Omega,\quad \slashed d:=\slashed\Pi d
\end{equation*}
to be the rotation vectorfield and the differential of $S_{s, u}$, respectively. Note that the explicit expression of $R$ is
\begin{equation}\label{R}
R=(\delta_j^i+g_{\al j}\mathring L^\al\tilde T^i){\Omega}^j\p_i
=\Omega-g_{aj}\tilde{T}^a{\Omega}^j\tilde{T}.
\end{equation}
For brevity, one writes
\begin{equation}\label{omega}
\upsilon:=g_{ab}\tilde{T}^a{\Omega}^b=g_{aj}\check{T}^a\epsilon_i^jx^i-(g_{aj}-m_{aj})\f{\epsilon_i^j x^ix^a}\varrho.
\end{equation}
Then
$$R=\Omega-\upsilon\tilde{T}.$$

\begin{definition}\label{2.3}
For any continuous function $f$ and tensor field $\xi$, define
\begin{align*}
&\int_{S_{s, u}}f:=\int_{S_{s, u}}fd\nu_{\slashed g}=\int_{\mathbb{S}}f(s, u,\vartheta)\sqrt{\slashed g_{XX}(s, u,\vartheta)}d\vartheta,\\
&\int_{ C^s_{ u}}f:=\int_{t_0}^s\int_{S_{\tau, u}}f(\tau, u,\vartheta) d\nu_{\slashed g}d\tau,\quad
\int_{\Sigma_s^{ u}}f:=\int_0^{ u}\int_{S_{s, u'}}f(s, u',\vartheta) d\nu_{\slashed g}d u',\\
&\int_{D^{s, u}}f:=\int_{t_0}^s\int_0^{ u}\int_{S_{\tau, u'}}f(\tau, u',\vartheta)d\nu_{\slashed g}d u'd\tau,\quad
\|\xi\|_{s,u}:=\sqrt{\int_{\Sigma_s^{ u}}|\xi|^2}.
\end{align*}
\end{definition}

Finally, for reader's convenience, we recall the  notations of contraction and trace as follows.
\begin{definition}[]\label{def}
(1) If $\xi$ is a $(0,2)$-type spacetime tensor, $\Lambda$ is a 1-form, $U$ and $V$ are vector fields, then the contraction
of $\xi$ with respect to $U$ and $V$ is defined as
$$
\xi_{UV}:=\xi_{\al\beta}U^{\al}V^{\beta},
$$
and the contraction of $\Lambda$ with respect to $U$ is
\[
\Lambda_U:=\Lambda_{\al}U^{\al}.
\]
(2) If $\xi$ is a $(0,2)$-type tensor on $S_{s,u}$, the trace of $\xi$ is defined as
	\[
	\text{tr}\xi:=\slashed g^{XX}\xi_{XX}.
	\]
\end{definition}

Note that $(g_{\al\beta})$ is the inverse matrix of $(g^{\al\beta})$.
By $g^{\al\beta}g^{\lambda\kappa}g_{\kappa\beta}=g^{\al\lambda}$, it holds that
\begin{equation}\label{ggg}
 g^{\al\beta}g^{\lambda\kappa}(\p_{\varphi_\gamma} g_{\kappa\beta})=-\p_{\varphi_\gamma} g^{\al\lambda}.
\end{equation}

With the help of ${\chi}$, $\sigma$, ${\zeta}$ and ${\eta}$
defined as before, one can derive some basic equalities in the frame $\{\mathring L, \mathring{\underline L}, X\}$
or the frame $\{T, \mathring{\underline L}, X\}$.

Let
\begin{equation}\label{FG}
\begin{split}
&\ds G_{\al\beta}^\gamma:=\p_{\varphi_\gamma} g_{\al\beta}.
\end{split}
\end{equation}
For any vector fields $\ds U=U^\al\p_\al$ and $\ds V=V^\al\p_\al$, set $G_{UV}^\gamma=G_{\al\beta}^\gamma U^\al V^\beta$. One has the following transport equation of $\mu$ along $\mathring L$, whose proof is exact similar to \cite[Lemma 4.3]{Ding4}.

\begin{lemma}\label{mu}
$\mu$ satisfies
 \begin{equation}\label{lmu}
 \begin{split}
 \mathring L\mu=-\f12\mu G_{\mathring L\mathring L}^\gamma\mathring L\varphi_\gamma-\mu G_{\tilde T\mathring L}^\gamma\mathring L\varphi_\gamma+\f12G_{\mathring L\mathring L}^\gamma T\varphi_\gamma.
 \end{split}
 \end{equation}
\end{lemma}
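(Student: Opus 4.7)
The natural starting point is the defining relation $\mu^{-1}=-g^{\al 0}\p_\al u$ from \eqref{H0-4}. Applying $\mathring L$ to both sides and using $\mathring L\mu^{-1}=-\mu^{-2}\mathring L\mu$, one gets
\begin{equation*}
\mathring L\mu=\mu^2\bigl[(\mathring L g^{\al 0})\p_\al u+g^{\al 0}\mathring L(\p_\al u)\bigr],
\end{equation*}
so the task reduces to evaluating these two bracketed terms in the null frame.

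For the first term, I would write $\mathring L g^{\al 0}=(\p_{\varphi_\gamma}g^{\al 0})\mathring L\varphi_\gamma$ and use the identity $\p_{\varphi_\gamma}g^{\al\beta}=-g^{\al\lambda}g^{\beta\kappa}G^\gamma_{\lambda\kappa}$, which is the version of \eqref{ggg} obtained by differentiating $g^{\al\beta}g_{\beta\kappa}=\dl^\al_\kappa$. Contracting with $\p_\al u$ and using the basic identity $g^{\al\lambda}\p_\al u=-\mu^{-1}\mathring L^\lambda$ (a direct consequence of the definition \eqref{H0-5} of $\mathring L$), the term collapses to $\mu^{-1}g^{0\kappa}G^\gamma_{\mathring L\kappa}\mathring L\varphi_\gamma$. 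I would then eliminate $g^{0\kappa}$ through the defining equation of $\tilde T$, which coordinate-wise reads $g^{\kappa 0}=-\tilde T^\kappa-\mathring L^\kappa$, giving the contribution $-\mu^{-1}(G^\gamma_{\mathring L\tilde T}+G^\gamma_{\mathring L\mathring L})\mathring L\varphi_\gamma$. For the second term, the key ingredient is a transport identity for $\p_\al u$ along $\tilde L$: differentiating the eikonal equation $g^{\beta\gamma}\p_\beta u\p_\gamma u=0$ with respect to $x^\al$ and using the symmetry $\p_\al\p_\beta u=\p_\beta\p_\al u$ yields $\tilde L(\p_\al u)=\tfrac12(\p_\al g^{\beta\gamma})\p_\beta u\p_\gamma u$. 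Applying \eqref{ggg} once more, together with $g^{\al\lambda}\p_\al u=-\mu^{-1}\mathring L^\lambda$, simplifies this to $\mathring L(\p_\al u)=-\tfrac{1}{2\mu}G^\lambda_{\mathring L\mathring L}\p_\al\varphi_\lambda$, and contracting with $g^{\al 0}$ and again replacing $g^{\al 0}\p_\al$ by $-\tilde T-\mathring L$ converts the second term into $\tfrac{1}{2\mu}G^\gamma_{\mathring L\mathring L}(\tilde T\varphi_\gamma+\mathring L\varphi_\gamma)$.

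Assembling the two contributions, the $\mathring L\mathring L$-piece combines into $-\tfrac12\mu G^\gamma_{\mathring L\mathring L}\mathring L\varphi_\gamma$, the off-diagonal piece becomes $-\mu G^\gamma_{\tilde T\mathring L}\mathring L\varphi_\gamma$ after using the symmetry $G^\gamma_{\mathring L\tilde T}=G^\gamma_{\tilde T\mathring L}$ inherited from the symmetry of $g_{\al\beta}$, and finally the relation $T=\mu\tilde T$ turns $\tfrac{\mu}{2}G^\gamma_{\mathring L\mathring L}\tilde T\varphi_\gamma$ into $\tfrac12 G^\gamma_{\mathring L\mathring L}T\varphi_\gamma$. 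This reproduces \eqref{lmu} exactly. I do not expect a serious obstacle: the computation is a bookkeeping exercise in the null frame, and the only points that require care are the correct use of \eqref{ggg} to pass between derivatives of $g^{\al\beta}$ and $g_{\al\beta}$, and the consistent replacement of $g^{\al 0}$-contractions by the $\{\mathring L,\tilde T\}$ frame via $g^{\al 0}\p_\al=-\tilde T-\mathring L$.
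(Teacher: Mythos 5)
Your computation is correct and reproduces \eqref{lmu} exactly; the chain of identities — differentiating $\mu^{-1}=-g^{\al 0}\p_\al u$ along $\mathring L$, using \eqref{ggg} for $\p_{\varphi_\gamma}g^{\al 0}$, invoking $g^{\al\beta}\p_\al u=-\mu^{-1}\mathring L^\beta$, deriving $\tilde L(\p_\al u)=\tfrac12(\p_\al g^{\beta\gamma})\p_\beta u\p_\gamma u$ from the eikonal equation, and replacing $g^{\al 0}\p_\al$ by $-\tilde T-\mathring L$ — is the standard route and is the same computation the paper delegates to Lemma 4.3 of \cite{Ding4}. No gap.
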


\begin{remark}
In the analysis later, special attention is needed for the terms containing $T\varphi_\gamma$ since  $T\varphi_\gamma$ has the worse
smallness and slower time decay rate than the ones of $\bar Z\varphi_\g$, where $\bar Z\in\{\mathring L, R\}$.
\end{remark}

As in \cite{Ding3}, the following explicit expressions hold:
\begin{align}
&\zeta_X=-\f12\Big\{G_{\tilde T\mathring L}^\gamma\slashed d_X\varphi_\gamma+G_{\tilde T\tilde T}^\gamma \slashed d_X\varphi_\gamma-G_{X\tilde T}^\gamma{\mathring L}\varphi_\gamma+G^\gamma_{X\mathring L}{\tilde T}\varphi_\gamma\Big\},\label{zeta}\\
&\sigma_{XX}=-G_{X\mathring L}^\gamma\slashed d_X\varphi_\gamma-G_{X\tilde T}^\gamma\slashed d_X\varphi_\gamma+\f12G_{XX}^\gamma{\mathring L}\varphi_\gamma+\f12G_{XX}^\g{\tilde T}\varphi_\gamma-\chi_{XX}.\label{theta}
\end{align}

Note that the quantity ``deformation tensor" defined in \eqref{dt}
 will occur in the forthcoming energy estimates. It is necessary  to compute the components
 of $\leftidx{^{(V)}}\pi$ in the null frame $\{\mathring L,\underline{\mathring L},X\}$.
 This will be given in Appendix A. In addition, as in \cite{Ding3},
it follows from \eqref{lmu} and \eqref{Lpi}-\eqref{Rpi} that
the vector fields $\mathring L$ and $T$ appear frequently. Based on this, we need the equations for $\mathring L^i$ and $\check L^i$ under the actions of the derivatives of null frame $\{T,\mathring L, X\}$ as well as the connection coefficients of the frames.

\begin{lemma}\label{4.2}
	It holds that
	\begin{align}
	&\mathring L\mathring L^i=\f12G^\gamma_{\mathring L\mathring L}\mathring L\varphi_{\gamma}\tilde T^i
-\big\{G^\gamma_{X\mathring L}\mathring L\varphi_{\gamma}-\f12G^\gamma_{\mathring L\mathring L}\slashed d_X\varphi_{\gamma}\big\}\slashed d^Xx^i,\label{LL}\\
	&\mathring L\big(\varrho\check L^i\big)=\varrho\mathring L\mathring L^i,\label{LeL}\\
	&\slashed d_X\mathring L^i=\textrm{tr}\chi{\slashed d_X}x^i-\{G^\gamma_{\mathring L\tilde T}\slashed d_X\varphi_{\gamma}
+\f12G^\gamma_{\tilde T\tilde T}\slashed d_X\varphi_{\gamma}\}\tilde T^i-\f12G_{XX}^\gamma\mathring L\varphi_{\gamma}\slashed d^Xx^i,\label{dL}\\
	&\slashed d_X\check{L}^i=\textrm{tr}\check\chi{\slashed d_X}x^i-\{G^\gamma_{\mathring L\tilde T}{\slashed d_X}\varphi_{\gamma}
+\f12G^\gamma_{\tilde T\tilde T}{\slashed d_X}\varphi_{\gamma}\}\tilde T^i-\f12G_{XX}^\gamma\mathring L\varphi_{\gamma}\slashed d^Xx^i,\label{deL}\\
	&T\mathring{L}^i=\{\slashed d_X\mu-\f12\mu G^\gamma_{\tilde T\tilde T}\slashed d_X\varphi_{\gamma}
-G^\gamma_{X\mathring L}T\varphi_{\gamma}\}\slashed d^Xx^i+\big\{\f12\mu G^\gamma_{\mathring L\mathring L}\mathring L\varphi_\gamma\no\\
	&\qquad\quad+\mu G^\gamma_{\tilde T\mathring L}\mathring L\varphi_\gamma+\f12\mu G^\gamma_{\tilde T\tilde T}\mathring L\varphi_\gamma\big\}\mathring L^i+\f12G^\gamma_{\mathring L\mathring L}T\varphi_\gamma\tilde T^i\label{TL}
	\end{align}
and
\begin{equation}\label{cdf}
\begin{split}
&\mathscr D_{\mathring L}\mathring L=(\mu^{-1}\mathring L\mu)\mathring L,\quad\mathscr D_{T}\mathring L=-\mathring L\mu\mathring L+\eta^XX,\quad\mathscr D_X\mathring L=-\zeta_X\mathring L+\textrm{tr}{\chi}X,\\
&\mathscr D_{\mathring L}T=-\mathring L\mu\mathring L-\mu\zeta^XX,\quad\mathscr D_TT=\mu\mathring L\mu\mathring L
+(\mu^{-1}T\mu+\mathring L\mu)T-\mu(\slashed d^X\mu) X,\\
&\mathscr D_XT=\mu\zeta_X\mathring L+\mu^{-1}\eta_XT+\mu\textrm{tr}{\sigma}X,\\
&\mathscr D_XX=\slashed{\nabla}_XX+(\sigma_{XX}+\chi_{XX})\mathring L+\mu^{-1}\chi_{XX}T.
\end{split}
\end{equation}
In addition, the covariant derivatives of the frame $\{\mathring{\underline L}, \mathring L, X\}$ are
\begin{equation}\label{LuL}
\begin{split}
&\mathscr D_{\mathring {\underline L}}{\mathring L}=-\mathring L\mu\mathring L+2\eta^XX,\quad\mathscr D_{\mathring L}\mathring{\underline L}=-2\mu\zeta^XX,\\
&\mathscr D_{\mathring{\underline L}}\mathring{\underline L}=(\mu^{-1}\mathring{\underline L}\mu+\mathring
 L\mu)\mathring{\underline L}-(2\mu \slashed d^X\mu) X.
\end{split}
\end{equation}
\end{lemma}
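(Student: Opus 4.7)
The plan is to split the proof into three stages matching the three groups of identities. I would first establish the intrinsic covariant-derivative formulas \eqref{cdf} and \eqref{LuL}, then extract the coordinate-component identities \eqref{LL}, \eqref{dL}, \eqref{deL}, \eqref{TL} from them by expanding in the spacetime basis, and finally verify \eqref{LeL} as a short algebraic consequence of \eqref{LL}.

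For the covariant-derivative identities in \eqref{cdf}, the backbone is that $\tilde L^\al=-g^{\al\beta}\p_\beta u$ is null-geodesic, i.e.\ $\mathscr D_{\tilde L}\tilde L=0$. This follows from differentiating the eikonal equation \eqref{H0-3} in $\p_\gamma$ and symmetrizing in $\p^2_{\al\beta}u$. Rescaling by $\mu$ via the Leibniz rule gives $\mathscr D_{\mathring L}\mathring L=(\tilde L\mu)\mathring L=(\mu^{-1}\mathring L\mu)\mathring L$. The remaining formulas for $\mathscr D_X\mathring L$, $\mathscr D_T\mathring L$, $\mathscr D_{\mathring L}T$, $\mathscr D_TT$, $\mathscr D_XT$, $\mathscr D_XX$ are obtained by expanding each $\mathscr D_UV$ in the frame $\{\mathring L,\tilde T,X\}$ and computing the coefficients through inner products, using the relations $g(\mathring L,\mathring L)=g(\mathring L,X)=g(\tilde T,X)=0$, $g(\mathring L,\tilde T)=-1$ together with the definitions \eqref{chith}--\eqref{zetaeta} of $\chi,\sigma,\zeta,\eta$. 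A key auxiliary identity $\eta_X=\mu\zeta_X+\slashed d_X\mu$ is obtained by applying Leibniz to $X\bigl(g(\mathring L,T)\bigr)=-\slashed d_X\mu$. The $\mathring{\underline L}$-identities in \eqref{LuL} then follow from \eqref{cdf} by substituting $\mathring{\underline L}=\mu\mathring L+2T$ and once more applying Leibniz.

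To pass from \eqref{cdf} to the coordinate-component identities \eqref{LL}, \eqref{dL}, \eqref{TL}, I expand each $\mathscr D_UV^i=UV^i+\Gamma^i_{\al\beta}U^\al V^\beta$ and use \eqref{FG} to write the Christoffel symbols in the form
\begin{equation*}
\Gamma^i_{\al\beta}=\tfrac12 g^{i\sigma}\bigl(G^\gamma_{\sigma\beta}\p_\al\varphi_\gamma+G^\gamma_{\sigma\al}\p_\beta\varphi_\gamma-G^\gamma_{\al\beta}\p_\sigma\varphi_\gamma\bigr).
\end{equation*}
Every factor of the type $g^{i\sigma}\p_\sigma u$ collapses to $-\mu^{-1}\mathring L^i$ by \eqref{H0-5}, and each $\p_\sigma\varphi_\gamma$ is regrouped into $\mathring L\varphi_\gamma$, $T\varphi_\gamma$, $\slashed d_X\varphi_\gamma$ by decomposing $\p_\sigma$ in the null frame (equivalently, reading off the dual coframe to $\{\mathring L,\tilde T,X\}$). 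Substituting the explicit expressions \eqref{zeta}--\eqref{theta} for $\zeta_X$ and $\sigma_{XX}$ then yields \eqref{LL} from the spatial part of $\mathscr D_{\mathring L}\mathring L=(\mu^{-1}\mathring L\mu)\mathring L$, \eqref{dL} from $\mathscr D_X\mathring L=-\zeta_X\mathring L+\text{tr}\chi\, X$ (using $\slashed d_X\mathring L^i=X\mathring L^i$), and \eqref{TL} from $\mathscr D_T\mathring L=-\mathring L\mu\cdot\mathring L+\eta^XX$ after unfolding $\eta_X$. Identity \eqref{deL} follows from \eqref{dL} by subtracting $\slashed d_X(x^i/\varrho)=\slashed d_Xx^i/\varrho$ (since $\slashed d_X\varrho=0$ as $X$ is tangent to $S_{s,u}$), which merely replaces $\text{tr}\chi$ by $\text{tr}\check\chi=\text{tr}\chi-1/\varrho$ in view of \eqref{errorv}.

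Finally, \eqref{LeL} is a direct calculation: $\mathring L(\varrho\check L^i)=(\mathring L\varrho)\check L^i+\varrho\mathring L\mathring L^i-\varrho\mathring L(x^i/\varrho)$, and the identities $\mathring L\varrho=\mathring L(t-u)=1$, $\mathring L x^i=\mathring L^i$, $\mathring L(x^i/\varrho)=\mathring L^i/\varrho-x^i/\varrho^2$ combine to give $\check L^i+\varrho\mathring L\mathring L^i-\mathring L^i+x^i/\varrho=\varrho\mathring L\mathring L^i$. The main obstacle is not analytic — no PDE estimates enter and the higher-order null condition \eqref{null} plays no role here — but rather combinatorial: the regrouping of generic $\p_\sigma\varphi_\gamma$ terms into frame derivatives, and the collection of $\mathring L^i$, $\tilde T^i$, $\slashed d^X x^i$ projections into the symmetric form stated in the lemma, must be executed slot by slot with care.
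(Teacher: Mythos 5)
Your proposal is correct and follows the standard geometric route: deduce \eqref{cdf} and \eqref{LuL} from the null--geodesic property $\mathscr D_{\tilde L}\tilde L=0$ (a consequence of the eikonal equation), the metric pairings in the null frame, and the defining formulas \eqref{chith}--\eqref{zetaeta} for $\chi,\sigma,\zeta,\eta$; then pass to the $x^i$--components by expanding the Christoffel symbols with $g^{i\sigma}$ decomposed via \eqref{gab} and regrouping $\p_\sigma\varphi_\gamma$ into $\mathring L\varphi_\gamma$, $T\varphi_\gamma$, $\slashed d_X\varphi_\gamma$ by \eqref{pal}, yielding \eqref{LL}, \eqref{dL}, \eqref{TL}; and finally get \eqref{deL}, \eqref{LeL} by elementary algebra. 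The paper omits this proof and refers to Lemmas 3.4--3.6 of \cite{Ding3}, which argue in exactly this way, so there is no discrepancy. One small mislabeling in your outline: \eqref{LeL} is not a consequence of \eqref{LL}---the explicit value of $\mathring L\mathring L^i$ never enters---but is an independent Leibniz computation relying only on $\mathring L\varrho=1$ and $\mathring L x^i=\mathring L^i$; your actual algebra is correct as written, so this is a matter of phrasing rather than substance.
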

\begin{proof}
The proofs are similar to these in Lemma 3.4-Lemma 3.6 in \cite{Ding3} so are omitted.
\end{proof}

As in \cite{Ding3}, to estimate $\varphi$, one needs to
derive the equations for $\varphi_\gamma$ under the action of the covariant wave operator $\Box_g=g^{\al\beta}\mathscr{D}^2_{\al\beta}=g^{\al\beta}\p_{\al\beta}^2-g^{\al\beta}\Gamma_{\al\beta}^\lambda\p_\lambda$, where $\Gamma_{\al\beta}^\gamma$ are Christoffel symbols with
\begin{equation*}
\begin{split}
\Gamma_{\al\beta}^\gamma=\f12g^{\gamma\kappa}\big(G_{\kappa\beta}^\nu\p_\al\varphi_\nu
+G_{\al\kappa}^\nu\p_\beta\varphi_\nu-G_{\al\beta}^\nu\p_\kappa\varphi_\nu\big).
\end{split}
\end{equation*}

Note that
\begin{equation}\label{boxg}
\begin{split}
\Box_g\varphi_\gamma=&\p_{\varphi_\nu}g^{\al\beta}\big(-\p_\nu\varphi_\gamma\p_\beta\varphi_\al+\p_\nu\varphi_\al\p_\beta\varphi_\gamma\big)
+\f12(g^{\al\beta}\p_{\varphi_\nu}g_{\al\beta})g^{\lambda\kappa}\p_\kappa\varphi_\nu\p_\lambda\varphi_\gamma.
\end{split}
\end{equation}
In terms of
\begin{align}
&g^{\al\beta}=-\mathring L^\al\mathring L^\beta-\tilde T^\al\mathring L^\beta-\mathring L^\al\tilde T^\beta+(\slashed d_Xx^\al)(\slashed d^Xx^\beta),\label{gab}\\
&\p_\al=\delta_\al^0\mathring L-\mu^{-1}\mathring L_\al T+g_{\al i}(\slashed d^Xx^i)X,\label{pal}
\end{align}
then in the frame $\{T, \mathring L, X\}$, \eqref{boxg} can be rewritten as
\begin{equation}\label{ge}
\begin{split}
\mu\Box_g \varphi_\gamma=&\f12(g^{\al\beta}G_{\al\beta}^
\nu)(\mu\slashed d^X\varphi_\nu)\slashed d_X\varphi_\gamma+f_1(\varphi,\mathring L^1, \mathring L^2)\varphi^{k-1}\left(
\begin{array}{ccc}
\mathring L\varphi\\
(\slashed d_Xx)\slashed d^X\varphi\\
\end{array}
\right)T\varphi_\gamma\\
&+f_2(\varphi, \mathring L^1, \mathring L^2)\varphi^{k-1}\left(
\begin{array}{ccc}
T\varphi\\
\mu\mathring L\varphi\\
\mu(\slashed d_Xx)\slashed d^X\varphi\\
\end{array}
\right)
\left(
\begin{array}{ccc}
\mathring L\varphi_\gamma\\
(\slashed d_Xx)\slashed d^X\varphi_\gamma\\
\end{array}
\right),
\end{split}
\end{equation}
where $f_1$ and $f_2$ are the generic smooth functions with respect to their arguments, and $\left(
\begin{array}{ccc}
A_1\\
\vdots\\
A_n\\
\end{array}
\right)
\left(
\begin{array}{ccc}
B_1\\
\vdots\\
B_m\\
\end{array}
\right)$ stands for the terms which conclude  $A_iB_j$ $(1\leq i\leq n,1\leq j\leq m)$.
It can be checked that in the right hand side of \eqref{ge}, the worst factor $T\varphi$
is always accompanied by the ``good" multipliers
such as $\mathring L\varphi$ and $\slashed d\varphi$.

\subsection{The bootstrap assumptions and some related estimates}\label{BA}

As in \cite{Ding3}, the existence and properties of local in time solution to \eqref{quasi} with \eqref{id}, \eqref{Y-0} and
\eqref{Y-0-a}-\eqref{g00} can be established easily in the following Theorem,
whose proof is given in Appendix B.

\begin{theorem}\label{Th2.1}
Under the assumptions \eqref{Y-0} and
\eqref{Y-0-a}-\eqref{g00}, when $\delta>0$ is small, equation \eqref{quasi} with \eqref{id}
admits a local smooth solution $\phi\in C^\infty([1, t_0]\times\mathbb R^2)$. Moreover,
for $m,n,p\in\mathbb N_0$, $\kappa\in\mathbb N_0^3$, it holds that
\begin{enumerate}[(i)]
	\item
	\begin{align}
	&|L^m\p^\kappa\O^p\phi(t_0,x)|\lesssim\delta^{2-|\kappa|-\varepsilon_0},\quad r \in[1-2\delta, 1+2\delta],\label{local1-2}\\
	&|\underline L^n\p^\kappa\O^p\phi(t_0,x)|\lesssim\delta^{2-|\kappa|-\varepsilon_0},\quad r\in [1-3\delta, 1+\delta].\label{local2-2}
	\end{align}
	\item
	\begin{equation}\label{local3-2}
	|\p^\kappa\O^p\phi(t_0,x)|\lesssim
	\delta^{3-|\kappa|-\varepsilon_0},
	\quad r\in [1-3\delta, 1+\delta].
	\end{equation}
	\item
	\begin{equation}\label{local3-3}
	|\underline L^mL^n\O^p\phi(t_0,x)|\lesssim\delta^{2-\varepsilon_0},\quad r\in[1-2\delta,1+\delta].
	\end{equation}
\end{enumerate}
\end{theorem}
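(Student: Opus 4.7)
The plan is to combine classical local well-posedness for \eqref{quasi} on the short interval $[1,t_0]$ of length $2\delta$ with a careful bookkeeping of the short pulse scaling and the outgoing constraint \eqref{Y-0-a}. Since $|\partial\phi|_{t=1}|=O(\delta^{1-\varepsilon_0})$ is small and the data are smooth with support in $r\in(1-\delta,1)$, the matrix $(g^{\alpha\beta}(\partial\phi))$ stays uniformly close to $(m^{\alpha\beta})$; standard high-order energy estimates then produce a unique smooth solution $\phi\in C^\infty([1,t_0]\times\mathbb{R}^2)$, and by finite propagation speed $\phi(t,\cdot)$ is supported in $r\in[1-(t-1)-\delta,1+(t-1)]\subset [1-3\delta, 1+2\delta]$ for $t\in[1,t_0]$.

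To prove (i), note that \eqref{Y-0-a} directly yields $|L^p\partial^\kappa\Omega^j\phi(1,x)|\lesssim\delta^{2-|\kappa|-\varepsilon_0}$ at $t=1$ for $p\in\{0,1\}$, which encodes both the short pulse scaling (each Cartesian $\partial$ acts as $\delta^{-1}\partial_s$ on the profile $\phi_0$, while $\Omega$ is tangential) and the outgoing constraint. Higher powers $L^m$ are handled inductively by using the equation with $g^{00}\equiv-1$ to express $L\underline L\phi$ in terms of $r^{-1}\partial_r\phi$, $r^{-2}\partial_\theta^2\phi$ and nonlinear combinations of first-order derivatives, which allows one to trade $\underline L$-derivatives for $L$-derivatives and tangential derivatives. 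A Gronwall estimate on $[1,t_0]$ together with Sobolev embedding on $\Sigma_t$ propagates these pointwise bounds to $t=t_0$; because the interval has length only $2\delta$, all Gronwall factors are $e^{O(1)}=O(1)$.

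For the post-pulse estimates in the interior region $r\in[1-3\delta,1+\delta]$ at $t=t_0$, observe that the outgoing portion of the wave (propagating with approximate speed $1$ from $r\in(1-\delta,1)$) has moved to a region near $r\in[1+\delta,1+2\delta]$, leaving only the small ingoing component and the two-dimensional dispersive tail in the interior. Writing the leading-order solution as $\phi\approx\delta^{2-\varepsilon_0}\bigl[F((r-t)/\delta,\omega)+G((r+t-2)/\delta,\omega)\bigr]$, the outgoing constraint $L\phi|_{t=1}=O(\delta^{2-\varepsilon_0})$ forces $G'=O(\delta)$, hence $G=O(\delta)$ by integration across the bounded support, so $\phi(t_0,x)=\delta^{2-\varepsilon_0}G+O(\text{NL})=O(\delta^{3-\varepsilon_0})$ in the post-pulse region; applying $\partial^\kappa\Omega^p$ (each Cartesian $\partial$ costing $\delta^{-1}$) then gives (ii). The $\underline L^n$ bound in (i) follows because $\underline L$ picks up at most one extra $\delta^{-1}$ when falling on the ingoing pulse in the sub-annulus $r\in[1-3\delta,1-2\delta]$, yielding $\delta^{2-|\kappa|-\varepsilon_0}$; estimate (iii), on the slightly smaller region $r\in[1-2\delta,1+\delta]$ that avoids this sub-annulus, contains only the dispersive tail, and the commuting vector fields $L,\underline L,\Omega$ do not generate extra $\delta^{-1}$ factors, so the bound $\delta^{2-\varepsilon_0}$ is immediate.

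The main technical obstacle is the careful algebra of tracking the short pulse scaling through every commutator of $L,\underline L,\Omega,\partial$ with \eqref{quasi}, since $(g^{\alpha\beta}(\partial\phi))$ depends on $\partial\phi$ and $\underline L$ or Cartesian $\partial$ applied to the pulse generates the ``bad'' factor $\delta^{-1}$. Because the time interval has length only $2\delta$, this can be carried out by a direct bootstrap, and the higher-order null structure \eqref{null} is not needed here; the latter plays its essential role only in the global analysis on $A_{2\delta}$ and $B_{2\delta}$ that occupies the rest of the paper.
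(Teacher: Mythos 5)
Your overall skeleton --- classical local well-posedness on the short interval $[1,t_0]$, initial pointwise bounds from \eqref{Y-0-a}, trading $\underline L$ for $L$ via the equation rewritten as \eqref{me}, and integrating along characteristics over the $O(\delta)$ pulse width --- follows the same route as the paper's Appendix~B. However, there is one concrete and important gap: you assert that ``the higher-order null structure \eqref{null} is not needed here,'' but the paper uses \eqref{null} (via the decomposition \eqref{YHC-2}) precisely to close the higher-$L^m$ estimates on the full range $\ve_0<\ve_k^*$. Without the null structure, the quasilinear source term in $L\underline L\phi$ is of pointwise size $O(\delta^{k(1-\ve_0)-\ve_0})$, and integrating it along $\underline L$-characteristics over the pulse width $O(\delta)$ contributes $O\big(\delta^{(k+1)(1-\ve_0)}\big)$ to $L^2\bar\p^\iota\O^p\phi$. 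This is larger than the desired $\delta^{2-\ve_0}$ exactly when $(k+1)(1-\ve_0)<2-\ve_0$, i.e.\ $\ve_0>\frac{k-1}{k}$. Since $\frac{k-1}{k}<\ve_k^*=\frac{k}{k+1}$, the naive bootstrap breaks down on the entire sub-range $\ve_0\in\big(\frac{k-1}{k},\frac{k}{k+1}\big)$. The paper circumvents this by rewriting the source with \eqref{null} as in \eqref{YHC-2}, so every monomial carries one ``good'' factor $L\phi$ or $r^{-1}\Omega\phi$ (of size $\delta^{2-\ve_0}$ instead of $\delta^{1-\ve_0}$); this gains exactly the extra power of $\delta$ needed, and is therefore indispensable even at the level of the local theorem.

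Secondly, your derivation of the post-pulse bound (ii) via a leading-order d'Alembert ansatz $\phi\approx\delta^{2-\ve_0}[F+G]$ with $G'=O(\delta)$ captures the intuition but is heuristic: it linearizes, ignores dispersion in $2$D, and requires an error analysis that is not supplied. The paper's argument is shorter and rigorous: a point $(t_0,x)$ with $r\in[1-3\delta,1+\delta]$ is joined to a point $(1,x_0)$ with $|x_0|\le 1-\delta$ (outside the data support, where $\phi$ and all its derivatives vanish) by an $L$-integral curve of length $O(\delta)$; integrating the already-established bound $|L\O^p\phi|\lesssim\delta^{2-\ve_0}$ from \eqref{Lle} along that curve yields $|\O^p\phi(t_0,x)|\lesssim\delta\cdot\delta^{2-\ve_0}=\delta^{3-\ve_0}$ directly, and \eqref{local3-2} then follows by expressing $\p_t,\p_i$ in the $\{L,\underline L,\Omega\}$ frame. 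You should replace the asymptotic ansatz by this characteristic integration, and you must reinstate the null-condition step before the higher-$L^m$ induction to reach the full critical range of $\ve_0$.
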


It should be pointed out that the special null condition structures of 2D Chaplygin gases are extensively
applied in \cite{Ding3} and simplify many identities compared to the general null conditions in the paper.
Therefore, although the remainder of this subsection is analogous to Section 3 in \cite{Ding3},
the differential equation of $\mu$ and  the last term on the right hand side of
\eqref{lmu}  require  to be analyzed more delicately.

To show the global existence of solution $\phi$ to \eqref{quasi} with \eqref{id} near $C_0$,
we will utilize the bootstrap argument and make the following bootstrap assumptions in $D^{s,u}$:
\begin{equation*}
\begin{split}
&\delta^l\|\mathring LZ^\jmath\varphi_\gamma\|_{L^\infty(\Sigma_s^{u})}+\delta^l\|\slashed dZ^\jmath\varphi_\gamma\|_{L^\infty(\Sigma_s^{u})}
\leq  M\delta^{1-\varepsilon_0} s^{-3/2},\\
&\delta^l\|\mathring{\underline L}Z^\jmath\varphi_\gamma\|_{L^\infty(\Sigma_s^{u})}
+\delta^{l-1}\|Z^\jmath\varphi_\gamma\|_{L^\infty(\Sigma_s^{u})}\leq  M\delta^{-\varepsilon_0} s^{-1/2},\\
&\dl\|\slashed\nabla^2\varphi_\gamma\|_{L^\infty(\Sigma_s^{u})}+
\|\slashed\nabla^2\phi\|_{L^\infty(\Sigma_s^{u})}\leq  M\delta^{2-\varepsilon_0} s^{-5/2},\\
&\delta^l\|\mathring LZ^\jmath\phi\|_{L^\infty(\Sigma_s^{u})}+\delta^l\|\slashed dZ^\jmath\phi\|_{L^\infty(\Sigma_s^{u})}\leq  M\delta^{2-\varepsilon_0} s^{-3/2},\\
&\delta^l\|Z^\jmath\phi\|_{L^\infty(\Sigma_s^{u})}\leq M\delta^{2-\varepsilon_0}s^{-1/2},\\
\end{split}\tag{$\star$}
\end{equation*}
where $|\jmath|\leq N$, $N$ is a fixed large positive integer, $M$ is some positive constant to be suitably chosen
(at least double bounds of the corresponding quantities on time $t_0$ in Theorem \ref{Th2.1}),
$Z\in\{\varrho\mathring L,T,R\}$, and $l$ is the number of $T$ included in $Z^\jmath$.

One can now derive a rough estimate of $\mu$ under the assumptions ($\star$) as follows. By
$$1=g_{ij}\tilde T^i\tilde T^j=\big(1+O(M^k\delta^{k(1-\varepsilon_0)} s^{-k/2})\big)\ds\sum_{i=1}^2|\tilde T^i|^2,$$
then
\begin{equation}\label{L}
|\tilde T^i|, |\mathring L^i|\leq 1+O( M^k\delta^{k(1-\varepsilon_0)} s^{-k/2}).
\end{equation}
Moreover, by $|\slashed dx^i|^2=\slashed g^{ab}\slashed d_ax^i\slashed d_bx^i=g^{ii}+(g^{0i})^2-(\tilde T^i)^2$,
it follows from $(\star)$ and \eqref{L} that
\begin{equation}\label{dx}
|\slashed dx^i|\lesssim 1.
\end{equation}
In addition, $|\mathring L(\varrho\check L^i)|\lesssim M^k\delta^{k(1-\varepsilon_0)}s^{-k/2}$ holds
by \eqref{LeL} and $(\star)$, which further gives that through integrating $\mathring L(\varrho\check L^i)$
along integral curves of $\mathring L$ and
using $\check T^i=-g^{0i}-\check L^i$,
 \begin{equation}\label{chL}
 |\check L^i|+|\check T^i|\lesssim M^k\delta^{k(1-\varepsilon_0)}s^{-1}\ln s.
 \end{equation}
Due to $\ds g_{ij}(\check T^i-\f{x^i}{\varrho})(\check T^j-\f{x^j}{\varrho})=1$, then
 \[
 (g_{ij}\o^i\o^j)\f{r^2}{\varrho^2}-(2g_{ij}\check T^i\o^j)\f r\varrho+g_{ij}\check T^i\check T^j-1=0.
 \]
 Thus
 \begin{equation}\label{rrho}
 \check\varrho:=\f{r}{\varrho}-1=\f{1-g_{ij}\o^i\o^j-g_{ij}\check T^i\check T^j+2g_{ij}\check T^i\o^j}{\sqrt{g_{ij}\o^i\o^j-(g_{ij}\o^i\o^j)(g_{ab}\check T^a\check T^b)+(g_{ij}\check T^i\o^j)^2}
 +g_{ij}\o^i\o^j-g_{ij}\check T^i\o^j}.
 \end{equation}
This yields
 \begin{equation}\label{cr}
 |\check\varrho|\lesssim M^k\delta^{k(1-\varepsilon_0)}s^{-1}\ln s,
 \end{equation}
 and hence  by the definition of $\upsilon$ in \eqref{omega},
 \begin{equation}\label{Ri}
 |\upsilon|\lesssim M^k\delta^{k(1-\varepsilon_0)}\ln s.
 \end{equation}

To estimate $\mu$, one needs to handle the last term in \eqref{lmu}.
It follows from \eqref{pal} and \eqref{ggg} that
\begin{equation}\label{GT}
\begin{split}
G_{\mathring L\mathring L}^\gamma T\varphi_\gamma=&G_{\mathring L\mathring L}^0T^i(\mathring L\varphi_i)
-G^\gamma_{\mathring L\mathring L}\mathring L_\gamma\tilde T^i T\varphi_i+\mu g_{\gamma i}G_{\mathring L\mathring L}^\gamma (\slashed d^Xx^i)\tilde T^j\slashed d_X\varphi_j\\
=&G_{\mathring L\mathring L}^0T^i(\mathring L\varphi_i)
+\underline{(\p_{\varphi_\gamma}g^{\al\beta})\mathring L_\al\mathring L_\beta\mathring L_\gamma}\tilde T^i T\varphi_i
+\mu g_{\gamma i}G_{\mathring L\mathring L}^\gamma (\slashed d^Xx^i)\tilde T^j\slashed d_X\varphi_j.
\end{split}
\end{equation}
The null condition \eqref{null} means that
the underline factor in \eqref{GT} can be written as
\begin{equation}\label{gLLL}
\begin{split}
&-G_{\mathring L\mathring L}^\g\mathring L_\g=(\p_{\varphi_\gamma}g^{\al\beta})\mathring L_\al\mathring L_\beta\mathring L_\gamma=kg^{\al\beta,\gamma\gamma_2\cdots\gamma_k}\varphi_{\gamma_2}\cdots\varphi_{\gamma_k}\mathring L_\al\mathring L_\beta\mathring L_\gamma+(\p_{\varphi_\gamma}h^{\al\beta})\mathring L_\al\mathring L_\beta\mathring L_\gamma\\
=&kg^{\al\beta,\gamma\gamma_2\cdots\gamma_k}\varphi_{\gamma_2}\cdots\varphi_{\gamma_k}m_{\al\al'}m_{\beta\beta'}m_{\gamma\gamma'}\f{\tilde x^{\al'}}\varrho\f{\tilde x^{\beta'}}\varrho\f{\tilde x^{\gamma'}}\varrho+f(\varphi, \check L^1, \check L^2,\f{x}\varrho)\left(
\begin{array}{ccc}
\varphi^k\\
\check L^{1}\varphi^{k-1}\\
\check L^{2}\varphi^{k-1}
\end{array}
\right)\\
=&kg^{\al\beta,\gamma\gamma_2\cdots\gamma_k}\varphi_{\gamma_2}\cdots\varphi_{\gamma_k}\o_\al\o_\beta\o_\gamma+f(\varphi, \check L^1, \check L^2,\f{x}\varrho,\check\varrho)\left(
\begin{array}{ccc}
\varphi^k\\
\check L^{1}\varphi^{k-1}\\
\check L^{2}\varphi^{k-1}\\
\check\varrho\varphi^{k-1}
\end{array}
\right)\\
=&f(\varphi, \check L^1,\check L^2,\f{x}\varrho,\check\varrho)\left(
\begin{array}{ccc}
\varphi^k\\
\check L^{1}\varphi^{k-1}\\
\check L^{2}\varphi^{k-1}\\
\check\varrho\varphi^{k-1}\\
(\mathring L\phi)\varphi^{k-2}\\
(\slashed d^Xx)(\slashed d_X\phi)\varphi^{k-2}
\end{array}
\right),
\end{split}
\end{equation}
where $\f{x^i}\varrho=\check\varrho\o_i+\o_i$, $\varphi_{\gamma_j}=\delta_{\gamma_j}^0\mathring L\phi-\mathring L_{\gamma_j}\tilde T^a\varphi_a+g_{\gamma_ja}\slashed d^Xx^a\slashed d_X\phi$, \eqref{pal} and \eqref{null} are used,
$(\tilde x^0, \tilde x^1, \tilde x^2)=(\varrho, x^1, x^2)$
and $(\o_0, \o_1, \o_2)=(-1, \o^1, \o^2)$. Therefore,
$|(\p_{\varphi_\gamma}g^{\al\beta})\mathring L_\al\mathring L_\beta\mathring L_\gamma|\lesssim M^k\delta^{k(1-\varepsilon_0)}s^{-k/2}$
holds by \eqref{L}-\eqref{chL}, \eqref{cr} and $(\star)$.
Subsequently, it follows from the expression \eqref{GT} that
\begin{equation}\label{GTe}
|G_{\mathring L\mathring L}^\gamma T\varphi_\gamma|\lesssim \mu M^k\delta^{k(1-\varepsilon_0)}s^{-(k+2)/2}+M^{k+1}\delta^{k-(k+1)\varepsilon_0}s^{-(k+1)/2}.
\end{equation}
Together with $(\star)$ and \eqref{lmu},  it is easy to
obtain $|\mathring L\mu|\lesssim M^k\delta^{k(1-\varepsilon_0)}s^{-(k+2)/2}\mu+M^{k+1}\delta^{k-(k+1)\varepsilon_0}s^{-(k+1)/2}$. For $\delta>0$  suitably small, by
integrating $\mathring L\mu$ along integral curves of $\mathring L$ and noting $\ds\mu=\f{1}{\sqrt{(g^{0i}\o_i)^2+g^{ij}\o_i\o_j}}$
$=1+O(\delta^{k(1-\varepsilon_0)})$ on $\Sigma_{t_0}$,
one directly has that due to $\ve_0<\ve_k^*$,
\begin{equation}\label{phimu}
\mu=1+O( M^{k+1}\delta^{k-(k+1)\varepsilon_0})
\end{equation}
which has a positive lower bound as long as $(\star)$ holds and $\dl$ is small enough.

To improve the assumptions ($\star$), one can rewrite equation \eqref{ge} in the new frame
$\{\mathring{\underline L}, \mathring L, X\}$ as
\begin{equation}\label{fequation}
\mathring L\mathring{\underline L}\varphi_\gamma+\f{1}{2\varrho}\mathring{\underline L}\varphi_\gamma
=\mu\slashed\triangle\varphi_\gamma+H_\gamma,
\end{equation}
here one has used the fact $\mu\Box_g\varphi_\g=-\mathscr D_{\mathring L\mathring {\underline L}}^2\varphi_\g
+\mu\slashed g^{XX}\mathscr D_{X}^2\varphi_\g=-\mathring L\mathring{\underline L}\varphi_\g-2\mu\zeta^X(\slashed d_X\varphi_\g)+\mu\slashed\triangle\varphi_\g-\mu(\textrm{tr}\sigma+\textrm{tr}\chi)\mathring L\varphi_\g
-\textrm{tr}\chi T\varphi_\g$
by \eqref{cdf} and \eqref{LuL}. In addition, by \eqref{zeta}-\eqref{theta},
\begin{equation}\label{H}
\begin{split}
H_\gamma=&-(\textrm{tr}\check\chi)T\varphi_\gamma+\f{1}{2\varrho}\mu\mathring L\varphi_\gamma+f_1(\varphi,\mathring L^1, \mathring L^2)\varphi^{k-1}\left(
\begin{array}{ccc}
\mathring L\varphi\\
(\slashed d_Xx)\slashed d^X\varphi\\
\end{array}
\right)T\varphi_\gamma\\
&+f_2(\varphi, \mathring L^1, \mathring L^2, \slashed dx, \slashed g)\varphi^{k-1}\left(
\begin{array}{ccc}
T\varphi\\
\mu\mathring L\varphi\\
\mu\slashed d\varphi\\
\end{array}
\right)
\left(
\begin{array}{ccc}
\mathring L\varphi_\gamma\\
\slashed d\varphi_\gamma\\
\end{array}
\right).
\end{split}
\end{equation}

As in \cite{Ding4, Ding3}, it is noted from the expression of $H_\g$ that if there are the terms including the factor $T\varphi_\al$
which admits the ``bad" smallness or slow time decay rate,
then there always appear another product factor equipped with the ``good"
smallness and fast time decay rate, e.g.
$\textrm{tr}\check\chi, \mathring L\varphi$ and so on.

Unless stated otherwise, from now on, in this subsection, the pointwise estimates for the related quantities
are all carried out inside domain $D^{s,u}$.

It follows from the expression \eqref{fequation} that the elaborate
estimate of $\mathring{\underline L}\varphi_\gamma$ can be achieved by integrating  \eqref{fequation}
along integral curves of $\mathring L$. To this end, one should estimate $\check{\chi}$ and
other terms in $H_{\g}$.

To estimate $\check\chi$, one needs the structure equation for $\chi$.
Analogously to \cite[Lemma 5.2]{Ding4}, one has

\begin{lemma}\label{LTchi}
	The second fundamental form $\chi$ and its ``error" form $\check\chi$, defined in \eqref{chith} and \eqref{errorv}
respectively, satisfy the following structure equations:
\begin{equation}\label{Lchi}
\begin{split}
\mathring L\chi_{XX}=&-G_{X\mathring L}^\gamma(\slashed d_X\mathring L\varphi_\gamma)+\f12G_{\mathring L\mathring L}^\gamma\slashed\nabla_{X}^2\varphi_\gamma+\f12G_{XX}^\gamma(\mathring L^2\varphi_{\gamma})
-\big\{\f12G_{\mathring L\mathring L}^\gamma+G_{\tilde T\mathring L}^\gamma\}\mathring L\varphi_\gamma\chi_{XX}\\
&+(G_{X\mathring L}^\gamma\slashed d_X\varphi_\gamma)\textrm{tr}\chi+(\textrm{tr}\chi)\chi_{XX}+f(\varphi, \slashed dx,\mathring L^1, \mathring L^2)\varphi^{k-2}\left(
\begin{array}{ccc}
\mathring L\varphi\\
\slashed d\varphi\\
\end{array}
\right)^2,
\end{split}
\end{equation}
\begin{equation}\label{Tchi}
\begin{split}
\slashed{\mathcal L}_T\chi_{XX}=&\slashed\nabla_{X}^2\mu-\f12\big\{\mu G_{\tilde T\tilde T}^\gamma\slashed\nabla_{X}^2\varphi_\gamma+2G_{X\mathring L}^\gamma\slashed d_XT\varphi_\gamma-G_{XX}^\gamma\mathring LT\varphi_\gamma\big\}-\mu(\textrm{tr}\chi)^2\slashed g_{XX}\\
&-\{G_{\tilde T\mathring L}^\gamma\slashed d_X\varphi_\gamma+G_{\tilde T\tilde T}^\gamma\slashed d_X\varphi_\gamma-G_{X\tilde T}^\gamma\mathring L\varphi_\gamma\}d_X\mu+\Upsilon\cdot\chi_{XX}\\
&+f_1(\varphi, \slashed dx, \mathring L^1, \mathring L^2)\varphi^{k-2}
\left(
\begin{array}{ccc}
\varphi\slashed d\mathring L^1\\
\varphi\slashed d\mathring L^2\\
\mathring L\varphi\\
\slashed d\varphi\\
\end{array}
\right)\left(
\begin{array}{ccc}
\mu\mathring L\varphi\\
\mu\slashed d\varphi\\
T{\varphi}\\
\end{array}
\right)
+f_2(\varphi, \mathring L^1, \mathring L^2)\varphi^{k-1}\slashed\nabla^2x\left(
\begin{array}{ccc}
\mu\mathring L\varphi\\
T{\varphi}\\
\end{array}
\right).
\end{split}
\end{equation}

And hence,
\begin{equation}\label{Lchi'}
\begin{split}
\mathring L\check\chi_{XX}=&-G_{X\mathring L}^\gamma(\slashed d_X\mathring L\varphi_\gamma)+\f12G_{\mathring L\mathring L}^\gamma\slashed\nabla_{X}^2\varphi_\gamma+\f12G_{XX}^\gamma(\mathring L^2\varphi_{\gamma})-\big\{\f12G_{\mathring L\mathring L}^\gamma+G_{\tilde T\mathring L}^\gamma\}\mathring L\varphi_\gamma\check\chi_{XX}\\
&-\f{\slashed g_{XX}}\varrho\big\{\f12G_{\mathring L\mathring L}^\gamma+G_{\tilde T\mathring L}^\gamma\}\mathring L\varphi_\gamma+(G_{X\mathring L}^\gamma\slashed d_X\varphi_\gamma)\textrm{tr}\check\chi+\f{1}{\varrho}(G_{X\mathring L}^\gamma\slashed d_X\varphi_\gamma)+(\textrm{tr}\check\chi)\check\chi_{XX}\\
&+f(\varphi, \slashed dx,\mathring L^1,\mathring L^2)\varphi^{k-2}\left(
\begin{array}{ccc}
\mathring L\varphi\\
\slashed d\varphi\\
\end{array}
\right)^2,
\end{split}
\end{equation}
\begin{equation}\label{Tchi'}
\begin{split}
\slashed{\mathcal L}_T\check{\chi}_{XX}=&\slashed\nabla_{X}^2\mu-\f12\big\{\mu G_{\tilde T\tilde T}^\gamma\slashed\nabla_{X}^2\varphi_\gamma+2G_{X\mathring L}^\gamma\slashed d_XT\varphi_\gamma-G_{XX}^\gamma\mathring LT\varphi_\gamma\big\}-\mu(\textrm{tr}\check\chi)^2\slashed g_{XX}\\
&-\{G_{\tilde T\mathring L}^\gamma\slashed d_X\varphi_\gamma+G_{\tilde T\tilde T}^\gamma\slashed d_X\varphi_\gamma-G_{X\tilde T}^\gamma\mathring L\varphi_\gamma\}d_X\mu+\Upsilon\cdot\check\chi_{XX}+\f{1}{2\varrho}\Theta\cdot\slashed g_{XX}\\
&+\f{\mu-1}{\varrho^2}\slashed g_{XX}+f_1(\varphi, \slashed dx, \mathring L^1,\mathring L^2)\varphi^{k-2}
\left(
\begin{array}{ccc}
\varphi\slashed d\mathring L^1\\
\varphi\slashed d\mathring L^2\\
\mathring L\varphi\\
\slashed d\varphi\\
\end{array}
\right)\left(
\begin{array}{ccc}
\mu\mathring L\varphi\\
\mu\slashed d\varphi\\
T{\varphi}\\
\end{array}
\right)\\
&
+f_2(\varphi, \mathring L^1,\mathring L^2)\varphi^{k-1}\slashed\nabla^2x\left(
\begin{array}{ccc}
\mu\mathring L\varphi\\
T{\varphi}\\
\end{array}
\right),
\end{split}
\end{equation}
where
\begin{align}
\Upsilon=&\f12\mu G_{\mathring L\mathring L}^\gamma\mathring L\varphi_\gamma+\mu G_{\tilde T\mathring L}^\gamma\mathring L\varphi_\gamma-\f12 G_{\mathring L\mathring L}^\gamma T\varphi_\gamma-\f32\mu G_{X\mathring L}^\gamma\slashed d^X\varphi_\gamma-\f12\mu G_{X\tilde T}^\gamma\slashed d^X\varphi_\gamma\no\\
&+\f12\slashed g^{XX}G_{XX}^\gamma T\varphi_\gamma-\f12G_{\mathring L\tilde T}^\gamma T\varphi_\gamma+\f12\mu\slashed g^{XX}G_{XX}^\gamma \mathring L\varphi_\gamma,\label{Upsion}\\
\Theta=&\mu G_{\mathring L\mathring L}^\gamma\mathring L\varphi_\gamma+2\mu G_{\tilde T\mathring L}^\gamma\mathring L\varphi_\gamma-G_{\mathring L\mathring L}^\gamma T\varphi_\gamma+\mu G_{X\mathring L}^\gamma\slashed d^X\varphi_\gamma+3G_{X\tilde T}^\gamma\slashed d^X\varphi_\gamma\no\\
&-\mu\slashed g^{XX}G_{XX}^\gamma\mathring L\varphi_\gamma-\slashed g^{XX}G_{XX}^\gamma T\varphi_\gamma-G_{\mathring L\tilde T}^\gamma T\varphi_\gamma,\label{Theta}
\end{align}
and $f, f_1, f_2$ are the generic smooth functions of their arguments.
\end{lemma}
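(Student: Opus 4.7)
The plan is to derive the structure equations via the standard null-frame formalism used in Christodoulou's shock formation work, following the same scheme as Lemma 5.2 of \cite{Ding4} but being careful about the $k$-th order null condition structure of $g^{\alpha\beta}(\partial\phi)$. For \eqref{Lchi}, I start from the definition $\chi_{XX}=g(\mathscr D_X\mathring L,X)$ and apply $\mathring L$, using metric compatibility to get
\[
\mathring L\chi_{XX}=g(\mathscr D_{\mathring L}\mathscr D_X\mathring L,X)+g(\mathscr D_X\mathring L,\mathscr D_{\mathring L}X).
\]
Since $\mathring L=\partial_s$ and $X=\partial_\vartheta$ commute in the coordinates $(s,u,\vartheta)$, I have $[\mathring L,X]=0$ and therefore $\mathscr D_{\mathring L}X=\mathscr D_X\mathring L$, which turns the second term into a Riccati contribution yielding $(\operatorname{tr}\chi)\chi_{XX}$ via \eqref{cdf}. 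For the first term I use the curvature identity $\mathscr D_{\mathring L}\mathscr D_X=\mathscr D_X\mathscr D_{\mathring L}+R(\mathring L,X)$, together with $\mathscr D_{\mathring L}\mathring L=(\mu^{-1}\mathring L\mu)\mathring L$ from \eqref{cdf}; the $R(\mathring L,X,\mathring L,X)$ component is then expressed through the Hessians of $\varphi_\gamma$ by writing $\Gamma_{\alpha\beta}^\lambda$ as $G^\gamma\partial\varphi$ and using \eqref{gab}--\eqref{pal}.

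The Hessians are then re-organized in the frame $\{\mathring L,T,X\}$ to produce the terms $-G^\gamma_{X\mathring L}\slashed d_X\mathring L\varphi_\gamma$, $\tfrac12 G^\gamma_{\mathring L\mathring L}\slashed\nabla_X^2\varphi_\gamma$, $\tfrac12 G^\gamma_{XX}\mathring L^2\varphi_\gamma$ appearing in \eqref{Lchi}. The remaining ``curvature'' contributions factor through $G^\gamma\cdot\partial G^\gamma$, which by \eqref{g} scales like $\varphi^{k-2}(\partial\varphi)^2$, and these are absorbed into the final generic term $f(\varphi,\slashed dx,\mathring L^1,\mathring L^2)\varphi^{k-2}\bigl(\mathring L\varphi,\slashed d\varphi\bigr)^2$. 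I expect the most delicate bookkeeping step to be the identification of the coefficient $\{\tfrac12 G^\gamma_{\mathring L\mathring L}+G^\gamma_{\tilde T\mathring L}\}$ of $\mathring L\varphi_\gamma\cdot\chi_{XX}$: this must be tracked carefully through the interchange between $\mathscr D_X\mathring L\varphi_\gamma$ and $\slashed d_X\mathring L\varphi_\gamma$ and the expression \eqref{zeta} for $\zeta_X$, since a miscount would destroy the cancellation needed later for the estimate of $\slashed d Z^m\operatorname{tr}\chi$.

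For \eqref{Tchi}, the strategy is parallel but uses the Lie derivative $\slashed{\mathcal L}_T$ and the commutator $[T,X]$, which is no longer zero due to the presence of $\Xi$ in $T=\partial_u-\Xi$, contributing lower-order tangential corrections. I apply the Leibniz rule to $\chi_{XX}=g(\mathscr D_X\mathring L,X)$, use $\mathscr D_T\mathring L=-\mathring L\mu\cdot\mathring L+\eta^XX$ and the expression \eqref{zeta} of $\eta_X$ in terms of $G^\gamma$ contractions, and invoke the curvature identity together with the identity $\slashed\nabla_X^2\mu$ appearing via $X(\tilde Tt)$-type computations. The wave equation \eqref{ge} is used to trade $\mu G^\gamma_{XX}\mathring L T\varphi_\gamma$ and $2G^\gamma_{X\mathring L}\slashed d_X T\varphi_\gamma$ into the displayed form; the $-\mu(\operatorname{tr}\chi)^2\slashed g_{XX}$ term arises from contracting $\mathscr D_T X=\mathscr D_X T$ with $\mathscr D_X\mathring L=\operatorname{tr}\chi\cdot X-\zeta_X\mathring L$ via \eqref{cdf}. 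The $\Upsilon\cdot\chi_{XX}$ piece collects all first-order error terms, and is most conveniently obtained by comparing to the expressions \eqref{zeta}--\eqref{theta}.

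Finally, the equations \eqref{Lchi'} and \eqref{Tchi'} for $\check\chi_{XX}=\chi_{XX}-\varrho^{-1}\slashed g_{XX}$ follow by subtracting the flat counterparts: since $\mathring L\varrho=1-\mathring L u=1$ (as $\mathring L u=0$ by \eqref{H0-3}--\eqref{H0-5}) and $T\varrho=-Tu=-1$, together with $\mathring L\slashed g_{XX}=2\chi_{XX}$ and the analogous Lie-derivative identity $\slashed{\mathcal L}_T\slashed g_{XX}=2\mu\sigma_{XX}+2\mu\chi_{XX}+\text{l.o.t.}$, the correction $\mathring L(\slashed g_{XX}/\varrho)$ produces the new $\slashed g_{XX}/\varrho^2$ denominators, the $(\mu-1)/\varrho^2$ term in \eqref{Tchi'}, and the $\tfrac{1}{2\varrho}\Theta\slashed g_{XX}$ contribution, while the inhomogeneous $\tfrac{1}{\varrho}(G^\gamma_{X\mathring L}\slashed d_X\varphi_\gamma)$ term in \eqref{Lchi'} is the subtracted version of $(G^\gamma_{X\mathring L}\slashed d_X\varphi_\gamma)\operatorname{tr}\chi$. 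The main obstacle throughout is purely algebraic: ensuring that all schematic $\varphi^{k-2}(\partial\varphi)^2$ remainders left over from the $k$-th order null condition in \eqref{g} really do fit into the stated functional form, so that the subsequent energy estimates (where the $\varphi^{k-2}$ factor supplies the decisive smallness in $\delta$) go through.
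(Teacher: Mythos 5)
Your approach --- applying $\mathring L$ (resp.\ $\slashed{\mathcal L}_T$) to $\chi_{XX}=g(\mathscr D_X\mathring L,X)$, using $[\mathring L,X]=0$, the covariant-derivative table \eqref{cdf}, and the expression of the curvature through $G^\gamma\mathscr D^2\varphi_\gamma$ plus lower-order $\Gamma\Gamma$ pieces --- is the standard null-frame computation and is what \cite[Lemma 5.2]{Ding4} (the paper's cited source for this lemma) does, so the overall plan is right. However, as written your sketch would not produce the precise error term of \eqref{Lchi}. You claim the coefficient $-\{\tfrac12 G^\gamma_{\mathring L\mathring L}+G^\gamma_{\tilde T\mathring L}\}$ of $\mathring L\varphi_\gamma\,\chi_{XX}$ comes from $(\mu^{-1}\mathring L\mu)\chi_{XX}$; but \eqref{lmu} shows that this quantity also contributes $+\tfrac{1}{2\mu}G^\gamma_{\mathring L\mathring L}(T\varphi_\gamma)\chi_{XX}$, a ``bad'' term which is not of the stated form $\varphi^{k-2}(\mathring L\varphi,\slashed d\varphi)^2$ --- it is of order $\delta^{\,k-1-k\varepsilon_0}$, one $\delta$-power worse than the displayed error, which is fatal for $\varepsilon_0$ near $\varepsilon_k^*$. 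The reason this does not appear in \eqref{Lchi} is a cancellation your sketch omits: decomposing the leading curvature contribution $\tfrac12 G^\gamma_{\mathring L\mathring L}\mathscr D^2_{XX}\varphi_\gamma$ through $\mathscr D_X X=\slashed\nabla_XX+(\sigma_{XX}+\chi_{XX})\mathring L+\mu^{-1}\chi_{XX}T$ from \eqref{cdf} gives $\tfrac12 G^\gamma_{\mathring L\mathring L}\slashed\nabla^2_X\varphi_\gamma-\tfrac12 G^\gamma_{\mathring L\mathring L}(\sigma_{XX}+\chi_{XX})\mathring L\varphi_\gamma-\tfrac{1}{2\mu}G^\gamma_{\mathring L\mathring L}\chi_{XX}T\varphi_\gamma$, and the last piece exactly cancels the $T\varphi_\gamma$ term coming from $\mu^{-1}\mathring L\mu$. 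You need to exhibit this explicitly; the blanket statement that ``the remaining curvature contributions $\ldots$ scale like $\varphi^{k-2}(\partial\varphi)^2$'' conflates $\mathring L\varphi,\slashed d\varphi$ with $T\varphi$, and without tracking the $T\varphi$ cancellation the later bootstrap would not close.

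A second, more minor point concerns the passage to \eqref{Tchi'}: you wrote $\slashed{\mathcal L}_T\slashed g_{XX}=2\mu\sigma_{XX}+2\mu\chi_{XX}+\text{l.o.t.}$, but from \eqref{Lpi} one has $\slashed{\mathcal L}_T\slashed g_{XX}=\leftidx{^{(T)}}{\slashed\pi}_{XX}=2\mu\sigma_{XX}$, and \eqref{theta} gives $\sigma_{XX}=-\chi_{XX}+\text{l.o.t.}$, so in fact $\slashed{\mathcal L}_T\slashed g_{XX}=-2\mu\chi_{XX}+\text{l.o.t.}$ with a minus sign. This sign is precisely what produces the coefficient $\tfrac{\mu-1}{\varrho^2}$ in \eqref{Tchi'}: the $-\mu(\text{tr}\chi)^2\slashed g_{XX}$ term of \eqref{Tchi} supplies $-\mu\varrho^{-2}\slashed g_{XX}$, subtracting $\slashed{\mathcal L}_T(\varrho^{-1}\slashed g_{XX})=\varrho^{-2}\slashed g_{XX}+2\varrho^{-1}\mu\sigma_{XX}$ supplies $-\varrho^{-2}\slashed g_{XX}$, and the compensating $+2\mu\varrho^{-2}\slashed g_{XX}$ needed to land on $(\mu-1)\varrho^{-2}\slashed g_{XX}$ comes from $-2\varrho^{-1}\mu\sigma_{XX}=+2\varrho^{-1}\mu\chi_{XX}+\text{l.o.t.}$; with your $+$ sign this bookkeeping does not reproduce \eqref{Tchi'}.
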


Based on \eqref{Lchi'}, $\check\chi$ can be estimated by integrating along integral curves of $\mathring L$.

\begin{proposition}\label{chi'}
	Under the assumptions $(\star)$, when $\delta>0$ is small, it holds that
	\begin{equation}\label{echi'}
	|\check{\chi}|=|\textrm{tr}\check\chi|\lesssim M^k\delta^{k(1-\varepsilon_0)} s^{-2}\ln s.
	\end{equation}
	Meanwhile,
	\begin{equation}\label{chi}
	|\chi|=\f{1}{\varrho}+O(M^k\delta^{k(1-\varepsilon_0)} s^{-2}\ln s).
	\end{equation}
\end{proposition}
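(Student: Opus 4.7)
The plan is to derive the estimate for $\check\chi$ by integrating the transport equation \eqref{Lchi'} along the integral curves of $\mathring L$, which in the new coordinates $(s,u,\vartheta)$ are simply the lines of constant $(u,\vartheta)$. Since $S_{s,u}$ is one-dimensional, the pointwise norm $|\check\chi|$ coincides with $|\textrm{tr}\check\chi|=|\slashed g^{XX}\check\chi_{XX}|$, so it is natural to work with the scalar $\textrm{tr}\check\chi$; its evolution along $\mathring L$ is obtained from \eqref{Lchi'} together with $\mathring L\slashed g^{XX}=-2\slashed g^{XX}\slashed g^{XX}\chi_{XX}$.

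First I would estimate each of the source terms on the right-hand side of \eqref{Lchi'} pointwise, using the bootstrap assumption $(\star)$ together with the derived bounds \eqref{L}, \eqref{chL}, \eqref{cr}, \eqref{Ri} and \eqref{phimu}. Since by \eqref{g} the coefficient $G^\gamma_{\al\beta}=\p_{\varphi_\gamma}g_{\al\beta}$ is $O(\varphi^{k-1})$, every product $G^\gamma_{\al\beta}\cdot(\mathring L\varphi_\gamma,\,\slashed d\varphi_\gamma,\,\slashed\nabla^2\varphi_\gamma,\,\ldots)$ automatically carries at least the factor $M^k\delta^{k(1-\varepsilon_0)}\tau^{-(k+2)/2}$ after using $(\star)$. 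The genuinely delicate contribution comes from the inhomogeneous term $-\frac{\slashed g_{XX}}{\varrho}\{\frac12 G^\gamma_{\mathring L\mathring L}+G^\gamma_{\tilde T\mathring L}\}\mathring L\varphi_\gamma$ and from $\frac1\varrho(G^\gamma_{X\mathring L}\slashed d_X\varphi_\gamma)$, because without further structure the factor $\varphi^{k-1}$ would only generate $\delta^{(k-1)(1-\varepsilon_0)}$ after one factor of $\mathring L\varphi$ or $\slashed d\varphi$ is paired. Here the $k$-th null condition \eqref{null} enters decisively, exactly as in the computation \eqref{gLLL}: it forces $(\p_{\varphi_\gamma}g^{\al\beta})\mathring L_\al\mathring L_\beta\mathring L_\gamma$ and its relatives to be rewritten in terms of $\check L^i,\check\varrho,\mathring L\phi,\slashed d\phi$, each of which is small by \eqref{chL}, \eqref{cr} and $(\star)$. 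This upgrades the smallness of the worst coefficient to the required $M^k\delta^{k(1-\varepsilon_0)}\tau^{-k/2}$.

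Next I would assemble these estimates into a scalar ODE of the form
\begin{equation*}
\bigl|\mathring L\,\textrm{tr}\check\chi\bigr|\;\lesssim\;a(s)\,|\textrm{tr}\check\chi|+b(s),
\end{equation*}
where $a(s)$ gathers the homogeneous coefficients $\{\frac12 G^\gamma_{\mathring L\mathring L}+G^\gamma_{\tilde T\mathring L}\}\mathring L\varphi_\gamma$ and $\textrm{tr}\check\chi$ itself (which are $L^1_s$-integrable by $(\star)$ and the bootstrap), and $b(s)$ collects all inhomogeneous contributions. Using the null-condition enhanced bounds, the leading term in $b(s)$ is of order $M^k\delta^{k(1-\varepsilon_0)}s^{-2}$, stemming from $\frac1\varrho(G^\gamma_{X\mathring L}\slashed d_X\varphi_\gamma)$ after dividing by $\slashed g_{XX}\sim s^2$; integrating $s^{-1}$ from $t_0$ is precisely what produces the $\ln s$ factor in \eqref{echi'}. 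Combining with the initial bound $|\textrm{tr}\check\chi|(t_0,\cdot)\lesssim\delta^{k(1-\varepsilon_0)}$ from Theorem \ref{Th2.1} and applying Gronwall's inequality (the homogeneous coefficient $a(s)$ being $\int^\infty a<\infty$) yields the desired bound $|\textrm{tr}\check\chi|\lesssim M^k\delta^{k(1-\varepsilon_0)}s^{-2}\ln s$, which is exactly \eqref{echi'}. Then \eqref{chi} is immediate from $\chi=\check\chi+\frac1\varrho\slashed g$ and $|\slashed g/\varrho|=1/\varrho$.

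The principal obstacle is the accurate exploitation of the higher-order null condition in the inhomogeneous terms of \eqref{Lchi'}: without the structural rewriting modeled on \eqref{gLLL}, the forcing $\frac{\slashed g_{XX}}{\varrho}G^\gamma_{\mathring L\mathring L}\mathring L\varphi_\gamma$ would integrate to $\delta^{(k-1)(1-\varepsilon_0)}$-sized $\check\chi$, which would destroy the whole bootstrap argument when $\varepsilon_0$ approaches $\varepsilon_k^*$. A secondary technical point is verifying that the frame coefficients $\mathring L^i,\check L^i,\check T^i,\check\varrho$ appearing when $G^\gamma_{\mathring L\mathring L}\mathring L_\gamma$ is expanded as in \eqref{gLLL} contribute only small corrections, which is handled by \eqref{L}--\eqref{cr}; the same care is needed for the second-derivative terms $\slashed\nabla^2_X\varphi_\gamma$ and $\slashed d_X\mathring L\varphi_\gamma$, whose $L^\infty$ bounds follow from the third and first lines of $(\star)$.
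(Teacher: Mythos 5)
Your high‑level plan — integrate the transport equation \eqref{Lchi'} along $\mathring L$ and run a Gronwall argument — is the same as the paper's, but the central technical step is missing and a quantitative claim is wrong, so the proof as written does not close.

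\textbf{The renormalization by $\varrho^2$ is essential and you omit it.} Taking the trace of \eqref{Lchi'} yields (cf.\ \eqref{Ltrhchi})
\begin{equation*}
\mathring L(\textrm{tr}\check\chi)=-2(\textrm{tr}\check\chi)^2-\frac{2}{\varrho}\,\textrm{tr}\check\chi+\slashed g^{XX}\mathring L\check\chi_{XX}.
\end{equation*}
You propose writing this as $|\mathring L\,\textrm{tr}\check\chi|\lesssim a(s)|\textrm{tr}\check\chi|+b(s)$ and asserting that $a(s)$ is $L^1_s$. But the linear coefficient $-\tfrac{2}{\varrho}\sim -\tfrac{2}{s}$ is \emph{not} $L^1_s$, and you never account for it. Applying Gronwall naively would multiply the solution by $e^{\int 2/\tau\,d\tau}\sim s^2$, destroying all decay. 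The paper resolves this by passing to $\varrho^2\textrm{tr}\check\chi$: since $\mathring L\varrho=1$, the $\varrho^2$ factor exactly cancels the $-\tfrac{2}{\varrho}$ term, leaving only the quadratic piece and the forcing, and this is precisely what produces the clean inequality $|\mathring L(\varrho^2\textrm{tr}\check\chi)|\lesssim M^k\delta^{k(1-\varepsilon_0)}s^{-k/2}+M^k\delta^{k(1-\varepsilon_0)}s^{-(k+2)/2}|\varrho^2\textrm{tr}\check\chi|+\varrho^{-2}|\varrho^2\textrm{tr}\check\chi|^2$. Without that renormalization (or an equivalent exploitation of the sign of the damping term), your ODE setup does not yield $s^{-2}\ln s$.

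\textbf{The forcing estimate $b(s)\sim M^k\delta^{k(1-\varepsilon_0)}s^{-2}$ is too weak.} After contracting with $\slashed g^{XX}$, every source term on the right of \eqref{Lchi'} — e.g.\ $\slashed g^{XX}G^\gamma_{X\mathring L}\slashed d_X\mathring L\varphi_\gamma$, $\slashed g^{XX}G^\gamma_{XX}\mathring L^2\varphi_\gamma$, $\tfrac{1}{\varrho}\slashed g^{XX}G^\gamma_{X\mathring L}\slashed d_X\varphi_\gamma$, $-\tfrac{1}{\varrho}\{\tfrac12G^\gamma_{\mathring L\mathring L}+G^\gamma_{\tilde T\mathring L}\}\mathring L\varphi_\gamma$ — scales like $M^k\delta^{k(1-\varepsilon_0)}s^{-(k+4)/2}$, since $|G^\gamma_{\al\beta}|\lesssim M^{k-1}\delta^{(k-1)(1-\varepsilon_0)}s^{-(k-1)/2}$ by $(\star)$ and each remaining derivative of $\varphi$ supplies at least $s^{-3/2}$ or $s^{-5/2}$. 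Multiplying by $\varrho^2$ gives the paper's $s^{-k/2}$; integrating gives $\ln s$ when $k=2$ (and bounded for $k>2$). Your $b(s)\sim s^{-2}$ is off by $s^{-k/2}$, and even were it correct, feeding $b\sim s^{-2}$ into the damped ODE would produce $|\textrm{tr}\check\chi|\sim s^{-1}$, not $s^{-2}\ln s$.

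\textbf{The null condition is not actually used here.} You emphasize \eqref{null} and \eqref{gLLL} as "decisive," but the paper's proof of this proposition uses only $(\star)$, \eqref{L} and \eqref{dx}. In \eqref{Lchi'} the coefficients $G^\gamma$ are \emph{always} paired with the good derivatives $\mathring L\varphi_\gamma$, $\slashed d\varphi_\gamma$, $\slashed\nabla^2\varphi_\gamma$, $\mathring L^2\varphi_\gamma$, $\slashed d\mathring L\varphi_\gamma$; no factor $T\varphi_\gamma$ appears, so $G^\gamma\sim\varphi^{k-1}$ already supplies the full smallness $M^{k-1}\delta^{(k-1)(1-\varepsilon_0)}$ without any algebraic cancellation. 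The null condition is genuinely needed for the estimate of $\mu$ (where \eqref{lmu} does contain the dangerous $T\varphi_\gamma$), which is where \eqref{gLLL} is put to work in the paper — but that is a separate argument.
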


\begin{proof}
	By $\textrm{tr}\check\chi=\slashed g^{XX}\check{\chi}_{XX}$, then
	\begin{equation}\label{Ltrhchi}
	\mathring L\big(\textrm{tr}\check\chi\big)=-2(\textrm{tr}\check\chi)^2-\f{2}{\varrho}\textrm{tr}\check\chi
	+\slashed g^{XX}\mathring L\check\chi_{XX}.
	\end{equation}
	Substituting \eqref{Lchi'} into \eqref{Ltrhchi}, and using assumptions $(\star)$, \eqref{L} and \eqref{dx} to estimate
	the right hand side of \eqref{Ltrhchi} except $\check\chi$ itself, one can get
	\[
	|\mathring L\big(\varrho^2\textrm{tr}\check\chi\big)|\lesssim
	M^k\delta^{k(1-\varepsilon_0)}s^{-k/2}+M^k\delta^{k(1-\varepsilon_0)}s^{-(k+2)/2}|\varrho^2\textrm{tr}\check\chi|
	+\varrho^{-2}|\varrho^2\textrm{tr}\check\chi|^2.
	\]
	Thus, for small $\delta>0$, one has
	\[
	|\check{\chi}|\lesssim M^k\delta^{k(1-\varepsilon_0)} s^{-2}\ln s.
	\]
\end{proof}

Additionally, it is noted that the operator $R$ is just only the scaling operator $r\slashed\nabla$ under
the assumptions $(\star)$ and the estimate \eqref{echi'}, which are similar to the ones in \cite[Lemma 12.22]{J}.

\begin{corollary}\label{12form}
 Under the assumptions $(\star)$, when $\delta>0$ is small,
\begin{enumerate}
	\item
  if $\xi$ is a 1-form on $S_{s, u}$, then
  \begin{align}
  &(\xi_a R^a)^2\sim r^2|\xi|^2\label{1-f},\\
  &|\slashed{\mathcal L}_{R}\xi|^2\sim r^2|\slashed\nabla\xi|^2+O(M^k\delta^{k(1-\varepsilon_0)}s^{-1}\ln s)|\xi|^2;\label{1f}
  \end{align}
\item
  if $\xi$ is a 2-form on $S_{s, u}$, then
  \begin{equation}\label{2-f}
  \begin{split}
  |\slashed{\mathcal L}_{R}\xi|^2\sim r^2|\slashed\nabla\xi|^2+O(M^k\delta^{k(1-\varepsilon_0)}s^{-1}\ln s)|\xi|^2
    \end{split}
  \end{equation}
  and
  \begin{equation}\label{2f}
  |\slashed\nabla^2\xi|\lesssim \varrho^{-2}|\slashed{\mathcal L}_R^{\leq 2}\xi|.
  \end{equation}
\end{enumerate}
\end{corollary}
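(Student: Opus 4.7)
The plan is to exploit that $S_{s,u}$ is one-dimensional, so every vector tangent to it is a scalar multiple of $X=\p/\p\vartheta$. Writing $R=\la X$ for a single scalar function $\la$, the corollary reduces to two quantitative facts: $\la^2\slashed g_{XX}\sim r^2$, and the tangential part of $\slashed\nabla R$ on $S_{s,u}$ is small. Then \eqref{1-f}--\eqref{2f} follow by direct componentwise computation in the $\vartheta$-coordinate.

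For the first fact I would use $R=\O-\upsilon\tilde T$ from \eqref{R} together with $\upsilon=g(\tilde T,\O)$ and the normalization $g_{ij}\tilde T^i\tilde T^j=1$ (employed just below \eqref{L}) to obtain $|R|_g^2=g(\O,\O)-\upsilon^2$. Expanding $g(\O,\O)=g_{ij}\O^i\O^j$ via \eqref{L}, \eqref{chL}, \eqref{cr} yields $g(\O,\O)=r^2+O(M^k\dl^{k(1-\ve_0)}s\ln s)$, while \eqref{Ri} gives $\upsilon^2=O(M^{2k}\dl^{2k(1-\ve_0)}(\ln s)^2)$. Hence $\la^2\slashed g_{XX}=|R|_g^2\sim r^2$ for small $\dl$. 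Since any 1-form $\xi$ tangent to $S_{s,u}$ is determined by the single component $\xi_X$ with $|\xi|^2=\slashed g^{XX}\xi_X^2$, this already yields $(\xi_aR^a)^2=\la^2\xi_X^2=(\la^2\slashed g_{XX})|\xi|^2\sim r^2|\xi|^2$, proving \eqref{1-f}.

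For \eqref{1f} and \eqref{2-f} I would invoke the identity $\slashed{\mathcal L}_R\xi=\slashed\nabla_R\xi+\xi\cdot\slashed\nabla R$ (and its analogue for $(0,2)$-tensors). With $R=\la X$, one has $\slashed\nabla_X R=(X(\la)+\la\slashed\G_{XX}^X)X$, where $\slashed\G_{XX}^X=\tfrac12\slashed g^{XX}X(\slashed g_{XX})$ is extracted from the identity $\chi_{XX}=\tfrac12\mathring L\slashed g_{XX}$ combined with Proposition \ref{chi'}, and $X(\la)$ is controlled by differentiating $\la^2\slashed g_{XX}=g(\O,\O)-\upsilon^2$ and using Lemma \ref{4.2}, \eqref{chL}, \eqref{cr}. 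The resulting per-unit error satisfies $\slashed g^{XX}(X(\la)+\la\slashed\G_{XX}^X)^2=O(M^k\dl^{k(1-\ve_0)}s^{-1}\ln s)$. A Cauchy--Schwarz absorption then yields the comparison stated in \eqref{1f}, after converting $\la^2\slashed g^{XX}((\slashed\nabla_X\xi)_X)^2$ into $r^2|\slashed\nabla\xi|^2$ via $\la^2\slashed g_{XX}\sim r^2$. The same computation applied componentwise gives \eqref{2-f} for $(0,2)$-tensors.

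Finally, \eqref{2f} follows by iteration: invert $X=\la^{-1}R$ and use $\la\sqrt{\slashed g_{XX}}\sim r\sim\varrho$ to convert each covariant derivative $\slashed\nabla$ into a factor $\varrho^{-1}$ times $\slashed{\mathcal L}_R$, modulo the errors already controlled; two covariant derivatives then produce the factor $\varrho^{-2}$ and the bound by $|\slashed{\mathcal L}_R^{\leq 2}\xi|$. The main obstacle throughout will be the bookkeeping of the deformation of $R$ on $S_{s,u}$: one must verify that every apparent correction sits in the claimed class $O(M^k\dl^{k(1-\ve_0)}s^{-1}\ln s)|\xi|^2$, and this is where the pointwise bounds \eqref{L}--\eqref{cr}, \eqref{Ri}, the $\check\chi$-estimate \eqref{echi'}, and the null-frame identities of Lemma \ref{4.2} must all be combined; the arithmetic is tightest for the $(0,2)$ case because a trace term also enters.
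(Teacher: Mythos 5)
Your overall architecture is sound and is the natural one-dimensional adaptation of the standard rotation-vector-field argument: write $R=\lambda X$ with $X=\p_\vartheta$, compute $|R|_g^2$, and compare $\slashed{\mathcal L}_R$ with $\slashed\nabla$ via the expansion $(\slashed{\mathcal L}_R\xi)_X=\lambda(\slashed\nabla_X\xi)_X+\xi_X(\slashed\nabla_X R)^X$. Your derivation of \eqref{1-f} is correct: $|R|_g^2=g(\O,\O)-2\upsilon\,g(\tilde T,\O)+\upsilon^2\,g(\tilde T,\tilde T)=g(\O,\O)-\upsilon^2\sim r^2$ using the normalization and \eqref{L}, \eqref{chL}, \eqref{cr}, \eqref{Ri}, and then $(\xi_aR^a)^2=\lambda^2\xi_X^2=(\lambda^2\slashed g_{XX})|\xi|^2=|R|^2|\xi|^2$ finishes it.

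However, there is a concrete error in how you propose to bound $(\slashed\nabla_X R)^X=X\lambda+\lambda\slashed\Gamma^X_{XX}$. You claim $\slashed\Gamma^X_{XX}=\tfrac12\slashed g^{XX}X(\slashed g_{XX})$ is ``extracted from the identity $\chi_{XX}=\tfrac12\mathring L\slashed g_{XX}$ combined with Proposition \ref{chi'}.'' That does not work: $\chi$ governs the $\mathring L$-derivative of $\slashed g_{XX}$, whereas $\slashed\Gamma^X_{XX}$ involves the tangential $X$-derivative, which Proposition \ref{chi'} does not control. Moreover $X\lambda$ and $\lambda\slashed\Gamma^X_{XX}$ are not individually small — only their sum is — so trying to bound them separately is both the wrong route and doomed. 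The clean way to close this step in dimension one is to observe that since $\slashed{\mathcal L}_R\slashed g=\leftidx{^{(R)}}{\slashed\pi}$ and $\slashed\nabla\slashed g=0$, one has $\leftidx{^{(R)}}{\slashed\pi}_{XX}=2\slashed g_{XX}(\slashed\nabla_X R)^X$, i.e.\ $(\slashed\nabla_X R)^X=\tfrac12\mathrm{tr}\,\leftidx{^{(R)}}{\slashed\pi}$. Then \eqref{Rpi} together with \eqref{Ri}, \eqref{chi}, and $(\star)$ gives $|\mathrm{tr}\,\leftidx{^{(R)}}{\slashed\pi}|\lesssim|\upsilon|\,|\mathrm{tr}\chi|+O(M^k\dl^{k(1-\ve_0)}s^{-k/2})=O(M^k\dl^{k(1-\ve_0)}s^{-1}\ln s)$, which is exactly the error coefficient in \eqref{1f}--\eqref{2-f} (note that the dominant contribution is $2\upsilon\mathrm{tr}\chi\approx 2\upsilon/\varrho$, not the $\check g$ or $G_{XX}^\gamma R\varphi_\gamma$ terms). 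Incidentally, your displayed expression $\slashed g^{XX}(X(\lambda)+\lambda\slashed\Gamma^X_{XX})^2=O(\cdots)$ has a spurious $\slashed g^{XX}$: in one dimension $|\slashed\nabla R|^2=((\slashed\nabla_X R)^X)^2$ with no extra metric factor, and the error entering $|\slashed{\mathcal L}_R\xi|^2$ is $|\xi|^2\,((\slashed\nabla_X R)^X)^2$ plus a cross term absorbed by Cauchy--Schwarz. Finally, for \eqref{2f} the iteration you sketch additionally requires control of one more tangential derivative of $(\slashed\nabla_X R)^X$, which is not argued; since \eqref{2f} is merely a $\lesssim$ this is forgiving, but it should be stated that one applies the one-form version of \eqref{1f} to $\slashed\nabla\xi$ (recast as a tensor) and uses $\leftidx{^{(R)}}{\slashed\pi}$ again at the second step.
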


To close the bootstrap
assumptions $(\star)$, one can improve the estimate of $\varphi$ by \eqref{fequation} to obtain the following proposition,
whose proofs are same as those in \cite[Proposition 6.3 and Corollary 6.3]{Ding4}.

\begin{proposition}\label{LTRh}
	Under the assumptions $(\star)$, for any operate $Z\in\{\varrho\mathring L, T, R\}$, when $\delta>0$ is small,
it holds that for $m\leq N-1$,
\begin{equation}\label{z}
\begin{split}
&|\slashed{\mathcal{L}}_{ Z}^{m;l,p}\check{\chi}|\lesssim M^k\delta^{k(1-\varepsilon_0)-l}s^{-2}\ln s,\quad|\slashed{\mathcal{L}}_{Z}^{m+1;l,p}\slashed dx^j|\lesssim \delta^{-l},\quad\\
&|Z^{m+1;l,p}\check L^j|+|\slashed{\mathcal{L}}_{Z}^{m;l,p}\leftidx{^{(R)}}{\slashed\pi}|+|\slashed{\mathcal{L}}_{Z}^{m;l,p}\leftidx{^{(R)}}{\slashed\pi}_{\mathring L}|+|Z^{m+1;l,p}\check\varrho|\lesssim M^k\delta^{k(1-\varepsilon_0)-l}s^{-1}\ln s,\\
& |\slashed{\mathcal{L}}_{Z}^{m;l,p}R|+|Z^{m+1;l,p}\upsilon|\lesssim M^k\delta^{k(1-\varepsilon_0)-l}\ln s,\quad|Z^{m+1;l,p}\mu|\lesssim M^{k+1}\delta^{k(1-\varepsilon_0)-\varepsilon_0-l},\\
& |\slashed{\mathcal{L}}_{Z}^{m;l,p}\leftidx{^{(T)}}{\slashed\pi}|\lesssim M^k\delta^{k(1-\varepsilon_0)-1-l}s^{-1}+s^{-l-1},\quad |\slashed{\mathcal{L}}_{Z}^{m;l,p}\leftidx{^{(T)}}{\slashed\pi}_{\mathring L}|\lesssim M^k\delta^{k(1-\varepsilon_0)-1-l}s^{-1},\\
&|\slashed{\mathcal{L}}_{Z}^{m;l,p}\leftidx{^{(R)}}{\slashed\pi}_{T}|\lesssim(M^k\delta^{k(1-\ve_0)-l}+M^{2k}\delta^{2k(1-\ve_0)-1-l})s^{-1}\ln s,
\end{split}
\end{equation}
and
\begin{equation}\label{ztl}
\begin{split}	
&\dl|Z^{m;l,p}\varphi_\gamma(s, u, \vartheta)|+|Z^{m;l,p}\phi(s, u, \vartheta)|\lesssim\delta^{2-l-\varepsilon_0}s^{-1/2},
\end{split}
\end{equation}
 where $(m;l,p)$ means the numbers of $Z$, $T$ and $\varrho\mathring L$ are $m$, $l$ and $p$ respectively.
\end{proposition}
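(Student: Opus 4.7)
The proof proceeds by strong induction on the total number $m$ of vector fields $Z\in\{\varrho\mathring L,T,R\}$ applied. For $m=0$, the stated estimates reduce to the pointwise bounds already established in Section \ref{BA}, namely \eqref{echi'}--\eqref{chi} for $\check\chi$, \eqref{chL} for $\check L^i$ and $\check T^i$, \eqref{dx} for $\slashed dx^j$, \eqref{cr} for $\check\varrho$, \eqref{Ri} for $\upsilon$, and \eqref{phimu} for $\mu$, together with the direct evaluation of the deformation tensor components given by the formulas in Appendix A applied to the $m=0$ bounds on the frame coefficients.

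For the inductive step, I would estimate each quantity through its transport equation along the integral curves of $\mathring L=\partial/\partial s$. Concretely: apply an arbitrary string $Z^{m;l,p}$ to \eqref{Lchi'}, \eqref{LL}--\eqref{deL} and \eqref{lmu}, and commute it past $\mathring L$ using the commutator identities of Appendix A. This produces a transport equation of the schematic form $\mathring L(\mathcal Q_{m;l,p})=\mathscr R_{m;l,p}$, where $\mathscr R_{m;l,p}$ contains only lower-order quantities that are controlled by the inductive hypothesis together with the bootstrap bounds $(\star)$ and by Proposition \ref{chi'} and Corollary \ref{12form}. Integrating from $\Sigma_{t_0}$ along integral curves of $\mathring L$ and using the initial-data estimates of Theorem \ref{Th2.1} then gives the claimed $L^\infty$ bound on $\mathcal Q_{m;l,p}$, and hence on the underlying geometric quantity.

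The central arithmetic point, inherited from the base case, is the $k$-th null condition \eqref{null}: as shown in \eqref{gLLL}, it forces the dangerous term $G^\gamma_{\mathring L\mathring L}T\varphi_\gamma$ in \eqref{lmu} and its higher-order analogues to carry an extra factor of $\varphi^{k-1}$ or one of the ``good'' weights $\check L^i,\check\varrho,\mathring L\phi,\slashed d\phi$. Each additional $T$-derivative costs a factor $\delta^{-1}$ but is compensated by the decay of these companion factors, and the assumption $\varepsilon_0<\varepsilon_k^*=\frac{k}{k+1}$ is precisely what guarantees $M^{k+1}\delta^{k(1-\varepsilon_0)-\varepsilon_0-l}\ll 1$ for small $\delta$, closing the $\mu$-estimate; the bound on $\mu$ then propagates into all the remaining quantities. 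The extra summand $s^{-l-1}$ in $\leftidx{^{(T)}}{\slashed\pi}$ traces back to the Minkowskian contribution $-\partial_r$ in $T$, and the extra quadratic term $M^{2k}\delta^{2k(1-\varepsilon_0)-1-l}s^{-1}\ln s$ in $\leftidx{^{(R)}}{\slashed\pi}_T$ arises from products of $T$- and $R$-derivatives of the metric, each of which already generates one $\varphi^{k-1}$ factor via \eqref{gLLL}.

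The principal obstacle is orchestrating the induction so that the coupled system closes: the estimates for $\check\chi$, $\check L^i$, $\slashed dx^j$, $\mu$ and the deformation tensor components are not independent, since each appears in the right-hand side of the transport equation for the others. I would therefore carry out the induction on $m$ simultaneously for the full list of quantities in \eqref{z}--\eqref{ztl}, and within each fixed level $m$ impose a linear ordering in which $\check L^i$, $\slashed dx^j$ and $\check\varrho$ are estimated first (from \eqref{LL}--\eqref{deL} and the algebraic formula \eqref{rrho}), then $\mu$ via the transport equation \eqref{lmu} using the null-condition identity \eqref{gLLL}, next $\check\chi$ via \eqref{Lchi'}, next $R$ and $\upsilon$ via \eqref{R}--\eqref{omega}, and finally the deformation tensor components via the explicit expressions of Appendix A. Once this ordering is fixed, the argument reduces to repeated integration along $\mathring L$ parallel to Proposition 6.3 and Corollary 6.3 of \cite{Ding4}, with the only genuinely new input being the higher-order null-condition bookkeeping of \eqref{gLLL} together with the optimal arithmetic $\varepsilon_0<\varepsilon_k^*$.
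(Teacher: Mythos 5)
Your proposal is correct and follows essentially the same route as the paper, which delegates the proof entirely to \cite[Proposition 6.3 and Corollary 6.3]{Ding4}: a simultaneous induction on $m$ carried out by applying $Z^{m;l,p}$ to the transport identities \eqref{lmu}, \eqref{LeL}--\eqref{deL}, \eqref{Lchi'} and integrating along the integral curves of $\mathring L$, with the algebraic formulas \eqref{R}, \eqref{omega}, \eqref{rrho} and \eqref{Lpi}--\eqref{Rpi} closing the loop for $R$, $\upsilon$, $\check\varrho$ and the deformation tensors, and with the null-condition identity \eqref{gLLL} supplying the crucial factor that makes $\delta^{k(1-\varepsilon_0)-\varepsilon_0}\ll 1$ for $\varepsilon_0<\varepsilon_k^*$. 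Your within-level ordering is a reasonable bookkeeping device; the coupling you flag (e.g.\ $\slashed d_X\check L^i$ depending on $\mathrm{tr}\check\chi$ via \eqref{deL}, and the commutators $[\mathring L,Z]$ producing $\leftidx{^{(Z)}}{\slashed\pi}_{\mathring L}$) is harmless because those terms enter with one fewer derivative and are handled by the inductive hypothesis; and the derivation of \eqref{ztl}, which you leave implicit, comes from integrating \eqref{fequation} along $\mathring L$ and then $T$, exactly as the sentence preceding the proposition indicates.
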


Noted that as stated in the end of \cite[Section 5]{Ding3}, all the related constants
in Proposition \ref{LTRh} can be independent of $M$. Therefore,
from now on, we may apply these estimates independent of $M$.

\section{Energy estimates and some higher order $L^2$ estimates}\label{EE}

As in \cite{Ding4}, to close the bootstrap assumptions $(\star)$, one needs further refined estimates than those derived in Subsection \ref{BA}.  Note that $\varphi_\gamma$ satisfies the nonlinear equation \eqref{ge}, and each
derivative of $\varphi_\g$ also fulfills similar equation with the same metric. Thus, one may focus on
the energy estimates for any smooth
function $\Psi$ solving the following linear equation
\begin{equation}\label{gel}
\mu\Box_g\Psi=\Phi,
\end{equation}
for a given function $\Phi$, where $\Psi$ and its derivatives vanish on $C_0^{s}$.

As in \cite{Ding4}, we choose two multipliers $V_1\Psi:=\varrho^{2\iota}\mathring L\Psi$ $(0<\iota<\f12)$
and $V_2\Psi:=\mathring{\underline L}\Psi$, the energies $E_i[\Psi](s, u)$ and fluxes $F_i[\Psi](s, u)$
are defined after performing integration by part to $\mu(\Box_g\Psi)(V_i\Psi)$ over $D^{s,u}$ $(i=1,2)$,
\begin{align}
&E_1[\Psi](s, u):=\f12\ds\int_{\Sigma_s^{u}}\mu\varrho^{2\iota}\{(\mathring L\Psi)^2+|\slashed d\Psi|^2\},\label{E1}\\
&E_2[\Psi](s, u):=\f12\ds\int_{\Sigma_s^{u}}\{(\mathring{\underline L}\Psi)^2+\mu^2|\slashed d\Psi|^2\},\label{E2}\\
&F_1[\Psi](s, u):=\ds\int_{C_{u}^s}\varrho^{2\iota}(\mathring L\Psi)^2,\label{F1}\\
&F_2[\Psi](s,u):=\ds\int_{C_{u}^s}\mu |\slashed d\Psi|^2.\label{F2}
\end{align}
Thus,
\begin{equation}\label{EI}
E_i[\Psi](s, u)-E_i[\Psi](t_0, u)+F_i[\Psi](s, u)=-\ds\int_{D^{s, u}}\Phi\cdot V_i\Psi-\int_{D^{s, u}}\f12\mu Q_{\al\beta}[\Psi]\leftidx{^{(V_i)}}\pi^{\al\beta},
\end{equation}
where
$Q_{\al\beta}[\Psi]:=(\p_\al\Psi)(\p_\beta\Psi)-\f12 g_{\al\beta}g^{\nu\lambda}(\p_\nu\Psi)(\p_\lambda\Psi)$
and $\leftidx{^{(V_i)}}\pi$ is the deformation tensor with respect to the vector field $V_i$ as defined in \eqref{dt}. Since $\f12\mu Q_{\al\beta}[\Psi]\leftidx{^{(V_i)}}\pi^{\al\beta}$ $(i=1,2)$ have the same expressions as (7.7) and (7.8) in \cite{Ding4},
one then can derive that
 \begin{equation}\label{QV1}
\begin{split}
&\int_{D^{s, u}}-\f12 \mu Q^{\al\beta}[\Psi]\leftidx{^{(V_1)}}\pi_{\al\beta}\\
\lesssim&\delta^{k(1-\varepsilon_0)-\varepsilon_0}\int_{t_0}^s\tau^{-3/2} E_1[\Psi](\tau,u)d\tau+\delta^{-1}\int_0^uF_1[\Psi](s,u')du'+\delta\int_{t_0}^s\tau^{2\iota-2}E_2[\Psi](\tau,u)d\tau
\end{split}
\end{equation}
and
\begin{equation}\label{QV2}
\begin{split}
&\int_{D^{s, u}}-\f12 \mu Q^{\al\beta}[\Psi]\leftidx{^{(V_2)}}\pi_{\al\beta}\\
\lesssim&\delta^{-1}\int_0^uF_2[\Psi](s,u')du'+\delta^{-1}\int_0^uF_1[\Psi](s,u')du'
+\int_{t_0}^s\tau^{-2}E_2[\Psi](\tau,u)d\tau.
\end{split}
\end{equation}
And hence,
\begin{equation}\label{e}
\begin{split}
&\delta E_2[\Psi](s, u)+\delta F_2[\Psi](s, u)+E_1[\Psi](s, u)+F_1[\Psi](s, u)\\
\lesssim& \delta E_2[\Psi](t_0, u)+E_1[\Psi](t_0, u)+\delta\int_{D^{s, u}}|\Phi\cdot \mathring{\underline L}\Psi|+\int_{D^{s, u}}\varrho^{2\iota}|\Phi\cdot \mathring L\Psi|.
\end{split}
\end{equation}
Choose $\Psi=\Psi_\gamma^{m+1}=Z^{m+1}\varphi_\gamma$ and then
$\Phi=\Phi_\gamma^{m+1}=\mu\Box_g\Psi_\gamma^{m+1}$ ({$m\leq2N-6$})
so that \eqref{gel} holds. Note that
\begin{equation}\label{Psi}
\begin{split}
\Phi_\gamma^{m+1}&=\mu[\Box_g,Z]\Psi_\gamma^{m}+Z\big(\mu\Box_g\Psi_\gamma^{m}\big)-(Z\mu)\Box_g\Psi_\gamma^{m}\\
&=\mu\mathscr D^\al({\leftidx{^{(Z)}}C_{\gamma}^{m}}_{,\al})+(Z+\leftidx{^{(Z)}}\lambda)\Phi_\gamma^{m},
\end{split}
\end{equation}
where
\begin{equation}\label{ClP}
\begin{split} &{\leftidx{^{(Z)}}C_{\gamma}^{m}}_{,\al}=\big(\leftidx{^{(Z)}}\pi_{\nu\al}
-\f12g_{\nu\al}(g_{\kappa\lambda}\leftidx{^{(Z)}}\pi^{\kappa\lambda})\big)g^{\nu\beta}\p_\beta\Psi_\gamma^{m},\\
&\leftidx{^{(Z)}}\lambda=-\mu^{-1}\leftidx{^{(Z)}}\pi_{\mathring LT}+\f12\textrm{tr}\leftidx{^{(Z)}}{\slashed{\pi}}-\mu^{-1}Z\mu,\\
&\Psi_\gamma^0=\varphi_\gamma,\quad\Phi_\g^0=\mu\Box_g\varphi_\g,
\end{split}
\end{equation}
 and $\Phi_\g^0$ equals
the right hand side of \eqref{ge}.
Consequently, for $\Psi_\g^{m+1}=Z_{m+1}Z_{m}\cdots Z_{1}\varphi_\gamma$ with $Z_j\in\{\varrho\mathring L, T, R\}$,
then by \eqref{Psi}, the induction argument gives
\begin{equation}\label{Phik}
\begin{split}
\Phi_\gamma^{m+1}=&\sum_{j=1}^{m}\big(Z_{m+1}+\leftidx{^{(Z_{m+1})}}\lambda\big)\dots\big(Z_{m+2-j}
+\leftidx{^{(Z_{m+2-j})}}\lambda\big)\big(\mu\mathscr D^\al{\leftidx{^{(Z_{m+1-j})}}C_{\gamma}^{m-j}}_{,\al}\big)\\
&+\mu\mathscr D^\al{\leftidx{^{(Z_{m+1})}}C_\gamma^{m}}_{,\al}+\big(Z_{m+1}+\leftidx{^{(Z_{m+1})}}\lambda\big)\dots\big(Z_{1}
+\leftidx{^{(Z_1)}}\lambda\big)\Phi_\gamma^0\\
&=:J_1^{m+1}+J_2^{m+1},\qquad m\geq 1,\\
\Phi_\gamma^{1}=&\big(Z_{1}+\leftidx{^{(Z_1)}}\lambda\big)\Phi_\gamma^0+\mu\mathscr D^\al{\leftidx{^{(Z_{1})}}C_{\gamma}^{0}}_{,\al},
\end{split}
\end{equation}
where $J_1^{m+1}$ and $J_2^{m+1}$ stand for the first and the second line in the right hand side of \eqref{Phik}, respectively.

By \eqref{Lpi}-\eqref{Rpi} in Appendix A, one has
\begin{equation}\label{lamda}
\begin{split}
&\leftidx{^{(T)}}\lambda=\f12\textrm{tr}\leftidx{^{(T)}}{\slashed\pi},\quad
\leftidx{^{(\varrho\mathring L)}}\lambda=\varrho\textrm{tr}_{\slashed g}\check\chi+2,\quad
\leftidx{^{(R)}}\lambda=\f12\text{tr}\leftidx{^{(R)}}{\slashed\pi}.
\end{split}
\end{equation}
In addition, notice that in the null frame $\{\mathring{\underline L},\mathring L,X\}$, the
term $\mu\mathscr D^\al{\leftidx{^{(Z)}}C_{\gamma}^{m}}_{,\al}$ can be written as
\begin{equation}\label{muC}
\begin{split}
\mu\mathscr D^\al{\leftidx{^{(Z)}}C_{\gamma}^{m}}_{,\al}=&-\f12\mathring L\big({\leftidx{^{(Z)}}C_\gamma^{m}}_{,\mathring{\underline L}}\big)-\f12\mathring{\underline L}\big({\leftidx{^{(Z)}}C_\gamma^{m}}_{,\mathring L}\big)
+\slashed\nabla^X\big(\mu {\leftidx{^{(Z)}}{\slashed C}_{\g}^{m}}_{,X}\big)\\
&-\f12\big(\mathring L\mu+\mu \textrm{tr}{\chi}+\textrm{tr}\leftidx{^{(T)}}{\slashed\pi}\big){\leftidx{^{(Z)}}C_\g^{m}}_{,\mathring L}-\f12\textrm{tr}\chi{\leftidx{^{(Z)}}C_\g^{m}}_{,\mathring{\underline L}},
\end{split}
\end{equation}
where
\begin{equation}\label{C}
\begin{split}
&{\leftidx{^{(Z)}}C_\g^{m}}_{,\mathring L}=\leftidx{^{(Z)}}{\slashed\pi}_{\mathring LX}(\slashed d^X\Psi_\g^{m})-\f12(\textrm{tr}\leftidx{^{(Z)}}{\slashed\pi})\mathring L\Psi_\g^{m},\\
&{\leftidx{^{(Z)}}C_\gamma^{m}}_{,\mathring{\underline L}}=-2(\leftidx{^{(Z)}}\pi_{LT}+\mu^{-1}\leftidx{^{(Z)}}\pi_{TT})(\mathring L\Psi_\gamma^{m})+\leftidx{^{(Z)}}{\slashed\pi}_{\mathring{\underline L}X}(\slashed d^X\Psi_\gamma^{m})-\f12(\textrm{tr}\leftidx{^{(Z)}}{\slashed\pi})\mathring{\underline L}\Psi_\gamma^{m},\\
&\mu{\leftidx{^{(Z)}}{\slashed C}_\gamma^{m}}_{,X}=-\f12\leftidx{^{(Z)}}{\slashed\pi}_{\mathring{\underline L}X}(\mathring L\Psi_\gamma^{m})-\f12\leftidx{^{(Z)}}{\slashed\pi}_{\mathring LX}(\mathring{\underline L}\Psi_\gamma^{m})
+\leftidx{^{(Z)}}{{\pi}}_{\mathring LT}(\slashed d_X\Psi_\gamma^{m})+\f12\mu\textrm{tr}\leftidx{^{(Z)}}{\slashed\pi}\slashed d_X\Psi_\gamma^{m}.
\end{split}
\end{equation}
Substituting \eqref{C} into \eqref{muC} directly would lead to a lengthy and tedious equation for $\mu\mathscr D^\al{\leftidx{^{(Z)}}C_{\gamma}^{m}}_{,\al}$. To overcome this default and handle the resulting terms more easily,
one can decompose $\mu\mathscr D^\al{\leftidx{^{(Z)}}C_{\gamma}^{m}}_{,\al}$ into the following three parts as in \cite{MY}:
\begin{equation}\label{muZC}
\mu\mathscr D^\al{\leftidx{^{(Z)}}C_{\gamma}^{m}}_{,\al}=\leftidx{^{(Z)}}D_{\gamma,1}^m+\leftidx{^{(Z)}}D_{\gamma,2}^m+\leftidx{^{(Z)}}D_{\gamma,3}^m,
\end{equation}
where
\begin{equation}\label{D1}
\begin{split}
\leftidx{^{(Z)}}D_{\gamma,1}^m=&\f12\textrm{tr}\leftidx{^{(Z)}}{\slashed{\pi}}\big(\mathring L\mathring{\underline L}\Psi_\gamma^m+\f12\textrm{tr}{\chi}\mathring{\underline L}\Psi_\gamma^m\big)-\leftidx{^{(Z)}}{\slashed\pi}_{\mathring{\underline L}X}(\slashed d^X\mathring L\Psi_\gamma^{m})-\leftidx{^{(Z)}}{\slashed\pi}_{\mathring LX}(\slashed d^X \mathring{\underline L}\Psi_\gamma^{m})\\
&+(\leftidx{^{(Z)}}\pi_{\mathring LT}+\leftidx{^{(Z)}}\pi_{\tilde TT})(\mathring L^2\Psi_\gamma^m)+\f12\mu\textrm{tr}\leftidx{^{(Z)}}{{\slashed\pi}}\slashed\triangle\Psi_\gamma^m+\leftidx{^{(Z)}}\pi_{\mathring LT}\slashed\triangle\Psi_\gamma^m,
\end{split}
\end{equation}
\begin{equation}\label{D2}
\begin{split}
\leftidx{^{(Z)}}D_{\gamma,2}^m=&\big\{\mathring L(\leftidx{^{(Z)}}\pi_{\mathring LT}+\leftidx{^{(Z)}}\pi_{\tilde TT})-\f12\slashed\nabla^X\leftidx{^{(Z)}}{\slashed\pi}_{\mathring{\underline L}X}+\f14\mathring{\underline L}(\textrm{tr}\leftidx{^{(Z)}}{\slashed\pi})\big\}\mathring L\Psi_\gamma^m+\f14\mathring L(\textrm{tr}\leftidx{^{(Z)}}{\slashed{\pi}})\mathring{\underline L}\Psi_\gamma^m\\
&-\big\{\f12\slashed{\mathcal L}_{\mathring{\underline L}}\leftidx{^{(Z)}}{\slashed\pi}_{\mathring LX}-\slashed d_X\leftidx{^{(Z)}}{\pi}_{\mathring LT}-\f12\slashed d_X(\mu\textrm{tr}\leftidx{^{(Z)}}{\slashed\pi})+\f12(\slashed{\mathcal L}_{\mathring L}\leftidx{^{(Z)}}{\slashed\pi}_{\mathring{\underline L}X})\big\}\slashed d^X\Psi_\gamma^m\\
&-\f12(\slashed\nabla^X\leftidx{^{(Z)}}{\slashed\pi}_{\mathring LX})\mathring{\underline L}\Psi_\gamma^m,
\end{split}
\end{equation}
\begin{equation}\label{D3}
\begin{split}
\leftidx{^{(Z)}}D_{\gamma,3}^m=&\f12\Big\{(\textrm{tr}\leftidx{^{(Z)}}{\slashed{\pi}})(\slashed d_X\mu+2\mu\zeta_X)-(\mathring L\mu-\mu\textrm{tr}\chi-\textrm{tr}\leftidx{^{(T)}}{\slashed\pi})\leftidx{^{(Z)}}{\slashed\pi}_{\mathring LX}+\textrm{tr}{\chi}\leftidx{^{(Z)}}{\slashed\pi}_{\mathring{\underline L}X}\Big\}\slashed d^X\Psi_\gamma^m\\
&+\Big\{\textrm{tr}{\chi}(\leftidx{^{(Z)}}\pi_{\mathring LT}+\leftidx{^{(Z)}}\pi_{\tilde TT})+\f14(\mu\textrm{tr}{\chi}+\textrm{tr}\leftidx{^{(T)}}{\slashed{\pi}})\textrm{tr}\leftidx{^{(Z)}}{\slashed{\pi}}+\f12\slashed d^X\mu\leftidx{^{(Z)}}{\slashed\pi}_{\mathring LX}\Big\}\mathring L\Psi_\gamma^m.
\end{split}
\end{equation}
 Note that all the terms in $\leftidx{^{(Z)}}D_{\gamma,1}^m$ are the products of the deformation tensor and the second
order derivatives of $\Psi_\g^m$, except the first term containing the factor of the form $\mathring L\mathring{\underline L}\Psi_\gamma^m+\f12\textrm{tr}{\chi}\mathring{\underline L}\Psi_\gamma^m$ (see \eqref{D1}). It should be emphasized here that such a structure is crucial in our analysis since $\Psi_\gamma^m$ is the derivative of $\varphi_\gamma$ and by \eqref{fequation}, $\mathring L\mathring{\underline L}\varphi_\gamma+\f12\textrm{tr}{\chi}\mathring{\underline L}\varphi_\gamma=H_\g+\f12\textrm{tr}\check\chi\mathring{\underline L}\varphi_\g$ admits the better smallness and the faster time decay than those for $\mathring L\mathring{\underline L}\varphi_\gamma$ and $\f12\textrm{tr}{\chi}\mathring{\underline L}\varphi_\gamma$ separately. $\leftidx{^{(Z)}}D_{\gamma,2}^m$
collects all the products of the first order derivatives of the deformation
 tensor and the first order derivatives of $\Psi_\g^m$, and $\leftidx{^{(Z)}}D_{\gamma,3}^m$ is the rest.

The explicit expressions of $\Phi_\g^{m+1}$ given in \eqref{Phik}-\eqref{lamda} and \eqref{muZC}-\eqref{D3} are important to estimate the last two integrals of \eqref{e}.
From the left side of \eqref{e}, it is natural to define the corresponding weighted energies and fluxes as in \cite{MY}:
\begin{align}
E_{i,m+1}(s , u)&=\sum_{\gamma=0}^4\sum_{Z\in\{\varrho\mathring L,T,R\}}\delta^{2l}E_i[Z^{m}\varphi_\gamma](s, u),\quad i=1,2,\label{ei}\\
F_{i,m+1}(s, u)&=\sum_{\gamma=0}^4\sum_{Z\in\{\varrho\mathring L,T,R\}}\delta^{2l}F_i[Z^{m}\varphi_\gamma](s, u),\quad i=1,2,\label{fi}\\
E_{i,\leq m+1}(s, u)&=\sum_{0\leq n\leq m}E_{i,n+1}(s, u),\quad i=1,2,\label{eil}\\
F_{i,\leq m+1}(s, u)&=\sum_{0\leq n\leq m}F_{i,n+1}(s, u),\quad i=1,2,\label{fil}
\end{align}
where $l$ is the number of $T$ in $Z^m$. These weighted energies will be estimated in the subsequent sections.

Next, one derives the higher order $L^2$ estimates for some related quantities
so that the last two terms of \eqref{e} can be absorbed by the left hand side, and hence the higher order energy estimates on \eqref{ge} can
be obtained. To this end, on needs two lemmas in \cite[Section 6]{Ding4}, where analogous results in 3D case
can be found in \cite[Lemma 7.3]{MY} and \cite[Lemma 12.57]{J} respectively.

\begin{lemma}\label{L2T}
 For any function $\psi\in C^1(D^{s, u})$ vanishing on $C_0$, one has that  for small $\delta>0$,
 \begin{align}
  \int_{S_{s, u}} \psi^2&\lesssim\delta\int_{\Sigma_s^u}|T\psi|^2\lesssim\delta\int_{\Sigma_{s}^{u}}\big(|\mathring{\underline L}\psi|^2+\mu^2|\mathring L\psi|^2\big)\label{SSi},\\
  \int_{\Sigma_{s}^{u}} \psi^2&\lesssim\delta^2\int_{\Sigma_s^u}|T\psi|^2\lesssim\delta^2\int_{\Sigma_{s}^{u}}\big(|\mathring{\underline L}\psi|^2+\mu^2|\mathring L\psi|^2\big).\label{SiSi}
 \end{align}
Therefore,
 \begin{align}
 \int_{S_{s, u}} \psi^2&\lesssim\delta\big(E_2[\psi](s, u)+\varrho^{-2\iota}E_1[\psi](s, u)\big),\label{SE}\\
 \int_{\Sigma_{s}^{u}} \psi^2&\lesssim\delta^2\big(E_2[\psi](s, u)+\varrho^{-2\iota}E_1[\psi](s, u)\big).\label{SiE}
 \end{align}
\end{lemma}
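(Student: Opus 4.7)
\textbf{Proof proposal for Lemma \ref{L2T}.}

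The plan is to prove the first chain \eqref{SSi} via a one-dimensional Poincar\'e-type argument along the integral curves of $T$ inside $\Sigma_s$, and then deduce \eqref{SiSi} by integrating in $u$ and \eqref{SE}--\eqref{SiE} by unwinding the definitions of $E_1,E_2$. The key fact is that $T$ is tangent to $\Sigma_s$ (since $Tt=0$ in the $(s,u,\vartheta)$-coordinates), that the $u$-extent of $\Sigma_s^u$ is $\lesssim\delta$, and that $\psi$ vanishes on $C_0=\{u=0\}$.

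First I would exploit the identity $T=\partial_u-\Xi^X\partial_\vartheta$ and focus on the integral curves of $T$ in $\Sigma_s$ parametrized by $u'$ (since $Tu'=1$). For a fixed endpoint $(s,u,\vartheta)$, let $u'\mapsto\vartheta(u';s,u,\vartheta)$ denote the integral curve of $T$ ending at $(s,u,\vartheta)$ and starting on $C_0$ at $u'=0$. Because $\psi$ vanishes on $C_0$, the fundamental theorem of calculus along this curve yields $\psi(s,u,\vartheta)=\int_0^u T\psi(s,u',\vartheta(u'))\,du'$, and Cauchy-Schwarz together with $u\leq 4\delta$ gives the pointwise bound $|\psi(s,u,\vartheta)|^2\lesssim\delta\int_0^u|T\psi|^2(s,u',\vartheta(u'))\,du'$.

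Next I would multiply by the area element $\sqrt{\slashed g_{XX}}(s,u,\vartheta)\,d\vartheta$ on $S_{s,u}$, integrate, and then swap the order of integration. For each fixed $u'\in[0,u]$, the map $\vartheta\mapsto\vartheta(u';s,u,\vartheta)$ is a diffeomorphism $S_{s,u}\to S_{s,u'}$ generated by the flow of $-T$. The resulting change of variables produces a Jacobian factor, but since $T\ln\sqrt{\slashed g_{XX}}$ and $\partial_\vartheta\Xi^X$ are both bounded (in fact $O(\delta^{k(1-\varepsilon_0)})$-close to zero) under the bootstrap assumptions $(\star)$ and Proposition \ref{LTRh}, this Jacobian is comparable to $1$, and moreover $\sqrt{\slashed g_{XX}}(s,u,\cdot)\sim\sqrt{\slashed g_{XX}}(s,u',\cdot)$ uniformly. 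Thus
\[
\int_{S_{s,u}}\psi^2\lesssim\delta\int_0^u\!\!\int_{S_{s,u'}}|T\psi|^2\,du'=\delta\int_{\Sigma_s^u}|T\psi|^2,
\]
giving the first half of \eqref{SSi}. The second half is immediate from $2T=\mathring{\underline L}-\mu\mathring L$, which yields the pointwise bound $|T\psi|^2\leq\tfrac12|\mathring{\underline L}\psi|^2+\tfrac12\mu^2|\mathring L\psi|^2$. For \eqref{SiSi} I would apply the $S_{s,u'}$-estimate already obtained and integrate in $u'\in[0,u]$, using $u\leq 4\delta$ to produce the extra factor of $\delta$.

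Finally, to obtain \eqref{SE} and \eqref{SiE}, I would estimate the right-hand side of \eqref{SSi}--\eqref{SiSi} in terms of $E_1$ and $E_2$. Splitting $\int_{\Sigma_s^u}(|\mathring{\underline L}\psi|^2+\mu^2|\mathring L\psi|^2)$, the first piece is bounded directly by $2E_2[\psi](s,u)$ from \eqref{E2}. For the second piece, I would use the uniform bound $\mu=O(1)$ from \eqref{phimu} and the fact that on $\Sigma_s^u$ one has $\varrho=s-u'\geq s-u$, so that $\varrho^{-2\iota}$ is bounded by its value at $(s,u)$, allowing me to insert and extract the weight:
\[
\int_{\Sigma_s^u}\mu^2(\mathring L\psi)^2\lesssim\varrho^{-2\iota}(s,u)\int_{\Sigma_s^u}\mu\varrho^{2\iota}(\mathring L\psi)^2\lesssim\varrho^{-2\iota}E_1[\psi](s,u).
\]
The only nontrivial step is the change-of-variables/Jacobian control in the second paragraph; everything else is essentially a consequence of the structural relation $2T=\mathring{\underline L}-\mu\mathring L$ and the finite width $u\lesssim\delta$.
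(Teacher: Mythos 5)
Your overall strategy (one-dimensional Poincar\'e along the $T$-integral curves inside $\Sigma_s$ using $u\lesssim\delta$, the boundary condition $\psi|_{C_0}=0$, the algebraic identity $2T=\mathring{\underline L}-\mu\mathring L$, and $\mu\sim 1$ for the energy step) is correct and is essentially the standard proof of this type of Poincar\'e/transport inequality in the Christodoulou--Speck framework, which the paper adopts by citing \cite{Ding4} and \cite{MY}. The reduction of \eqref{SiSi} from \eqref{SSi} by integrating in $u'$, and the passage from \eqref{SiSi} to \eqref{SE}--\eqref{SiE} by dropping $\mu\lesssim 1$ and extracting $\varrho^{-2\iota}\leq\varrho(s,u)^{-2\iota}$ from the $E_1$-integrand, are all fine.

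One point in your Jacobian control is stated imprecisely and, if taken at face value, is actually false. You claim that $T\ln\sqrt{\slashed g_{XX}}$ and $\partial_\vartheta\Xi^X$ are ``$O(\delta^{k(1-\varepsilon_0)})$-close to zero.'' In fact $T\ln\sqrt{\slashed g_{XX}}=\tfrac12\mathrm{tr}\,\leftidx{^{(T)}}{\slashed\pi}+\partial_\vartheta\Xi^X$, and by Proposition \ref{LTRh} one only has $|\mathrm{tr}\,\leftidx{^{(T)}}{\slashed\pi}|\lesssim s^{-1}+\delta^{k(1-\varepsilon_0)-1}s^{-1}$; since $k(1-\varepsilon_0)-1<0$ when $\varepsilon_0$ is close to $\varepsilon_k^*$, this pointwise quantity is \emph{not} small and is not even uniformly bounded as $\delta\to 0$. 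What actually rescues the argument is that the flow segment has $u$-length $\leq 4\delta$, so the logarithm of the full Jacobian factor (the ratio of pulled-back area elements) is controlled by $\int_0^u|\tfrac12\mathrm{tr}\,\leftidx{^{(T)}}{\slashed\pi}|\,du'\lesssim\delta+\delta^{k(1-\varepsilon_0)}$, which \emph{is} small. You should phrase the Jacobian estimate in this integrated form rather than as a pointwise smallness claim. With that correction your proof is complete.
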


\begin{lemma}\label{L2L}
	For any function $f\in C(D^{s, u})$, set
	\[
	F(s,u,\vartheta):=\int_{t_0}^sf(\tau,u,\vartheta)d\tau.
	\]
	Under the assumptions $(\star)$, then for small $\delta>0$,
	\begin{equation}\label{Ff}
	\|F\|_{s,u}\lesssim\varrho(s,u)^{1/2}\int_{t_0}^s\f{1}{\varrho(\tau,u)^{1/2}}\|f\|_{\tau,u}d\tau,
	\end{equation}
	where $\|F\|_{s,u}$ denotes the $L^2$ norm of $F$ over $\Sigma_s^u$  which is defined in Definition \ref{2.3}.
\end{lemma}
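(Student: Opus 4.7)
My plan is to establish \eqref{Ff} as a weighted Minkowski-type inequality combined with a pointwise transfer of the surface area element from $\Sigma_s^u$ to $\Sigma_\tau^u$. Using Definition \ref{2.3}, I would first unfold
\[
\|F\|_{s,u}^2=\int_0^u\int_{\mathbb S^1}\left(\int_{t_0}^sf(\tau,u',\vartheta)\,d\tau\right)^2\sqrt{\slashed g_{XX}(s,u',\vartheta)}\,d\vartheta\,du',
\]
and apply Minkowski's integral inequality with $\tau$ as the outer variable to obtain
\[
\|F\|_{s,u}\leq\int_{t_0}^s\left(\int_0^u\int_{\mathbb S^1}f(\tau,u',\vartheta)^2\sqrt{\slashed g_{XX}(s,u',\vartheta)}\,d\vartheta\,du'\right)^{1/2}d\tau.
\]
The target norm $\|f\|_{\tau,u}$ uses the area element at level $\tau$, so the remaining task is to compare $\sqrt{\slashed g_{XX}(s,\cdot)}$ with $\sqrt{\slashed g_{XX}(\tau,\cdot)}$ pointwise.

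For this transfer, I would derive the evolution identity for $\slashed g_{XX}$ along the integral curve of $\mathring L=\p_s$ at fixed $(u',\vartheta)$. Because $X=\p_\vartheta$ commutes with $\mathring L$ and $\mathscr D$ is the Levi-Civita connection of $g$, \eqref{chith} yields $\mathring L\slashed g_{XX}=2\chi_{XX}$; combining with the decomposition \eqref{errorv} gives
\[
\mathring L\ln\sqrt{\slashed g_{XX}}=\frac{1}{\varrho}+\text{tr}\check\chi.
\]
Integrating this ODE from $\tau$ to $s$ and using $|\text{tr}\check\chi|\lesssim M^k\delta^{k(1-\varepsilon_0)}s^{-2}\ln s$ from Proposition \ref{chi'}, whose $\tau$-integral over $[t_0,s]$ is uniformly $O(\delta^{k(1-\varepsilon_0)})$, produces the pointwise bound
\[
\sqrt{\slashed g_{XX}(s,u',\vartheta)}\lesssim\frac{\varrho(s,u')}{\varrho(\tau,u')}\sqrt{\slashed g_{XX}(\tau,u',\vartheta)}.
\]

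Finally, since $u'\in[0,u]\subset[0,4\delta]$ with $\delta$ small and $s\geq t_0$, the identity $\varrho(\cdot,u')=\varrho(\cdot,u)+(u-u')$ gives the uniform equivalence $\varrho(s,u')/\varrho(\tau,u')\sim\varrho(s,u)/\varrho(\tau,u)$. Substituting this into the Minkowski bound converts the inner integral into at most $[\varrho(s,u)/\varrho(\tau,u)]\|f\|_{\tau,u}^2$, and extracting $\varrho(s,u)^{1/2}$ delivers \eqref{Ff}. I do not foresee any serious obstacle: the whole argument reduces to the uniform $L^\infty$ comparison of the two surface area elements along integral curves of $\mathring L$, and this is exactly what the $\check\chi$-bound \eqref{echi'} provides.
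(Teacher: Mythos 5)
Your proof is correct and follows the canonical argument: Minkowski's integral inequality followed by a pointwise comparison of the area elements via the transport equation $\mathring L\slashed g_{XX}=2\chi_{XX}$, with the deviation from the Euclidean rate controlled by $\int_\tau^s\text{tr}\check\chi\,d\sigma=O(\delta^{k(1-\ve_0)})$ from Proposition \ref{chi'}. The paper itself does not present a proof of this lemma (it cites Lemma 12.57 of \cite{J} via \cite{Ding4}), but your derivation — including the identification $\chi_{XX}/\slashed g_{XX}=\text{tr}\chi$ in one spatial dimension, the split $\text{tr}\chi=\frac{1}{\varrho}+\text{tr}\check\chi$, and the monotonicity argument showing $\varrho(s,u')/\varrho(\tau,u')\leq\varrho(s,u)/\varrho(\tau,u)$ for $u'\leq u$ and $s\geq\tau$ — is exactly the standard route and is complete.
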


One can now derive $L^2$ estimates for the quantities such as
${\check\chi}$, $\leftidx{^{(Z)}}{\slashed\pi}_{\mathring L}$, $\leftidx{^{(R)}}{\slashed\pi}$, $\check{L}$, $\upsilon$, $x^j$
 $(j=1,2)$ and $R$ similarly as in \cite[Proposition 8.1]{Ding4}. However, here one has to pay more attentions to the
precise orders of smallness for $\dl$ and the time decay or increasing rates in each involved
quantity, one can get

\begin{proposition}\label{L2chi}
 Under the assumptions $(\star)$, when $\delta>0$ is small, it holds that for ${m\leq 2N-6}$,
 \begin{align*}
  &\delta^l\|Z^{m+1}\mu\|_{s,u}
 \lesssim\delta^{k(1-\varepsilon_0)-1/2}s^{1/2}\ln s+\delta^{(k-1)(1-\varepsilon_0)} s^{1/2}\Big(\sqrt{\tilde E_{1,\leq m+2}(s,u)}\\
 &\qquad\qquad\qquad\quad+\ln s\sqrt{\tilde E_{2,\leq m+2}(s,u)}+\dl^{k(1-\ve_0)-1}\sqrt{\int_0^u \tilde F_1(s,u')du'}\Big),\\
 &\delta^l\|\slashed{\mathcal L}_{Z}^m\check\chi\|_{s,u}\lesssim\delta^{k(1-\varepsilon_0)+1/2}s^{-3/2}\ln s
 +s^{-1}\mathscr E_{m+2}(s,u),\\
 &\delta^l\|\slashed{\mathcal L}_Z^{m+1}\slashed g\|_{s,u}+s^{-1}\delta^l\|{Z}^{m+2}x\|_{s,u}\lesssim\delta^{1/2}s^{1/2}
 +\mathscr E_{m+2}(s,u),\\
   &\delta^l\|\slashed{\mathcal L}_Z^{m+1}R\|_{s,u}+\delta^l\|Z^{m+1}\upsilon\|_{s,u}\lesssim\delta^{k(1-\varepsilon_0)+1/2}s^{1/2}\ln s
   +s\mathscr E_{m+2}(s,u),\\
   &\delta^l\|\slashed{\mathcal L}_{Z}^m\leftidx{^{(R)}}{\slashed\pi}_T\|_{s,u}\lesssim(1+\dl^{k(1-\ve_0)-1}\ln s)\Big(\delta^{k(1-\varepsilon_0)+1/2}s^{-1/2}\ln s
   +\mathscr E_{m+2}(s,u)\Big),\\
      &\delta^l\|Z^{m+1}\check L^j\|_{s,u}+\delta^l\|Z^{m+1}\check\varrho\|_{s,u}+\delta^l\|\slashed{\mathcal L}_{Z}^m\leftidx{^{(R)}}{\slashed\pi}\|_{s,u}+\delta^l\|\slashed{\mathcal L}_{Z}^m\leftidx{^{(R)}}{\slashed\pi}_{\mathring L}\|_{s,u}\\
   &\qquad\qquad\qquad\qquad\qquad\qquad\lesssim\delta^{k(1-\varepsilon_0)+1/2}s^{-1/2}\ln s+\mathscr E_{m+2}(s,u),\quad j=1,2,\\
   &\delta^l\|\slashed{\mathcal L}_{Z}^m\leftidx{^{(T)}}{\slashed\pi}_{\mathring L}\|_{s,u}\lesssim\delta^{k(1-\varepsilon_0)-1/2}s^{-1/2}\ln s
   +\delta^{(k-1)(1-\varepsilon_0)}s^{-1/2}\Big(\sqrt{\tilde E_{1,\leq m+2}(s,u)}\\
   &\qquad\qquad\qquad\qquad+\ln s\sqrt{\tilde E_{2,\leq m+2}(s,u)}+\dl^{k(1-\ve_0)-1}\sqrt{\int_0^u \tilde F_1(s,u')du'}\Big),\\
       &\delta^l\|\slashed{\mathcal L}_{Z}^m\leftidx{^{(T)}}{\slashed\pi}\|_{s,u}\lesssim(\delta^{1/2}+\dl^{k(1-\ve_0)-1/2}\ln s)s^{-1/2}
   +\delta^{(k-1)(1-\varepsilon_0)}s^{-1/2}\Big(\sqrt{\tilde E_{1,\leq m+2}(s,u)}\\
   &\qquad\qquad\qquad\qquad+\ln s\sqrt{\tilde E_{2,\leq m+2}(s,u)}+(s^{-1/2}+\dl^{k(1-\ve_0)-1})\sqrt{\int_0^u \tilde F_1(s,u')du'}\Big),
 \end{align*}
 where $\mathscr E_{m+2}(s,u)=\dl^{k(1-\ve_0)}s^{-\iota}\sqrt{\tilde E_{1,\leq m+2}(s,u)}+\dl^{k(1-\ve_0)+\ve_0}(1+\dl^{k(1-\ve_0)-1})s^{-1/2}\ln^2 s\sqrt{\tilde E_{2,\leq m+2}(s,u)}+\dl^{(k-1)(1-\ve_0)}\sqrt{\int_0^u \tilde F_1(s,u')du'}$ with $\tilde E_{i,\leq m+2}(s,u)=\sup_{t_0\leq\tau\leq s}E_{i,\leq m+2}(\tau,u)$ $(i=1,2)$ and $\tilde F_{1,\leq m+2}(s,u)$ $=\sup_{t_0\leq\tau\leq s}F_{1,\leq m+2}(\tau,u)$.
\end{proposition}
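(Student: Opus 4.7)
The plan is to establish all seven estimates simultaneously by induction on $m$, using the fact that each quantity $Q$ appearing on the left satisfies a transport equation of the form $\mathring L Q = (\text{principal RHS})$, to which I will commute $Z^{m+1}$ (or $\slashed{\mathcal L}_Z^m$ where appropriate) and then apply Lemma \ref{L2L} together with the data bounds from Theorem \ref{Th2.1} on $\Sigma_{t_0}^u$. Specifically, for $\mu$ the starting equation is \eqref{lmu}; for $\check\chi$ it is \eqref{Lchi'}; for $\slashed g$ and $X^{m+2}x^j$ it is $\slashed{\mathcal L}_{\mathring L}\slashed g = 2\chi$ together with $\mathring L x^j = \mathring L^j$; for $R$, $\upsilon$ and $\check\varrho$ they come from \eqref{R}, \eqref{omega}, \eqref{rrho} combined with \eqref{LeL}; and for $\check L^j$ directly from \eqref{LeL}. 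The deformation-tensor estimates for $\leftidx{^{(T)}}{\slashed\pi}$, $\leftidx{^{(R)}}{\slashed\pi}$ and their $\mathring L$-, $T$-components are algebraic consequences of the formulas in Appendix A, so they reduce to the preceding six once the relevant $L^2$ bounds are in hand.

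The critical step is the estimate of $\|Z^{m+1}\mu\|_{s,u}$. After commuting $Z^{m+1}$ through \eqref{lmu}, the top-order output splits into three types of terms: (i) terms with $Z^{m+1}\mathring L\varphi_\gamma$ and $Z^{m+1}(\slashed d\varphi_\gamma)$, which Lemma \ref{L2L} and the weighted energies \eqref{eil}-\eqref{fil} bound by $\delta^{(k-1)(1-\varepsilon_0)}s^{1/2}\bigl(\sqrt{\tilde E_{1,\leq m+2}}+\ln s\,\sqrt{\tilde E_{2,\leq m+2}}\bigr)$ once the prefactor $G_{\mathring L\mathring L}^\gamma = O(\delta^{k(1-\varepsilon_0)}s^{-k/2})$ supplied by the $k$-th null condition identity \eqref{gLLL} is absorbed; (ii) the dangerous term $G_{\mathring L\mathring L}^\gamma Z^{m+1}T\varphi_\gamma$, for which the $T$-derivative is estimated via the flux through $\int_0^u F_{1,\leq m+2}(s,u')\,du'$, producing the $\delta^{k(1-\varepsilon_0)-1/2}$ flux contribution after incorporating the $\delta^{k(1-\varepsilon_0)}$ gain from \eqref{gLLL}; and (iii) lower-order commutator terms involving $Z^j\mu$ for $j\le m$ and the deformation tensors, which are closed by the induction hypothesis. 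The baseline $\delta^{k(1-\varepsilon_0)-1/2}s^{1/2}\ln s$ term comes from the data $Z^{m+1}\mu|_{\Sigma_{t_0}}$ combined with pointwise integration of the $L^\infty$-sized part of the RHS.

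The $\check\chi$ estimate proceeds in the same spirit: commute \eqref{Lchi'} with $\slashed{\mathcal L}_Z^m$, noting that the inhomogeneous term $\frac{1}{\varrho}G_{X\mathring L}^\gamma\slashed d_X\varphi_\gamma$ produces the $\delta^{k(1-\varepsilon_0)+1/2}s^{-3/2}\ln s$ baseline after using $|\varrho| \lesssim s$ and integrating through Lemma \ref{L2L}, while the principal-order terms $G_{X\mathring L}^\gamma\slashed d_X\mathring L\varphi_\gamma$, $G_{\mathring L\mathring L}^\gamma\slashed\nabla_X^2\varphi_\gamma$ and $G_{XX}^\gamma\mathring L^2\varphi_\gamma$ feed into $\mathscr E_{m+2}$ via the energy definitions \eqref{ei}-\eqref{fi}. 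The subsequent estimates for $\slashed{\mathcal L}_Z^{m+1}\slashed g$, $Z^{m+2}x^j$, $Z^{m+1}\check L^j$, $Z^{m+1}\check\varrho$, $Z^{m+1}\upsilon$ and $\slashed{\mathcal L}_Z^{m+1}R$ then follow because each of their transport equations has $\chi$ (hence $\check\chi + \varrho^{-1}\slashed g$) or $\mathring L\mathring L^i$ as its principal driving term, and the principal driver $\check\chi$ is now controlled. The $\slashed g$ bound comes with an extra $\delta^{1/2}s^{1/2}$ piece because of the nonzero induced metric on $\Sigma_{t_0}$.

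The main obstacle is the circular dependence between $\|Z^{m+1}\mu\|_{s,u}$ and $\|\slashed{\mathcal L}_Z^m\leftidx{^{(T)}}{\slashed\pi}\|_{s,u}$: the commutator $[Z^{m+1},\mathring L]\mu$ generates a $T$-deformation-tensor factor which, via Appendix A, couples back to $Z^{m+1}\mu$ itself. This is resolved by running the induction on the coupled pair simultaneously, exploiting that the self-coupling coefficient carries a factor $\delta^{k(1-\varepsilon_0)}$ (from \eqref{gLLL}) so that Gronwall's inequality in $s$ on $s^{-1/2}\|Z^{m+1}\mu\|_{s,u}$ closes for $\delta$ small and $\varepsilon_0\in(0,\varepsilon_k^*)$. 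The bookkeeping of the $T$-count $l$ on each step — so that the prefactor $\delta^l$ on the left matches precisely the loss $\delta^{-l}$ per $T$ already recorded pointwise in Proposition \ref{LTRh} — requires careful tracking but no new idea beyond what is already exploited in \cite{Ding4, Ding3}.
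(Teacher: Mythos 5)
Your high-level plan is correct in its broad strokes: transport each quantity along $\mathring L$, apply Lemma \ref{L2L}, exploit the null-condition gain $G_{\mathring L\mathring L}^\gamma=O(\delta^{k(1-\varepsilon_0)}s^{-k/2})$ from \eqref{gLLL}, reduce the auxiliary quantities (i.e.\ $\check L^j$, $x^j$, $\slashed g$, $R$, $\upsilon$, $\check\varrho$, the deformation tensors) to the two coupled core unknowns, and then close the coupled system by Gronwall. The paper proves things in exactly this architecture: it first expresses all the auxiliary $L^2$ norms in terms of $\|\slashed{\mathcal L}_Z^{m_1}\check\chi\|_{s,u}$ and $\|Z^{m_0}\mu\|_{s,u}$ (see \eqref{Zk1L}--\eqref{ZkRp}), proves the $R$-only case first, then handles $T$ and $\varrho\mathring L$ by commutators.

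However, there is a genuine gap: you propose to feed the principal second-order terms in $\slashed{\mathcal L}_R^m$\eqref{Lchi'}, namely $G_{\mathring L\mathring L}^\gamma\slashed\nabla_X^2\varphi_\gamma$, $G_{X\mathring L}^\gamma\slashed d_X\mathring L\varphi_\gamma$, $G_{XX}^\gamma\mathring L^2\varphi_\gamma$, directly into the energies via the $\varrho^{-2}$-loss of one angular derivative. That route does control the norms, but only with the coefficient $\delta^{k(1-\varepsilon_0)}\sqrt{\tilde E_{2,\leq m+2}}$, whereas the statement (through $\mathscr E_{m+2}$) requires the strictly better coefficient $\delta^{k(1-\varepsilon_0)+\varepsilon_0}(1+\delta^{k(1-\varepsilon_0)-1})\sqrt{\tilde E_{2,\leq m+2}}$. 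The extra $\delta^{\varepsilon_0}$ is not cosmetic: it is precisely what makes the final bootstrap in Section \ref{YY} close for all $\varepsilon_0<\varepsilon_k^*$ (cf.\ Remark \ref{5.1} and the requirement $\delta^{2(k+1)(1-\varepsilon_0)-2\varepsilon_0}<\delta^{2-2\varepsilon_0}$). The paper obtains this extra gain by \emph{not} estimating $G_{\mathring L\mathring L}^\gamma\slashed\triangle\varphi_\gamma$ as an honest angular second derivative: it instead substitutes the wave equation \eqref{fequation} to express $\slashed\triangle\varphi_\gamma$ through $\mathring L\mathring{\underline L}\varphi_\gamma$, $\f{1}{2\varrho}\mathring{\underline L}\varphi_\gamma$ and the lower-order source $H_\gamma$, and then substitutes \eqref{fequation} \emph{a second time} to eliminate the residual $\mathring LT\varphi$ and $T\varphi$ factors (the two underlined terms preceding \eqref{G3}). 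Only after this double rewriting does $G_{\mathring L\mathring L}^\gamma\slashed\triangle\varphi_\gamma$ give the estimate in \eqref{G3} with the improved $\delta$-power. Your outline as written would yield a version of $\mathscr E_{m+2}$ without the $\delta^{\varepsilon_0}$ factor and the constants would not close near the critical exponent.

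A secondary inaccuracy: you name the circular coupling as being between $\|Z^{m+1}\mu\|_{s,u}$ and $\|\slashed{\mathcal L}_Z^m\leftidx{^{(T)}}{\slashed\pi}\|_{s,u}$, but the deformation-tensor estimates are algebraic once $\mu$ and $\check\chi$ are in hand (you yourself observe this a paragraph earlier). The actual two-variable coupled ODE system that the paper closes is between $\|R^{m+1}\mu\|_{s,u}$ and $\|R^m(\mu\text{tr}\check\chi)\|_{s,u}$ --- see \eqref{LbZmu}, \eqref{RLc}, \eqref{tchi}, \eqref{tRmu}, and the induction ansatz \eqref{induction} that feeds both into Gronwall simultaneously.
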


\begin{proof}
	By Lemma \ref{4.2}, the explicit expressions of $\varrho\mathring L\check L^i$, $T\check L^i$ and $R\check L^i$
	can be given since $\varrho\mathring L\check L^i=\varrho\mathring L\mathring L^i-\check L^i$, $T\check L^i=T\mathring L^i+\f{\mu}{\varrho}g^{0i}+\f{\mu}{\varrho}\check L^i+\f{\mu-1}{\varrho^2}x^i$ and $R\check L^i={R^X}\slashed d_X\check L^i$
	hold.
	One then gets
	\begin{equation}\label{ZkL}
	\begin{split}
	\delta^l\|Z^{m+1}\check L^i\|_{s,u}\lesssim&\delta^{l_1}\|Z^{m_1}\check L^i\|_{s,u}+s\delta^{l_1}\|\slashed{\mathcal L}_Z^{m_1}\check\chi\|_{s,u}+\delta s^{-1}\delta^{l_{0}}\|Z^{m_{0}}\mu\|_{s,u}\\
	&+\delta^{k(1-\varepsilon_0)}s^{-2}\ln s\delta^{l_0}\|Z^{m_0}x\|_{s,u}+\delta^{k(1-\varepsilon_0)}s^{-1}\ln s\delta^{l_1}\|\slashed{\mathcal L}_Z^{m_1}\slashed g\|_{s,u}\\
	&+\delta^{(k-1)(1-\varepsilon_0)}s^{-(k-1)/2}\delta^{l_0}\|Z^{m_0}\varphi_\gamma\|_{s,u},
	\end{split}
	\end{equation}
	here $l$ and $l_p$ ($p=0,1$) are the numbers of $T$ in $Z^{m+1}$ and $Z^{m_p}$ respectively with $1\leq m_p\leq m+1-p$.
	Note that $|\check L^i|\lesssim\delta^{k(1-\varepsilon_0)}s^{-1}\ln s$ by \eqref{chL}.
	Together with $\|1\|_{s,u}\lesssim\delta^{1/2}\varrho(s,u)^{1/2}$, this yields
	$\|\check L^i\|_{s,u}\lesssim\delta^{k(1-\varepsilon_0)+1/2}s^{-1/2}\ln s$. Thus, with the help of \eqref{SiE},
	\begin{equation}\label{ZkcL}
	\begin{split}
	\delta^l\|Z^{m+1}\check L^i\|_{s,u}\lesssim&\delta^{k(1-\varepsilon_0)+1/2}s^{-1/2}\ln s+s\delta^{l_1}\|\slashed{\mathcal L}_Z^{m_1}\check\chi\|_{s,u}+\delta s^{-1}\delta^{l_{0}}\|Z^{m_{0}}\mu\|_{s,u}\\
	&+\delta^{k(1-\varepsilon_0)}s^{-2}\ln s\delta^{l_0}\|Z^{m_0}x\|_{s,u}+\delta^{k(1-\varepsilon_0)}s^{-1}\ln s\delta^{l_1}\|\slashed{\mathcal L}_Z^{m_1}\slashed g\|_{s,u}\\
	&+\delta^{k(1-\varepsilon_0)+\varepsilon_0}s^{-(k-1)/2}\Big\{s^{-\iota}\sqrt{E_{1,\leq m+2}(s,u)}+\sqrt{E_{2,\leq m+2}(s,u)}\Big\}.
	\end{split}
	\end{equation}
	Similarly, due to $\varrho Lx^i=\varrho\check L^i+x^i$, $Tx^i=\mu(-g^{0i}-\check L^i-\f{x^i}\varrho)$ and $Rx^a=\O^a+\upsilon(g^{0a}+\check L^a+\f{x^a}\varrho)$, then
	\begin{equation}\label{Zkx}
	\begin{split}
	\delta^{l}\|Z^{m+2}x&\|_{s,u}\lesssim\delta^{\f12}s^{\f32}+\delta^{l_0}\|Z^{m_0}\upsilon\|_{s,u}
	+s\sum_{j=1}^2\delta^{l_0}\|Z^{m_0}\check L^j\|_{s,u}+\delta\delta^{l_{0}}\|Z^{m_{0}}\mu\|_{s,u}\\
	&+\delta^{k(1-\varepsilon_0)+\varepsilon_0+1}s^{-(k-1)/2}\ln s\Big\{s^{-\iota}\sqrt{E_{1,\leq m+2}(s,u)}+\sqrt{E_{2,\leq m+2}(s,u)}\Big\}.
	\end{split}
	\end{equation}
	And it follows from $\slashed{\mathcal L}_{\varrho\mathring L}R=\varrho\leftidx{^{(R)}}{\slashed\pi}_{\mathring L}$, $\slashed{\mathcal L}_TR=\leftidx{^{(R)}}{\slashed\pi}_T$ and \eqref{Rpi}  in Appendix A that
	\begin{equation}\label{ZR}
	\begin{split}
	\delta^l\|\slashed{\mathcal L}_Z^{m+1}R&\|_{s,u}\lesssim\delta^{k(1-\varepsilon_0)+1/2}s^{1/2}\ln s+\delta^{k(1-\varepsilon_0)}s^{-1}\ln s\delta^{l_0}\|Z^{m_0}x\|_{s,u}
	\\
	&+\delta^{k(1-\varepsilon_0)}s^{-1}\delta^{l_1}\|Z^{m_1}\upsilon\|_{s,u}+s^2\delta^{l_1}\|\slashed{\mathcal L}_Z^{m_1}\check\chi\|_{s,u}+\sum_{j=1}^2s\delta^{l_0}\|Z^{m_1}\check L^j\|_{s,u}\\
	&+\delta^{k(1-\varepsilon_0)}\ln s\delta^{l_1}\|\slashed{\mathcal L}_Z^{m_1}\slashed g\|_{s,u}+\delta^{k(1-\varepsilon_0)+1}s^{-1}\ln s\delta^{l_{0}}\|Z^{m_{0}}\mu\|_{s,u}\\
	&+\delta^{k(1-\varepsilon_0)+\varepsilon_0}s^{-(k-3)/2} \Big\{s^{-\iota}\sqrt{E_{1,\leq m+2}(s,u)}+\sqrt{E_{2,\leq m+2}(s,u)}\Big\}.
	\end{split}
	\end{equation}
	Additionally, by \eqref{omega}, \eqref{rrho} and the facts that $\slashed{\mathcal L}_{\varrho\mathring L}\slashed g=\varrho\check\chi+\slashed g$, $\slashed{\mathcal L}_T\slashed g=\leftidx{^{(T)}}{\slashed\pi}$ and $\slashed{\mathcal L}_R\slashed g=\leftidx{^{(R)}}{\slashed\pi}$, it holds that
	\begin{equation}\label{Zku}
	\begin{split}
	\delta^{l}\|Z^{m+1}\upsilon\|_{s,u}\lesssim& \delta^{k(1-\varepsilon_0)}s^{-1}\ln s\delta^{l_0}\|Z^{k_0}x\|_{s,u}+s\delta^{l_0}\|Z^{m_0}\check L^1\|_{s,u}+s\delta^{l_0}\|Z^{m_0}\check L^2\|_{s,u}\\
	&+\delta^{k(1-\varepsilon_0)+\varepsilon_0}s^{-(k-3)/2}\Big\{ s^{-\iota}\sqrt{E_{1,\leq m+2}(s,u)}+\sqrt{E_{2,\leq m+2}(s,u)}\Big\},\qquad
	\end{split}
	\end{equation}
	\begin{equation}\label{Zkr}
	\begin{split}
	\delta^l\|Z^{m+1}\check\varrho\|_{s,u}\lesssim&\delta^{2k(1-\varepsilon_0)+1/2}s^{-3/2}\ln^2s+\delta^{k(1-\varepsilon_0)}s^{-2}\ln s\delta^{l_0}\|Z^{k_0}x\|_{s,u}
	+\delta^{l_0}\|Z^{m_0}\check L^1\|_{s,u}\\
	+\delta^{l_0}\|Z^{m_0}&\check L^2\|_{s,u}+\delta^{k(1-\varepsilon_0)+\varepsilon_0}s^{-(k-1)/2}\Big\{ s^{-\iota}\sqrt{E_{1,\leq m+2}(s,u)}+\sqrt{E_{2,\leq m+2}(s,u)}\Big\},
	\end{split}
	\end{equation}
	\begin{equation}\label{Zkg}
	\begin{split}
	\delta^l\|\slashed{\mathcal L}_Z^{m+1}\slashed g\|_{s,u}\lesssim&\delta^{1/2}s^{1/2}+s^{-1}\delta^{l_1}\|Z^{m_1}\upsilon\|_{s,u}+\delta s^{-1}\delta^{l_{0}}\|Z^{m_{0}}\mu\|_{s,u}\\
	&+\delta^{k(1-\varepsilon_0)}s^{-(k+2)/2}\delta^{l_0}\|Z^{k_0}x\|_{s,u}+s\delta^{l_1}\|\slashed{\mathcal L}_Z^{m_1}\check\chi\|_{s,u}\\
	&+(\delta^{k(1-\varepsilon_0)+1}s^{-2}+\delta^{2k(1-\ve_0)}s^{-2}\ln s)\delta^{l_0}\big\{\|Z^{m_1}\check L^1\|_{s,u}+\|Z^{m_1}\check L^2\|_{s,u}\big\}\\
	&+\delta^{k(1-\varepsilon_0)+\varepsilon_0}s^{-(k-1)/2}\Big\{s^{-\iota}\sqrt{E_{1,\leq m+2}(s,u)}+\sqrt{E_{2,\leq m+2}(s,u)}\Big\}.\qquad
	\end{split}
	\end{equation}
	
	Collecting \eqref{ZkcL}-\eqref{Zkg} yields
	\begin{align}
	\delta^l\|Z^{m+1}\check L^i\|_{s,u}\lesssim&\delta^{k(1-\varepsilon_0)+1/2}s^{-1/2}\ln s+s\delta^{l_1}\|\slashed{\mathcal L}_Z^{m_1}\check\chi\|_{s,u}+\delta s^{-1}\delta^{l_{0}}\|Z^{m_{0}}\mu\|_{s,u}\no\\
	&+\delta^{k(1-\varepsilon_0)+\varepsilon_0}s^{-(k-1)/2}\Big\{s^{-\iota}\sqrt{E_{1,\leq m+2}(s,u)}+\sqrt{E_{2,\leq m+2}(s,u)}\Big\},\label{Zk1L}\\
	\delta^l\|Z^{m+2}x^i\|_{s,u}\lesssim&\delta^{1/2}s^{3/2}+s^2\delta^{l_1}\|\slashed{\mathcal L}_Z^{m_1}\check\chi\|_{s,u}+\delta\delta^{l_{0}}\|Z^{m_{0}}\mu\|_{s,u}\no\\
	&+\delta^{k(1-\varepsilon_0)+\varepsilon_0}s^{-(k-3)/2}\Big\{s^{-\iota}\sqrt{E_{1,\leq m+2}(s,u)}+\sqrt{E_{2,\leq m+2}(s,u)}\Big\},\label{Zk1x}\\
	\delta^l\|\slashed{\mathcal L}_Z^{m+1}\slashed g\|_{s,u}\lesssim&\delta^{1/2}s^{1/2}+s\delta^{l_1}\|\slashed{\mathcal L}_Z^{m_1}\check\chi\|_{s,u}+\delta s^{-1}\delta^{l_{0}}\|Z^{m_{0}}\mu\|_{s,u}\no\\
	&+\delta^{k(1-\varepsilon_0)+\varepsilon_0}s^{-(k-1)/2}\Big\{ s^{-\iota}\sqrt{E_{1,\leq m+2}(s,u)}+\sqrt{E_{2,\leq m+2}(s,u)}\Big\},\label{Zk1g}\\
	\delta^{l}\|Z^{m+1}\upsilon\|_{s,u}\lesssim&\delta^{k(1-\varepsilon_0)+1/2}s^{1/2}\ln s+s^2\delta^{l_1}\|\slashed{\mathcal L}_Z^{m_1}\check\chi\|_{s,u}+\delta\delta^{l_{0}}\|Z^{m_{0}}\mu\|_{s,u}\no\\
	&+\delta^{k(1-\varepsilon_0)+\varepsilon_0}s^{-(k-3)/2}\Big\{ s^{-\iota}\sqrt{E_{1,\leq m+2}(s,u)}+\sqrt{E_{2,\leq m+2}(s,u)}\Big\},\label{Zk1u}\\
	\delta^l\|Z^{m+1}\check\varrho\|_{s,u}\lesssim&\delta^{k(1-\varepsilon_0)+1/2}s^{-1/2}\ln s+s\delta^{l_1}\|\slashed{\mathcal L}_Z^{m_1}\check\chi\|_{s,u}+\delta s^{-1}\delta^{l_{0}}\|Z^{m_{0}}\mu\|_{s,u}\no\\
	&+\delta^{k(1-\varepsilon_0)+\varepsilon_0}s^{-(k-1)/2}\Big\{s^{-\iota}\sqrt{E_{1,\leq m+2}(s,u)}+\sqrt{E_{2,\leq m+2}(s,u)}\Big\},\label{Zk1Rr}\\
	\delta^l\|\slashed{\mathcal L}_Z^{m+1}R\|_{s,u}\lesssim& \delta^{k(1-\varepsilon_0)+1/2}s^{1/2}\ln s+s^2\delta^{l_1}\|\slashed{\mathcal L}_Z^{m_1}\check\chi\|_{s,u}+\delta\delta^{l_{0}}\|Z^{m_{0}}\mu\|_{s,u}\no\\
	&+\delta^{k(1-\varepsilon_0)+\varepsilon_0}s^{-(k-3)/2}\Big\{s^{-\iota}\sqrt{E_{1,\leq m+2}(s,u)}+\sqrt{E_{2,\leq m+2}(s,u)}\Big\}.\label{Zk1R}
	\end{align}
	Thanks to \eqref{Lpi}-\eqref{Rpi} in Appendix A, one can deduce
	\begin{align}
	\delta^l\|\slashed{\mathcal L}_Z^m\leftidx{^{(T)}}{\slashed\pi}\|_{s,u}\lesssim&\delta^{1/2}s^{-1/2}
	+\delta^{l_1}\|\slashed{\mathcal L}_Z^{m_1}\check\chi\|_{s,u}+s^{-1}\delta^{l_{0}}\|Z^{m_{0}}\mu\|_{s,u}\no\\
	&+\delta^{(k-1)(1-\varepsilon_0)}s^{-(k-1)/2}\Big\{s^{-\iota}\sqrt{E_{1,\leq m+2}(s,u)}+\sqrt{E_{2,\leq m+2}(s,u)}\Big\},\label{ZkTpi}\\
	\delta^l\|\slashed{\mathcal L}_Z^m\leftidx{^{(T)}}{\slashed\pi}_{\mathring L}\|_{s,u}\lesssim&\delta^{k(1-\varepsilon_0)-1/2}s^{-1/2}+\delta^{k(1-\varepsilon_0)-1}s^{-(k-2)/2}\delta^{l_1}\|\slashed{\mathcal L}_Z^{m_1}\check\chi\|_{s,u}+ s^{-1}\delta^{l_{0}}\|Z^{m_{0}}\mu\|_{s,u}\no\\
	&+\delta^{(k-1)(1-\varepsilon_0)}s^{-(k-1)/2}\Big\{s^{-\iota}\sqrt{E_{1,\leq m+2}(s,u)}+\sqrt{E_{2,\leq m+2}(s,u)}\Big\},\label{ZkTLpi}\\
	\delta^{l}\|\slashed{\mathcal L}_Z^{m}\leftidx{^{(R)}}{\slashed\pi}_T\|_{s,u}\lesssim&\delta^{k(1-\varepsilon_0)+1/2}s^{-1/2}\ln s
	+s\delta^{l_1}\|\slashed{\mathcal L}_Z^{m_1}\check\chi\|_{s,u}+\delta s^{-1}\ln s\delta^{l_{0}}\|Z^{m_{0}}\mu\|_{s,u}\no\\
	+(\delta^{k(1-\varepsilon_0)+\varepsilon_0}&+\delta^{(2k-1)(1-\ve_0)})s^{-(k-1)/2}\Big\{s^{-\iota}\sqrt{E_{1,\leq m+2}(s,u)}+\sqrt{E_{2,\leq m+2}(s,u)}\Big\},\label{ZkRLpi}\\
	\delta^{l}\|\slashed{\mathcal L}_Z^{m}\leftidx{^{(R)}}{\slashed\pi}_{\mathring L}\|_{s,u}\lesssim&\delta^{k(1-\varepsilon_0)+1/2}s^{-1/2}\ln s+s\delta^{l_1}\|\slashed{\mathcal L}_Z^{m_1}\check\chi\|_{s,u}
	+\delta s^{-1}\delta^{l_{0}}\|Z^{m_{0}}\mu\|_{s,u}\no\\
	&+\delta^{k(1-\varepsilon_0)+\varepsilon_0}s^{-(k-1)/2} \Big\{s^{-\iota}\sqrt{E_{1,\leq m+2}(s,u)}+\sqrt{E_{2,\leq m+2}(s,u)}\Big\},\label{ZkRLp}\\
	\delta^l\|\slashed{\mathcal L}_Z^{m}\leftidx{^{(R)}}{\slashed\pi}\|_{s,u}\lesssim&\delta^{k(1-\varepsilon_0)+1/2}s^{-1/2}\ln s+s\delta^{l_1}\|\slashed{\mathcal L}_Z^{m_1}\check\chi\|_{s,u}+\delta s^{-1}\delta^{l_{0}}\|Z^{m_{0}}\mu\|_{s,u}\no\\
	&+\delta^{k(1-\varepsilon_0)+\varepsilon_0}s^{-(k-1)/2}\Big\{s^{-\iota}\sqrt{E_{1,\leq m+2}(s,u)}+\sqrt{E_{2,\leq m+2}(s,u)}\Big\}.\label{ZkRp}
	\end{align}
	On the other hand, for the solution $\phi$ to \eqref{quasi},
	\begin{equation}\label{Zm+1p}
	\begin{split}
	&\delta^l\|Z^{m+1}\phi\|_{s,u}\lesssim\delta^{1+l}\|[T,Z^{m+1}]\phi\|_{s,u}+\delta^{1+l}\|Z^{m+1}(\mu\tilde T^i\varphi_i)\|_{s,u}\\
	\lesssim&\delta^{3-\ve_0}s^{-3/2}\delta^{l_1}\|\slashed{\mathcal L}_Z^{m_1}\leftidx{^{(R)}}{\slashed\pi}_T\|_{s,u}+\delta^{3-\ve_0}s^{-1/2}\delta^{l_1}\|\slashed{\mathcal L}_Z^{m_1}\leftidx{^{(T)}}{\slashed\pi}_{\mathring L}\|_{s,u}+\delta^{2-\ve_0}s^{-1/2}\delta^{l'}\|Z^{m'+1}\mu\|_{s,u}\\
	&+\delta^{(k+1)(1-\ve_0)+1}s^{-3/2}\delta^{l_1}\|\slashed{\mathcal L}_Z^{m_1}\slashed g\|_{s,u}+(\delta+\delta^{k(1-\ve_0)})s^{-1}\delta^{l_0}\|Z^{m_0}\phi\|_{s,u}+\delta\delta^{l_0}\|Z^{m_0}\varphi\|_{s,u}\\
	&+\delta^{2-\ve_0}s^{-1/2}\delta^{l_0}\big(\|\tilde Z^{m_0}\check L^1\|_{s,u}+\|\tilde Z^{m_0}\check L^2\|_{s,u}\big)+\delta^{2-\ve_0}s^{-3/2}\delta^{l_0}\|Z^{m_0}x\|_{s,u},
	\end{split}
	\end{equation}
	which means
	\begin{equation}\label{Zmp1Y}
	\begin{split}
	\delta^l\|Z^{m+1}\phi\|_{s,u}\lesssim&\delta^{5/2-\ve_0}+\dl^{2-\ve_0}s^{1/2}\delta^{l_1}\|\slashed{\mathcal L}_Z^{m_1}\check\chi\|_{s,u}+\delta^{2-\ve_0} s^{-1/2}\delta^{l_{0}}\|Z^{m_{0}}\mu\|_{s,u}\\
	&+\delta^{2}\Big\{s^{-\iota}\sqrt{E_{1,\leq m+2}(s,u)}+\sqrt{E_{2,\leq m+2}(s,u)}\Big\}.
	\end{split}
	\end{equation}

	Note that all the terms in the left hand side of \eqref{Zk1L}-\eqref{Zmp1Y} are controlled by the $L^2-$norms of
	the derivatives of $\check\chi$ and $\mu$.
	Next we deal with $\|\slashed{\mathcal L}_Z^{m_1}\check\chi\|_{s,u}$ and $\|Z^{m_{0}}\mu\|_{s,u}$.

	First, we treat the cases that all the vectorfields $Z'$s in the left hand side of \eqref{Zk1L}-\eqref{Zmp1Y}
	are $R'$s. Following the processes to obtain \eqref{Zk1L}-\eqref{Zmp1Y}, one can show that the following inequalities
	hold when all $Z'$s are replaced by $R'$s,
	\begin{align}
	\|R^{m+1}\check L^i\|_{s,u}+&\|R^{m+1}\check\varrho\|_{s,u}+\|\slashed{\mathcal L}_R^{m}\leftidx{^{(R)}}{\slashed\pi}\|_{s,u}+\|\slashed{\mathcal L}_R^{m}\leftidx{^{(R)}}{\slashed\pi}_{\mathring L}\|_{s,u}\no\\
	\lesssim&\delta^{k(1-\varepsilon_0)+1/2}s^{-1/2}\ln s+s\|\slashed{\mathcal L}_R^{m_1}\check\chi\|_{s,u}+\delta s^{-1}\|R^{m_{0}}\mu\|_{s,u}\no\\
	&+\delta^{k(1-\varepsilon_0)+\varepsilon_0}s^{-(k-1)/2}\Big\{s^{-\iota}\sqrt{E_{1,\leq m+2}(s,u)}+\sqrt{E_{2,\leq m+2}(s,u)}\Big\},\no\\
	\|R^{m+2}x\|_{s,u}\lesssim&\delta^{1/2}s^{3/2}+s^2\|\slashed{\mathcal L}_R^{m_1}\check\chi\|_{s,u}+\delta\|R^{m_{0}}\mu\|_{s,u}\no\\
	&+\delta^{k(1-\varepsilon_0)+\varepsilon_0}s^{-(k-3)/2}\Big\{s^{-\iota}\sqrt{E_{1,\leq m+2}(s,u)}+\sqrt{E_{2,\leq m+2}(s,u)}\Big\},\no\\
	\|\slashed{\mathcal L}_R^{m+1}\slashed g\|_{s,u}\lesssim&\delta^{1/2}s^{1/2}+s\|\slashed{\mathcal L}_R^{m_1}\check\chi\|_{s,u}+\delta s^{-1}\|R^{m_{0}}\mu\|_{s,u}\no\\
	&+\delta^{k(1-\varepsilon_0)+\varepsilon_0}s^{-(k-1)/2}\Big\{ s^{-\iota}\sqrt{E_{1,\leq m+2}(s,u)}+\sqrt{E_{2,\leq m+2}(s,u)}\Big\},\no\\
	\|R^{m+1}\upsilon\|_{s,u}\lesssim&\delta^{k(1-\varepsilon_0)+1/2}s^{1/2}\ln s+s^2\|\slashed{\mathcal L}_R^{m_1}\check\chi\|_{s,u}+\delta\|R^{m_{0}}\mu\|_{s,u}\label{RL2}\\
	&+\delta^{k(1-\varepsilon_0)+\varepsilon_0}s^{-(k-3)/2}\Big\{ s^{-\iota}\sqrt{E_{1,\leq m+2}(s,u)}+\sqrt{E_{2,\leq m+2}(s,u)}\Big\},\no\\
	\|\slashed{\mathcal L}_R^m\leftidx{^{(T)}}{\slashed\pi}\|_{s,u}\lesssim&\delta^{1/2}s^{-1/2}
	+\|\slashed{\mathcal L}_R^{m_1}\check\chi\|_{s,u}+s^{-1}\|R^{m_{0}}\mu\|_{s,u}\no\\
	&+\delta^{(k-1)(1-\varepsilon_0)}s^{-(k-1)/2}\Big\{s^{-\iota}\sqrt{E_{1,\leq m+2}(s,u)}+\sqrt{E_{2,\leq m+2}(s,u)}\Big\},\no\\
	\|\slashed{\mathcal L}_R^m\leftidx{^{(T)}}{\slashed\pi}_{\mathring L}\|_{s,u}\lesssim&\delta^{k(1-\varepsilon_0)-1/2}s^{-1/2}+\delta^{k(1-\varepsilon_0)-1}s^{-{(k-2)}/2}\|\slashed{\mathcal L}_R^{m_1}\check\chi\|_{s,u}+ s^{-1}\|R^{m_{0}}\mu\|_{s,u}\no\\
	&+\delta^{(k-1)(1-\varepsilon_0)}s^{-(k-1)/2}\Big\{s^{-\iota}\sqrt{E_{1,\leq m+2}(s,u)}+\sqrt{E_{2,\leq m+2}(s,u)}\Big\},\no\\
	\|\slashed{\mathcal L}_R^{m}\leftidx{^{(R)}}{\slashed\pi}_T\|_{s,u}\lesssim&\delta^{k(1-\varepsilon_0)+\f12}s^{-\f12}\ln s
	+s\|\slashed{\mathcal L}_R^{m_1}\check\chi\|_{s,u}+\delta s^{-1}\ln s\|R^{m_{0}}\mu\|_{s,u}\no\\
	+(\delta^{k(1-\varepsilon_0)+\varepsilon_0}&+\delta^{(2k-1)(1-\ve_0)})s^{-(k-1)/2}\Big\{s^{-\iota}\sqrt{E_{1,\leq m+2}(s,u)}+\sqrt{E_{2,\leq m+2}(s,u)}\Big\},\no\\
	\|R^{m+1}\phi\|_{s,u}\lesssim&\delta^{5/2-\ve_0}+\dl^{2-\ve_0}s^{1/2}\|\slashed{\mathcal L}_R^{m_1}\check\chi\|_{s,u}+\delta^{2-\ve_0} s^{-1/2}\|R^{m_{0}}\mu\|_{s,u}\no\\
	&+\delta^{2}\Big\{s^{-\iota}\sqrt{E_{1,\leq m+2}(s,u)}+\sqrt{E_{2,\leq m+2}(s,u)}\Big\},\no\\
	\|R^{m}\mathring L\phi\|_{s,u}\lesssim&\delta^{5/2-\ve_0}s^{-1}+(\dl^{2-\ve_0}+\dl^{(k+1)(1-\ve_0)})s^{-1/2}\|\slashed{\mathcal L}_R^{m_1}\check\chi\|_{s,u}+\delta^{2-\ve_0} s^{-3/2}\|R^{m_{0}}\mu\|_{s,u}\no\\
	&+\delta^{2}\Big\{s^{-1-\iota}\sqrt{E_{1,\leq m+2}(s,u)}+\sqrt{E_{2,\leq m+2}(s,u)}\Big\}.\no
	\end{align}

We now estimate the $L^2-$norm of $\mathring LR^{m+1}\mu$.
Note that it follows from \eqref{Lchi'}, \eqref{fequation} and \eqref{pal} that
	\begin{equation}\label{Lrchi}
	   \begin{split}
		\mathring L(\varrho^2\text{tr}\check\chi)=&-\varrho^2G_{X\mathring L}^\gamma(\slashed d^X\mathring L\vp_\g)+\f12\varrho^2G_{\mathring L\mathring L}^\g\slashed\triangle\vp_\g+\f12\varrho^2\slashed g^{XX}G_{XX}^\g\mathring L^2\vp_\g-\varrho^2(\text{tr}\check\chi)^2\\
		&+\Big(-\f12G_{\mathring L\mathring L}^\g\mathring L\vp_\g-G_{\t T\mathring L}^\g\mathring L\vp_\g+G_{X\mathring L}^\g\slashed d^X\vp_\g\Big)(\varrho^2\text{tr}\check\chi+\varrho)+\cdots,
			\end{split}
	\end{equation}
	where
	\begin{align*}
	&G_{\mathring L\mathring L}^\g\slashed\triangle\vp_\g=\mu^{-1}G_{\mathring L\mathring L}^\g\Big(L(\mu\mathring L\vp_\g)+\f1{2\varrho}\mu\mathring L\vp_\g+2LT\vp_\g+\f1\varrho T\vp_\g-H_\g\Big)\\
	=&\mu^{-1}G_{\mathring L\mathring L}^\g\Big(L(\mu\mathring L\vp_\g)+\f1{2\varrho}\mu\mathring L\vp_\g-H_\g\Big)+2\mu^{-1}\big\{G_{\mathring L\mathring L}^0L(\mu\tilde T^i\mathring L\vp_i)+G_{\mathring L\mathring L}^\g L(\mu\tilde T^ig_{\g j}\slashed d^Xx^j\slashed d_X\vp_i)\\
	&-G_{\mathring L\mathring L}^\g(\mathring L\mathring L_{\g})\tilde T^iT\vp_i-(G_{\mathring L\mathring L}^\g\mathring L_\g)(L\tilde T^i)T\vp_i\underline{-(G_{\mathring L\mathring L}^\g\mathring L_\g)\tilde T^i(\mathring LT\vp_i)}+\f1{2\varrho}\big(\mu G_{\mathring L\mathring L}^0\tilde T\mathring L\vp_i\\
	&+\mu G_{\mathring L\mathring L}^\g\tilde T^ig_{\g j}\slashed d^Xx^j\slashed d_X\vp_i\underline{-(G_{\mathring L\mathring L}^\g\mathring L_\g)\tilde T^iT\vp_i}\big)
	\big\}.
	\end{align*}
	Applying \eqref{fequation} again for the above two underline terms yields
	\begin{align*}
	G_{\mathring L\mathring L}^\g\slashed\triangle\vp_\g
	=&-(G_{\mathring L\mathring L}^\g\mathring L_\g)\tilde T^i\slashed\triangle\vp_i+G_{\mathring L\mathring L}^\g\mathring L^2\vp_\g+2G_{\mathring L\mathring L}^0\tilde T^i(\mathring L^2\vp_i)\no\\
	&+2G_{\mathring L\mathring L}^\g\tilde{T}^ig_{\g j}\slashed d^Xx^j(\slashed d_X\mathring L\vp_i)-(G_{\mathring L\mathring L}^\g\mathring L_\g)\tilde T^i(\mathring L^2\vp_i)\\
	&+\mu^{-1}f_1(\varphi, \slashed dx, \mathring L^1, \mathring L^2)\left(
	\begin{array}{ccc}
	G_{\mathring L\mathring L}^\g\mathring L_\g\\
	\vp^{k-1}\\
	\end{array}
	\right)
	\left(
	\begin{array}{ccc}
	\mathring L\vp\\
	\slashed d\vp\\
	\end{array}
	\right)
	\left(
	\begin{array}{ccc}
	\vp^{k-1}T\vp\\
	\mu\mathring L\vp\\
	\mu\slashed d\vp\\
	\mu\varrho^{-1}
	\end{array}
	\right)\no\\
	&+\mu^{-1}(\text{tr}\check\chi) f_2(\varphi, \slashed dx, \mathring L^1, \mathring L^2)\left(
	\begin{array}{ccc}
	(G_{\mathring L\mathring L}^\g\mathring L_\g)T\vp\\
	\mu\vp^{k-1}\mathring L\vp\\
	\mu\vp^{k-1}\slashed d\vp
	\end{array}
	\right).\no
	\end{align*}
Together with \eqref{gLLL} and Proposition \ref{LTRh}, by the analogous proof of (8.43) in \cite{Ding4}, this deduces
	\begin{align}\label{G3}
	&\|R^m(\varrho^2\mu G_{\mathring L\mathring L}^\g\slashed\triangle\vp_\g)\|_{s,u}+\|R^m(\varrho^2\mu G_{X\mathring L}^\gamma\slashed d^X\mathring L\vp_\g)\|_{s,u}+\|R^m(\varrho^2\mu\slashed g^{XX}G_{XX}^\g\mathring L^2\vp_\g)\|_{s,u}\no\\
	\lesssim&\dl^{k(1-\ve_0)}\|\slashed dR^{\leq m+1}\vp\|_{s,u}+\dl^{(k-1)(1-\ve_0)}s^{1/2}\big(s\|\mathring LR^{\leq m}\mathring L\vp\|_{s,u}+\|\mathring LR^{\leq m+1}\vp\|_{s,u}\big)\no\\
	&+\dl^{k(1-\ve_0)}s^{-1}\big(\|R^{\leq m}\mu\|_{s,u}+\|\slashed{\mathcal L}_{R}^{\leq m}\leftidx{^{(R)}}{\slashed\pi}_{\mathring L}\|_{s,u}\big)+\dl^{k(1-\ve_0)-\ve_0}s^{1/2}\|R^{\leq m}\text{tr}\check\chi\|_{s,u}\\
	&+(1+\dl^{k(1-\ve_0)-1}\ln s)\Big\{\dl^{k(1-\ve_0)}s^{-1}\big(\sum_{i=1}^2\|R^{\leq m}\check L^i\|_{s,u}+\|R^{\leq m}\check\varrho\|_{s,u}+\|\slashed{\mathcal L}_{R}^{\leq m-1}\leftidx{^{(R)}}{\slashed\pi}\|_{s,u}\no\\
	&+s^{-1}\|R^{\leq m}\upsilon\|_{s,u}\big)+\dl^{(k-1)(1-\ve_0)}s^{-3/2}\big(s\|R^{\leq m}\mathring L\phi\|_{s,u}+\|R^{\leq m+1}\phi\|_{s,u}\big)	\Big\}\no\\
	&+\dl^{k(1-\ve_0)+\ve_0}(1+\dl^{k(1-\ve_0)-1})s^{-1/2}(s^{-\iota}\sqrt{E_{1,\leq m+2}(s,u)}+\sqrt{E_{2,\leq m+2}(s,u)}).\no
	\end{align}
Collecting \eqref{Lrchi}, \eqref{G3} and \eqref{RL2}, we have
	\begin{equation}\label{RLc}
		\begin{split}
	&\|LR^m(\mu\varrho^2\text{tr}\check\chi)\|_{s,u}\\
	\lesssim&\dl^{k(1-\ve_0)+1/2}s^{-1/2}+\dl^{(k-1)(1-\ve_0)}s^{1/2}\big(s\|\mathring LR^{\leq m}\mathring L\vp\|_{s,u}+\|\mathring LR^{\leq m+1}\vp\|_{s,u})\\
	&+\dl^{k(1-\ve_0)}s^{-\iota}\sqrt{E_{1,\leq m+2}(s,u)}+\dl^{k(1-\ve_0)+\ve_0}(1+\dl^{k(1-\ve_0)-1})s^{-1/2}\sqrt{E_{2,\leq m+2}(s,u)}\\
	&+\dl^{k(1-\ve_0)-\ve_0}s^{1/2}\|R^{\leq m}(\mu\text{tr}\check\chi)\|_{s,u}+\dl^{k(1-\ve_0)}s^{-1}\|R^{\leq m}\mu\|_{s,u}\\
	&+\dl^{k(1-\ve_0)+1}(1+\dl^{k(1-\ve_0)-1})s^{-2}\ln s\|R^{\leq m+1}\mu\|_{s,u}.
	\end{split}
	\end{equation}
On the other hand, by virtue of \eqref{lmu} and the commutation of vector fields, one has
	\begin{align}\label{LbZmu}
	&\delta^l\|\mathring LR^{m+1}\mu\|_{s,u}\no\\
	\lesssim&\dl^{k(1-\ve_0)-1}s^{-1}\big(\|\slashed{\mathcal L}_R^{\leq m}\leftidx{^{(R)}}{\slashed\pi}_{\mathring L}\|_{s,u}+\|R^{m+1}\check L^1\|_{s,u}+\|R^{m+1}\check L^2\|_{s,u}+s^{-1}\|R^{\leq m+1}\upsilon\|_{s,u}\big)\no\\
	&+\dl^{2k(1-\ve_0)-\ve_0}s^{-2}\ln s\|\slashed{\mathcal L}_R^{\leq m-1}\leftidx{^{(R)}}{\slashed\pi}\|_{s,u}+\dl^{k(1-\ve_0)}s^{-2}\ln s\|R^{\leq m+1}\mu\|_{s,u}\no\\
	&+\dl^{(k-1)(1-\ve_0)-1}s^{-1/2}\big(\|R^{\leq m+1}\vp\|_{s,u}+\dl\|TR^{\leq m+1}\vp\|_{s,u}+\dl\|\slashed dR^{\leq m}\mathring L\vp\|_{s,u}\big)\no\\
	&+\dl^{k(1-\ve_0)}s^{-2}\|\slashed{\mathcal L}_R^{\leq m}\leftidx{^{(R)}}{\slashed\pi}_{T}\|_{s,u}\no\\
	\lesssim&\delta^{2k(1-\varepsilon_0)-1/2}s^{-3/2}\ln s+\delta^{k(1-\varepsilon_0)-1}\|R^{\leq m}(\mu\text{tr}\check\chi)\|_{s,u}\no\\
	&+\delta^{(k-1)(1-\varepsilon_0)}s^{-1/2}\Big\{s^{-\iota}\sqrt{E_{1,\leq m+2}(s,u)}+\sqrt{E_{2,\leq m+2}(s,u)}\Big\}\\
	&+\delta^{k(1-\varepsilon_0)}s^{-2}\ln s\|R^{\leq m+1}\mu\|_{s,u}+\delta^{2k(1-\varepsilon_0)-1}s^{-2}\ln s\|R^{\leq m}\mu\|_{s,u}.\no
	\end{align}

	Set $F(s,u,\vartheta)=\varrho(s,u)^2({R}^{m}(\mu\textrm{tr}\check\chi))(s,u,\vartheta)-\varrho(t_0,u)^2({R}^{m}(\mu\textrm{tr}\check\chi))(t_0,u,\vartheta)$ in \eqref{Ff}.
Since for any 2-form $\xi$ on $S_{s,u}$, $|\mathring L(\varrho^2\textrm{tr}\xi)|\lesssim\varrho^2|\slashed{\mathcal L}_{\mathring L}\xi|+\delta^{k(1-\varepsilon_0)}s^{-2}\ln s(\varrho^2|\textrm{tr}\xi|)$ holds. Then
	applying \eqref{RLc} and Grownwall's equality simultaneously yields
	\begin{align}\label{tchi}
	&s^{3/2}\|{R}^{m}(\mu\textrm{tr}\check\chi)\|_{s,u}\no\\
	\lesssim&\delta^{k(1-\varepsilon_0)+1/2}\ln s
	+\delta^{k(1-\varepsilon_0)+1}(1+\dl^{k(1-\ve_0)-1})\int_{t_0}^s\tau^{-5/2}\ln\tau\|R^{\leq m+1}\mu\|_{\tau,u}d\tau\no\\
	&+\delta^{k(1-\varepsilon_0)}s^{1/2-\iota}\sqrt{\tilde E_{1,\leq m+2}(s,u)}+\dl^{k(1-\ve_0)+\ve_0}(1+\dl^{k(1-\ve_0)-1})\ln s\sqrt{\tilde E_{2,\leq m+2}(s,u)}\\
	&+\dl^{k(1-\ve_0)}\int_{t_0}^s\tau^{-3/2}\|R^{\leq m}\mu\|_{\tau,u}d\tau+\dl^{(k-1)(1-\ve_0)}s^{1/2}\sqrt{\int_0^uF_{1,\leq m+2}(s,u')du'}.\no
	\end{align}

	Analogously, \eqref{LbZmu} shows that
	\begin{equation}\label{tRmu}
	\begin{split}
	s^{-1/2}\|R^{m+1}\mu\|_{s,u}\lesssim&\delta^{k(1-\varepsilon_0)-\ve_0+1/2}
	+\delta^{k(1-\varepsilon_0)-1}\int_{t_0}^s\tau^{-1/2}\|{R}^{\leq m}(\mu\textrm{tr}\check\chi)\|_{\tau,u}d\tau\\
	&+\delta^{2k(1-\varepsilon_0)-1}\int_{t_0}^s\tau^{-5/2}\ln\tau\|R^{\leq m}\mu\|_{\tau,u}d\tau\\
	&+\delta^{(k-1)(1-\varepsilon_0)}\sqrt{\tilde E_{1,\leq m+2}(s,u)}+\dl^{(k-1)(1-\ve_0)}\ln s\sqrt{\tilde E_{2,\leq m+2}(s,u)}.
	\end{split}
	\end{equation}
	When $m\leq N-1$, it follows from Proposition \ref{LTRh} that $|R^{m+1}\mu|\leq \dl^{k(1-\ve_0)-\ve_0}$, which implies $\|R^{m+1}\mu\|_{s,u}\leq\dl^{k(1-\ve_0)-\ve_0+1/2}s^{1/2}$. For $N\leq m\leq 2N-5$, one can make the following induction:
	\begin{equation}\label{induction}
	\begin{split}
	s^{-1/2}\|R^{m}\mu\|_{s,u}\lesssim&\delta^{k(1-\varepsilon_0)-\ve_0+1/2}+\delta^{(k-1)(1-\varepsilon_0)}\sqrt{\tilde E_{1,\leq m+2}(s,u)}\\
	+\dl^{(k-1)(1-\ve_0)}&\ln s\sqrt{\tilde E_{2,\leq m+2}(s,u)}+\dl^{(2k-1)(1-\ve_0)-1}\sqrt{\int_0^uF_{1,\leq m+2}(s,u')du'}.
	\end{split}
	\end{equation}
	Substitute \eqref{induction} into the right hands of \eqref{tchi} and \eqref{tRmu} to obtain
	\begin{align*}
	&s^{3/2}\|{R}^{m}(\mu\textrm{tr}\check\chi)\|_{s,u}\\
	\lesssim&\delta^{k(1-\varepsilon_0)+1/2}\ln s
	+\delta^{k(1-\varepsilon_0)+1}(1+\dl^{k(1-\ve_0)-1})\int_{t_0}^s\tau^{-5/2}\ln\tau\|R^{\leq m+1}\mu\|_{\tau,u}d\tau\\
	&+\delta^{k(1-\varepsilon_0)}s^{1/2-\iota}\sqrt{\tilde E_{1,\leq m+2}(s,u)}+\dl^{k(1-\ve_0)+\ve_0}(1+\dl^{k(1-\ve_0)-1})\ln^2 s\sqrt{\tilde E_{2,\leq m+2}(s,u)}\\
	&+\dl^{(k-1)(1-\ve_0)}s^{1/2}\sqrt{\int_0^u\tilde F_{1,\leq m+2}(s,u')du'}
	\end{align*}
	and
	\begin{align*}
	&s^{-1/2}\|R^{m+1}\mu\|_{s,u}\\
	\lesssim&\delta^{k(1-\varepsilon_0)-\ve_0+1/2}
	+\delta^{k(1-\varepsilon_0)-1}\int_{t_0}^s\tau^{-1/2}\|{R}^{\leq m}(\mu\textrm{tr}\check\chi)\|_{\tau,u}d\tau\\
	&+\delta^{(k-1)(1-\varepsilon_0)}\Big(\sqrt{\tilde E_{1,\leq m+2}}+\ln s\sqrt{\tilde E_{2,\leq m+2}}\Big)+\dl^{(4k-1)(1-\ve_0)-2}\sqrt{\int_0^u\tilde F_{1,\leq m+2}(s,u')du'}.
	\end{align*}
	And hence,
	\begin{align}
	\|{R}^{m}(\mu\textrm{tr}\check\chi)\|_{s,u}&\lesssim \delta^{k(1-\varepsilon_0)+1/2}s^{-3/2}\ln s+\delta^{k(1-\varepsilon_0)}s^{-1-\iota}\sqrt{\tilde E_{1,\leq m+2}(s,u)}\no\\
	&+\dl^{k(1-\ve_0)+\ve_0}(1+\dl^{k(1-\ve_0)-1})s^{-3/2}\ln^2 s\sqrt{\tilde E_{2,\leq m+2}(s,u)}\no\\
	&+\dl^{(k-1)(1-\ve_0)}s^{-1}\sqrt{\int_0^u\tilde F_{1,\leq m+2}(s,u')du'},\label{LRc1}\\
	\|R^{m+1}\mu\|_{s,u}&\lesssim \delta^{k(1-\varepsilon_0)-\ve_0+1/2}s^{1/2}+\delta^{(k-1)(1-\varepsilon_0)}s^{1/2}\Big\{\sqrt{\tilde E_{1,\leq m+2}}+\ln s\sqrt{\tilde E_{2,\leq m+2}}\Big\}\no\\
	&+\dl^{(2k-1)(1-\ve_0)-1}s^{1/2}\sqrt{\int_0^u\tilde F_{1,\leq m+2}(s,u')du'}.\label{Rkmu}
	\end{align}
	Thus,
	\begin{equation}\label{LRc}
	\|\slashed{\mathcal L}_{R}^m\check\chi\|_{s,u}\lesssim\delta^{k(1-\varepsilon_0)+1/2}s^{-3/2}\ln s
	+s^{-1}\mathscr E_{m+2}(s,u)
	\end{equation}
	and then \eqref{induction} hold.
	
	If $\slashed{\mathcal L}_Z^m\check\chi=\slashed{\mathcal L}_Z^{m_1}\slashed{\mathcal L}_T\slashed{\mathcal L}_R^{m_2}\check\chi$ with $m_1+m_2=m-1$, then $\slashed{\mathcal L}_Z^m\check\chi=\slashed{\mathcal L}_Z^{m_1}[\slashed{\mathcal L}_T,\slashed{\mathcal L}_R^{m_2}]\check\chi+\slashed{\mathcal L}_Z^{m_1}\slashed{\mathcal L}_R^{m_2}\slashed{\mathcal L}_T\check\chi$ holds. Hence
	it follows from Lemma \ref{com}, \eqref{Tchi'} and  \eqref{Zk1L}-\eqref{ZkRp} that
	\begin{equation}\label{TRchi}
	\begin{split}
	&\delta^l\|\slashed{\mathcal L}_Z^m\check\chi\|_{s,u}\\
	\lesssim&\delta^{k(1-\varepsilon_0)+1/2}s^{-3/2}+\delta^{k(1-\varepsilon_0)}s^{-1}\delta^{l_1}\|\slashed{\mathcal L}_Z^{m_1}\check\chi\|_{s,u}+\delta s^{-2}\delta^{l_0}\|Z^{m_0}\mu\|_{s,u}\\
	&+\delta^{k(1-\varepsilon_0)+\varepsilon_0}s^{-(k+1)/2}\Big\{s^{-\iota}\sqrt{E_{1,\leq m+2}}+\sqrt{E_{2,\leq m+2}}\Big\}.
	\end{split}
	\end{equation}

	If $\slashed{\mathcal L}_Z^m\check\chi=\slashed{\mathcal L}_Z^{m_1}\slashed{\mathcal L}_{\varrho\mathring L}\slashed{\mathcal L}_R^{m_2}\check\chi$ with $m_1+m_2=m-1$, then $\slashed{\mathcal L}_Z^m\check\chi=\slashed{\mathcal L}_Z^{m_1}[\slashed{\mathcal L}_{\varrho\mathring L},\slashed{\mathcal L}_R^{m_2}]\check\chi+\slashed{\mathcal L}_Z^{m_1}\slashed{\mathcal L}_R^{m_2}(\varrho\slashed{\mathcal L}_{\mathring L}\check\chi)$, and it follows from \eqref{Lchi'} that
	\begin{equation}\label{rLRchi}
	\begin{split}
	&\delta^l\|\slashed{\mathcal L}_Z^m\check\chi\|_{s,u}\\
	\lesssim&\delta^{k(1-\varepsilon_0)+1/2}s^{-3/2}+\delta^{k(1-\varepsilon_0)}s^{-1}\ln s\delta^{l_1}\|\slashed{\mathcal L}_Z^{m_1}\check\chi\|_{s,u}+\delta^{k(1-\varepsilon_0)+1} s^{-3}\ln s\delta^{l_0}\|Z^{m_0}\mu\|_{s,u}\\
	&+\delta^{k(1-\varepsilon_0)+\varepsilon_0}s^{-(k+1)/2}\Big\{s^{-\iota}\sqrt{E_{1,\leq m+2}}+\sqrt{E_{2,\leq m+2}}\Big\}.
	\end{split}
	\end{equation}

	Therefore, for any $Z\in\{\varrho\mathring L,T,R\}$, by \eqref{LRc}, \eqref{TRchi} and \eqref{rLRchi}, we obtain
	\begin{equation}\label{zchi}
	\begin{split}
	\delta^l\|\slashed{\mathcal L}_Z^m\check\chi\|_{s,u}\lesssim& \delta^{k(1-\varepsilon_0)+1/2}s^{-3/2}\ln s+\delta s^{-2}\delta^{l_0}\|Z^{m_0}\mu\|_{s,u}+s^{-1}\mathscr E_{m+2}(s,u).
	\end{split}
	\end{equation}

\item For $Z^{m+1}\mu=\bar Z^{m+1}\mu$ with $\bar Z\in\{T, R\}$,
taking $F(s,u,\vartheta)=\delta^l\bar Z^{m+1}\mu(s,u,\vartheta)-\delta^l\bar Z^{m+1}\mu(t_0,u,\vartheta)$ in \eqref{Ff},
meanwhile using
\begin{equation*}
\begin{split}
&\delta^l\|\mathring L\bar Z^{m+1}\mu\|_{s,u}\\
\lesssim&\delta^{k(1-\varepsilon_0)-\varepsilon_0}s^{-1}\delta^{l_1}\|\slashed{\mathcal L}_{\bar Z}^{m_1}\leftidx{^{(R)}}{\slashed\pi}_{\mathring L}\|_{s,u}+\delta^{k(1-\varepsilon_0)}s^{-2}\ln s\delta^{l_0}\|\bar Z^{m_0}\mu\|_{s,u}\\
&+\delta^{(k-2)(1-\varepsilon_0)-\ve_0}s^{-1/2}\delta^{l_0}\|\bar Z^{ m_0}\varphi\|_{s,u}+\delta^{(k+1)(1-\varepsilon_0)}s^{-1}\delta^{l_1}\|\slashed{\mathcal L}_{\bar Z}^{k_1}\leftidx{^{(T)}}{\slashed\pi}_{\mathring L}\|_{s,u}\\
&+\delta^{k(1-\varepsilon_0)-1}s^{-k/2}\delta^{l_0}\|\bar Z^{m_0}\check L^i\|_{s,u}+\delta^{k(1-\varepsilon_0)}s^{-2}\delta^{l_1}\|\slashed{\mathcal L}_{\bar Z}^{m_1}\leftidx{^{(R)}}{\slashed\pi}_T\|_{s,u}\\
&+\delta^{k(1-\varepsilon_0)-1}s^{-k/2}\delta^{l_0}\|\slashed{\mathcal L}_{\bar Z}^{m_0}\slashed dx^i\|_{s,u}+\delta^{(k-1)(1-\varepsilon_0)}s^{-(k-1)/2}\delta^{l_0}\|\mathring L\bar Z^{m_0}\varphi\|_{s,u}\\
&+\delta^{(k-1)(1-\varepsilon_0)}s^{-1/2}\delta^{l_0}\|T\bar Z^{m_0}\varphi\|_{s,u}+\delta^{2k(1-\varepsilon_0)-\varepsilon_0}s^{-2}\ln s\delta^{l_1}\|\slashed{\mathcal L}_{\bar Z}^{m_1}\slashed g\|_{s,u}\\
\lesssim&\delta^{k(1-\varepsilon_0)-1/2}s^{-1/2}+\delta^{k(1-\varepsilon_0)-1}\delta^{l_1}\|\slashed{\mathcal L}_{\bar Z}^{m_1}\check\chi\|_{s,u}+\delta^{k(1-\varepsilon_0)}s^{-2}\ln s\delta^{l_0}\|\bar Z^{m_0}\mu\|_{s,u}\\
&+\delta^{(k-1)(1-\varepsilon_0)}s^{-1/2}\Big\{s^{-\iota}\sqrt{E_{1,\leq m+2}(s,u)}+\sqrt{E_{2,\leq m+2}(s,u)}\Big\}
\end{split}
\end{equation*}

and \eqref{zchi}, one has
	\begin{equation}\label{bZmu}
	\begin{split}
	\delta^l\|\bar Z^{m+1}\mu\|_{s,u}
	\lesssim&\delta^{k(1-\varepsilon_0)-1/2}s^{1/2}\ln s+\delta^{(k-1)(1-\varepsilon_0)}s^{1/2}\Big\{\sqrt{\tilde E_{1,\leq m+2}}+\ln s\sqrt{\tilde E_{2,\leq m+2}}\Big\}\\
	&+\dl^{(2k-1)(1-\ve_0)-1}s^{1/2}\sqrt{\int_0^u\tilde F_{1,\leq m+2}(s,u')du'}.
	\end{split}
	\end{equation}

	If $Z^{m+1}\mu=Z^{m_1}(\varrho\mathring L)\bar Z^{m_2}\mu$ with $m_1+m_2=m$, with the help of \eqref{lmu},
	then it holds that
	\begin{equation}\label{ZLmu}
	\begin{split}
	&\delta^l\|Z^{m+1}\mu\|_{s,u}=\delta^l\|Z^{m_1}[\varrho\mathring L,\bar Z^{m_2}]\mu+Z^{m_1}\bar Z^{m_2}(\varrho\mathring L)\mu\|_{s,u}\\
	\lesssim&\delta^{k(1-\varepsilon_0)-1/2}s^{1/2}+\delta^{k(1-\varepsilon_0)-1}s\delta^{l_1}\|\slashed{\mathcal L}_Z^{m_1}\check\chi\|_{s,u}+\delta^{k(1-\varepsilon_0)} s^{-1}\ln s\delta^{l_0}\|Z^{m_0}\mu\|_{s,u}\\
	&+\delta^{(k-1)(1-\varepsilon_0)} \Big\{s^{-\iota}\sqrt{E_{1,\leq m+2}(s,u)}+\sqrt{E_{2,\leq m+2}(s,u)}\Big\}\\
	\lesssim&\delta^{k(1-\varepsilon_0)-1/2}s^{1/2}+\delta^{k(1-\varepsilon_0)} s^{-1}\ln s\delta^{l_0}\|Z^{m_0}\mu\|_{s,u}\\
	&+\delta^{(k-1)(1-\varepsilon_0)} \Big\{s^{-\iota}\sqrt{E_{1,\leq m+2}(s,u)}+\sqrt{E_{2,\leq m+2}(s,u)}\Big\},
	\end{split}
	\end{equation}
	where \eqref{zchi} is  used in the last inequality.
	
	Thus, for any $Z\in\{\varrho\mathring L,T,R\}$, it follows from \eqref{bZmu} and \eqref{ZLmu} that
	\begin{equation}\label{Fmu}
	\begin{split}
	\delta^l\|Z^{m+1}\mu\|_{s,u}
	\lesssim&\delta^{k(1-\varepsilon_0)-1/2}s^{1/2}\ln s+\delta^{(k-1)(1-\varepsilon_0)}s^{1/2}\Big\{\sqrt{\tilde E_{1,\leq m+2}}+\ln s\sqrt{\tilde E_{2,\leq m+2}}\Big\}\\
	&+\dl^{(2k-1)(1-\ve_0)-1}s^{1/2}\sqrt{\int_0^u\tilde F_{1,\leq m+2}(s,u')du'},
	\end{split}
	\end{equation}
	which, together with \eqref{zchi}, implies that
	\begin{equation}\label{Fchi}
	\begin{split}
	\delta^l\|\slashed{\mathcal L}_Z^m\check\chi\|_{s,u}\lesssim&
	\delta^{k(1-\varepsilon_0)+1/2}s^{-3/2}\ln s+s^{-1}\mathscr E_{m+2}(s,u).
	\end{split}
	\end{equation}
	
	Substituting \eqref{Fmu}-\eqref{Fchi} into \eqref{Zk1L}-\eqref{ZkRp}
	yields the proof of Proposition \ref{L2chi}.
\end{proof}

It follows from the $L^\infty$ estimates in Subsection \ref{BA} that $|Z^{m+1}\mu|\lesssim\delta^{k(1-\ve_0)-\ve_0-l}$ holds
when $m\leq N-3$, which implies $\delta^l\|Z^{m+1}\mu\|_{s,u}\lesssim\delta^{k(1-\ve_0)-\ve_0+1/2}s^{1/2}$ $(m\leq N-3)$ due to $\|1\|_{s,u}\lesssim\dl^{1/2}s^{1/2}$.
Note that the upper bound $\delta^{k(1-\ve_0)-1/2-l}s^{1/2}\ln s$ of the $L^2$ norm of $Z^{m+1}\mu$ for $m\le 2N-6$
(see Proposition \ref{L2chi})
is much worse than $\delta^{k(1-\ve_0)-\ve_0+1/2-l}s^{1/2}$.
This leads to some essential difficulties for obtaining the global uniform estimates of $\p\phi$ in $A_{2\dl}$ when $\ve_0$ is near $\ve_k^*$.
Therefore, one needs to improve the $L^2$ norm of $Z^{m+1}\mu$ for $N-2\leq m\leq 2N-6$,
which will be achieved by studying the transport equation \eqref{lmu} carefully with special attention to the term $\f12G_{\mathring L\mathring L}^\gamma T\varphi_\gamma$ in  \eqref{lmu}. According to \eqref{GT} and \eqref{gLLL}, one may estimate  $\|Z^{m+1}\mathring L\phi\|_{s,u}$ and $\|\slashed{\mathcal L}_Z^{m+1}\slashed d\phi\|_{s,u}$, and even further obtain better estimates on the $L^2$ norm
of $Z^{m+1}\mathring L\mu$. It is emphasized here that one cannot treat $\mathring L\phi$ simply as $\mathring L^\al\varphi_\al$,
otherwise, the better smallness and time-decay rate of $\mathring L\phi$ will be lost. The same caution applies to $\slashed d\phi$.

We now apply \eqref{SiSi} to establish the estimates of $\|Z^{m+1}\mathring L\phi\|_{s,u}$ and $\|\slashed{\mathcal L}_Z^{m+1}\slashed d\phi\|_{s,u}$.

\begin{lemma}\label{Ldphi}
	Under the assumption $(\star)$, when $\delta>0$ is small, it holds that for $m\leq 2N-6$,
	\begin{equation}\label{ZLphi}
	\begin{split}
	\delta^l\|\bar Z^{m+1}\mathring L\phi\|_{s,u}
	&\lesssim\delta^{5/2-\ve_0}s^{-1}+\delta s^{-\iota}\sqrt{\tilde E_{1,\leq m+2}(s,u)}+\delta^2s^{-1}\sqrt{\tilde E_{2,\leq m+2}(s,u)}\\
	&+(\delta^{3-\ve_0}+\delta^{(k+1)(1-\ve_0)+1})s^{-1/2}\|R^m\slashed\triangle\mu\|_{s,u}+\delta^{2-\ve_0}s^{-3/2}\delta^{l'}\|\bar Z^{m'+1}\mu\|_{s,u}\\
	&+\dl^{k(1-\ve_0)}(\dl+\dl^{k(1-\ve_0)})s^{-3/2}\sqrt{\int_0^u \tilde F_1(s,u')du'},
	\end{split}
	\end{equation}
	\begin{equation}\label{Zdphi}
	\begin{split}
	\delta^{l}\|\slashed{\mathcal L}_{\bar Z}^{m+1}\slashed d\phi\|_{s,u}\lesssim&\delta^{5/2-\ve_0}s^{-1}+\delta s^{-\iota}\sqrt{\tilde E_{1,\leq m+2}(s,u)}+\delta^2s^{-1}\sqrt{\tilde E_{2,\leq m+2}(s,u)}\\
	&+\delta^{2-\ve_0}s^{1/2}\|R^m\slashed\triangle\mu\|_{s,u}+\delta^{2-\ve_0}s^{-3/2}\delta^{l'}\|\bar Z^{m'+1}\mu\|_{s,u},\qquad\qquad\qquad\qquad\\
	&+\dl^{k(1-\ve_0)+1}s^{-3/2}\sqrt{\int_0^u \tilde F_1(s,u')du'}
	\end{split}
	\end{equation}
		\begin{equation}\label{ZL2phi}
	\begin{split}
	\delta^l\|\bar Z^{m}\mathring L^2\phi\|_{s,u}
	&\lesssim\delta^{5/2-\ve_0}s^{-2}+\delta s^{-1-\iota}\sqrt{\tilde E_{1,\leq m+2}(s,u)}+\delta^2s^{-2}\sqrt{\tilde E_{2,\leq m+2}(s,u)}\\
	+&\delta^{(k+1)(1-\ve_0)+1}(\delta+\delta^{k(1-\ve_0)})s^{-5/2}\|R^m\slashed\triangle\mu\|_{s,u}+\delta^{2-\ve_0}s^{-5/2}\delta^{l'}\|\bar Z^{m'+1}\mu\|_{s,u}\\
	&+\dl^{k(1-\ve_0)}(\dl+\dl^{k(1-\ve_0)})s^{-5/2}\sqrt{\int_0^u \tilde F_1(s,u')du'},
	\end{split}
	\end{equation}
	where $\bar Z\in\{T,R\}$, the numbers of $T$ in $\bar Z^{m+1}$ and $\bar Z^{m'+1}$ are $l$ and $l'$ respectively, $m'\leq m$.
\end{lemma}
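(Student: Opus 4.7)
The plan is to exploit the reduced wave equation satisfied by $\phi$ itself. Since $\phi$ solves \eqref{quasi}, i.e.\ $g^{\al\beta}\p^2_{\al\beta}\phi=0$, we have $\mu\Box_g\phi=-\mu g^{\al\beta}\Gamma^\lambda_{\al\beta}\varphi_\lambda$, and the null-frame decomposition yields the analog of \eqref{fequation} for $\phi$. Using $\mathring{\underline L}=\mu\mathring L+2T$ this becomes
\begin{equation*}
\mathring L(\mu\mathring L\phi)+2\mathring LT\phi+\f{\mu\mathring L\phi}{2\varrho}+\f{T\phi}{\varrho}=\mu\slashed\triangle\phi+H_\phi,
\end{equation*}
where $H_\phi$ has the same algebraic structure as $H_\gamma$ in \eqref{H} but with $\varphi_\gamma$ replaced by $\phi$. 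In particular, the higher order null condition \eqref{null} applied through \eqref{gLLL} forces the worst $T\varphi$-factor appearing in $H_\phi$ to carry the sharpened $\delta^{k(1-\ve_0)}$ smallness. Since the initial data have compact support, $\phi$ and all its $\bar Z$-derivatives vanish on $C_0$, so $\bar Z^{m+1}\mathring L\phi$ and $\slashed d\bar Z^{m+1}\phi$ vanish there and Lemma \ref{L2T} applies.

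For the first estimate, I would apply \eqref{SiSi} to write $\|\bar Z^{m+1}\mathring L\phi\|_{s,u}\lesssim\delta\|T\bar Z^{m+1}\mathring L\phi\|_{s,u}$ and then decompose $T=\tfrac12(\mathring{\underline L}-\mu\mathring L)$. The $\mathring{\underline L}$-part, after commuting $\mathring{\underline L}$ through $\bar Z^{m+1}$ by means of the deformation-tensor $L^2$ bounds of Proposition \ref{L2chi}, is controlled by the $E_{i,\leq m+2}$-type quantities, delivering the contributions $\delta s^{-\iota}\sqrt{\tilde E_{1,\leq m+2}}$ and $\delta^2s^{-1}\sqrt{\tilde E_{2,\leq m+2}}$. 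The $\mathring L$-part reduces to $\bar Z^m\mathring L^2\phi$ plus commutators, so I would establish the third estimate first and feed it back here. The second bound on $\slashed{\mathcal L}_{\bar Z}^{m+1}\slashed d\phi$ is handled symmetrically: using $\slashed{\mathcal L}_{\bar Z}^{m+1}\slashed d\phi=\slashed d(\bar Z^{m+1}\phi)$ modulo lower-order terms, \eqref{SiSi} bounds $\|\slashed d\bar Z^{m+1}\phi\|_{s,u}$ by $\delta\|T\slashed d\bar Z^{m+1}\phi\|_{s,u}$, and commuting $T$ past $\slashed d$ together with \eqref{Zmp1Y} and Proposition \ref{L2chi} absorbs the ensuing commutators.

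For the third estimate on $\bar Z^m\mathring L^2\phi$ I would apply $\bar Z^m$ to the displayed equation, solve for $\mathring L^2\phi$ using that $\mu$ is uniformly bounded away from zero by \eqref{phimu}, and invoke Lemma \ref{L2L} to convert the resulting time integrals into $L^2$ norms. The decisive term is $\bar Z^m(\mu\slashed\triangle\phi)$: exploiting the identity $\slashed d\phi=(\slashed dx^i)\varphi_i$ together with integration by parts on $S_{s,u}$ redistributes the angular derivatives, so that the top-order piece—where all $\bar Z$'s equal $R$ and land on $\mu$ inside the Laplacian—yields precisely the contribution $\|R^m\slashed\triangle\mu\|_{s,u}$ appearing in \eqref{ZL2phi}, while the remaining distributions yield $\sqrt{\tilde E_{1,\leq m+2}}$, $\sqrt{\tilde E_{2,\leq m+2}}$, and $\|\bar Z^{m'+1}\mu\|_{s,u}$ (this last controlled by Proposition \ref{L2chi}). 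The factor $\mathring LT\phi$ is rewritten via $T\phi=\mu\tilde T^i\varphi_i$, whose derivatives are controlled by the same energies. Finally, the $H_\phi$ term, treated through the $k^{\text{th}}$ null condition \eqref{null} and \eqref{gLLL}, pairs every ``bad'' $T\varphi$-factor with a $\mathring L\varphi$ or $\slashed d\varphi$-factor that is estimated by the flux, producing the $\delta^{k(1-\ve_0)+1}s^{-3/2}\sqrt{\int_0^u\tilde F_1\,du'}$-type contribution.

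The main obstacle is the top-order commutator bookkeeping. Each commutator $[\bar Z^{m+1},\mathring L]$, $[\bar Z^{m+1},\slashed d]$, or $[\bar Z^m,\slashed\triangle]$ drops one derivative onto a deformation coefficient, pushing us to the edge of the smallness budget as $\ve_0\to\ve_k^*$; the naive substitutions $\mathring L\phi=\mathring L^\al\varphi_\al$ and $\slashed d\phi=(\slashed dx^i)\varphi_i$ expose a top-order derivative of $\varphi$ that squanders the extra smallness $\delta^{1-\ve_0}$ and time decay carried intrinsically by $\mathring L\phi$ and $\slashed d\phi$. The resolution is to refuse such direct expansions and instead transport along $\mathring L$ via the reduced wave equation above, offloading each over-differentiated piece onto $\|\bar Z^{m'+1}\mu\|_{s,u}$, $\|R^m\slashed\triangle\mu\|_{s,u}$ (both bounded in Proposition \ref{L2chi} and in Section \ref{L2chimu}), or the flux integral $\int_0^u\tilde F_1\,du'$. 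This routing is precisely what keeps the optimal $\delta^{5/2-\ve_0}$ smallness intact throughout the entire window $\ve_0\in(0,\ve_k^*)$.
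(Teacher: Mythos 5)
There is a genuine gap in the proposal, centered on where the top-order $\mu$-terms come from.

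Your claimed source of the $\|R^m\slashed\triangle\mu\|_{s,u}$ contribution does not hold up. You write that in $\bar Z^m(\mu\slashed\triangle\phi)$ the top-order piece "where all $\bar Z$'s equal $R$ and land on $\mu$ inside the Laplacian" produces $\|R^m\slashed\triangle\mu\|_{s,u}$. But the Laplacian in $\mu\slashed\triangle\phi$ acts on $\phi$, not on $\mu$; $\mu$ is an outside coefficient, and Leibniz gives terms of the form $(\bar Z^{m_1}\mu)(\bar Z^{m_2}\slashed\triangle\phi)$ whose top $\mu$-piece is $\bar Z^m\mu$, which carries $m$ derivatives of $\mu$, not the $m+2$ derivatives in $R^m\slashed\triangle\mu$. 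Integration by parts against the multiplier $\bar Z^{m+1}\mathring L\phi$ would transfer one derivative, not two, and in any case would not produce the specific operator $\slashed\triangle$ landing on $\mu$. In the paper, this term comes from an entirely different mechanism: the formula \eqref{TL} for $T\mathring L^i$ exhibits a coefficient $\slashed d_X\mu$, so that after inserting \eqref{TLp} into the reduction $\|\cdot\|_{s,u}\lesssim\delta\|T\cdot\|_{s,u}$ and applying $R^{m+1}$ one meets $\slashed dR^{m+1}\mu$, which is then related to $R^m\slashed\triangle\mu$ via \eqref{1f} and \eqref{LZf}. Without identifying this source, the term $\|R^m\slashed\triangle\mu\|_{s,u}$ (whose coefficient and power of $s$ differ in the three inequalities, and which is precisely the delicate quantity estimated in Section \ref{trimu}) cannot be produced by the argument as written.

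A second issue is the index bookkeeping in your decomposition $T=\tfrac12(\mathring{\underline L}-\mu\mathring L)$. After $\|\bar Z^{m+1}\mathring L\phi\|_{s,u}\lesssim\delta\|T\bar Z^{m+1}\mathring L\phi\|_{s,u}$ and commutation, the $\mu\mathring L$-part is $\mu\,\bar Z^{m+1}\mathring L^2\phi$, which has $m+1$ (not $m$) factors of $\bar Z$ applied to $\mathring L^2\phi$. To feed back your third estimate you would need it at order $m+1$, whose right-hand side contains $\tilde E_{i,\leq m+3}$; at the top order $m=2N-6$ this is $\tilde E_{i,\leq 2N-3}$, one level beyond the energies controlled in \eqref{E}. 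The paper sidesteps this entirely: \eqref{TLp} rewrites $T(\mathring L^\alpha\varphi_\alpha)=(T\mathring L^i)\varphi_i+\mu\tilde T^i\mathring L\varphi_i+\cdots$ so that only first-order derivatives of $\varphi$ appear before $R^{m+1}$ is applied, and the leftover self-referencing terms $\delta s^{-1}\|R^{m_0}\mathring L\phi\|_{s,u}$ are absorbed directly. Relatedly, the paper establishes \eqref{Zdphi} first (self-contained, via \eqref{Rp}--\eqref{Rmp}), then \eqref{ZLphi} using it, then \eqref{ZL2phi} using both; your proposed ordering (third first, feed back into first, second "symmetric") is circular once the reduced wave equation's $\mathring LT\phi$ and $\slashed\triangle\phi$ terms force you to estimate $\mathring L\phi$ and $\slashed d\phi$ at the same order.
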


\begin{proof}
	For $\bar Z^{m+1}=R^{m+1}$, due to \eqref{SiSi}, one has
	\begin{equation}\label{ZLp}
	\begin{split}
	&\|R^{m+1}\mathring L\phi\|_{s,u}\lesssim\delta\|TR^{m+1}\mathring L\phi\|_{s,u}\\
	\lesssim&\delta\|R^{m+1}T(\mathring L^\al\varphi_\al)\|_{s,u}+\delta^{3-\ve_0}s^{-5/2}\|\slashed{\mathcal L}_{R}^{m_1}\leftidx{^{(R)}}{\slashed\pi}_T\|_{s,u}+\delta s^{-1}\|R^{m_0}\mathring L\phi\|_{s,u},
	\end{split}
	\end{equation}
	where $T(\mathring L^\al \varphi_\al)=(T\mathring L^i)\varphi_i+\mu\tilde T^i(\mathring L\varphi_i)$.
Recall that $T\mathring L^i$ satisfies \eqref{TL}. Then
	\begin{equation}\label{TLp}
	T(\mathring L^\al \varphi_\al)=\mu\tilde T^i(\mathring L\varphi_i)+(\slashed d_X\mu)\slashed d^X\phi-(G_{X\mathring L}^\gamma T\varphi_\gamma)\slashed d^X\phi+\f12(G_{\mathring L\mathring L}^\gamma T\varphi_\gamma)\tilde T^i\varphi_i+\dots.
	\end{equation}
	With the help of \eqref{GT} and \eqref{gLLL}, after substituting \eqref{TLp} into \eqref{ZLp} and absorbing all $\|R^{m_0}\mathring L\phi\|_{s,u}$ in the right hand side by $\sum_{m\leq 2N-6}\|R^{m+1}\mathring L\phi\|_{s,u}$ in \eqref{ZLp}, one obtains
	\begin{align}
	&\|R^{m+1}\mathring L\phi\|_{s,u}\no\\
	\lesssim&(\delta^{2-\ve_0}+\delta^{(k+1)(1-\ve_0)})s^{-3/2}\big(\|R^{m_0}\check L^1\|_{s,u}+\|R^{m_0}\check L^2\|_{s,u}\big)+\delta^{(k+1)(1-\ve_0)}s^{-3/2}\|R^{m_0}\check\varrho\|_{s,u}\no\\
	&+\delta^{2-\ve_0}s^{-5/2}\|R^{m_{-1}}x\|_{s,u}+\delta^{(k+1)(1-\ve_0)+1}s^{-3/2}\|TR^{m_0}\varphi\|_{s,u}\no\\
	&+\delta^{k(1-\ve_0)+2}s^{-2}\|\slashed dR^{m_0}\varphi\|_{s,u}+\delta^{2-\ve_0}s^{-3/2}\|\slashed{\mathcal L}_{R}^{m_1}\leftidx{^{(R)}}{\slashed\pi}_{\mathring L}\|_{s,u}+\delta s^{-1}\|R^{m_0}\varphi\|_{s,u}\label{RLphi}\\
	&+\delta\|\mathring LR^{m_0}\varphi\|_{s,u}+\delta^{3-\ve_0}s^{-5/2}\|\slashed{\mathcal L}_{R}^{m_1}\leftidx{^{(R)}}{\slashed\pi}_T\|_{s,u}+\delta^{k(1-\ve_0)}s^{-1}\|\slashed dR^{m_0}\phi\|_{s,u}\no\\
	&+\delta^{3-\ve_0}s^{-3/2}\|\slashed dR^{m+1}\mu\|_{s,u}+\delta^{2-\ve_0}s^{-3/2}\|R^{m'+1}\mu\|_{s,u}.\no
	\end{align}
	where $1\leq m_p\leq m+1-p$ with $p=-1,0,1$. Using Proposition \ref{L2chi} to estimate the second to fourth lines in \eqref{RLphi}
yields
	\begin{equation}\label{RLp}
	\begin{split}
	\|R^{m+1}\mathring L&\phi\|_{s,u}\lesssim\delta^{5/2-\ve_0}s^{-1}+\delta s^{-\iota}\sqrt{\tilde E_{1,\leq m+2}(s,u)}
+\delta^2s^{-1}\sqrt{\tilde E_{2,\leq m+2}(s,u)}\\
	&+\delta^{k(1-\ve_0)}s^{-1}\|\slashed dR^{m_0}\phi\|_{s,u}+\delta^{3-\ve_0}s^{-3/2}\|\slashed dR^{m+1}\mu\|_{s,u}\\
	&+\delta^{2-\ve_0}s^{-3/2}\|R^{m'+1}\mu\|_{s,u}+\dl^{k(1-\ve_0)}(\dl+\dl^{k(1-\ve_0)})s^{-3/2}\sqrt{\int_0^u \tilde F_1(s,u')du'}.
	\end{split}
	\end{equation}
	
	If $\bar Z^{m+1}=\bar Z^{p_1}TR^{p_2}$ with $p_1+p_2=m$, then $\bar Z^{m+1}\mathring L\phi=\bar Z^{p_1}R^{p_2}T(\mathring L^\al\varphi_\al)+\bar Z^{p_1}[T,R^{p_2}]\mathring L\phi$. Using \eqref{TLp} again, one can get similarly that
	\begin{equation}\label{ZTp}
	\begin{split}
	\delta^l\|\bar Z^{m+1}\mathring L\phi\|_{s,u}\lesssim&\delta^{5/2-\ve_0}s^{-1}+\delta s^{-\iota}\sqrt{\tilde E_{1,\leq m+2}(s,u)}+\delta^2s^{-1}\sqrt{\tilde E_{2,\leq m+2}(s,u)}\\
	+\delta^{k(1-\ve_0)}&s^{-1}\delta^{l_0}\|\slashed d\bar Z^{m_0}\phi\|_{s,u}+\delta^{2-\ve_0}s^{-3/2}\delta^{l'}\|\bar Z^{m'+1}\mu\|_{s,u}\\
	&+\dl^{k(1-\ve_0)}(\dl+\dl^{k(1-\ve_0)})s^{-3/2}\sqrt{\int_0^u \tilde F_1(s,u')du'}.
	\end{split}
	\end{equation}
	Thus, for any $\bar Z^{m+1}$ which contains $l$ vector fields $T$, it follows from \eqref{RLp} and \eqref{ZTp} that
		\begin{equation}\label{bZLp}
	\begin{split}
	\delta^l\|\bar Z&^{m+1}\mathring L\phi\|_{s,u}
	\lesssim\delta^{5/2-\ve_0}s^{-1}+\delta s^{-\iota}\sqrt{\tilde E_{1,\leq m+2}(s,u)}+\delta^2s^{-1}\sqrt{\tilde E_{2,\leq m+2}(s,u)}\\
	&+\delta^{k(1-\ve_0)}s^{-1}\delta^{l_0}\|\slashed d\bar Z^{m_0}\phi\|_{s,u}+\delta^{3-\ve_0}s^{-1/2}\| R^{m}\slashed\triangle\mu\|_{s,u}+\delta^{2-\ve_0}s^{-3/2}\delta^{l'}\|\bar Z^{m'+1}\mu\|_{s,u}\\
	&+\dl^{k(1-\ve_0)}(\dl+\dl^{k(1-\ve_0)})s^{-3/2}\sqrt{\int_0^u \tilde F_1(s,u')du'}.
	\end{split}
	\end{equation}
	
Due to $|\slashed{\mathcal L}_{\bar Z}^{m+1}\slashed d\phi|\lesssim s^{-1}|R\bar Z^{m+1}\phi|$,
one needs to estimate the $L^2$ norm of $R\bar Z^{m+1}\phi$. As before, for $\bar Z^{m+1}=R^{m+1}$, there holds
	\begin{equation}\label{Rp}
	\begin{split}
	&\|R^{m+2}\phi\|_{s,u}
	\lesssim\delta\|R^{m+1}TR\phi\|_{s,u}+\delta^{3-\ve_0}s^{-3/2}\|\slashed{\mathcal L}_{R}^{m_1}\leftidx{^{(R)}}{\slashed\pi}_T\|_{s,u}+\delta s^{-1}\|R^{m_0}R\phi\|_{s,u}.
	\end{split}
	\end{equation}
	Note that $TR\phi=\leftidx{^{(R)}}{\slashed\pi}_{TX}\slashed d^X\phi+(R\mu)\tilde T^i\varphi_i-\mu(Rg^{0i})\varphi_i-\mu(R\mathring L^i)\varphi_i+\mu \tilde T^iR\varphi_i$. In addition, it follows from \eqref{Rpi} in Appendix A and \eqref{dL} that
	\begin{equation*}
	\begin{split}
	&\leftidx{^{(R)}}{\slashed\pi}_{TX}\slashed d^X\phi-\mu(R\mathring L^i)\varphi_i\\
	=&\upsilon\slashed d^X\mu\slashed d_X\phi+\mu(G_{X\mathring L}^\g R\varphi_\g)\slashed d^X\phi+\mu(G_{X\tilde T}^\g R\varphi_\g)\slashed d^X\phi+\mu\epsilon_i^a\check T^ig_{ab}(\slashed d_Xx^b\slashed d^X\phi)-\f{\mu}{\varrho}R\phi\\
	&+\upsilon(G_{X\tilde T}^\g T\varphi_\g)\slashed d^X\phi-\f12\upsilon\mu(G_{\tilde T\tilde T}^\g\slashed d^X\varphi_\g)\slashed d_X\phi+\mu(G_{\mathring L\tilde T}^\g R\varphi_\g)\tilde T^i\varphi_i+\f12\mu(G_{\tilde T\tilde T}^\g R\varphi_\g)\tilde T^i\varphi_i.
	\end{split}
	\end{equation*}
	Thus,
	\begin{align}
	\|R^{m+2}\phi\|_{s,u}
	\lesssim&\delta^{2-\ve_0}s^{-1/2}\big\{s\|\slashed dR^{m+1}\mu\|_{s,u}+\|R^{m'+1}\mu\|_{s,u}\big\}+\delta^{3-\ve_0}s^{-3/2}\|\slashed{\mathcal L}_{R}^{m_1}\leftidx{^{(R)}}{\slashed\pi}_T\|_{s,u}\no\\
	&+\delta\|Z^{m_0}\varphi\|_{s,u}+\delta s\|\slashed dR^{m+1}\varphi\|_{s,u}+\delta^{2k(1-\ve_0)+2}s^{-2}\ln s\|TR^{m+1}\varphi\|_{s,u}\no\\
	&+\delta^{(k+1)(1-\ve_0)+1}s^{-5/2}\|R^{m_0}\upsilon\|_{s,u}+\delta^{(k+1)(1-\ve_0)+1}s^{-3/2}\ln s\|\slashed{\mathcal L}_R^{m_0}\slashed g\|_{s,u}\no\\
	&+\delta^{2-\ve_0}s^{-1/2}(\|R^{m_0}\check L^1\|_{s,u}+\|R^{m_0}\check L^2\|_{s,u})+\delta^{2-\ve_0}s^{-3/2}\|R^{m_{-1}}x\|_{s,u}\no\\
	\lesssim\delta^{5/2-\ve_0}&+\delta s^{1-\iota}\sqrt{\tilde E_{1,\leq m+2}(s,u)}+\delta^2\sqrt{\tilde E_{2,\leq m+2}(s,u)}+\delta^{2-\ve_0}s^{1/2}\|\slashed dR^{m+1}\mu\|_{s,u}\label{Rmp}\\
	+\delta^{2-\ve_0}&s^{-1/2}\|R^{m'+1}\mu\|_{s,u}+\dl^{k(1-\ve_0)+1}s^{-1/2}\sqrt{\int_0^u \tilde F_1(s,u')du'}\no.
	\end{align}
	For $\bar Z^{m+1}=\bar Z^{p_1}TR^{p_2}$ with $p_1+p_2=m$ and the number of $T$ in $\bar Z^{p_1}$ being $q_1$,
due to $R\bar Z^{m+1}\mathring \phi=R\bar Z^{p_1}R^{p_2}(\mu \tilde T^i\varphi_i)+R\bar Z^{p_1}[T,R^{p_2}]\phi$, then it holds that
	\begin{equation*}
	\begin{split}
	\delta^l\|R\bar Z^{m+1}\phi\|_{s,u}\lesssim&\delta^{5/2-\ve_0}+\delta^2 s^{-\iota}\sqrt{\tilde E_{1,\leq m+2}(s,u)}+\delta^2\sqrt{\tilde E_{2,\leq m+2}(s,u)}+\delta^{2-\ve_0}s^{-1/2}\delta^{l'}\|\bar Z^{m'+1}\mu\|_{s,u}\\
	&+\delta s^{-2}\ln s\delta^{l_1}\|R\bar Z^{m_1}\phi\|_{s,u}+\dl^{k(1-\ve_0)+1}s^{-1/2}\sqrt{\int_0^u \tilde F_1(s,u')du'},
	\end{split}
	\end{equation*}
	where $1\leq m_1\leq m$ and the number of $T$ in $\bar Z^{m_1}$ is at most $q_1$. Then it follows from induction that
	\begin{equation}\label{RZTp}
	\begin{split}
	\delta^l\|R\bar Z^{m+1}\phi\|_{s,u}\lesssim&\delta^{5/2-\ve_0}+\delta^2 s^{-\iota}\sqrt{\tilde E_{1,\leq m+2}(s,u)}
+\delta^2\sqrt{\tilde E_{2,\leq m+2}(s,u)}\\
	&+\delta^{2-\ve_0}s^{-1/2}\delta^{l'}\|\bar Z^{m'+1}\mu\|_{s,u}+\delta s^{-2}\ln s\|R^{m_1+1}\phi\|_{s,u}\\
	&+\dl^{k(1-\ve_0)+1}s^{-1/2}\sqrt{\int_0^u \tilde F_1(s,u')du'}\\
	\lesssim&\delta^{5/2-\ve_0}+\delta^2 s^{-\iota}\sqrt{\tilde E_{1,\leq m+2}(s,u)}+\delta^2\sqrt{\tilde E_{2,\leq m+2}(s,u)}\\
	&+\delta^{2-\ve_0}s^{-1/2}\delta^{l'}\|\bar Z^{m'+1}\mu\|_{s,u}+\dl^{k(1-\ve_0)+1}s^{-1/2}\sqrt{\int_0^u \tilde F_1(s,u')du'}.
	\end{split}
	\end{equation}
Note that the last inequality in \eqref{RZTp} holds when the number $m+2$ is replaced by $m_1+1$ in \eqref{Rmp}.
Therefore, \eqref{Zdphi} follows from \eqref{Rmp} and \eqref{RZTp}, and furthermore, \eqref{ZLphi} is obtained
by inserting \eqref{Zdphi} into \eqref{bZLp}.
	
Analogously, due to
	$$
	\|R^m\mathring L^2\phi\|_{s,u}\lesssim\delta\|[T,R^m\mathring L]\mathring L\phi\|_{s,u}
+\dl\|R^m\mathring LT(\mathring L^\al\varphi_\al)\|_{s,u},
	$$
	 then it follows from \eqref{TLp} that for  $\tilde Z\in\{\varrho\mathring L, R\}$,
	\begin{equation*}
	\begin{split}
	&\|R^m\mathring L^2\phi\|_{s,u}\\
	\lesssim&(\delta^{2-\ve_0}+\delta^{(k+1)(1-\ve_0)})s^{-5/2}\big(\|\tilde Z^{m_0}\check L^1\|_{s,u}+\|\tilde Z^{m_0}\check L^2\|_{s,u}\big)+\delta^{(k+1)(1-\ve_0)}s^{-5/2}\|\tilde Z^{m_0}\check\varrho\|_{s,u}\\
	&+\delta^{2-\ve_0}s^{-7/2}\|\tilde Z^{m_{-1}}x\|_{s,u}+\delta^{3-\ve_0}s^{-5/2}\|\slashed{\mathcal L}_{R}^{m_1}\leftidx{^{(T)}}{\slashed\pi}_{\mathring L}\|_{s,u}+\delta^{3-\ve_0}s^{-7/2}\|\slashed{\mathcal L}_{R}^{m_1}\leftidx{^{(R)}}{\slashed\pi}_T\|_{s,u}\\
	&+\delta^{k(1-\ve_0)}s^{-2}\|R^{m_0}\mathring L\phi\|_{s,u}+\delta^{k(1-\ve_0)}s^{-3}\|R^{m_0}\phi\|_{s,u}+\delta^{3-\ve_0}s^{-3/2}\|\slashed dR^{m}\mathring L\mu\|_{s,u}\\
	&+\delta^{2-\ve_0}s^{-3/2}\|R^{m_0}\mathring L\mu\|_{s,u}+\delta^{2-\ve_0}s^{-5/2}\|R^{m_0}\mu\|_{s,u}+\dl s^{-1-\iota}\sqrt{E_{1,\leq m+2}}+\dl^2 s^{-2}\sqrt{E_{2,\leq m+2}}.
	\end{split}
	\end{equation*}
	Utilizing \eqref{lmu}, \eqref{GT} and \eqref{gLLL} leads to
	\begin{equation*}
	\begin{split}
	&\|R^m\mathring L^2\phi\|_{s,u}\\
	\lesssim&(\delta^{2-\ve_0}+\delta^{(k+1)(1-\ve_0)})s^{-5/2}\big(\|\tilde Z^{m_0}\check L^1\|_{s,u}+\|\tilde Z^{m_0}\check L^2\|_{s,u}\big)+\delta^{(k+1)(1-\ve_0)}s^{-5/2}\|\tilde Z^{m_0}\check\varrho\|_{s,u}\\
	&+\delta^{2-\ve_0}s^{-7/2}\|\tilde Z^{m_{-1}}x\|_{s,u}+\delta^{3-\ve_0}s^{-5/2}\|\slashed{\mathcal L}_{R}^{m_1}\leftidx{^{(T)}}{\slashed\pi}_{\mathring L}\|_{s,u}+\delta^{3-\ve_0}s^{-7/2}\|\slashed{\mathcal L}_{R}^{m_1}\leftidx{^{(R)}}{\slashed\pi}_T\|_{s,u}\\
&+\delta^{k(1-\ve_0)}s^{-2}\|R^{m_0}\mathring L\phi\|_{s,u}+\delta^{k(1-\ve_0)+1}s^{-3}\|\slashed dR^{m+1}\phi\|_{s,u}+\delta^{k(1-\ve_0)}s^{-3}\|R^{m_0}\phi\|_{s,u}\\
	&+\delta^{2-\ve_0}s^{-5/2}\|R^{m_0}\mu\|_{s,u}+\dl s^{-1-\iota}\sqrt{E_{1,\leq m+2}}+\dl^2 s^{-2}\sqrt{E_{2,\leq m+2}}\\
	\lesssim&\delta^{5/2-\ve_0}s^{-2}+\delta s^{-1-\iota}\sqrt{\tilde E_{1,\leq m+2}(s,u)}+\delta^2s^{-2}\sqrt{\tilde E_{2,\leq m+2}(s,u)}\\
&+\delta^{(k+1)(1-\ve_0)+1}(\delta+\delta^{k(1-\ve_0)})s^{-5/2}\|R^m\slashed\triangle\mu\|_{s,u}
+\delta^{2-\ve_0}s^{-5/2}\delta^{l'}\|R^{m'+1}\mu\|_{s,u}\\
&+\dl^{k(1-\ve_0)}(\dl+\dl^{k(1-\ve_0)})s^{-5/2}\sqrt{\int_0^u \tilde F_1(s,u')du'},
	\end{split}
	\end{equation*}
	where \eqref{ZLphi} and \eqref{Zdphi} have been used. For the case of $\bar Z^{p_1}TR^{p_2}\mathring L^2\phi$ with $p_1+p_2=m-1$, the desired result can be obtained by using $\bar Z^{p_1}TR^{p_2}\mathring L^2\phi=\bar Z^{p_1}R^{p_2}[T,\mathring L]\mathring L\phi+\bar Z^{p_1}[T,R^{p_2}]\mathring L^2\phi+\bar Z^{p_1}R^{p_2}\mathring LT(\mathring L^\al\varphi_\al)$.
\end{proof}

\eqref{RZTp} implies that when there is at least one $T$ in $\bar Z^{m+1}$, it then holds that
\begin{equation}\label{ZTdphi}
\begin{split}
\delta^{l}\|\slashed{\mathcal L}_{\bar Z}^{m+1}\slashed d\phi\|_{s,u}\lesssim&\delta^{5/2-\ve_0}s^{-1}+\delta^2 s^{-1-\iota}\sqrt{\tilde E_{1,\leq m+2}(s,u)}+\delta^2s^{-1}\sqrt{\tilde E_{2,\leq m+2}(s,u)}\\
&+\delta^{2-\ve_0}s^{-3/2}\delta^{l'}\|\bar Z^{m'+1}\mu\|_{s,u}+\dl^{k(1-\ve_0)+1}s^{-3/2}\sqrt{\int_0^u \tilde F_1(s,u')du'}.
\end{split}
\end{equation}

In addition, it follows from \eqref{Zm+1p} that
\begin{equation}\label{Zmp}
\begin{split}
\delta^l\|Z^{m+1}\phi\|_{s,u}\lesssim&\delta^{5/2-\ve_0}+\delta^2s^{-\iota}\sqrt{\tilde E_{1,\leq m+2}(s,u)}+\delta^2\sqrt{\tilde E_{2,\leq m+2}(s,u)}\\
&+\delta^{2-\ve_0}s^{-1/2}\delta^{l'}\|\bar Z^{m'+1}\mu\|_{s,u}+\dl^{k(1-\ve_0)+1}s^{-1/2}\sqrt{\int_0^u \tilde F_1(s,u')du'}.
\end{split}
\end{equation}

One can check easily from \eqref{ZLphi} and \eqref{Zdphi} that $\|\bar Z^{m+1}\mathring L\phi\|_{s,u}$
has admitted the better smallness order and time decay rate than $\|\slashed{\mathcal L}
_{\bar Z}^{m+1}\slashed d\phi\|_{s,u}$ (see the coefficients of $\|R^m\slashed\triangle\mu\|_{s,u}$).
In addition, it should be pointed out that $\delta^{2-\ve_0}s^{1/2}\|R^m\slashed\triangle\mu\|_{s,u}$ in \eqref{Zdphi} is not enough to
close the energy estimates derived in Section \ref{ert} below  when $\ve_0$ is near $\ve_k^*$.
To improve $L^2$ norm estimate of $\bar Z^{m+1}\mu$ by the transport equation for $\mathring L\bar Z^{m+1}\mu$
 derived from \eqref{lmu}, our main strategy is to explore the possibility that the related term $\f12G_{\mathring L\mathring L}^\g T\varphi_\g$ in \eqref{lmu} may admit
more precise  smallness and better time decay rate information by some suitable combinations of $\slashed d\phi$.
This will be the next main task.

It follows from \eqref{pal} that $\varphi_{\g_j}=\delta_{\g_j}^0\mathring L\phi-\mathring L_{\g_j}\tilde T^{l_j}\varphi_{l_j}
+g_{\g_ja}\slashed d^Xx^a\slashed d_X\phi$. Then
\begin{align}
&g^{\al\beta,\g\g_2\cdots\g_k}\varphi_{\g_2}\cdots\varphi_{\g_k}\no\\
=&(-1)^{k-1}g^{\al\beta,\g\g_2\cdots\g_k}\mathring L_{\g_2}\cdots\mathring L_{\g_k}(\tilde T^{l_2}\cdots \tilde T^{l_k})(\varphi_{l_2}\cdots\varphi_{l_k})\no\\
&+(-1)^k(k-1)g^{\al\beta,\g\g_2\cdots\g_k}(g_{\g_2a}\slashed d^Xx^a\slashed d_X\phi)\underbrace{\mathring L_{\g_3}\cdots\mathring L_{\g_k}(\tilde T^{l_3}\cdots \tilde T^{l_k})(\varphi_{l_3}\cdots\varphi_{l_k})}_{\text{vanish when }k=2}\no\\
&+f(\varphi, \check L^1, \check L^2,\f{x}\varrho)\left(
\begin{array}{ccc}
(\mathring L\phi)\varphi^{k-2}\\
(\slashed d^Xx)(\slashed d_X\phi)(\slashed d^Xx)(\slashed d_X\phi)\varphi^{k-3}\\
\end{array}
\right)\no\\
=&(-1)^{k-1}g^{\al\beta,\g\g_2\cdots\g_k}\o_{\g_2}\cdots\o_{\g_k}(\tilde T^{l_2}\cdots \tilde T^{l_k})(\varphi_{l_2}\cdots\varphi_{l_k})\no\\
&+(-1)^k(k-1)g^{\al\beta,\g a\g_3\cdots\g_k}(m_{ab}\slashed d^Xx^b\slashed d_X\phi)\underbrace{\o_{\g_3}\cdots\o_{\g_k}(\tilde T^{l_3}\cdots \tilde T^{l_k})(\varphi_{l_3}\cdots\varphi_{l_k})}_{\text{vanish when }k=2}\label{gp}\\
&+f(\varphi, \check L^1, \check L^2,\f{x}\varrho,\check\varrho)\left(
\begin{array}{ccc}
\check L^{1}\varphi^{k-1}\\
\check L^{2}\varphi^{k-1}\\
\check\varrho\varphi^{k-1}\\
(\mathring L\phi)\varphi^{k-2}\\
(\slashed d^Xx)(\slashed d_X\phi)\varphi^{2k-2}\\
(\slashed d^Xx)(\slashed d_X\phi)(\slashed d^Xx)(\slashed d_X\phi)\varphi^{k-3}\\
\end{array}
\right).\no
\end{align}
Substituting \eqref{gp} into the third line of \eqref{gLLL} and using the null condition \eqref{null} yield
\begin{align}
&(\p_{\varphi_\gamma}g^{\al\beta})\mathring L_\al\mathring L_\beta\mathring L_\gamma\no\\
=&(-1)^kk(k-1)g^{\al\beta,\g a\g_3\cdots\g_k}\o_\al\o_\beta\o_\g(m_{ab}\slashed d^Xx^b\slashed d_X\phi)\underbrace{\o_{\g_3}\cdots\o_{\g_k}(\tilde T^{l_3}\cdots \tilde T^{l_k})(\varphi_{l_3}\cdots\varphi_{l_k})}_{\text{vanish when }k=2}\no\\
&+f(\varphi, \check L^1, \check L^2,\f{x}\varrho,\check\varrho)\left(
\begin{array}{ccc}
\varphi^k\\
\check L^{1}\varphi^{k-1}\\
\check L^{2}\varphi^{k-1}\\
\check\varrho\varphi^{k-1}\\
(\mathring L\phi)\varphi^{k-2}\\
(\slashed d^Xx)(\slashed d_X\phi)\varphi^{2k-2}\\
(\slashed d^Xx)(\slashed d_X\phi)(\slashed d^Xx)(\slashed d_X\phi)\varphi^{k-3}\\
\end{array}
\right).\label{gLLL2}
\end{align}
Based on \eqref{gLLL2}, we can introduce a good unknown as
\begin{equation}\label{DA}
\mathscr A
=g^{\al\beta,\g a\g_3\cdots\g_k}\o_\al\o_\beta\o_\g(m_{ab}\slashed d^Xx^b\slashed d_X\phi)\o_{\g_3}\cdots\o_{\g_k},
\end{equation}
which is a combination of  $\slashed d\phi$.

\begin{lemma}\label{8.4}
	Under the assumption $(\star)$, when $\delta>0$ is small, it holds that for $m\leq 2N-6$,
	\begin{equation}\label{A}
	\begin{split}
	\delta^l\|&\bar Z^{m+1}\mathscr A\|_{s,u}
	\lesssim\delta^{3/2-\ve_0}(\delta+\delta^{k(1-\ve_0)}\ln s)s^{-1}+\delta s^{-\iota}\sqrt{\tilde E_{1,\leq m+2}(s,u)}\\
	&+\big(\delta^2+\delta^{k(1-\ve_0)}(\dl+\dl^{k(1-\ve_0)})\ln^2 s\big) s^{-1}\sqrt{\tilde E_{2,\leq m+2}(s,u)}+\delta^{2-\ve_0}s^{-3/2}\delta^{l'}\|\bar Z^{m'+1}\mu\|_{s,u}\\
	&+(\delta^{3-\ve_0}+\delta^{(k+1)(1-\ve_0)+1})s^{-1/2}\|R^m\slashed\triangle\mu\|_{s,u}+\dl^{k(1-\ve_0)}s^{-1/2}\sqrt{\int_0^u \tilde F_1(s,u')du'},
	\end{split}
	\end{equation}
	where $\bar Z\in\{T,R\}$, the numbers of $T$ in $\bar Z^{m+1}$ and $\bar Z^{m'+1}$ are $l$ and $l'$ respectively, $m'\leq m$.
\end{lemma}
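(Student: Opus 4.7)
The strategy is to expand $\bar Z^{m+1}\mathscr A$ by the Leibniz rule, distinguishing the principal term (in which all $m+1$ derivatives fall on the $\slashed d\phi$ factor) from the lower-order terms (where derivatives are distributed among the coefficient factors $\o_\al$, $\o_\beta$, $\o_\g$, $\o_{\g_3},\dots,\o_{\g_k}$, $\slashed d^X x^b$, and the $\slashed g^{XX}$ hidden inside $\slashed d^X$). Since the constants $g^{\al\beta,\gamma a\gamma_3\cdots\gamma_k}$ are fixed and the remaining coefficients are $L^\infty$-bounded by Proposition \ref{LTRh}, $(\star)$, and the identities $\o_0=-1$, $\o_i=x^i/\varrho+O(\check\varrho)$, the whole expression is a sum of products of such bounded coefficients with one copy of $\slashed d\phi$.

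For the principal term, I will exploit the specific algebraic structure of the contraction $m_{ab}\slashed d^X x^b\,\slashed d_X\phi$ appearing in $\mathscr A$. The crucial point, parallel to the identity for $\sum_j\slashed d\varphi_j\cdot\slashed dx^j$ highlighted in Remark \ref{R1.5-1}, is that this particular combination can be rewritten, modulo controlled corrections in $\check L^i$, $\check\varrho$, $\slashed g-m$, as an expression in which the highest-order derivative effectively falls on $\mathring L\phi$ rather than on $\slashed d\phi$. Substituting the bound \eqref{ZLphi} on $\|\bar Z^{m+1}\mathring L\phi\|_{s,u}$ in place of the direct bound \eqref{Zdphi} then produces the coefficient $(\dl^{3-\ve_0}+\dl^{(k+1)(1-\ve_0)+1})s^{-1/2}$ of $\|R^m\slashed\triangle\mu\|_{s,u}$ in \eqref{A}, an improvement by a factor of $\dl\,s^{-1}$ over what a naive treatment of $\slashed d\phi$ would yield. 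This is precisely the gain that makes $\mathscr A$ a good unknown.

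For the lower-order terms, each factor $\o_\al,\o_\beta,\o_\g,\o_{\g_j}$ is decomposed into its leading $x^\al/\varrho$ part plus the perturbations $\check L^i, \check\varrho$, for which Proposition \ref{L2chi} supplies the required $L^2$ estimates; the factor $\slashed d^X x^b$ is controlled through $\|\slashed{\mathcal L}_{\bar Z}^{p}\slashed dx^j\|_{s,u}$ from the same proposition; and the remaining $\slashed d\phi$ is bounded pointwise via $(\star)$. Multiplying one $L^2$ estimate by the $L^\infty$ bounds of Proposition \ref{LTRh} and \eqref{Zmp}, together with \eqref{Zdphi}--\eqref{ZTdphi} whenever $\slashed d\phi$ still carries at most $m$ derivatives, produces contributions of the form $\dl^{k(1-\ve_0)}\ln s$ against $\dl^{3/2-\ve_0}s^{-1}$ and $\dl^{5/2-\ve_0}s^{-1}$ from the genuine $\slashed d\phi$ piece. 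Combining these yields the leading error $\dl^{3/2-\ve_0}(\dl+\dl^{k(1-\ve_0)}\ln s)s^{-1}$ and the weighted energy and flux contributions stated in \eqref{A}.

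The main obstacle is step two: correctly executing the cancellation inside $m_{ab}\slashed d^X x^b\,\slashed d_X\phi$ so that top-order derivatives are routed from $\slashed d\phi$ to $\mathring L\phi$, while simultaneously tracking all commutator errors generated when $\bar Z$ is pushed through the projector $\slashed\Pi$, through $\slashed d^X$, and through the $\o$-coefficients, without losing either the $\dl^{k(1-\ve_0)}$ smallness or the $s^{-1/2}$ time decay in the final bound. Once this rewriting is organised, the remaining work is a bookkeeping application of Proposition \ref{L2chi}, Proposition \ref{LTRh}, and Lemma \ref{Ldphi}.
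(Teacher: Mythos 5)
Your high-level picture of $\mathscr A$ as a good unknown is right, but the mechanism you propose for obtaining the improved coefficient of $\|R^m\slashed\triangle\mu\|_{s,u}$ does not match how the estimate actually works, and a key step is missing.

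First, the identity you invoke is the wrong one. You appeal to the analogue of $\slashed d^X x^i\slashed d_X\varphi_i=-(\mathring L^\al+2g^{0\al})\mathring L\varphi_\al$, i.e.\ \eqref{dxdp}, to claim that $m_{ab}\slashed d^X x^b\slashed d_X\phi$ can be rewritten ``so that the highest-order derivative falls on $\mathring L\phi$.'' But \eqref{dxdp} contracts $\slashed d^X x^i$ against $\slashed d_X\varphi_i$, not against $\slashed d_X\phi$, and there is no comparable identity turning the angular derivative $\slashed d\phi$ into $\mathring L\phi$. What the paper actually uses is the frame decomposition \eqref{gab}, which gives $\slashed d^X x^b\slashed d_X\phi=(g^{bj}-\mathring L^bg^{0j}-\mathring L^jg^{0b}-\mathring L^b\mathring L^j)\varphi_j$. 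Substituting this and then using the null condition to replace $g^{\al\beta,\g a\g_3\cdots\g_k}\o_\al\o_\beta\o_\g\o_a\o_{\g_3}\cdots\o_{\g_k}$ by $g^{\al\beta,\g 0\g_3\cdots\g_k}\o_\al\o_\beta\o_\g\o_{\g_3}\cdots\o_{\g_k}$ produces a decomposition of $\mathscr A$ into a ``main part'' (a specific linear combination of $\varphi_j$ with frame coefficients) plus terms proportional to $\check L^i$, $\check\varrho$ (already small by Proposition \ref{LTRh}). That is a genuinely different algebraic manipulation than the one you sketch.

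Second — and this is the real gap — you never introduce the Poincar\'e-type inequality \eqref{SiSi}, $\|\psi\|_{s,u}\lesssim\delta\|T\psi\|_{s,u}$, which is the mechanism that produces the extra factor of $\delta$. The paper applies \eqref{SiSi} to the main part of $R^{m+1}\mathscr A$, turning its $L^2$ norm into $\delta\|TR^{m+1}(\text{main part})\|_{s,u}$; it is the structure of this $T$-derivative, split into the pieces $\mathcal I$, $\mathcal{II}$, $\mathcal{III}$, that yields the final bound. In particular, the boxed contributions to $\|R^m\slashed\triangle\mu\|_{s,u}$ with coefficients $\delta^{3-\ve_0}s^{-1/2}$ and $\delta^{(k+1)(1-\ve_0)+1}s^{-1/2}$ arise from $T$ falling on $\mathring L^i$ (so that $T\mathring L^i$ via \eqref{TL} produces a $\slashed d\mu$ term, which is converted to $R^m\slashed\triangle\mu$ by \eqref{1f} and \eqref{LZf}). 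They do not come from substituting the bound \eqref{ZLphi} on $\|\bar Z^{m+1}\mathring L\phi\|_{s,u}$ in place of \eqref{Zdphi}, as you suggest; Lemma \ref{Ldphi} is indeed used in the proof, but only to control lower-order pieces of $\mathcal{III}$, and its contribution to the $\|R^m\slashed\triangle\mu\|_{s,u}$ coefficient is strictly smaller. The coincidence of coefficients between \eqref{ZLphi} and \eqref{A} is noted in the paper as an observation, not as a causal substitution. Without \eqref{SiSi} and the frame-identity decomposition your Leibniz expansion would leave you with a bare $\slashed d R^{m+1}\phi$, giving at best the weaker coefficient $\delta^{2-\ve_0}s^{1/2}$ of \eqref{Zdphi}, and the stated estimate would not be reached.
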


\begin{remark}
	Note that the coefficients of $\|R^m\slashed\triangle\mu\|_{s,u}$
in \eqref{ZLphi} and \eqref{A} are the same.
We will use \eqref{A} instead of \eqref{ZLphi} to estimate $\|L\bar Z^{m+1}\mu\|_{s,u}$ later.
\end{remark}

\begin{proof}
	Due to $\slashed d^Xx^b\slashed d_X\phi=(g^{bj}-\mathring L^bg^{0j}-\mathring L^jg^{0b}-\mathring L^b\mathring L^j)\varphi_j$ by \eqref{gab} and $m_{ab}\mathring L^b=\o_a+m_{ab}\check L^b+\check\varrho\o_a$, then it holds that
	\begin{equation*}
	\begin{split}
	\mathscr A=&g^{\al\beta,\g a\g_3\cdots\g_k}\o_\al\o_\beta\o_\g m_{ab}(g^{bj}-\mathring L^bg^{0j}-\mathring L^jg^{0b})\varphi_j\o_{\g_3}\cdots\o_{\g_k}\\
	&-g^{\al\beta,\g a\g_3\cdots\g_k}\o_\al\o_\beta\o_\g\o_a\o_{\g_3}\cdots\o_{\g_k}(\mathring L^j\varphi_j)+f(\o)\left(
	\begin{array}{ccc}
	\check L^1\\
	\check L^2\\
	\check\varrho\\
	\end{array}
	\right)(\mathring L^j\varphi_j)\\
	=&g^{\al\beta,\g a\g_3\cdots\g_k}\o_\al\o_\beta\o_\g m_{ab}(g^{bj}-\mathring L^bg^{0j}-\mathring L^jg^{0b})\varphi_j\o_{\g_3}\cdots\o_{\g_k}\\
	&-g^{\al\beta,\g 0\g_3\cdots\g_k}\o_\al\o_\beta\o_\g\o_{\g_3}\cdots\o_{\g_k}(\mathring L^j\varphi_j)+f(\o)\left(
	\begin{array}{ccc}
	\check L^1\\
	\check L^2\\
	\check\varrho\\
	\end{array}
	\right)(\mathring L^j\varphi_j),
	\end{split}
	\end{equation*}
	where the last identity comes from \eqref{null}. This, together with \eqref{SiSi}, yields
	\begin{equation}\label{RA}
	\begin{split}
	&\|R^{m+1}\mathscr A\|_{s,u}\\
	\lesssim&\|R^{m+1}\Big\{f(\o)\left(
	\begin{array}{ccc}
	\check L^1\\
	\check L^2\\
	\check\varrho\\
	\end{array}
	\right)(\mathring L^j\varphi_j)\Big\}\|_{s,u}\\
	&+\delta\|[R^{m+1},T]\big\{g^{\al\beta,\g a\g_3\cdots\g_k}\o_\al\o_\beta\o_\g m_{ab}(g^{bj}-\mathring L^bg^{0j}-\mathring L^jg^{0b})\varphi_j\o_{\g_3}\cdots\o_{\g_k}\\
	&\qquad\qquad\qquad\quad-g^{\al\beta,\g 0\g_3\cdots\g_k}\o_\al\o_\beta\o_\g\o_{\g_3}\cdots\o_{\g_k}(\mathring L^j\varphi_j)\big\}\|_{s,u}\\
	&+\delta\|R^{m+1}\big\{T\big(g^{\al\beta,\g a\g_3\cdots\g_k}\o_\al\o_\beta\o_\g\o_{\g_3}\cdots\o_{\g_k} m_{ab}(g^{bj}-\mathring L^bg^{0j}-\mathring L^jg^{0b})\big)\varphi_j\big\}\|_{s,u}\\
	&+\delta\|R^{m+1}\big\{T\big(g^{\al\beta,\g 0\g_3\cdots\g_k}\o_\al\o_\beta\o_\g\o_{\g_3}\cdots\o_{\g_k}\big)\mathring L^j\varphi_j\big\}\|_{s,u}\\
	&+\delta\|R^{m+1}\big\{g^{\al\beta,\g a\g_3\cdots\g_k}\o_\al\o_\beta\o_\g\o_{\g_3}\cdots\o_{\g_k} m_{ab}(g^{bj}-\mathring L^bg^{0j}-\mathring L^jg^{0b})T\varphi_j\\
	&\qquad\qquad\quad-g^{\al\beta,\g 0\g_3\cdots\g_k}\o_\al\o_\beta\o_\g\o_{\g_3}\cdots\o_{\g_k}(\mathring L^jT\varphi_j)\big\}\|_{s,u}\\
	&+\delta\|R^{m+1}\big\{g^{\al\beta,\g 0\g_3\cdots\g_k}\o_\al\o_\beta\o_\g\o_{\g_3}\cdots\o_{\g_k}(T\mathring L^j)\varphi_j\big\}\|_{s,u}.
	\end{split}
	\end{equation}
	Denote the first five lines in the right hand side of \eqref{RA} as $\mathcal I$, the sixth-seventh
lines as $\mathcal{II}$, and the last line as $\mathcal {III}$, which will be estimated separately as follows.
	\begin{enumerate}
		\item Since $\o^i=\f{1}{\check\varrho+1}\cdot\f{x^i}\varrho$, $T\o^i=\f{\mu\tilde T^i}{\varrho(\check\varrho+1)}-\sum_{j=1}^3\f{\mu x^ix^j\tilde T^j}{\varrho^3(\check\varrho+1)^3}$ and $T\mathring L^i$ satisfies \eqref{TL}, then
		\begin{equation}\label{I}
		\begin{split}
		\mathcal I\lesssim&\delta^{1-\ve_0}s^{-1/2}\big(\sum_{i=1}^2\|R^{m_0}\check L^i\|_{s,u}+\|R^{m_0}\check\varrho\|_{s,u}\big)+\delta^{k(1-\ve_0)}s^{-1}\ln s\|R^{m_0}\varphi\|_{s,u}\\
		&+(\delta^{(k+1)(1-\ve_0)}\ln s+\delta^{2-\ve_0})s^{-5/2}\|R^{m_{-1}}x\|_{s,u}+\delta^{2-\ve_0}s^{-3/2}\|\slashed{\mathcal L}_R^{m_1}\leftidx{^{(R)}}{\slashed\pi}_T\|_{s,u}\\
		&+\delta^{2-\ve_0}s^{-5/2}\ln s(\delta^{k(1-\ve_0)}+\delta^{2k(1-\ve_0)-1})\|\slashed{\mathcal L}_R^{m_1}\slashed g\|_{s,u}+\boxed{\delta^{2-\ve_0}s^{-3/2}\|R^{m'+1}\mu\|_{s,u}}\\
		&+\boxed{\delta^{(k+1)(1-\ve_0)+1}s^{-3/2}\|\slashed dR^{m+1}\mu\|_{s,u}}+\delta^{k(1-\ve_0)+1}s^{-1}\|TR^{m'+1}\varphi\|_{s,u}\\
		&+\delta^{2k(1-\ve_0)+1}s^{-2}\|\slashed dR^{m+1}\varphi\|_{s,u}+\delta^{2k(1-\ve_0)+1}s^{-2}\|R^{m'+1}\mathring L\varphi\|_{s,u},
		\end{split}
		\end{equation}
		where the second boxed term above arises from $\delta\|R^{m+1}\big(g^{0j}T\mathring L^b+g^{0b}T\mathring L^j\big)\varphi_j\|_{s,u}$.
		
		\item Making use of \eqref{null} again, that is, $g^{\al\beta,\g 0\g_3\cdots\g_k}\o_\al\o_\beta\o_\g\o_{\g_3}\cdots\o_{\g_k}
=g^{\al\beta,\g a\g_3\cdots\g_k}\o_\al\o_\beta\o_\g\o_a\o_{\g_3}\cdots\o_{\g_k}$, then one can  rewrite $\o_a=m_{ab}\mathring L^b+(\f{1}{\check\varrho+1}-1)m_{ab}\mathring L^b-\f{1}{\check\varrho+1}m_{ab}\check L^b$ to get
		\begin{equation*}
		\begin{split}
		\mathcal{II}=&\delta\|R^{m+1}\big\{g^{\al\beta,\g a\g_3\cdots\g_k}
\o_\al\o_\beta\o_\g\o_{\g_3}\cdots\o_{\g_k}(m_{ab}\slashed d^Xx^b\slashed d_X\varphi_j)\mu\tilde T^j\big\}\|_{s,u}\\
		&+\delta\|R^{m+1}\Big\{f(\o,\mathring L^1,\mathring L^2,\check\varrho)\left(
		\begin{array}{ccc}
		\check L^1\\
		\check L^2\\
		\check\varrho\\
		\end{array}
		\right)(\mathring L^jT\varphi_j)\Big\}\|_{s,u},
		\end{split}
		\end{equation*}
		which implies
		\begin{equation}\label{II}
		\begin{split}
		\mathcal{II}\lesssim&\delta\|\slashed dR^{m+1}\varphi\|_{s,u}+\delta^{2-\ve_0}s^{-5/2}\|R^{m_{-1}}x\|_{s,u}+\delta^{1-\ve_0}s^{-1/2}\|R^{m_0}\check\varrho\|_{s,u}\\
		&+\delta^{1-\ve_0}s^{-1/2}\big(\|R^{m_0}\check L^1\|_{s,u}+\|R^{m_0}\check L^2\|_{s,u}\big)+\boxed{\delta^{2-\ve_0}s^{-3/2}\|R^{m'+1}\mu\|_{s,u}}\\
		&+\delta s^{-1}\|R^{m_0}\varphi\|_{s,u}+\delta^{k(1-\ve_0)+1}s^{-1}\ln s\|TR^{m'+1}\varphi\|_{s,u}\\
		&+\delta^{(k+1)(1-\ve_0)+1}s^{-5/2}\ln s\|\slashed{\mathcal L}_R^{m_1}\leftidx{^{(R)}}{\slashed\pi}_T\|_{s,u}+\delta^{2-\ve_0}s^{-3/2}\|\slashed{\mathcal L}_R^{m_1}\slashed g\|_{s,u}.
		\end{split}
		\end{equation}
		
		\item It follows from \eqref{TL} and \eqref{gLLL} that
		\begin{equation}\label{III}
		\begin{split}
		\mathcal{III}\lesssim&\boxed{\delta^{3-\ve_0}s^{-3/2}\|\slashed dR^{m+1}\mu\|_{s,u}}+\boxed{(\delta^{(k+1)(1-\ve_0)+1}+\delta^{3-\ve_0})s^{-5/2}\|R^{m'+1}\mu\|_{s,u}}\\
		&+\delta^{(k+1)(1-\ve_0)+1}s^{-5/2}\|\slashed{\mathcal L}_R^{m_1}\slashed g\|_{s,u}+\delta^{k(1-\ve_0)}s^{-1}\|\slashed dR^{m'+1}\phi\|_{s,u}\\
		&+\delta^{k(1-\ve_0)}s^{-1}\|R^{m'+1}\mathring L\phi\|_{s,u}+\delta^{(k+2)(1-\ve_0)}s^{-5/2}\|R^{m_0+1}x\|_{s,u}\\
		&+\delta^{(k+1)(1-\ve_0)}s^{-3/2}\big(\|R^{m_0}\check L^1\|_{s,u}+\|R^{m_0}\check L^2\|_{s,u}+\|R^{m_0}\check\varrho\|_{s,u}\big)\\
		&+\delta^{(k+1)(1-\ve_0)+1}s^{-3/2}\|TR^{m'+1}\varphi\|_{s,u}+\delta^{(k+2)(1-\ve_0)+1}s^{-3}\|\slashed{\mathcal L}_R^{m_1}\leftidx{^{(R)}}{\slashed\pi}_T\|_{s,u}\\
		&+\delta^{k(1-\ve_0)+1}s^{-1}\|R^{m'+1}\mathring L\varphi\|_{s,u}+\delta^{k(1-\ve_0)+1}s^{-1}\|\slashed dR^{m+1}\varphi\|_{s,u}\\
		&+\delta^{(k+1)(1-\ve_0)}s^{-3/2}\|R^{m_0}\varphi\|_{s,u}.
		\end{split}
		\end{equation}
	\end{enumerate}

Substituting \eqref{I}-\eqref{III} to \eqref{RA}, using Proposition \ref{L2chi} and Lemma \ref{Ldphi} to estimate all terms except the boxed ones, and adopting \eqref{1f} and \eqref{LZf} in Appendix A to find the relationship between $\slashed dR^{m+1}\mu$ and $R^{m}\slashed\triangle\mu$,
then one can get eventually that
	\begin{equation}\label{RmA}
	\begin{split}
	\|R^{m+1}&\mathscr A\|_{s,u}
	\lesssim\delta^{3/2-\ve_0}(\delta+\delta^{k(1-\ve_0)}\ln s)s^{-1}+\delta s^{-\iota}\sqrt{\tilde E_{1,\leq m+2}(s,u)}\\
	&+\big(\delta^2+\delta^{k(1-\ve_0)}(\dl+\dl^{k(1-\ve_0)})\ln^2 s\big) s^{-1}\sqrt{\tilde E_{2,\leq m+2}(s,u)}+\delta^{2-\ve_0}s^{-3/2}\|R^{m'+1}\mu\|_{s,u}\\
	&+(\delta^{3-\ve_0}+\delta^{(k+1)(1-\ve_0)+1})s^{-1/2}\|R^m\slashed\triangle\mu\|_{s,u}+\dl^{k(1-\ve_0)}s^{-1/2}\sqrt{\int_0^u \tilde F_1(s,u')du'}.
\end{split}
\end{equation}

For $\bar Z^{m+1}=\bar Z^{p_1}TR^{p_2}$ with $p_1+p_2=m$, it follows from the expression of $\mathscr A$, Proposition \ref{L2chi}, \eqref{ZTdphi} and \eqref{Zmp} that
\begin{equation}\label{ZmA}
\begin{split}
&\delta^l\|\bar Z^{m+1}\mathscr A\|_{s,u}\\
\lesssim&\delta^{2-\ve_0}s^{-3/2}\big\{\delta^{l_0}\|\bar Z^{m_0}\check\varrho\|_{s,u}+s^{-1}\delta^{l_{-1}}\|\bar Z^{m_{-1}}x\|_{s,u}\big\}+\delta^l\|\slashed{\mathcal L}_{\bar Z}^{m+1}\slashed d\phi\|_{s,u}+s^{-1}\delta^{l_0}\|\bar Z^{m_0}\phi\|_{s,u}\\
\lesssim&\delta^{5/2-\ve_0}s^{-1}+\delta^2 s^{-1-\iota}\sqrt{\tilde E_{1,\leq m+2}}+\delta^2s^{-1}\sqrt{\tilde E_{2,\leq m+2}}+\delta^{2-\ve_0}s^{-3/2}\delta^{l'}\|\bar Z^{m'+1}\mu\|_{s,u}\\
&+\dl^{k(1-\ve_0)+1}s^{-3/2}\sqrt{\int_0^u \tilde F_1(s,u')du'}.
\end{split}
\end{equation}
Therefore, \eqref{A} follows from \eqref{RmA} and \eqref{ZmA}.
\end{proof}

Next we can refine the $L^2$ norm of $Z^{m+1}\mu$ despite the appearance of $R^m\slashed\triangle\mu$ which will be dealt with later.
\begin{proposition}\label{8.2}
	Under the assumption $(\star)$, when $\delta>0$ is small, it holds that for $m\leq 2N-6$,
	\begin{equation}\label{Zm}
	\begin{split}
	&\delta^l\|Z^{m+1}\mu\|_{s,u}\lesssim\delta^{k(1-\ve_0)-\ve_0+1/2}s^{1/2}+\delta^{(k-1)(1-\ve_0)} s^{1/2}\sqrt{\tilde E_{1,\leq m+2}(s,u)}\\
	&\qquad+\delta^{k(1-\ve_0)}s^{1/2}\sqrt{\tilde E_{2,\leq m+2}(s,u)}+\delta^{k(1-\ve_0)}(\delta+\delta^{k(1-\ve_0)})s^{1/2}\int_{t_0}^s\tau^{-3/2}\|R^m\slashed\triangle\mu\|_{\tau,u}d\tau\\
	&\qquad+\dl^{(2k-1)(1-\ve_0)-1}s^{1/2}\sqrt{\int_0^u \tilde F_1(s,u')du'}.
	\end{split}
	\end{equation}
\end{proposition}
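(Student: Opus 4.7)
The strategy is to exploit the identity \eqref{gLLL2} arising from the higher order null condition, which allows the worst term $\tfrac12 G^{\gamma}_{\mathring L\mathring L}T\varphi_\gamma$ in the transport equation \eqref{lmu} to be recast in terms of the good unknown $\mathscr A$ defined in \eqref{DA}, together with remainders involving $\mathring L\phi$, $\check L^i$, $\check\varrho$, etc. Indeed, by \eqref{GT} and \eqref{gLLL2},
\begin{equation*}
G^\gamma_{\mathring L\mathring L}T\varphi_\gamma = -(-1)^{k}k(k-1)\mathscr A\,\tilde T^iT\varphi_i\cdot(\text{factors of }\varphi)+(\text{good remainders}),
\end{equation*}
so the crucial quantity $\bar Z^{m+1}(\mathring L\mu)$ (for $\bar Z\in\{T,R\}$) can be rewritten, after commutation, as a combination whose worst pieces are $\bar Z^{m+1}\mathscr A$ and $\bar Z^{m+1}\mathring L\phi$ multiplied by known factors, plus terms of total size no worse than $\delta^{k(1-\varepsilon_0)}$ times the deformation-tensor quantities already controlled in Proposition \ref{L2chi}.

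The plan is as follows. First, I commute $\bar Z^{m+1}$ with $\mathring L$ in \eqref{lmu} and group the resulting terms into (i) the $\mathscr A$-part, (ii) the $\mathring L\phi$-part, and (iii) error terms that are either lower-order in derivatives or contain already-estimated quantities such as $\slashed{\mathcal L}_Z^{m_1}\check\chi$, $Z^{m_0}\mu$, $Z^{m_{-1}}x^i$, $\check L^i$, $\upsilon$, $\check\varrho$, $\leftidx{^{(Z)}}{\slashed\pi}_{\bullet}$. For (i) I substitute the bound of Lemma \ref{8.4}, for (ii) the bound \eqref{ZLphi} of Lemma \ref{Ldphi}, and for (iii) the estimates of Proposition \ref{L2chi}. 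Since both Lemma \ref{8.4} and Lemma \ref{Ldphi} produce exactly the same structure with coefficient $(\delta^{3-\varepsilon_0}+\delta^{(k+1)(1-\varepsilon_0)+1})s^{-1/2}$ in front of $\|R^m\slashed\triangle\mu\|_{s,u}$, that term survives in the final estimate as an integral factor. This yields
\begin{equation*}
\delta^l\|\mathring L\bar Z^{m+1}\mu\|_{s,u}\lesssim\delta^{k(1-\varepsilon_0)-\varepsilon_0}+\delta^{(k-1)(1-\varepsilon_0)}\sqrt{\tilde E_{1,\leq m+2}}+\delta^{k(1-\varepsilon_0)}\sqrt{\tilde E_{2,\leq m+2}}+\cdots
\end{equation*}
with the $\|R^m\slashed\triangle\mu\|_{\cdot,u}$ and $\sqrt{\int_0^u\tilde F_1 du'}$ contributions absorbed through the coefficients indicated in \eqref{Zm}.

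Next, I integrate along integral curves of $\mathring L$ via Lemma \ref{L2L}, setting $F=\delta^l\bar Z^{m+1}\mu(s,u,\vartheta)-\delta^l\bar Z^{m+1}\mu(t_0,u,\vartheta)$, noting that on $\Sigma_{t_0}^u$ one has $|\bar Z^{m+1}\mu|\lesssim\delta^{k(1-\varepsilon_0)-\varepsilon_0-l}$ by the initial data analysis, so $\delta^l\|\bar Z^{m+1}\mu(t_0,\cdot)\|_{t_0,u}\lesssim\delta^{k(1-\varepsilon_0)-\varepsilon_0+1/2}$. The factor $s^{1/2}\int_{t_0}^s\tau^{-1/2}(\cdot)d\tau$ produced by \eqref{Ff} reproduces the stated time weights once one uses that the source terms behave like $\tau^{-\iota}\sqrt{\tilde E_{1}}$ and $\tau^{-1/2}\sqrt{\tilde E_{2}}$. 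Finally, for the case $Z^{m+1}=Z^{m_1}(\varrho\mathring L)\bar Z^{m_2}$ with at least one factor of $\varrho\mathring L$, I argue exactly as in \eqref{ZLmu}: write $Z^{m_1}(\varrho\mathring L)\bar Z^{m_2}\mu=Z^{m_1}[\varrho\mathring L,\bar Z^{m_2}]\mu+Z^{m_1}\bar Z^{m_2}(\varrho\mathring L\mu)$, where the commutator produces deformation tensors already estimated and the second factor is bounded directly by substituting \eqref{lmu} together with the $\mathscr A$-representation and the bounds of Lemma \ref{Ldphi}.

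The main obstacle is ensuring that every appearance of the worst factor $T\varphi$ (which carries $\delta^{-\varepsilon_0}$ loss and only $s^{-1/2}$ decay) gets absorbed into a genuinely better quantity: either $\mathscr A$ via \eqref{gLLL2}, or a pre-existing deformation tensor whose $L^2$ norm is already small. Tracking the exponent $l$ of $T$ factors carefully through all commutators is the bookkeeping heart of the argument; the bound $\delta^{k(1-\varepsilon_0)-\varepsilon_0+1/2}$ in \eqref{Zm} — strictly better than the $\delta^{k(1-\varepsilon_0)-1/2}\ln s$ of Proposition \ref{L2chi} by a factor of $\delta^{1-\varepsilon_0}$ up to a logarithm — appears precisely because $\mathscr A$ provides one additional unit of $\delta^{1-\varepsilon_0}$ smallness over that of a bare $\slashed d\phi$, a gain which is essential for closing the bootstrap when $\varepsilon_0\to\varepsilon_k^*$.
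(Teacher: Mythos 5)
Your proposal reproduces the paper's argument essentially verbatim: expand $\mathring L\bar Z^{m+1}\mu$ via \eqref{lmu}, \eqref{GT}, \eqref{gLLL2}, identify the worst pieces as $\bar Z^{\le m+1}\mathscr A$ and $\bar Z^{\le m+1}\mathring L\phi$, control those by Lemma \ref{8.4} and Lemma \ref{Ldphi} (both of which carry the matching coefficient $(\delta^{3-\varepsilon_0}+\delta^{(k+1)(1-\varepsilon_0)+1})s^{-1/2}$ on $\|R^m\slashed\triangle\mu\|_{s,u}$, so that after multiplying by the prefactor $\delta^{(k-1)(1-\varepsilon_0)-1}s^{-1/2}$ one gets exactly the stated $\delta^{k(1-\varepsilon_0)}(\delta+\delta^{k(1-\varepsilon_0)})s^{-1}$ weight), integrate along $\mathring L$ via Lemma \ref{L2L}, and treat the $\varrho\mathring L$ case by the commutator as in \eqref{ZLmu}. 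This is the same route and the same bookkeeping as the paper's proof, so the proposal is correct.
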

\begin{proof}
	For any $\bar Z\in\{T, R\}$, \eqref{lmu} together with \eqref{GT} and \eqref{gLLL2} gives
	\begin{align*}
	&\delta^l\|\mathring L\bar Z^{m+1}\mu\|_{s,u}\\
	\lesssim&\delta^{k(1-\ve_0)}s^{-1}\delta^l\|T\bar Z^{m+1}\varphi\|_{s,u}+\delta^{(k+1)(1-\ve_0)}s^{-5/2}\delta^{l_1}\|\slashed{\mathcal L}_{\bar Z}^{m_1}\leftidx{^{(R)}}{\slashed\pi}_T\|_{s,u}\\
	&+\delta^{k(1-\ve_0)-1}s^{-1}\delta^{l_0}\big(\|\bar Z^{m_0}\check \varrho\|_{s,u}+\|\bar Z^{m_0}\check L^1\|_{s,u}+\|\bar Z^{m_0}\check L^2\|_{s,u}\big)\\
	&+\delta^{(k-1)(1-\ve_0)-1}s^{-1/2}\delta^{l_0}\|\bar Z^{m_0}\mathring L\phi\|_{s,u}+\delta^{k(1-\ve_0)-1}s^{-1}\delta^{l_0}\|\bar Z^{m_0}\varphi\|_{s,u}\\
	&+\delta^{(k-1)(1-\ve_0)-1}(\delta+\delta^{k(1-\ve_0)})s^{-3/2}\delta^{l_0}\|\slashed{\mathcal L}_{\bar Z}^{m_0}\slashed d\phi\|_{s,u}+\delta^{k(1-\ve_0)-\ve_0}s^{-1}\delta^{l_1}\|\slashed{\mathcal L}_{\bar Z}^{m_1}\leftidx{^{(R)}}{\slashed\pi}_{\mathring L}\|_{s,u}\\
	&+\delta^{k(1-\ve_0)-\ve_0}s^{-5/2}\delta^{l_{-1}}\|\bar Z^{m_{-1}}x\|_{s,u}+\delta^{(k-1)(1-\ve_0)}s^{-1/2}\delta^l\|\slashed d\bar Z^{m+1}\varphi\|_{s,u}\\
	&+\delta^{(k-1)(1-\ve_0)-1}s^{-1/2}\delta^{l_0}\|\bar Z^{m_0}\mathscr A\|_{s,u}+\delta^{(k-1)(1-\ve_0)}s^{-1/2}\delta^{l_0}\|\mathring L\bar Z^{m_0}\varphi\|_{s,u}\qquad\qquad\\
	&+\delta^{(k+1)(1-\ve_0)}s^{-1}\delta^{l_1}\|\slashed{\mathcal L}_{\bar Z}^{m_1}\leftidx{^{(T)}}{\slashed\pi}_{\mathring L}\|_{s,u}+\delta^{2k(1-\ve_0)-\ve_0}s^{-2}\ln s\delta^{l_1}\|\slashed{\mathcal L}_{\bar Z}^{m_1}\slashed g\|_{s,u}\\
	&+\delta^{k(1-\ve_0)}s^{-2}\delta^{l_0}\|\bar Z^{m_0}\mu\|_{s,u},
	\end{align*}
where $l_p$ ($p=-1,0,1$) are the numbers of $T$ in $Z^{m_p}$ and $1\leq m_p\leq m+1-p$.
It follows from Proposition \ref{L2chi}, Lemma \ref{Ldphi} and \eqref{A} that
	\begin{equation*}
	\begin{split}
	\delta^l\|\mathring L\bar Z^{m+1}\mu\|_{s,u}
	\lesssim&\delta^{k(1-\ve_0)}(\delta+\delta^{k(1-\ve_0)})s^{-1}\|R^m\slashed\triangle\mu\|_{s,u}+\delta^{k(1-\ve_0)}s^{-2}\delta^{l_0}\|\bar Z^{m_0}\mu\|_{s,u}\\
	&+\delta^{k(1-\ve_0)-\ve_0+1/2}s^{-1}+\delta^{(k-1)(1-\ve_0)}s^{-1/2-\iota}\sqrt{\tilde E_{1,\leq m+2}(s,u)}\\
	&+\delta^{k(1-\ve_0)}s^{-1}\sqrt{\tilde E_{2,\leq m+2}(s,u)}+\dl^{(2k-1)(1-\ve_0)-1}s^{-1}\sqrt{\int_0^u \tilde F_1(s,u')du'}.
	\end{split}
	\end{equation*}
	As in \eqref{tRmu}, by taking $F(s,u,\vartheta)=\delta^l\big(\bar Z^{m+1}\mu(s,u,\vartheta)-\bar Z^{m+1}\mu(t_0, u, \vartheta)\big)$ in \eqref{Ff}, one gets
	\begin{equation}\label{bZm}
	\begin{split}
	&\delta^l\|\bar Z^{m+1}\mu\|_{s,u}\lesssim\delta^{k(1-\ve_0)-\ve_0+1/2}s^{1/2}+\delta^{(k-1)(1-\ve_0)} s^{1/2}\sqrt{\tilde E_{1,\leq m+2}(s,u)}\\
	&\qquad+\delta^{k(1-\ve_0)}s^{1/2}\sqrt{\tilde E_{2,\leq m+2}(s,u)}+\delta^{k(1-\ve_0)}(\delta+\delta^{k(1-\ve_0)})s^{1/2}\int_{t_0}^s\tau^{-3/2}\|R^m\slashed\triangle\mu\|_{\tau,u}d\tau\\
	&\qquad+\dl^{(2k-1)(1-\ve_0)-1}s^{1/2}\sqrt{\int_0^u \tilde F_1(s,u')du'}.
	\end{split}
	\end{equation}
	
	For $Z^{m+1}=Z^{p_1}(\varrho\mathring L)\bar Z^{p_2}$ with $p_1+p_2=m$, due to \eqref{lmu}, \eqref{GT} and \eqref{gLLL}, one can get by Proposition \ref{L2chi} and \eqref{Zmp} that
	\begin{equation}\label{ZLZm}
	\begin{split}
	&\delta^l\|Z^{p_1}(\varrho\mathring L)\bar Z^{p_2}\mu\|_{s,u}
	\lesssim\delta^l\|Z^{p_1}(\varrho\bar Z^{p_2}\mathring L\mu)\|_{s,u}+\delta^l\|Z^{p_1}(\varrho[\mathring L,\bar Z^{p_2}]\mu)\|_{s,u}\\
	\lesssim&\delta^{k(1-\ve_0)-\ve_0}\delta^{l_1}\|\slashed{\mathcal L}_Z^{m_1}\leftidx{^{(R)}}{\slashed\pi}_{\mathring L}\|_{s,u}+\delta^{(k+1)(1-\ve_0)}\delta^{l_1}\|\slashed{\mathcal L}_Z^{m_1}\leftidx{^{(T)}}{\slashed\pi}_{\mathring L}\|_{s,u}+\delta^{k(1-\ve_0)}s^{-1}\ln s\delta^{l_1}\|Z^{m_1}\mu\|_{s,u}\\
	&+\delta^{2k(1-\ve_0)-\ve_0}s^{-1}\ln s\delta^{l_1}\|\slashed{\mathcal L}_Z^{m_1}\slashed g\|_{s,u}+\delta^{k(1-\ve_0)-1}\delta^{l_1}\big(\|Z^{m_1}\check L^1\|_{s,u}+\|Z^{m_1}\check L^2\|_{s,u}+\|Z^{m_1}\check\varrho\|_{s,u}\big)
	\\&+\delta^{k(1-\ve_0)-\ve_0}s^{-3/2}\delta^{l_0}\|Z^{m_0}x\|_{s,u}+\delta^{k(1-\ve_0)-1}\delta^{l_0}\|Z^{m_0}\varphi\|_{s,u}+\delta^{(k-1)(1-\ve_0)-1}s^{-1/2}\delta^{l_0}\|Z^{m_0}\phi\|_{s,u}\\
	\lesssim&\delta^{k(1-\ve_0)-\ve_0+1/2}+\delta^{k(1-\ve_0)}(1+\dl^{k(1-\ve_0)-1}) s^{-\iota}\sqrt{\tilde E_{1,\leq m+2}(s,u)}+\delta^{k(1-\ve_0)}\sqrt{\tilde E_{2,\leq m+2}(s,u)}\\
	&+\delta^{k(1-\ve_0)}s^{-1}\ln s\delta^{l_1}\|Z^{m_1}\mu\|_{s,u}+\dl^{(2k-1)(1-\ve_0)-1}\sqrt{\int_0^u \tilde F_1(s,u')du'}.
	\end{split}
	\end{equation}
	
	Thus, \eqref{Zm} follows from \eqref{bZm} and \eqref{ZLZm}.
\end{proof}

It follows from \eqref{Zm}, Lemma \ref{Ldphi}, \eqref{Zmp} and \eqref{A} that
\begin{corollary}\label{Zphi}
	Under the assumption $(\star)$, for small $\delta>0$, it holds that for $m\leq 2N-6$,
	\begin{align}
	\delta^l\|\bar Z^{m+1}\mathring L\phi&\|_{s,u}
	\lesssim\delta^{5/2-\ve_0}s^{-1}+\delta s^{-\iota}\sqrt{\tilde E_{1,\leq m+2}(s,u)}+\delta^2s^{-1}\sqrt{\tilde E_{2,\leq m+2}(s,u)}\no\\
+&\delta^{2-\ve_0}(\delta+\delta^{k(1-\ve_0)})\big\{s^{-1/2}\|R^m\slashed\triangle\mu\|_{s,u}
+\delta^{k(1-\ve_0)}s^{-1}\int_{t_0}^s\tau^{-3/2}\|R^m\slashed\triangle\mu\|_{\tau,u}d\tau\big\}\\
+&\dl^{k(1-\ve_0)}(\dl s^{-3/2}+\dl^{k(1-\ve_0)}s^{-1})\sqrt{\int_0^u \tilde F_1(s,u')du'},\no\\
	\delta^{l}\|\slashed{\mathcal L}_{\bar Z}^{m+1}\slashed d\phi&\|_{s,u}\lesssim\delta^{5/2-\ve_0}s^{-1}
+\delta s^{-\iota}\sqrt{\tilde E_{1,\leq m+2}(s,u)}+\delta^2s^{-1}\sqrt{\tilde E_{2,\leq m+2}(s,u)}\no\\
	+\delta^{2-\ve_0}&s^{1/2}\|R^m\slashed\triangle\mu\|_{s,u}+\delta^{(k+1)(1-\ve_0)+1}(\delta
+\delta^{k(1-\ve_0)})s^{-1}\int_{t_0}^s\tau^{-3/2}\|R^m\slashed\triangle\mu\|_{\tau,u}d\tau\\
+\dl^{k(1-\ve_0)}&(\dl s^{-3/2}+\dl^{k(1-\ve_0)}s^{-1})\sqrt{\int_0^u \tilde F_1(s,u')du'},\no\\
	\delta^l\|\bar Z^{m}\mathring L^2\phi&\|_{s,u}
	\lesssim\delta^{5/2-\ve_0}s^{-2}+\delta s^{-1-\iota}\sqrt{\tilde E_{1,\leq m+2}(s,u)}+\delta^2s^{-2}\sqrt{\tilde E_{2,\leq m+2}(s,u)}\no\\
	+&\delta^{(k+1)(1-\ve_0)+1}(\delta+\delta^{k(1-\ve_0)})s^{-2}\big\{s^{-1/2}\|R^m\slashed\triangle\mu\|_{s,u}
+\int_{t_0}^s\tau^{-\f32}\|R^m\slashed\triangle\mu\|_{\tau,u}d\tau\big\}\\
+&\dl^{k(1-\ve_0)}(\dl s^{-5/2}+\dl^{k(1-\ve_0)}s^{-2})\sqrt{\int_0^u \tilde F_1(s,u')du'},\no\\
	\delta^l\|Z^{m+1}\phi&\|_{s,u}\lesssim\delta^{5/2-\ve_0}+\delta(\delta s^{-\iota}+\delta^{k(1-\ve_0)})\sqrt{\tilde E_{1,\leq m+2}(s,u)}+\delta^2\sqrt{\tilde E_{2,\leq m+2}(s,u)}\no\\
	&\qquad\ \ +\delta^{(k+1)(1-\ve_0)+1}(\delta+\delta^{k(1-\ve_0)})\int_{t_0}^s\tau^{-3/2}\|R^m\slashed\triangle\mu\|_{\tau,u}d\tau\label{Zm1p}\\
	&\qquad\quad+\dl^{k(1-\ve_0)}(\dl s^{-1/2}+\dl^{k(1-\ve_0)})\sqrt{\int_0^u \tilde F_1(s,u')du'},\no
	\end{align}
	and
	\begin{equation}\label{bZmA}
	\begin{split}
	\delta^l\|\bar Z^{m+1}&\mathscr A\|_{s,u}
	\lesssim\delta^{3/2-\ve_0}(\delta+\delta^{k(1-\ve_0)}\ln s)s^{-1}+\delta s^{-\iota}\sqrt{\tilde E_{1,\leq m+2}(s,u)}\\
	&+\big(\delta^2+\delta^{k(1-\ve_0)}(\dl+\dl^{k(1-\ve_0)})\ln^2 s\big) s^{-1}\sqrt{\tilde E_{2,\leq m+2}(s,u)}\\
	&+(\delta+\delta^{k(1-\ve_0)})\delta^{2-\ve_0}\big\{s^{-1/2}\|R^m\slashed\triangle\mu\|_{s,u}
+\dl^{k(1-\ve_0)}s^{-1}\int_{t_0}^s\tau^{-3/2}\|R^m\slashed\triangle\mu\|_{\tau,u}d\tau\big\}\\
&+\dl^{k(1-\ve_0)}s^{-1/2}\sqrt{\int_0^u \tilde F_1(s,u')du'}.
	\end{split}
	\end{equation}
\end{corollary}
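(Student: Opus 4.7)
The plan is to obtain these five refined bounds by direct substitution of the improved inverse-foliation-density estimate \eqref{Zm} into the preliminary bounds already established in Lemma \ref{Ldphi}, equation \eqref{Zmp}, and Lemma \ref{8.4} (equation \eqref{A}). In each of these preliminary bounds the only term still involving $Z^{m'+1}\mu$ is of the form $\delta^{2-\varepsilon_0}s^{-\alpha}\delta^{l'}\|\bar Z^{m'+1}\mu\|_{s,u}$ with $\alpha\in\{3/2,5/2,1/2\}$; every other right-hand-side contribution is already in the form claimed by the corollary, modulo terms that will be seen to be subdominant.

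Concretely, I would substitute \eqref{Zm} term by term. The first summand $\delta^{k(1-\varepsilon_0)-\varepsilon_0+1/2}s^{1/2}$ on the right of \eqref{Zm}, once multiplied by $\delta^{2-\varepsilon_0}s^{-\alpha}$, is absorbed by the leading $\delta^{5/2-\varepsilon_0}s^{1-\alpha}$ term precisely because $\varepsilon_0<\varepsilon_k^\ast=\frac{k}{k+1}$ forces $k(1-\varepsilon_0)\geq\varepsilon_0$, so $\delta^{k(1-\varepsilon_0)-\varepsilon_0}\lesssim 1$. The energy summands $\delta^{(k-1)(1-\varepsilon_0)}s^{1/2}\sqrt{\tilde E_{1,\leq m+2}}$ and $\delta^{k(1-\varepsilon_0)}s^{1/2}\sqrt{\tilde E_{2,\leq m+2}}$ of \eqref{Zm}, after multiplication by $\delta^{2-\varepsilon_0}s^{-\alpha}$, are similarly dominated by the $\delta s^{-\iota}\sqrt{\tilde E_{1,\leq m+2}}$ and $\delta^2 s^{1-\alpha}\sqrt{\tilde E_{2,\leq m+2}}$ contributions already listed in \eqref{ZLphi}, \eqref{Zdphi}, \eqref{ZL2phi}, \eqref{Zmp}, and \eqref{A}; the flux summand produces the claimed $\dl^{k(1-\ve_0)}(\dl s^{-1/2}+\dl^{k(1-\ve_0)})$ type contributions once the prefactor is multiplied in. The truly new ingredient is the integral summand $\delta^{k(1-\varepsilon_0)}(\delta+\delta^{k(1-\varepsilon_0)})s^{1/2}\int_{t_0}^{s}\tau^{-3/2}\|R^m\slashed\triangle\mu\|_{\tau,u}d\tau$, which, after multiplication by $\delta^{2-\varepsilon_0}s^{-\alpha}$, produces exactly the integral term appearing in each of the conclusions; no such term was present in the preliminary estimates.

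For the $\|R^m\slashed\triangle\mu\|_{s,u}$ non-integral contribution already sitting in \eqref{ZLphi}, \eqref{Zdphi}, \eqref{ZL2phi} and \eqref{A}, I would simply copy it to the conclusion, noting that for \eqref{ZLphi} and \eqref{bZmA} the coefficient $(\delta^{3-\varepsilon_0}+\delta^{(k+1)(1-\varepsilon_0)+1})s^{-1/2}$ is what was already derived, while for the refined $\slashed d\phi$ estimate the larger $\delta^{2-\varepsilon_0}s^{1/2}$ coefficient of \eqref{Zdphi} is retained as the dominant one. For $Z^{m+1}\phi$ the extra factor $\delta^{k(1-\varepsilon_0)}$ on the energy term arises because one of the vector fields may be $\varrho\mathring L$, contributing the larger $\delta^{(k-1)(1-\varepsilon_0)}$ factor coming from the $\bar Z^{m_0}$ part of \eqref{ZLZm} after commutation, which on multiplication by $\delta$ yields the stated $\delta\cdot\delta^{k(1-\ve_0)}$.

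The main obstacle — though a mild one — is bookkeeping: one must track every power of $\delta$ and $s$ against the threshold $\varepsilon_0<\varepsilon_k^\ast$, verify which summands from \eqref{Zm} are absorbed into preexisting terms and which survive, and make sure the integral term with weight $\tau^{-3/2}$ is faithfully carried through each inequality without being clipped by the $\|\cdot\|_{s,u}$ pointwise-in-$s$ bound. No new analytic idea is required; the proof is a careful assembly driven by the structural identity \eqref{gLLL2} and the good unknown $\mathscr A$, whose role was already exploited in Lemma \ref{8.4}.
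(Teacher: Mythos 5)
Your proposal takes the same route as the paper: substitute the refined bound \eqref{Zm} for $\|\bar Z^{m'+1}\mu\|_{s,u}$ into the preliminary estimates \eqref{ZLphi}, \eqref{Zdphi}, \eqref{ZL2phi}, \eqref{Zmp} and \eqref{A}, which is exactly what the paper records with its one-line attribution before the corollary. Your exponent bookkeeping is sound; the only blemish is the aside about a $\varrho\mathring L$ contribution in the \eqref{Zm1p} case, where the $\delta^{1+k(1-\ve_0)}\sqrt{\tilde E_1}$ term in fact just comes from the $\delta^{(k-1)(1-\ve_0)}s^{1/2}\sqrt{\tilde E_1}$ summand of \eqref{Zm} multiplied by the prefactor $\delta^{2-\ve_0}s^{-1/2}$ already present in \eqref{Zmp}, but that slip is cosmetic and does not affect the argument.
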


\section{Top order $L^2$ estimates on the derivatives of $\chi$ and $\mu$}\label{L2chimu}

	It follows from Proposition \ref{8.2} that to complete the estimate of $\|Z^{m+1}\mu\|_{s,u}$, it remains to bound $\|R^m\slashed\triangle\mu\|_{s,u}$. Furthermore, by the energy estimate \eqref{e} and \eqref{Psi}, one checks easily that the top order derivatives of $\varphi$
in the energy estimate for \eqref{Psi} are of $2N-4$. On the other hand, by the expression of $\leftidx{^{(Z)}}D_{\gamma,2}^m$
 in \eqref{D2}, the top order derivatives of $\chi$ and $\mu$ in \eqref{e} are of $2N-5$ and $2N-4$ respectively as in \cite{Ding4}. However,
 it follows from  Proposition \ref{L2chi} that the $L^2$ estimates on the $2N-5$ top order derivatives of ${\chi}$
and the $2N-4$ top order derivatives of
$\mu$ should be controlled by the energies of $\vp$ with the orders $2N-3$, which
is beyond the range of energies containing up to $2N-4$ order derivatives of $\vp$.
To overcome this difficulty, as in \cite{C2, J}, also motivated by \cite{Ding4, Ding3},
we will
treat the smallness orders and the time decay rates of $\textrm{tr}{\chi}$ and $\slashed\triangle\mu$
with the corresponding top order derivatives. It should be pointed out that although the main ideas here are similar to
\cite[Section 9]{Ding4}, yet it seems hard to adopt the analysis in \cite{Ding4} to close the energies in
Section \ref{YY} when $\ve_0$ is near $\ve_k^*$ in this paper. For instance, the quantity $e$ in equation \eqref{e} and the factor $(\mathring LG_{\al\beta}^\gamma)\mathring L^\al\mathring L^\beta\underline{\mathring L}\varphi_\g$ in equation \eqref{LFal} below should be handled more carefully by the null structure \eqref{null} and a new identity \eqref{dxdp}. In this case, the resulting new transport
equation \eqref{LhF} is much more suitable for estimating the $L^2$ norm of the top derivative of $\textrm{tr}{\chi}$ in the paper.

\subsection{Estimates on the derivatives of $\slashed d${tr}$\chi$ and $\slashed\nabla\chi$}\label{trchi}

{\bfseries 1.} If there is at least one vector field $\varrho\mathring L$ in $Z^m$, that is, $Z^m=Z^{p_1}(\varrho\mathring L)\bar Z^{p_2}$ and $p_1+p_2=m-1$ with $\bar Z\in\{T,R\}$, then
it follows from \eqref{Lchi'} and Proposition \ref{L2chi} that
\begin{align}
&\delta^l\|\slashed dZ^m(\textrm{tr}\check\chi)\|_{s,u}=\delta^l\|\slashed dZ^{p_1}\bar Z^{p_2}(\varrho\mathring L) (\textrm{tr}\check\chi)
+\slashed dZ^{p_1}[\varrho\mathring L,\bar Z^{p_2}](\textrm{tr}\check\chi)\|_{s,u}\no\\
\lesssim&s^{-1}\delta^{l_1}\|\slashed{\mathcal L}_Z^{m_1}\check\chi\|_{s,u}+\delta^{k(1-\varepsilon_0)}s^{-3}\ln s\delta^{l_1}(\|\slashed{\mathcal L}_Z^{m_1}\leftidx^{{(R)}}\slashed\pi_{\mathring L}\|_{s,u}+\delta\|\slashed{\mathcal L}_Z^{m_1}\leftidx^{{(T)}}\slashed\pi_{\mathring L}\|_{s,u})\no\\
&+\delta^{k(1-\varepsilon_0)}s^{-3}\big\{\delta^{l_1}\|Z^{m_1}\check L^1\|_{s,u}+\delta^{l_1}\|Z^{m_1}\check L^2\|_{s,u}+s^{-1}\delta^{l_0}\|Z^{k_0}x\|_{s,u}+\ln s\delta^{l_1}\|\slashed{\mathcal L}_Z^{m_1}\slashed g\|_{s,u}\big\}\no\\
&+\delta^{(k-1)(1-\varepsilon_0)}s^{-5/2}\delta^{l_0}\|Z^{m_0}\varphi\|_{s,u}+\delta^{(k-1)(1-\varepsilon_0)}s^{-3/2}\delta^{l_0}\|\slashed dZ^{m_0}\varphi\|_{s,u}\no\\
\lesssim&\delta^{k(1-\varepsilon_0)+1/2}s^{-5/2}\ln s+\delta^{(k-1)(1-\varepsilon_0)}s^{-3/2-\iota}\sqrt{\tilde E_{1,\leq m+2}}\no\\
&+\delta^{k(1-\varepsilon_0)+\varepsilon_0}(1+\dl^{k(1-\ve_0)-1}) s^{-5/2}\ln^2 s\sqrt{\tilde E_{2,\leq m+2}}+\dl^{(k-1)(1-\ve_0)}s^{-2}\sqrt{\int_0^u \tilde F_1(s,u')du'}.\label{nz}
\end{align}
Thus,
\begin{equation}\label{dch}
\begin{split}
\delta^l\|\slashed\nabla\slashed{\mathcal L}_{Z}^m\check\chi\|_{s,u}\lesssim&\delta^{k(1-\varepsilon_0)+1/2}s^{-5/2}\ln s+\delta^{(k-1)(1-\varepsilon_0)}s^{-3/2-\iota}\sqrt{\tilde E_{1,\leq m+2}}\\
&+\delta^{k(1-\varepsilon_0)+\varepsilon_0}(1+\dl^{k(1-\ve_0)-1}) s^{-5/2}\ln^2 s\sqrt{\tilde E_{2,\leq m+2}}\\
&+\dl^{(k-1)(1-\ve_0)}s^{-2}\sqrt{\int_0^u \tilde F_1(s,u')du'}.
\end{split}
\end{equation}

{\bfseries 2.} When all vectorfields $Z'$s are in $\{T, R\}$, then \eqref{Lchi} implies
\begin{equation}\label{Ltrchi}
\begin{split}
\mathring L(\textrm{tr}&{\chi})=-|{\check\chi}|^2-\f2\varrho\text{tr}\chi+\f{1}{\varrho^2}-G_{X\mathring L}^\gamma\slashed d^X\mathring L\varphi_\gamma
+\f12G_{\mathring L\mathring L}^\gamma\slashed\triangle\varphi_\gamma+\f12\slashed g^{XX}G_{XX}^\gamma\mathring L^2\varphi_{\gamma}\\
&-\big\{\f12G_{\mathring L\mathring L}^\gamma(\mathring L\varphi_\gamma)+G_{\tilde T\mathring L}^\gamma(\mathring L\varphi_\gamma)-G_{X\mathring L}^\gamma(\slashed d^X\varphi_{\gamma})\big\}\textrm{tr}\chi+f(\varphi, \slashed dx,\mathring L^1,\mathring L^2)\varphi^{k-2}\left(
\begin{array}{ccc}
\mathring L\varphi\\
\slashed d\varphi\\
\end{array}
\right)^2,
\end{split}
\end{equation}
where
\begin{equation}\label{triphi}
\begin{split}
&\slashed\triangle\varphi_\g=\mu^{-1}\big(\mathring L\mathring{\underline L}\varphi_\g+\f{1}{2\varrho}\mathring {\underline L}\varphi_\g-{H}_\g\big)
\end{split}
\end{equation}
by \eqref{fequation}.
Note that
\begin{equation*}
\begin{split}
G_{X\mathring L}^\gamma\slashed d^X\mathring L\varphi_\gamma=&\mathring L(G_{X\mathring L}^\gamma\slashed d^X\varphi_\gamma)-(\mathring LG_{\al\beta}^\g)\mathring L^\beta\slashed d_Xx^\al\slashed d^X\varphi-G_{\al\beta}(\slashed d_X\mathring L^\al)\mathring L^\beta\slashed d^X\varphi_\g\\
&-G_{\al\beta}\slashed d_Xx^\al(\mathring L\mathring L^\beta)\slashed d^X\varphi_\g+2\textrm{tr}\chi (G_{X\mathring L}^\g\slashed d^X\varphi_\g),
\end{split}
\end{equation*}
and the term $\mathring L(G_{X\mathring L}^\gamma\slashed d^X\varphi_\gamma)$ can be moved to the left hand of \eqref{Ltrchi}.
Analogous treatments can be carried out for $\f12\slashed g^{XX}G_{XX}^\gamma\mathring L^2\varphi_{\gamma}$ and $\f12\mu^{-1}G_{\mathring L\mathring L}^\gamma\mathring L\mathring{\underline L}\varphi_\g$.
Thus, with the help of \eqref{lmu}, \eqref{LL}, \eqref{dL} and \eqref{H}, \eqref{Ltrchi} can be written as a new form containing only the first order
or zero-th order derivatives of $\varphi_\g$ on the right hand side of the equality
\begin{equation}\label{Ltr}
\begin{split}
\mathring L\big(\textrm{tr}\chi-E)=\big(-\f{2}{\varrho}+\mathcal E\big)\textrm{tr}\chi+\f{1}{\varrho^2}-|\check\chi|^2-\f12\mu^{-1}\mathring L(G_{\al\beta}^\gamma)\mathring L^\al\mathring L^\beta\underline{\mathring L}\varphi_\g+e,
\end{split}
\end{equation}
where
\begin{equation}\label{Ee}
\begin{split}
E=&-G_{X\mathring L}^\gamma\slashed d^X\varphi_\gamma+\f12\mu^{-1}G_{\mathring L\mathring L}^\gamma\mathring{\underline L}\varphi_\gamma
+\f12\slashed g^{XX}G_{XX}^\gamma\mathring L\varphi_\gamma,\\
\mathcal E=&-\f12G_{\mathring L\mathring L}^\gamma\mathring L\varphi_\gamma-G_{\tilde T\mathring L}^\gamma\mathring L\varphi_\gamma+\f12\mu^{-1}G_{\mathring L\mathring L}^\gamma T\varphi_\gamma,
\end{split}
\end{equation}
and
\begin{equation}\label{e1}
\begin{split}
e=&\f12\mu^{-2}(G_{\mathring L\mathring L}^\gamma T\varphi_\gamma)^2+f_1(\varphi, \slashed dx^i,\mathring L^i,\slashed g)\varphi^{k-2}\left(
\begin{array}{ccc}
\mathring L\varphi\\
\slashed d\varphi\\
\end{array}
\right)^2+f_2(\varphi, \slashed dx^i,\mathring L^i,\slashed g)\mu^{-1}\varphi^{2k-2}\mathring L\varphi T\varphi\\
&+f_3(\varphi, \slashed dx^i,\mathring L^i,\slashed g)\mu^{-1}\varphi^{k-1}(G_{\mathring L\mathring L}^\g\slashed d_X\varphi_\g)T\varphi
+f_4(\varphi, \slashed dx^i,\mathring L^i,\slashed g)\mu^{-1}\varphi^{k-1}(G_{\mathring L\mathring L}^\g T\varphi_\g)\slashed d\varphi.
\end{split}
\end{equation}
Here note that for the global estimate on the solution $\phi$
to \eqref{quasi} with \eqref{id} for all $\ve_0<\ve_k^*$ especially for $\ve_0$ near the critical
exponent $\ve_k^*$, we will make full use of the null structure \eqref{null} to
write explicitly the factors $G_{\mathring L\mathring L}^\g\slashed d_X\varphi_\g$ and $G_{\mathring L\mathring L}^\g T\varphi_\g$
in the expression of $e$. In fact, according to \eqref{GT},
\begin{equation}\label{GTp}
G_{\mathring L\mathring L}^\g T\varphi_\g=-(G_{\mathring L\mathring L}^\g\mathring L_\g)\tilde T^iT\varphi_i
+\mu G_{\mathring L\mathring L}^0\tilde T^i(\mathring L\varphi_i)
+\mu g_{\gamma i}G_{\mathring L\mathring L}^\gamma (\slashed d^Xx^i)\tilde T^j\slashed d_X\varphi_j.
\end{equation}
In addition, inserting \eqref{gab} into \eqref{quasi} gives
\begin{equation}\label{dxdp}
\slashed d^Xx^i\slashed d_X\varphi_i=\mu^{-1}\underline{\mathring L}^\al\mathring L\varphi_\al
=-(\mathring L^\al+2g^{0\al})\mathring L\varphi_\al,
\end{equation}
which, together with \eqref{pal}, implies
\begin{equation}\label{Gdp}
\begin{split}
&G_{\mathring L\mathring L}^\g\slashed d_X\varphi_\g=G_{\mathring L\mathring L}^\g\slashed d_Xx^i(\p_\g\varphi_i)\\
=&\dl_\g^0G_{\mathring L\mathring L}^\g(\slashed d_Xx^i\mathring L\varphi_i)-(G_{\mathring L\mathring L}^\g\mathring L_\g)\tilde T^j\slashed d_X\varphi_j+g_{\g j}G_{\mathring L\mathring L}^\g\slashed d_Xx^j(\slashed d^Xx^i\slashed d_X\varphi_i)\\
=&\dl_\g^0G_{\mathring L\mathring L}^\g(\slashed d_Xx^i\mathring L\varphi_i)-(G_{\mathring L\mathring L}^\g\mathring L_\g)\tilde T^j\slashed d_X\varphi_j-g_{\g j}G_{\mathring L\mathring L}^\g\slashed d_Xx^j(\mathring L^\al+2g^{0\al})\mathring L\varphi_\al.
\end{split}
\end{equation}
Substituting \eqref{GTp} and \eqref{Gdp} into $\eqref{e1}$ yields
\begin{equation}\label{ee}
\begin{split}
e=&\f12\mu^{-2}(G_{\mathring L\mathring L}^\gamma T\varphi_\gamma)^2+f_1(\varphi, \slashed dx,\mathring L^1,\mathring L^2,\slashed g)\varphi^{k-2}\left(
\begin{array}{ccc}
\mathring L\varphi\\
\slashed d\varphi\\
\end{array}
\right)^2\\
&+f_2(\varphi, \slashed dx,\mathring L^1,\mathring L^2,\slashed g)\mu^{-1}\varphi^{2k-2}\mathring L\varphi T\varphi+f_3(\varphi, \slashed dx,\mathring L^1,\mathring L^2,\slashed g)\mu^{-1}\varphi^{k-1}(G_{\mathring L\mathring L}^\g\mathring L_\g)T\varphi \slashed d\varphi.
\end{split}
\end{equation}
Let $F^m=\slashed d\bar Z^m\textrm{tr}\chi-\slashed d\bar Z^m E$ with $\bar Z\in\{T,R\}$. Then by an induction
argument on \eqref{Ltr}, one can get
\begin{equation}\label{LFal}
\begin{split}
\slashed{\mathcal L}_{\mathring L}F^m=&(-\f{2}{\varrho}+\mathcal E)F^m+(-\f{2}{\varrho}+\mathcal E)\slashed d\bar Z^m E-\slashed d\bar Z^{m}(|\check\chi|^2)\\
&-\f12\slashed d\bar Z^{m}\big(\mu^{-1}\mathring L(G_{\al\beta}^\gamma)\mathring L^\al\mathring L^\beta\underline{\mathring L}\varphi_\g\big)+e^m,
\end{split}
\end{equation}
where
\begin{equation}\label{eal}
\begin{split}
e^m=&\slashed{\mathcal L}_{\bar Z}^m e^0+\underbrace{\sum_{p_1+p_2=m-1}{\tiny {\tiny {\tiny {\tiny }}}}\slashed{\mathcal L}_{\bar Z}^{p_1}\slashed{\mathcal L}_{[\mathring L,\bar Z]}F^{p_2}+\sum_{\tiny\begin{array}{c}p_1+p_2=m\\p_1\geq 1\end{array}}\bar Z^{p_1}\big(-\f{2}{\varrho}+\mathcal E)\slashed d\bar Z^{p_2}\textrm{tr}\chi}_{\text{vanish when $m=1$}}
\end{split}
\end{equation}
and
\begin{equation}\label{e0}
\begin{split}
e^0=&\slashed de+(\slashed d\mathcal E)\textrm{tr}\chi.
\end{split}
\end{equation}

Similarly,
the factor $(\mathring LG_{\al\beta}^\gamma)\mathring L^\al\mathring L^\beta\underline{\mathring L}\varphi_\g$ in \eqref{LFal}
can also be dealt with \eqref{null}. In fact, due to $$G_{\al\beta}^\g=-kg_{\al\al'}g_{\beta\beta'}g^{\al'\beta',\g\g_2\cdots\g_k}\varphi_{\g_2}\cdots\varphi_{\g_k}+f(\varphi)\varphi^k,$$
then by \eqref{pal} and $\mathring L\mathring L^i=-\f12(G_{\mathring L\mathring L}^\g\mathring L_\g)\tilde T^j\slashed d^Xx^i\slashed d_X\varphi_j+O(\varphi^{k-1}\mathring L\varphi)$ from \eqref{LL} and \eqref{Gdp}, one can obtain
\begin{equation*}
\begin{split}
\mathring LG_{\al\beta}^\g=&-k(k-1)g_{\al\al'}g_{\beta\beta'}g^{\al'\beta',\g\g_2\cdots\g_k}(\mathring L\p_{\g_2}\phi)\varphi_{\g_3}\cdots\varphi_{\g_k}+f(\varphi)\varphi^{k-1}\mathring L\varphi\\
=&k(k-1)g_{\al\al'}g_{\beta\beta'}g^{\al'\beta',\g\g_2\cdots\g_k}\mathring L_{\g_2}\tilde T^{i_2}(\mathring L\varphi_{i_2})\varphi_{\g_3}\cdots\varphi_{\g_k}\\
&-k(k-1)g_{\al\al'}g_{\beta\beta'}g^{\al'\beta',\g\g_2\cdots\g_k}\mathring L(\delta_{\g_2}^0\mathring L\phi+g_{\g_2i}\slashed d^Xx^i\slashed d_X\phi)\varphi_{\g_3}\cdots\varphi_{\g_k}\\
&+f(\varphi,\mathring L^1,\mathring L^2)\varphi^{k-1}\left(
\begin{array}{ccc}
\mathring L\varphi\\
(G_{\mathring L\mathring L}^\g\mathring L_\g)\slashed d^Xx\slashed d_X\varphi
\end{array}
\right).
\end{split}
\end{equation*}
Applying \eqref{pal} to $\varphi_{\g_p}=\p_{\g_p}\phi$ $(3\leq p\leq k)$ again leads to
\begin{equation*}
\begin{split}
LG_{\al\beta}^\g=&(-1)^kk(k-1)g_{\al\al'}g_{\beta\beta'}g^{\al'\beta',\g\g_2\cdots\g_k}\mathring L_{\g_2}\cdots\mathring L_{\g_k}\tilde T^{i_2}\cdots\tilde T^{i_k}(\mathring L\varphi_{i_2})\varphi_{i_3}\cdots\varphi_{i_k}\\
&-(-1)^{k}k(k-1)(k-2)g_{\al\al'}g_{\beta\beta'}g^{\al'\beta',\g\g_2\cdots\g_k}\mathring L_{\g_2}\tilde T^i(\mathring L\varphi_i)(g_{\g_3j}\slashed d^Xx^j\slashed d_X\phi)\\
&\qquad\qquad\qquad\qquad\qquad\cdot\mathring L_{\g_4}\cdots\mathring L_{\g_k}\tilde T^{i_4}\cdots\tilde T^{i_k}\varphi_{i_4}\cdots\varphi_{i_k}\\
&-k(k-1)g_{\al\al'}g_{\beta\beta'}g^{\al'\beta',\g\g_2\cdots\g_k}\mathring L(\delta_{\g_2}^0\mathring L\phi+g_{\g_2i}\slashed d^Xx^i\slashed d_X\phi)\varphi_{\g_3}\cdots\varphi_{\g_k}\\
&+f(\varphi,\mathring L^1, \mathring L^2)\left(
\begin{array}{ccc}
\varphi^{k-1}\mathring L\varphi\\
\varphi^{k-1}(G_{\mathring L\mathring L}^\g\mathring L_\g)\slashed d^Xx\slashed d_X\varphi\\
\mathring L\varphi\mathring L\phi\varphi^{k-3}\\
\mathring L\varphi(\slashed d_Xx\slashed d^X\phi)(\slashed d_Xx\slashed d^X\phi)\varphi^{k-4}
\end{array}
\right).
\end{split}
\end{equation*}
Thus,
\begin{equation*}
\begin{split}
&(\mathring LG_{\al\beta}^\gamma)\mathring L^\al\mathring L^\beta\underline{\mathring L}\varphi_\g=(\mathring LG_{\al\beta}^\gamma)\mathring L^\al\mathring L^\beta\underline{\mathring L}^\nu\p_\g\varphi_\nu\\
=&(-1)^{k-1}k(k-1)g^{\al\beta,\g\g_2\cdots\g_k}\mathring L_\al\mathring L_\beta\mathring L_\g\mathring L_{\g_2}\cdots\mathring L_{\g_k}\tilde T^{i_2}\cdots\tilde T^{i_k}(\mathring L\varphi_{i_2})\varphi_{i_3}\cdots\varphi_{i_k}(\mu^{-1}\underline{\mathring L}^\nu T\varphi_{\nu})\\
&+(-1)^{k}k(k-1)(k-2)g^{\al\beta,\g\g_2\cdots\g_k}\mathring L_\al\mathring L_\beta\mathring L_\g\mathring L_{\g_2}(g_{\g_3j}\slashed d^Xx^j\slashed d_X\phi)\mathring L_{\g_4}\cdots\mathring L_{\g_k}\\
&\qquad\qquad\qquad\qquad\qquad\cdot\tilde T^i(\mathring L\varphi_i)\tilde T^{i_4}\cdots\tilde T^{i_k}\varphi_{i_4}\cdots\varphi_{i_k}(\mu^{-1}\underline{\mathring L}^\nu T\varphi_{\nu})\\
&-k(k-1)g_{\al\al'}g_{\beta\beta'}g^{\al'\beta',\g\g_2\cdots\g_k}\mathring L(\delta_{\g_2}^0\mathring L\phi+g_{\g_2i}\slashed d^Xx^i\slashed d_X\phi)\varphi_{\g_3}\cdots\varphi_{\g_k}\mathring L^\al\mathring L^\beta\underline{\mathring L}\varphi_\g\\
&+f(\varphi,\mathring L^1, \mathring L^2,\f{x}\varrho)\left(
\begin{array}{ccc}
\varphi^{k-1}\mathring L\varphi(\underline{\mathring L}\varphi)\\
\varphi^{k-1}(G_{\mathring L\mathring L}^\g\mathring L_\g)\slashed d^Xx\slashed d_X\varphi(\underline{\mathring L}\varphi)\\
\mathring L\varphi\mathring L\phi\varphi^{k-3}(\underline{\mathring L}\varphi)\\
\mathring L\varphi(\slashed d_Xx\slashed d^X\phi)(\slashed d_Xx\slashed d^X\phi)\varphi^{k-4}(\underline{\mathring L}\varphi)\\
\mu(\mathring L\varphi)^2\varphi^{k-2}\\
\mu\mathring L\varphi(\slashed d^Xx\slashed d_X\varphi)\varphi^{k-2}
\end{array}
\right).\qquad\qquad\qquad\qquad\qquad
\end{split}
\end{equation*}
As for \eqref{gLLL}, the null condition \eqref{null} implies that
\begin{align}
&(\mathring LG_{\al\beta}^\gamma)\mathring L^\al\mathring L^\beta\underline{\mathring L}\varphi_\g\no\\
=&(-1)^{k}k(k-1)(k-2)\overbrace{g^{\al\beta,\g j'\g_3\cdots\g_k}\o_\al\o_\beta\o_\g(m_{jj'}\slashed d^Xx^j\slashed d_X\phi)\o_{\g_3}\cdots\o_{\g_k}}^{\mathscr A}\qquad\qquad\no\\
&\qquad\qquad\qquad\qquad\qquad\cdot\tilde T^i(\mathring L\varphi_i)\tilde T^{i_4}\cdots\tilde T^{i_k}\varphi_{i_4}\cdots\varphi_{i_k}(\mu^{-1}\underline{\mathring L}^\nu T\varphi_{\nu})\no\\
&-k(k-1)g_{\al\al'}g_{\beta\beta'}g^{\al'\beta',\g\g_2\cdots\g_k}\mathring L(\delta_{\g_2}^0\mathring L\phi+g_{\g_2i}\slashed d^Xx^i\slashed d_X\phi)\varphi_{\g_3}\cdots\varphi_{\g_k}\mathring L^\al\mathring L^\beta\underline{\mathring L}\varphi_\g\label{LG}\\
&+\underbrace{f(\varphi,\mathring L^1,\mathring L^2,\f{x}\varrho,\check\varrho)\left(
\begin{array}{ccc}
\varphi^{k-1}\mathring L\varphi(\underline{\mathring L}\varphi)\\
\varphi^{k-1}(G_{\mathring L\mathring L}^\g\mathring L_\g)\slashed d^Xx\slashed d_X\varphi(\underline{\mathring L}\varphi)\\
\mathring L\varphi\mathring L\phi\varphi^{k-3}(\underline{\mathring L}\varphi)\\
\mathring L\varphi(\slashed d_Xx\slashed d^X\phi)(\slashed d_Xx\slashed d^X\phi)\varphi^{k-4}(\underline{\mathring L}\varphi)\\
\mu(\mathring L\varphi)^2\varphi^{k-2}\\
\mu\mathring L\varphi(\slashed d^Xx\slashed d_X\varphi)\varphi^{k-2}\\
(\mathring L\varphi)T\varphi(\slashed d^Xx\slashed d_X\phi)\varphi^{2k-3}\\
\check L^1(\mathring L\varphi)\varphi^{k-2}(T\varphi)\\
\check L^2(\mathring L\varphi)\varphi^{k-2}(T\varphi)\\
\check\varrho(\mathring L\varphi)\varphi^{k-2}(T\varphi)\\
\check L^1(\slashed d^Xx\slashed d_X\phi)(\mathring L\varphi)\varphi^{k-3}(T\varphi)\\
\check L^2(\slashed d^Xx\slashed d_X\phi)(\mathring L\varphi)\varphi^{k-3}(T\varphi)\\
\check\varrho(\slashed d^Xx\slashed d_X\phi)(\mathring L\varphi)\varphi^{k-3}(T\varphi)\no
\end{array}
\right)}_{\mathscr B}.
\end{align}

Observe that the first term in the right hand side of \eqref{LG} contains
the good known $\mathscr A$ which has been estimated in \eqref{A}. Note also that
if one substitutes the second term of \eqref{LG} into \eqref{LFal} and estimates
the resulting term directly, then the final energy estimates cannot be closed since
the highest $m+3$  order derivatives of $\phi$ will  appear on the right hand side
of the related energy inequality (Corollary \ref{Zphi} only includes the estimates of at most
$m+2$  order derivatives of $\phi$). Fortunately, there exists
a crucial factor $\mathring L(\delta_{\g_2}^0\mathring L\phi+g_{\g_2i}\slashed d^Xx^i\slashed d_X\phi)$ in
the second term of \eqref{LG}, which helps us rewrite the corresponding term in \eqref{LFal} as
 $-\slashed{\mathcal L}_{\mathring L}\mathscr C^m+\cdots$, and $-\slashed{\mathcal L}_{\mathring L}\mathscr C^m$
 can be moved to the left hand side of \eqref{LFal} (see \eqref{Zd} below and $\mathscr C^m$
 admits some good properties). In addition, the last term $\mathscr B$ can be regarded
as the higher order error term, which can be treated easily. To deal with the term $-\f12\slashed{\mathcal L}_{\bar Z}^m\slashed d((\mathring LG_{\al\beta}^\gamma)\mathring L^\al\mathring L^\beta\underline{\mathring L}\varphi_\g)$ in \eqref{LFal}, as for \eqref{Ltr}, one can get from \eqref{LG} that
\begin{equation}\label{Zd}
\begin{split}
&\slashed d{\bar Z}^m\big(\mu^{-1}(\mathring LG_{\al\beta}^\gamma)\mathring L^\al\mathring L^\beta\underline{\mathring L}\varphi_\g\big)\\
=&\slashed{\mathcal L}_{\mathring L}\mathscr C^m+\mathscr D^m+\slashed d\bar Z^m(\mu^{-1}\mathscr B)\\
&+(-1)^kk(k-1)(k-2)\slashed d\bar Z^m\big\{\mathscr A\tilde T^i(\mathring L\varphi_i)\tilde T^{i_4}\cdots\tilde T^{i_k}\varphi_{i_4}\cdots\varphi_{i_k}(\mu^{-2}\underline{\mathring L}^\nu T\varphi_{\nu})\big\},
\end{split}
\end{equation}
where
\begin{equation}\label{mC}
\mathscr C^m={-}k(k-1)g^{\al\beta,\g\g_2\cdots\g_k}\slashed d\bar Z^m(\delta_{\g_2}^0\mathring L\phi+g_{\g_2i}\slashed d^Xx^i\slashed d_X\phi)\mu^{-1}\varphi_{\g_3}\cdots\varphi_{\g_k}\mathring L_\al\mathring L_\beta\underline{\mathring L}\varphi_\g
\end{equation}
and
\begin{equation}\label{mD}
\begin{split}
\mathscr D^m
=&-k(k-1)g^{\al\beta,\g\g_2\cdots\g_k}\Big\{\big([\slashed{\mathcal L}_{\bar Z}^m,\slashed{\mathcal L}_{\mathring L}]\slashed d(\delta_{\g_2}^0\mathring L\phi+g_{\g_2i}\slashed d^Xx^i\slashed d_X\phi)\big)\mu^{-1}\varphi_{\g_3}\cdots\varphi_{\g_k}\mathring L_\al\mathring L_\beta\underline{\mathring L}\varphi_\g\\
&\qquad-\slashed d\bar Z^m(\delta_{\g_2}^0\mathring L\phi+g_{\g_2i}\slashed d^Xx^i\slashed d_X\phi)\mathring L(\mu^{-1}\varphi_{\g_3}\cdots\varphi_{\g_k}\mathring L_\al\mathring L_\beta\underline{\mathring L}\varphi_\g)\\
&\qquad+\slashed{\mathcal L}_{\bar Z}^m\big\{\mathring L(\delta_{\g_2}^0\mathring L\phi+g_{\g_2i}\slashed d^Xx^i\slashed d_X\phi)\slashed d(\mu^{-1}\varphi_{\g_3}\cdots\varphi_{\g_k}\mathring L_\al\mathring L_\beta\underline{\mathring L}\varphi_\g)\big\}\\
&\qquad+\sum_{\tiny\begin{array}{c}p_1+p_2=m\\p_1\geq 1\end{array}}\slashed{\mathcal L}_{\bar Z}^{p_1}\slashed d\mathring L(\delta_{\g_2}^0\mathring L\phi+g_{\g_2i}\slashed d^Xx^i\slashed d_X\phi)\bar Z^{p_2}(\mu^{-1}\varphi_{\g_3}\cdots\varphi_{\g_k}\mathring L_\al\mathring L_\beta\underline{\mathring L}\varphi_\g)\Big\}.
\end{split}
\end{equation}
Substituting \eqref{Zd} into \eqref{LFal} and setting
$$
\hat F^m:=F^m+\f12\mathscr C^m
$$
and
\begin{equation}\label{hem}
\begin{split}
\hat e^m:=&e^m-\f12\mathscr D^m-\f12\slashed d\bar Z^m(\mu^{-1}\mathscr B)\\
&-\f12(-1)^kk(k-1)(k-2)\slashed d\bar Z^m\big\{\mathscr A\tilde T^i(\mathring L\varphi_i)\tilde T^{i_4}\cdots\tilde T^{i_k}\varphi_{i_4}\cdots\varphi_{i_k}(\mu^{-2}\underline{\mathring L}^\nu T\varphi_{\nu})\big\},
\end{split}
\end{equation}
then one can rewrite \eqref{LFal} as
\begin{equation}\label{LhF}
\slashed{\mathcal L}_{\mathring L}\hat F^m=(-\f{2}{\varrho}+\mathcal E)\hat F^m+(-\f{2}{\varrho}+\mathcal E)(\slashed d\bar Z^m E
-\f12\mathscr C^m)-\slashed d\bar Z^{m}(|\check\chi|^2)+\hat e^m.
\end{equation}
Note that for any one-form $\xi$ on $S_{s,u}$, it holds that
\begin{equation}\label{Lxi}
\mathring L(\varrho^2|\xi|^2)=-2\varrho^2\check{\chi}^{XX}\xi_X\xi_X
+2\varrho^2\slashed g^{XX}(\slashed{\mathcal L}_{\mathring L}\xi_X)\xi_X.
\end{equation}
By taking $\xi=\varrho^2\hat F^m$ in \eqref{Lxi} and using \eqref{LhF}, one can obtain
\begin{equation*}
\begin{split}
\mathring L\big(\varrho^6|\hat F^m|^2\big)
=2\varrho^6\big\{&-\textrm{tr}\check\chi |\hat F^m|^2+\mathcal E|\hat F^m|^2+\hat e^m\cdot\hat F^m\\
&+(-\f{2}{\varrho}+\mathcal E)(\slashed d\bar Z^m E-\f12\mathscr C^m)\cdot \hat F^m-\slashed d\bar Z^m(|\check\chi|^2)\cdot\hat F^m\big\}.
\end{split}
\end{equation*}
Then
\begin{equation}\label{LrhoF}
\begin{split}
|\mathring L(\varrho^3|\hat F^m|)|\lesssim&\delta^{k(1-\varepsilon_0)-\varepsilon_0}s^{-3/2}\varrho^3|\hat F^m|+\varrho^2|\slashed d\bar Z^m E|+\varrho^2|\mathscr C^m|+\varrho^3|\slashed d\bar Z^m(|\check\chi|^2)|+\varrho^3|\hat e^m|.
\end{split}
\end{equation}
It follows from \eqref{Ff} and \eqref{LrhoF} that
\begin{equation*}
\begin{split}
\delta^l\varrho^3\|\hat F^m&\|_{s,u}\lesssim\delta^l\|\hat F^m(t_0,\cdot,\cdot)\|_{L^2(\Sigma_{s}^{ u})}+\delta^l\varrho^{1/2}\int_{t_0}^s\Big\{\delta^{k(1-\varepsilon_0)
-\varepsilon_0}\tau^{-3/2}\varrho^{5/2}\|\hat F^m\|_{L^2(\Sigma_{\tau}^{ u})}\\
&+\tau^{3/2}\|\slashed d\bar Z^m E\|_{\tau,u}+\tau^{3/2}\|\mathscr C^m\|_{\tau,u}+\tau^{5/2}\|\slashed d\bar Z^m(|\check\chi|^2)\|_{L^2(\Sigma_{\tau}^u)}+\tau^{5/2}\|\hat e^m\|_{L^2(\Sigma_{\tau}^u)}\Big\}d\tau.
\end{split}
\end{equation*}
This, together with Gronwall's inequality, yields
\begin{equation}\label{Eal}
\begin{split}
\varrho^{5/2}\delta^l\|\hat F
^m\|_{s,u}
\lesssim&\delta^{k(1-\varepsilon_0)-\varepsilon_0+1/2}+\delta^l\int_{t_0}^s\Big\{\tau^{3/2}\|\slashed d\bar Z^m E\|_{\tau,u}\\
&+\tau^{3/2}\|\mathscr C^m\|_{\tau,u}+\tau^{5/2}\|\slashed d\bar Z^m(|\check\chi|^2)\|_{L^2(\Sigma_{\tau}^u)}+\tau^{5/2}\|\hat e^m\|_{L^2(\Sigma_{\tau}^u)}\Big\}{d}\tau.
\end{split}
\end{equation}
Each term in the integrand of \eqref{Eal} can be estimated as follows.

\vskip 0.1 true cm

{\bfseries (1) The estimate of $\|\slashed d\bar Z^{{m}} E\|_{L^2(\Sigma_{s}^{u})}$}
	
Due to $E=-G_{X\mathring L}^\gamma\slashed d^X\varphi_\gamma+\f12\mu^{-1}G_{\mathring L\mathring L}^\gamma\mathring{\underline L}\varphi_\gamma+\f12\slashed g^{XX}G_{XX}^\gamma\mathring L\varphi_\gamma$, by virtue of \eqref{GT} and \eqref{gLLL2},
then it holds that
	\begin{align}
	&\delta^l\|\slashed d\bar Z^m E\|_{s,u}\no\\
	\lesssim&\delta^{k(1-\varepsilon_0)-\varepsilon_0}s^{-7/2}\delta^{l_{-1}}\|\bar Z^{m_{-1}}x\|_{s,u}
+\delta^{k(1-\varepsilon_0)-1}s^{-2}\big\{\sum_{i=1}^2\delta^{l_0}\|\bar Z^{m_0}\check L^i\|_{s,u}+\delta^{l_0}\|\bar Z^{m_0}\check\varrho\|_{s,u}\big\}\no\\
	&+\delta^{(k-1)(1-\varepsilon_0)-1}s^{-3/2}\delta^{l_0}\|\bar Z^{m_0}\mathring L\phi\|_{s,u}+\delta^{(k-1)(1-\varepsilon_0)}(1+\dl^{k(1-\ve_0)-1})s^{-5/2}\delta^{l_0}\|\slashed d\bar Z^{m_0}\phi\|_{s,u}\no\\
	&+\delta^{k(1-\varepsilon_0)-1}s^{-2}\delta^{l_0}\|\bar Z^{m_0}\varphi\|_{s,u}
+\delta^{k(1-\varepsilon_0)}s^{-3}\delta^{l_0}\|\slashed{\mathcal L}_{\bar Z}^{m_0}\slashed g\|_{s,u}
+\dl^{k(1-\ve_0)}s^{-2}\delta^{l_0}\|T\bar Z^{m_0}\varphi\|_{s,u}\no\\
	&+\delta^{k(1-\varepsilon_0)-\varepsilon_0}s^{-5/2}\delta^{l_0}\|\bar Z^{m_0}\mu\|_{s,u}+\delta^{(k-1)(1-\varepsilon_0)}s^{-3/2}\delta^{l_0}\|\slashed d\bar Z^{m_0}\varphi\|_{s,u}\no\\
	&+\dl^{(k+1)(1-\ve_0)}s^{-7/2}\dl^{l_1}\|\slashed{\mathcal L}_{\bar Z}^{m_1}\leftidx{^{(R)}}{\slashed\pi}_T\|_{s,u}+\dl^{(k-1)(1-\ve_0)-1}s^{-3/2}\delta^{l_0}\|\bar Z^{m_0}\mathscr A\|_{s,u}\no\\
	\lesssim&\delta^{k(1-\ve_0)-\ve_0+1/2}s^{-2}+\delta^{(k-1)(1-\ve_0)}s^{-3/2-\iota}\sqrt{\tilde E_{1,\leq m+2}}+\delta^{k(1-\ve_0)}s^{-2}\sqrt{\tilde E_{2,\leq m+2}}\no\\
	&+\dl^{k(1-\ve_0)}(\delta+\dl^{k(1-\ve_0)})s^{-2}\big\{\|R^m\slashed\triangle\mu\|_{s,u}+\delta^{k(1-\ve_0)
-\ve_0}\int_{t_0}^s\tau^{-3/2}\|R^m\slashed\triangle\mu\|_{\tau,u}d\tau\big\}\label{sdE}\\
&+\dl^{(2k-1)(1-\ve_0)-1}s^{-2}\sqrt{\int_0^u \tilde F_1(s,u')du'},\no
	\end{align}
where one has used the related $L^\infty$ estimates in Subsection \ref{BA} and the $L^2$ estimates in
Propositions \ref{L2chi}, \ref{8.2} and Corollary \ref{Zphi},  $l_p$ is the number of $T$
in $Z^{m_p}$ ($p=-1,0,1$) and $1\leq m_p\leq m+1-p$.

\vskip 0.1 true cm

{\bfseries(2) The estimate of $\|\mathscr C^m\|_{L^2(\Sigma_{s}^{u})}$}

\vskip 0.1 true cm

\eqref{mC} implies that
\begin{equation}\label{cm}
\begin{split}
&\dl^l\|\mathscr C^m\|_{L^2(\Sigma_{s}^{u})}\\
\lesssim&\delta^{(k-1)(1-\ve_0)-1}s^{-3/2}\delta^l\|R\bar Z^m\mathring L\phi\|_{L^2(\Sigma_{s}^{u})}
+\dl^{(2k-1)(1-\ve_0)}s^{-7/2}\dl^{l_0}\|\bar Z^{m_0}\varphi\|_{L^2(\Sigma_{s}^{u})}\\
&+\dl^{2k(1-\ve_0)}s^{-5}\dl^{l_0}\|\bar Z^{m_0}x\|_{L^2(\Sigma_{s}^{u})}+\dl^{(2m-1)(1-\ve_0)-1}s^{-7/2}\dl^{l_0}\|\bar Z^{m_0}\phi\|_{L^2(\Sigma_{s}^{u})}\\
&+s^{-1}\dl^l\|g^{\al\beta,\g\g_2\cdots\g_k}R\bar Z^m(\slashed d^Xx^i\slashed d_X\phi)g_{\g_2i}\varphi_{\g_3}\cdots\varphi_{\g_k}\mathring L_\al\mathring L_\beta\underline{\mathring L}\varphi_\g\|_{L^2(\Sigma_{s}^{u})}.
\end{split}
\end{equation}
Note that the last term in \eqref{cm} can be estimated by the $L^2$ norm of $\mathscr A$.
Indeed, by $\varphi_{\g_p}=-\mathring L_{\g_p}\tilde T^i\varphi_i+O(\delta^{2-\ve_0}s^{-3/2})$, $\underline{\mathring L}\varphi_\g=-\mathring L_\g(\mathring L^\al+2\tilde T^\al)T\varphi_\al+O(\dl^{1-\ve_0}s^{-3/2})$, $\mathring L_\al=\o_\al+O(\dl^{k(1-\ve_0)}s^{-1}\ln s)$ and $\o_i=\f{x^i}{\varrho(1+\check\varrho)}$, one has
\begin{align}\label{gR}
&\dl^l\|g^{\al\beta,\g\g_2\cdots\g_k}R\bar Z^m(\slashed d^Xx^i\slashed d_X\phi)g_{\g_2i}\varphi_{\g_3}\cdots\varphi_{\g_k}\mathring L_\al\mathring L_\beta\underline{\mathring L}\varphi_\g\|_{L^2(\Sigma_{s}^{u})}\no\\
\lesssim&\dl^{(k-1)(1-\ve_0)-1}s^{-1/2}\dl^l\|g^{\al\beta,\g i'\g_3\cdots\g_k}R\bar Z^m(\slashed d^Xx^i\slashed d_X\phi)m_{ii'}\o_{\g_3}\cdots\o_{\g_k}\o_\al\o_\beta\o_\g\|_{L^2(\Sigma_{s}^{u})}\no\\
&+(\dl^{(2k-1)(1-\ve_0)-1}\ln s+\dl^{(k-1)(1-\ve_0)})s^{-3/2}\dl^l\|R\bar Z^m(\slashed d^Xx\slashed d_X\phi)\|_{L^2(\Sigma_{s}^{u})}\no\\
\lesssim&\dl^{(k-1)(1-\ve_0)-1}s^{-1/2}\dl^l\|R\bar Z^m\mathscr A\|_{L^2(\Sigma_{s}^{u})}+\delta^{k(1-\ve_0)}s^{-3}\dl^{l_{-1}}\|\bar Z^{m_{-1}}x\|_{L^2(\Sigma_{s}^{u})}\\
&+(\dl^{(2k-1)(1-\ve_0)-1}\ln s+\dl^{(k-1)(1-\ve_0)})s^{-3/2}\dl^l\|\slashed dR\bar Z^m\phi\|_{L^2(\Sigma_{s}^{u})}\no\\
&+\delta^{k(1-\ve_0)}s^{-2}\delta^{l_0}\|\bar Z^{m_0}\check\varrho\|_{L^2(\Sigma_{s}^{u})}+\dl^{(k-1)(1-\ve_0)-1}s^{-3/2}\delta^{l_0}\|\bar Z^{m_0}\phi\|_{L^2(\Sigma_{s}^{u})}.\no
\end{align}
Substituting \eqref{gR} into \eqref{cm} and using Proposition \ref{L2chi} and Corollary \ref{Zphi} lead to
\begin{align}\label{Cm}
\dl^l&\|\mathscr C^m\|_{L^2(\Sigma_{s}^{u})}\lesssim\delta^{k(1-\ve_0)-1/2}(\dl+\dl^{k(1-\ve_0)}\ln s)s^{-5/2}+\delta^{(k-1)(1-\ve_0)}s^{-3/2-\iota}\sqrt{\tilde E_{1,\leq m+2}}\no\\
	&+\big\{\dl^{(2k-1)(1-\ve_0)}(1+\dl^{k(1-\ve_0)-1})\ln^2 s+\delta^{k(1-\ve_0)+\ve_0}\big\}s^{-5/2}\sqrt{\tilde E_{2,\leq m+2}}\\
&+\dl^{2k(1-\ve_0)}(\delta+\dl^{k(1-\ve_0)})s^{-5/2}\int_{t_0}^s\tau^{-3/2}\|R^m\slashed\triangle\mu\|_{\tau,u}d\tau\no\\
&+\dl^{k(1-\ve_0)}(\dl+\dl^{k(1-\ve_0)}\ln s)s^{-2}\|R^m\slashed\triangle\mu\|_{s,u}+\dl^{(2k-1)(1-\ve_0)-1}s^{-2}\sqrt{\int_0^u \tilde F_1(s,u')du'}.\no
\end{align}

{\bfseries (3) The estimate of $\|\slashed d\bar Z^m(|\check\chi|^2)\|_{L^2(\Sigma_{s}^u)}$}

Due to $|\check\chi|^2=(\textrm{tr}\check\chi)^2$, then $\ds\slashed d\bar Z^m(|\check\chi|^2)
=2\textrm{tr}\check\chi(\hat F^m-\f12\mathscr C^m+\slashed d\bar Z^m E)+2\sum_{p\leq m-1}(\bar Z^{m-p}\textrm{tr}\check\chi)(\slashed d\bar Z^p\textrm{tr}\check\chi)$. This, together with Proposition \ref{L2chi}, \eqref{sdE} and \eqref{Cm}, yields
\begin{equation}\label{Y-19}
\begin{split}
\delta^l\|\slashed dZ^m(|\check\chi|^2)\|_{s,u}
	\lesssim&\delta^{k(1-\varepsilon_0)}s^{-2}\ln s\big(\delta^l\|\hat F^m\|_{s,u}+\dl^l\|\mathscr C^m\|_{s,u}
+\delta^l\|\slashed d\bar Z^m E\|_{s,u}\big)\\
	&+\delta^{k(1-\varepsilon_0)}s^{-3}\ln s\big\{\delta^{l_1}\|\slashed{\mathcal L}_Z^{m_1}\check\chi\|_{s,u}
+\delta^{k(1-\varepsilon_0)}s^{-2}\ln s\delta^{l_0}\|\slashed{\mathcal L}_Z^{m_0}\slashed g\|_{s,u}\big\}\\
	\lesssim&\delta^{k(1-\varepsilon_0)}s^{-2}\ln s\delta^{l}\|\hat F^m\|_{s,u}+\delta^{2k(1-\varepsilon_0)
+1/2-\ve_0}s^{-4}\ln s\\
	&+\delta^{(2k-1)(1-\varepsilon_0)}s^{-7/2-\iota}\ln s\sqrt{\tilde E_{1,\leq m+2}}
+\delta^{2k(1-\varepsilon_0)}s^{-4}\ln s\sqrt{\tilde E_{2,\leq m+2}}\\
	&+\dl^{2k(1-\ve_0)}(\delta+\dl^{k(1-\ve_0)}\ln s)s^{-4}\ln s\|R^m\slashed\triangle\mu\|_{s,u}\\
	&+\dl^{3k(1-\ve_0)-\ve_0}(\delta+\dl^{k(1-\ve_0)})s^{-4}\ln s\int_{t_0}^s\tau^{-3/2}\|R^m\slashed\triangle\mu\|_{\tau,u}d\tau\\
	&+\dl^{(2k-1)(1-\ve_0)}(1+\dl^{k(1-\ve_0)-1})s^{-4}\ln s\sqrt{\int_0^u \tilde F_1(s,u')du'}
\end{split}
\end{equation}

{\bfseries (4) The estimate of $\|\hat e^m\|_{L^2(\Sigma_{s}^u)}$}

\begin{itemize}
	\item

One starts with $\slashed{\mathcal L}_{\bar Z}^{p_1}\slashed{\mathcal L}_{[\mathring L,\bar Z]}F^{p_2}$
in \eqref{eal} for $p_1+p_2=m-1$. Due to
$[\mathring L, \bar Z]=\leftidx^{{(\bar Z)}}\slashed\pi_{\mathring L}^XX$, then
$\slashed{\mathcal L}_{[\mathring L,\bar Z]}F^{p_2}=\leftidx^{{(\bar Z)}}{\slashed\pi_{\mathring L}}^X\slashed\nabla_XF^{p_2}+F^{p_2}_X\slashed\nabla\leftidx^{{(\bar Z)}}{\slashed\pi_{\mathring L}}^X$.
This implies that
\begin{align}\label{LF}
&\delta^l\|\slashed{\mathcal L}_{\bar Z}^{m_1}\slashed{\mathcal L}_{[\mathring L, \bar Z]}F^{m_2}\|_{s,u}\no\\
\lesssim&\delta^{k(1-\varepsilon_0)} s^{-2}\ln s\Big\{\delta^l\|\hat F^m\|_{s,u}+\dl^l\|\mathscr C^m\|_{s,u}
+s^{-1}\delta^{l_1}\|\slashed{\mathcal L}_{ Z}^{m_1}\check\chi\|_{s,u}+\delta^{l_2}\|\slashed dZ^{m_2}E\|_{s,u}\Big\}\no\\
&+\delta^{k(1-\varepsilon_0)-\varepsilon_0}s^{-7/2}\delta^{l_1}\Big\{\|\slashed{\mathcal L}_{ Z}^{m_1}\leftidx^{{(R)}}\slashed\pi_{\mathring L}\|_{s,u}+\delta\|\slashed{\mathcal L}_{ Z}^{m_1}\leftidx^{{(T)}}\slashed\pi_{\mathring L}\|_{s,u}+\delta^{k(1-\varepsilon_0)}s^{-1}\ln s\|\slashed{\mathcal L}_{Z}^{m_1}\slashed g\|_{s,u}\Big\}\no\\
\lesssim&\delta^{2k(1-\varepsilon_0)-\varepsilon_0+1/2}s^{-4}\ln s+\delta^{2k(1-\varepsilon_0)}(\dl^{\ve_0-1}\ln s+\dl^{-\ve_0})s^{-7/2-\iota}\ln s\sqrt{\tilde{E}_{1,\leq m+2}}\no\\
&+\delta^{2k(1-\varepsilon_0)}(1+\dl^{k(1-\ve_0)-1})s^{-4}\ln^2 s\sqrt{\tilde{E}_{2,\leq m+2}}\\
&+\dl^{3k(1-\ve_0)}(\delta+\dl^{k(1-\ve_0)})s^{-4}\ln s\int_{t_0}^s\tau^{-\f32}\|R^m\slashed\triangle\mu\|_{\tau,u}d\tau+\delta^{k(1-\varepsilon_0)} s^{-2}\ln s\delta^l\|\hat F^m\|_{s,u}\no\\
&+\dl^{2k(1-\ve_0)}(\delta+\dl^{k(1-\ve_0)}\ln s)s^{-4}\ln s\|R^m\slashed\triangle\mu\|_{s,u}+\dl^{2k(1-\ve_0)-1}s^{-7/2}\sqrt{\int_0^u \tilde F_1(s,u')du'}.\no
\end{align}

\item
For the term $\bar Z^{p_1}(-\f2\varrho+\mathcal E)\slashed d\bar Z^{p_2}\textrm{tr}\chi$
in \eqref{eal} with $p_1+p_2=m$ and $p_1\geq 1$, by making use of \eqref{GT}, \eqref{gLLL}, Corollary \ref{Zphi}, Propositions \ref{L2chi} and \ref{8.2}, one gets
\begin{align}\label{Zrho}
&\delta^l\|\bar Z^{p_1}(-\f2\varrho+\mathcal E)\slashed d\bar Z^{p_2}\textrm{tr}\chi\|_{s,u}\no\\
\lesssim&s^{-5/2}\delta^{l_1}\|\slashed{\mathcal L}_{\bar Z}^{m_1}\check\chi\|_{s,u}+\delta^{2k(1-\varepsilon_0)-\ve_0}s^{-9/2}\ln s\Big\{\delta^{l_1}\|\bar Z^{k_1}\mu\|_{s,u}+s^{-1}\delta^{l_0}\|\bar Z^{m_0}x\|_{s,u}\Big\}\no\\
&+\delta^{2k(1-\varepsilon_0)-1}s^{-4}\ln s\Big\{\sum_{i=1}^2\delta^{l_1}\|\bar Z^{m_1}\check L^i\|_{s,u}+\delta^{l_1}\|\bar Z^{m_1}\check\varrho\|_{s,u}+\delta^{l_0}\|\bar Z^{m_0}\varphi\|_{s,u}\Big\}\no\\
&+\delta^{(2k-1)(1-\varepsilon_0)-1}s^{-9/2}\ln s\dl^{l_0}\|\bar Z^{m_0}\phi\|_{s,u}+\delta^{k(1-\varepsilon_0)}s^{-9/2}\ln s\delta^{l_1}\|\slashed{\mathcal L}_{\bar Z}^{m_1}\slashed g\|_{s,u}\no\\
\lesssim&\delta^{k(1-\varepsilon_0)}s^{-7/2-\iota}\ln s\sqrt{\tilde{E}_{1,\leq m+2}}+\delta^{k(1-\varepsilon_0)+\ve_0}(1+\dl^{k(1-\ve_0)-1})s^{-4}\ln^2 s\sqrt{\tilde E_{2,\leq m+2}}\no\\
&+\delta^{k(1-\varepsilon_0)+1/2}s^{-4}\ln s+\dl^{(k-1)(1-\ve_0)}s^{-7/2}\sqrt{\int_0^u \tilde F_1(s,u')du'}\\
&+\delta^{3k(1-\ve_0)-\ve_0}(\delta+\dl^{k(1-\ve_0)})s^{-4}\ln s\int_{t_0}^s\tau^{-3/2}\|R^m\slashed\triangle\mu\|_{\tau,u}d\tau.\no
\end{align}

\item
Analogously to the treatment on $\mathscr C^m$,
the second term in the right side hand of \eqref{mD} can be estimated as follows:
\begin{align*}
&\dl^l\|g^{\al\beta,\g\g_2\cdots\g_k}\slashed d\bar Z^m(\delta_{\g_2}^0\mathring L\phi+g_{\g_2i}\slashed d^Xx^i\slashed d_X\phi)\mathring L(\mu^{-1}\varphi_{\g_3}\cdots\varphi_{\g_k}\mathring L_\al\mathring L_\beta\underline{\mathring L}\varphi_\g)\|_{s,u}\\
\lesssim&\dl^{(k-1)(1-\ve_0)-1}s^{-5/2}\dl^l\|R\bar Z^m\mathring L\phi\|_{s,u}\\
&+s^{-1}\dl^l\|g^{\al\beta,\g\g_2\cdots\g_k}R\bar Z^m(g_{\g_2i}\slashed d^Xx^i\slashed d_X\phi)(\mathring L\varphi_{\g_3})\varphi_{\g_4}\cdots\varphi_{\g_k}\mathring L_\al\mathring L_\beta\underline{\mathring L}\varphi_\g\|_{s,u}\\
&+s^{-1}\dl^l\|g^{\al\beta,\g\g_2\cdots\g_k}R\bar Z^m(g_{\g_2i}\slashed d^Xx^i\slashed d_X\phi)\varphi_{\g_3}\cdots\varphi_{\g_k}\mathring L_\al\mathring L_\beta(\mathring L\underline{\mathring L}\varphi_\g)\|_{s,u}\\
&+s^{-1}\dl^l\|g^{\al\beta,\g\g_2\cdots\g_k}R\bar Z^m(g_{\g_2i}\slashed d^Xx^i\slashed d_X\phi)\varphi_{\g_3}\cdots\varphi_{\g_k}\mathring L(\mu^{-1}\mathring L_\al\mathring L_\beta)\underline{\mathring L}\varphi_\g\|_{s,u},
\end{align*}
which, together with
\begin{align*}
\mathring L\varphi_{\g_3}=&-\mathring L_{\g_3}\tilde T^{i_3}\mathring L\varphi_{i_3}+O(\dl^{1-\ve_0}(\dl+\dl^{k(1-\ve_0)})s^{-5/2}),\\
\mathring L\underline{\mathring L}\varphi_\g=&\mu^{-1}(\mathring L\mu)\underline{\mathring L}\varphi_\g
-\mathring L_\g(\mu^{-1}\underline{\mathring L}^\al T\mathring L\varphi_\al)+\mu^{-1}\mathring L_\g(T\mathring L^\al)\underline{\mathring L}\varphi_\al\\
&+O(\dl^{-\ve_0}(\dl+\dl^{k(1-\ve_0)})s^{-5/2})
\end{align*}
and an analogous analysis for \eqref{gR}, yields
\begin{align}\label{Dm2}
&\dl^l\|g^{\al\beta,\g\g_2\cdots\g_k}\slashed d\bar Z^m(\delta_{\g_2}^0\mathring L\phi+g_{\g_2i}\slashed d^Xx^i\slashed d_X\phi)\mathring L(\mu^{-1}\varphi_{\g_3}\cdots\varphi_{\g_k}\mathring L_\al\mathring L_\beta\underline{\mathring L}\varphi_\g)\|_{s,u}\no\\
\lesssim&\dl^{(k-1)(1-\ve_0)-1}s^{-5/2}\dl^l\|R\bar Z^m\mathring L\phi\|_{s,u}+\delta^{(k-1)(1-\ve_0)-1}(\dl+\dl^{k(1-\ve_0)-\ve_0})s^{-4}\dl^{l_0}\|\bar Z^{m_0}\phi\|_{s,u}\no\\
&+\dl^{1-\ve_0}s^{-5/2}\dl^l\|g^{\al\beta,\g\g_2\cdots\g_k}R\bar Z^m(g_{\g_2i}\slashed d^Xx^i\slashed d_X\phi)\mathring L_{\g_3}\varphi_{\g_4}\cdots\varphi_{\g_k}\mathring L_\al\mathring L_\beta\underline{\mathring L}\varphi_\g\|_{s,u}\no\\
&+\dl^{k(1-\ve_0)-\ve_0}s^{-5/2}\dl^l\|g^{\al\beta,\g\g_2\cdots\g_k}R\bar Z^m(g_{\g_2i}\slashed d^Xx^i\slashed d_X\phi)\varphi_{\g_3}\cdots\varphi_{\g_k}\mathring L_\al\mathring L_\beta\underline{\mathring L}\varphi_\g\|_{s,u}\no\\
&+\dl^{-\ve_0}s^{-5/2}\dl^l\|g^{\al\beta,\g\g_2\cdots\g_k}R\bar Z^m(g_{\g_2i}\slashed d^Xx^i\slashed d_X\phi)\varphi_{\g_3}\cdots\varphi_{\g_k}\mathring L_\al\mathring L_\beta\mathring L_\g\|_{s,u}\no\\
&+\delta^{(k-1)(1-\ve_0)-1}(\dl+\dl^{k(1-\ve_0)})s^{-\f72}\dl^l\|\slashed d\bar Z^{m+1}\phi\|_{s,u}\no\\
&+\delta^{k(1-\ve_0)}(\dl+\dl^{k(1-\ve_0)-\ve_0})s^{-11/2}\dl^{l_{-1}}\|\bar Z^{m_{-1}}x\|_{s,u}\no\\
\lesssim&\delta^{k(1-\ve_0)-1/2}(\dl+\dl^{k(1-\ve_0)}\ln s)s^{-\f72}+\delta^{(k-1)(1-\ve_0)}s^{-5/2-\iota}\sqrt{\tilde E_{1,\leq m+2}}\no\\
&+\delta^{(k-1)(1-\ve_0)}\big\{\delta^{k(1-\ve_0)}(1+\dl^{k(1-\ve_0)-1})\ln^2 s+\dl\big\} s^{-7/2}\sqrt{\tilde E_{2,\leq m+2}}\\
&+\dl^{(2k-1)(1-\ve_0)-1}s^{-3}\sqrt{\int_0^u \tilde F_1(s,u')du'}+\dl^{k(1-\ve_0)}(\dl+\dl^{k(1-\ve_0)}\ln s)s^{-3}\|R^m\slashed\triangle\mu\|_{s,u}\no\\
&+\dl^{2k(1-\ve_0)}(\delta+\dl^{k(1-\ve_0)})s^{-7/2}\int_{t_0}^s\tau^{-3/2}\|R^m\slashed\triangle\mu\|_{\tau,u}d\tau.\no
\end{align}
Similarly, one can derive that
\begin{align}\label{Dm1}
&\dl^l\|g^{\al\beta,\g\g_2\cdots\g_k}\big([\slashed{\mathcal L}_{\bar Z}^m,\slashed{\mathcal L}_{\mathring L}]\slashed d(\delta_{\g_2}^0\mathring L\phi+g_{\g_2i}\slashed d^Xx^i\slashed d_X\phi)\big)\mu^{-1}\varphi_{\g_3}\cdots\varphi_{\g_k}\mathring L_\al\mathring L_\beta\underline{\mathring L}\varphi_\g\|_{s,u}\no\\
\lesssim&\dl^{k(1-\ve_0)}s^{-3}\ln s\dl^l\|g^{\al\beta,\g\g_2\cdots\g_k}R\bar Z^m(g_{\g_2i}\slashed d^Xx^i\slashed d_X\phi)\varphi_{\g_3}\cdots\varphi_{\g_k}\mathring L_\al\mathring L_\beta\underline{\mathring L}\varphi_\g\|_{s,u}\no\\
&+\dl^{k(1-\ve_0)}s^{-4}\delta^{l_1}\|\slashed{\mathcal L}_{\bar Z}^{m_1}\leftidx{^{(R)}}{\slashed\pi}_{\mathring L}\|_{s,u}+\dl^{k(1-\ve_0)+1}s^{-4}\delta^{l_1}\|\slashed{\mathcal L}_{\bar Z}^{m_1}\leftidx{^{(T)}}{\slashed\pi}_{\mathring L}\|_{s,u}\no\\
&+\dl^{(2k-1)(1-\ve_0)-1}s^{-7/2}\ln s\dl^{l_0}\|\bar Z^{m_0}\mathring L\phi\|_{s,u}+\dl^{(2k-1)(1-\ve_0)-1}s^{-9/2}\ln s\dl^{l_0}\|\bar Z^{m_0}\phi\|_{s,u}\no\\
&+\dl^{2k(1-\ve_0)}s^{-6}\ln s\dl^{l_0}\|\bar Z^{m_0}x\|_{s,u}+\dl^{2k(1-\ve_0)}s^{-5}\ln s\dl^{l_0}\|\slashed{\mathcal L}_{\bar Z}^{m_0}\slashed g\|_{s,u}\no\\
\lesssim&\dl^{2k(1-\ve_0)-1/2}(\dl+\dl^{k(1-\ve_0)}\ln s)s^{-9/2}\ln s+\dl^{(2k-1)(1-\ve_0)}s^{-7/2-\iota}\ln s\sqrt{\tilde E_{1,\leq m+2}}\no\\
&+\delta^{(2k-1)(1-\ve_0)}\big\{\delta^{k(1-\ve_0)}(1+\dl^{k(1-\ve_0)-1})\ln^2 s+\dl\big\} s^{-9/2}\ln s\sqrt{\tilde E_{2,\leq m+2}}\\\
&+\dl^{2k(1-\ve_0)}(\dl+\dl^{k(1-\ve_0)}\ln s)s^{-4}\ln s\|R^m\slashed\triangle\mu\|_{s,u}\no\\
&+\dl^{3k(1-\ve_0)}(\dl+\dl^{k(1-\ve_0)})s^{-9/2}\ln s\int_{t_0}^s\tau^{-3/2}\|R^m\slashed\triangle\mu\|_{\tau,u}d\tau\no\\
&+\dl^{(2k-1)(1-\ve_0)}(1+\dl^{k(1-\ve_0)-1}\ln s)s^{-4}\sqrt{\int_0^u \tilde F_1(s,u')du'}.\no
\end{align}
To estimate the last two terms in \eqref{mD}, one can get by direct computations that
\begin{align}\label{Dm34}
&\dl^l\|\slashed{\mathcal L}_{\bar Z}^m\big\{\mathring L(\delta_{\g_2}^0\mathring L\phi+g_{\g_2i}\slashed d^Xx^i\slashed d_X\phi)\slashed d(\mu^{-1}\varphi_{\g_3}\cdots\varphi_{\g_k}\mathring L_\al\mathring L_\beta\underline{\mathring L}\varphi_\g)\big\}\no\\
&+\sum_{\tiny\begin{array}{c}p_1+p_2=m\\p_1\geq 1\end{array}}\slashed{\mathcal L}_{\bar Z}^{p_1}\slashed d\mathring L(\delta_{\g_2}^0\mathring L\phi+g_{\g_2i}\slashed d^Xx^i\slashed d_X\phi)\bar Z^{p_2}(\mu^{-1}\varphi_{\g_3}\cdots\varphi_{\g_k}\mathring L_\al\mathring L_\beta\underline{\mathring L}\varphi_\g)\|_{s,u}\no\\
\lesssim&\dl^{(k-1)(1-\ve_0)-1}s^{-3/2}\big(\dl^{l_1}\|\bar Z^{m_1}\mathring L^2\phi\|_{s,u}+s^{-1}\dl^{l_0}\|\bar Z^{m_0}\mathring L\phi\|_{s,u}+s^{-2}\dl^{l_0}\|\bar Z^{m_0}\phi\|_{s,u}\big)\no\\
&+\dl^{k(1-\ve_0)}s^{-5}\big(\dl^{l_{-1}}\|Z^{m_{-1}}x\|_{s,u}+s\dl^{l_0}\|\bar Z^{m_0}\mu\|_{s,u}+s\dl^{l_0}\|\bar Z^{m_0}\check L^1\|_{s,u}+s\dl^{l_0}\|\bar Z^{m_0}\check L^2\|_{s,u}\big)\no\\
&+\dl^{(k-1)(1-\ve_0)}s^{-7/2}\dl^{l_0}\|\bar Z^{m_0}\varphi\|_{s,u}+\dl^{k(1-\ve_0)+\ve_0}s^{-7/2}\dl^{l_0}\big(\|T\bar Z^{m_0}\varphi\|_{s,u}+\|\slashed d\bar Z^{m_0}\varphi\|_{s,u}\big)\no\\
&+\dl^{k(1-\ve_0)+1}s^{-5}\dl^{l_1}\|\slashed{\mathcal L}_{\bar Z}^{m_1}\leftidx{^{(R)}}{\slashed\pi}_T\|_{s,u}+\dl^{2k(1-\ve_0)}(\dl+\dl^{k(1-\ve_0)})s^{-6}\ln s\dl^{l_0}\|\slashed{\mathcal L}_{\bar Z}^{m_0}\slashed g\|_{s,u}\no\\
\lesssim&\delta^{k(1-\ve_0)+1/2}s^{-7/2}+\delta^{(k-1)(1-\ve_0)}s^{-5/2-\iota}\sqrt{\tilde E_{1,\leq m+2}}+\delta^{k(1-\ve_0)+\ve_0}s^{-7/2}\sqrt{\tilde E_{2,\leq m+2}}\no\\
&+\dl^{k(1-\ve_0)}(\dl+\dl^{k(1-\ve_0)}\ln s)s^{-3}\|R^m\slashed\triangle\mu\|_{s,u}\no\\
&+\dl^{2k(1-\ve_0)}(\delta+\dl^{k(1-\ve_0)})s^{-7/2}\int_{t_0}^s\tau^{-3/2}\|R^m\slashed\triangle\mu\|_{\tau,u}d\tau\\
&+\dl^{(2k-1)(1-\ve_0)-1}(\dl s^{-1/2}+\dl^{k(1-\ve_0)})s^{-7/2}\sqrt{\int_0^u \tilde F_1(s,u')du'},\no
\end{align}
where one has used Proposition \ref{L2chi} and Corollary \ref{Zphi} in the last inequality.

It follows from \eqref{Dm2}-\eqref{Dm34} that
\begin{align}\label{Dm}
&\dl^l\|\mathscr D^m\|_{s,u}\no\\
\lesssim&\delta^{k(1-\ve_0)-1/2}(\dl+\dl^{k(1-\ve_0)}\ln s)s^{-7/2}+\delta^{(k-1)(1-\ve_0)}s^{-5/2-\iota}\sqrt{\tilde E_{1,\leq m+2}}\no\\
&+\delta^{(k-1)(1-\ve_0)}\big\{\delta^{k(1-\ve_0)}(1+\dl^{k(1-\ve_0)-1})\ln^2 s+\dl\big\} s^{-7/2}\sqrt{\tilde E_{2,\leq m+2}}\no\\
&+\dl^{(2k-1)(1-\ve_0)-1}s^{-3}\sqrt{\int_0^u \tilde F_1(s,u')du'}+\dl^{k(1-\ve_0)}(\dl+\dl^{k(1-\ve_0)}\ln s)s^{-3}\|R^m\slashed\triangle\mu\|_{s,u}\no\\
&+\dl^{2k(1-\ve_0)}(\delta+\dl^{k(1-\ve_0)})s^{-7/2}\int_{t_0}^s\tau^{-3/2}\|R^m\slashed\triangle\mu\|_{\tau,u}d\tau.
\end{align}

\item Note that \eqref{gLLL} implies $|G_{\mathring L\mathring L}^\g\mathring L_\g|\lesssim\dl^{k(1-\ve_0)}s^{-1}$.
Then it follows from the definitions of $e$ and $\mathscr B$ in \eqref{ee} and \eqref{LG} respectively that
\begin{align}\label{B}
&\dl^l\|\slashed {\mathcal L}_{\bar Z}^m\slashed de\|_{s,u}+\dl^l\|\slashed d\bar Z^m(\mu^{-1}\mathscr B)\|_{s,u}\no\\
\lesssim&\dl^{k(1-\ve_0)-\ve_0}s^{-7/2}\dl^{l_0}\|\bar Z^{m_0}\mu\|_{s,u}+\dl^{k(1-\ve_0)}s^{-3}\dl^{l}\|T\bar Z^{m+1}\varphi\|_{s,u}\no\\
&+\dl^{k(1-\ve_0)-1}s^{-2}\dl^{l}\|\mathring L\bar Z^{m+1}\varphi\|_{s,u}+\dl^{k(1-\ve_0)-1}s^{-3}\dl^{l_0}\| Z^{m_0}\varphi\|_{s,u}\no\\
&+\dl^{(k-1)(1-\ve_0)}s^{-5/2}\dl^l\|\slashed d\bar Z^{m+1}\varphi\|_{s,u}+\dl^{(k+1)(1-\ve_0)}s^{-9/2}\dl^{l_1}\|\slashed{\mathcal L}_{\bar Z}^{m_1}\leftidx{^{(R)}}{\slashed\pi}_T\|_{s,u}\no\\
&+\dl^{k(1-\ve_0)-\ve_0}s^{-7/2}\dl^{l_1}\|\slashed{\mathcal L}_{\bar Z}^{m_1}\leftidx{^{(R)}}{\slashed\pi}_{\mathring L}\|_{s,u}+\dl^{(k+1)(1-\ve_0)}s^{-7/2}\dl^{l_1}\|\slashed{\mathcal L}_{\bar Z}^{m_1}\leftidx{^{(T)}}{\slashed\pi}_{\mathring L}\|_{s,u}\no\\
&+\dl^{k(1-\ve_0)-1}s^{-3}\dl^{l_0}\|\bar Z^{m_0}\check\varrho\|_{s,u}+\dl^{k(1-\ve_0)-1}s^{-3}\dl^{l_0}\big(\|\bar Z^{m_0}\check L^1\|_{s,u}+\|\bar Z^{m_0}\check L^2\|_{s,u}\big)\no\\
&+\dl^{k(1-\ve_0)-\ve_0}s^{-9/2}\dl^{l_{-1}}\|\bar Z^{m_{-1}}x\|_{s,u}+\dl^{2k(1-\ve_0)-2}s^{-3}\dl^{l}\|\bar Z^{m+1}\mathscr A\|_{s,u}\no\\
&+\dl^{(k-1)(1-\ve_0)-1}s^{-5/2}\dl^{l}\|\bar Z^{m+1}\mathring L\phi\|_{s,u}+\dl^{(k-1)(1-\ve_0)-1}s^{-7/2}\dl^{l}\|Z^{m+1}\phi\|_{s,u}\no\\
&+\dl^{(k-1)(1-\ve_0)-1}(\dl+\dl^{k(1-\ve_0)}\ln s)s^{-7/2}\dl^l\|\slashed{\mathcal L}_{\bar Z}^{m+1}\slashed d\phi\|_{s,u}+\dl^{k(1-\ve_0)}s^{-4}\dl^{l_0}\|\slashed{\mathcal L}_{\bar Z}^{m_0}\slashed g\|_{s,u}\no\\
\lesssim&\delta^{k(1-\ve_0)+1/2-\ve_0}s^{-3}+\delta^{(k-1)(1-\ve_0)}s^{-5/2-\iota}\sqrt{\tilde E_{1,\leq m+2}}+\delta^{k(1-\ve_0)}s^{-3}\sqrt{\tilde E_{2,\leq m+2}}\no\\
&+\dl^{k(1-\ve_0)-1}s^{-2}\dl^{l}\|\mathring L\bar Z^{m+1}\varphi\|_{s,u}+\delta^{k(1-\ve_0)}(\dl+\dl^{k(1-\ve_0)}\ln s)s^{-3}\|R^m\slashed\triangle\mu\|_{s,u}\no\\
&+\dl^{2k(1-\ve_0)-\ve_0}(\delta+\dl^{k(1-\ve_0)})s^{-3}\int_{t_0}^s\tau^{-3/2}\|R^m\slashed\triangle\mu\|_{\tau,u}d\tau\\
&+\dl^{(2k-1)(1-\ve_0)-1}s^{-3}\sqrt{\int_0^u \tilde F_1(s,u')du'},\no
\end{align}
where one has used \eqref{GT} and \eqref{gLLL2} to estimate $G_{\mathring L\mathring L}^\g T\varphi_\g$.

\item
It follows from direct computations that
\begin{align}\label{ATL}
&\dl^l\|\slashed d\bar Z^m\big\{\mathscr A\tilde T^i(\mathring L\varphi_i)\tilde T^{i_4}\cdots\tilde T^{i_k}\varphi_{i_4}\cdots\varphi_{i_k}(\mu^{-2}\underline{\mathring L}^\nu T\varphi_{\nu})\big\}\|_{s,u}\no\\
\lesssim&\dl^{(k-1)(1-\ve_0)-1}s^{-5/2}\dl^l\|\bar Z^{m+1}\mathscr A\|_{s,u}
+\dl^{(k-1)(1-\ve_0)-1}s^{-7/2}\dl^{l_0}\|\bar Z^{m_0}\phi\|_{s,u}\no\\
&+\dl^{k(1-\ve_0)}s^{-5}\big(\dl^{l_{-1}}\|\bar Z^{m_{-1}}x\|_{s,u}+s\dl^{l_0}\|\bar Z^{m_0}\check L^1\|_{s,u}+s\dl^{l_0}\|\bar Z^{m_0}\check L^2\|_{s,u}+s\dl^{l_0}\|\bar Z^{m_0}\check\varrho\|_{s,u}\big)\no\\
&+\dl^{k(1-\ve_0)+1}s^{-5}\dl^{l_1}\|\slashed{\mathcal L}_{\bar Z}^{m_1}\leftidx{^{(R)}}{\slashed\pi}_T\|_{s,u}+\dl^{k(1-\ve_0)}s^{-4}\dl^{l_0}\|\bar Z^{m_0}\mu\|_{s,u}\no\\
&+\dl^{(k-1)(1-\ve_0)}s^{-5/2-\iota}\sqrt{\tilde E_{1,\leq m+2}}+\dl^{k(1-\ve_0)+\ve_0}s^{-7/2}\sqrt{\tilde E_{2,\leq m+2}}\no\\
\lesssim&\delta^{k(1-\ve_0)-1/2}(\dl+\dl^{k(1-\ve_0)}\ln s)s^{-7/2}+\delta^{(k-1)(1-\ve_0)}s^{-5/2-\iota}\sqrt{\tilde E_{1,\leq m+2}}\no\\
&+\delta^{(k-1)(1-\ve_0)}\big\{\delta^{k(1-\ve_0)}(1+\dl^{k(1-\ve_0)-1})\ln^2 s+\dl\big\} s^{-7/2}\sqrt{\tilde E_{2,\leq m+2}}\\
&+\dl^{(2k-1)(1-\ve_0)-1}s^{-3}\sqrt{\int_0^u \tilde F_1(s,u')du'}+(\dl+\dl^{k(1-\ve_0)})\delta^{k(1-\ve_0)}s^{-3}\|R^m\slashed\triangle\mu\|_{s,u}\no\\
&+(\dl+\dl^{k(1-\ve_0)})\dl^{2k(1-\ve_0)}s^{-7/2}\int_{t_0}^s\tau^{-3/2}\|R^m\slashed\triangle\mu\|_{\tau,u}d\tau.\no
\end{align}

\item
It remains to estimate $\slashed{\mathcal L}_{\bar Z}^m(\textrm{tr}\chi\slashed d\mathcal E)$ in $\slashed{\mathcal L}_{\bar Z}^m e^0$
of \eqref{eal}.
Since $\mathcal E$ has an analogous expression as $E$ in \eqref{Ee}, it follows from \eqref{sdE} that
\begin{align}\label{Le0}
&\delta^l\|\slashed{\mathcal L}_{\bar Z}^m (\textrm{tr}\chi\slashed d\mathcal E)\|_{s,u}\no\\
\lesssim&\dl^{k(1-\ve_0)-\ve_0}s^{-5/2}\dl^{l_0}\|\slashed{\mathcal L}_{\bar Z}^{m_1}\check\chi\|_{s,u}+s^{-1}\dl^{l_1}\|\slashed d\bar Z^{m_1}\mathcal E\|_{s,u}+\dl^{2k(1-\ve_0)-\ve_0}s^{-9/2}\ln s\dl^{l_1}\|\slashed{\mathcal L}_{\bar Z}^{m_1}\slashed g\|_{s,u}\no\\
	\lesssim&\delta^{k(1-\ve_0)-\ve_0+1/2}s^{-3}+\delta^{(k-1)(1-\ve_0)}s^{-5/2-\iota}\sqrt{\tilde E_{1,\leq m+2}}+\delta^{k(1-\ve_0)}s^{-3}\sqrt{\tilde E_{2,\leq m+2}}\no\\
&+\dl^{k(1-\ve_0)}(\delta+\dl^{k(1-\ve_0)})s^{-3}\big\{\|R^m\slashed\triangle\mu\|_{s,u}+\delta^{k(1-\ve_0)
-\ve_0}\int_{t_0}^s\tau^{-3/2}\|R^m\slashed\triangle\mu\|_{\tau,u}d\tau\big\}\\
&+\dl^{(2k-1)(1-\ve_0)-1}s^{-3}\sqrt{\int_0^u \tilde F_1(s,u')du'}.\no
\end{align}
\end{itemize}

Combining the estimates \eqref{LF}-\eqref{Le0} with \eqref{hem} yields
\begin{align}\label{Y-21}
&\delta^l\|\hat e^m\|_{s,u}\no\\
\lesssim&\delta^{k(1-\ve_0)+1/2-\ve_0}s^{-3}+\delta^{(k-1)(1-\ve_0)}s^{-5/2-\iota}\sqrt{\tilde E_{1,\leq m+2}}
+\delta^{k(1-\ve_0)}s^{-3}\sqrt{\tilde E_{2,\leq m+2}}\no\\
&+\dl^{k(1-\ve_0)-1}s^{-2}\dl^{l}\|\mathring L\bar Z^{m+1}\varphi\|_{s,u}+\delta^{k(1-\ve_0)}(\dl+\dl^{k(1-\ve_0)}\ln s)s^{-3}\|R^m\slashed\triangle\mu\|_{s,u}\\
&+\dl^{2k(1-\ve_0)-\ve_0}(\delta+\dl^{k(1-\ve_0)})s^{-3}\int_{t_0}^s\tau^{-3/2}\|R^m\slashed\triangle\mu\|_{\tau,u}d\tau
+\delta^{k(1-\varepsilon_0)} s^{-2}\ln s\delta^l\|\hat F^m\|_{s,u}\no\\
&+\dl^{(k-1)(1-\ve_0)}(s^{-1/2}+\dl^{k(1-\ve_0)-1})s^{-3}\sqrt{\int_0^u \tilde F_1(s,u')du'}.\no
\end{align}

Inserting \eqref{sdE}, \eqref{Cm}, \eqref{Y-19} and \eqref{Y-21} into \eqref{Eal} gives
\begin{equation}\label{Fal}
\begin{split}
\delta^l\|&\hat F^m\|_{s,u}\lesssim\delta^{k(1-\varepsilon_0)+1/2-\ve_0}s^{-2}
+\delta^{(k-1)(1-\varepsilon_0)}s^{-3/2-\iota}\sqrt{\tilde E_{1,\leq m+2}}\\
&+\delta^{k(1-\varepsilon_0)} s^{-2}\sqrt{\tilde E_{2,\leq m+2}}+\dl^{k(1-\ve_0)}s^{-2}\int_{t_0}^s(\dl+\dl^{k(1-\ve_0)}\ln\tau)\tau^{-1/2}\|R^m\slashed\triangle\mu\|_{\tau,u}d\tau\\
&+\dl^{k(1-\ve_0)-1}s^{-3/2-\iota}\sqrt{\int_0^u\tilde F_{1,m+2}(s,u')du'}.
\end{split}\end{equation}
In addition, $\slashed d\bar Z^m\textrm{tr}\chi=\hat F^m+\slashed d \bar Z^m E-\f12\mathscr C^m$
holds due to the definition of $\hat F^m$. Hence, one has by inequalities \eqref{Fal}, \eqref{sdE} and \eqref{Cm} that
\begin{align}\label{d}
&\delta^l\|\slashed d\bar Z^m\textrm{tr}\chi\|_{s,u}
+\delta^l\|\slashed\nabla{\slashed{\mathcal L}}_{\bar Z}^m\check{\chi}\|_{s,u}\no\\
\lesssim&\delta^{k(1-\varepsilon_0)+1/2-\ve_0}s^{-2}
+\delta^{(k-1)(1-\varepsilon_0)}s^{-3/2-\iota}\sqrt{\tilde E_{1,\leq m+2}}
+\delta^{k(1-\varepsilon_0)} s^{-2}\sqrt{\tilde E_{2,\leq m+2}}\no\\
&+\dl^{k(1-\ve_0)-1}s^{-3/2-\iota}\sqrt{\int_0^u\tilde F_{1,m+2}(s,u')du'}+\dl^{k(1-\ve_0)}(\dl+\dl^{k(1-\ve_0)}\ln s)s^{-2}\|R^m\slashed\triangle\mu\|_{(s,u)}\no\\
&+\dl^{k(1-\ve_0)}s^{-2}\int_{t_0}^s(\dl+\dl^{k(1-\ve_0)}\ln\tau)\tau^{-1/2}\|R^m\slashed\triangle\mu\|_{\tau,u}d\tau.
\end{align}

In summary, for any $Z\in\{\varrho\mathring L, T, R\}$, it follows from  \eqref{nz}, \eqref{dch} and \eqref{d} that
\begin{align}\label{dnnchi}
&\delta^l\|\slashed dZ^m\textrm{tr}\chi\|_{s,u}
+\delta^l\|\slashed\nabla{\slashed{\mathcal L}}_{Z}^m\check{\chi}\|_{s,u}\no\\
\lesssim&\delta^{k(1-\varepsilon_0)+1/2-\ve_0}s^{-2}
+\delta^{(k-1)(1-\varepsilon_0)}s^{-3/2-\iota}\sqrt{\tilde E_{1,\leq m+2}}+\delta^{k(1-\varepsilon_0)} s^{-2}\sqrt{\tilde E_{2,\leq m+2}}\no\\
&+\dl^{k(1-\ve_0)-1}s^{-3/2-\iota}\sqrt{\int_0^u\tilde F_{1,m+2}(s,u')du'}+\dl^{k(1-\ve_0)}(\dl+\dl^{k(1-\ve_0)}\ln s)s^{-2}\|R^m\slashed\triangle\mu\|_{(s,u)}\no\\
&+\dl^{k(1-\ve_0)}s^{-2}\int_{t_0}^s(\dl+\dl^{k(1-\ve_0)}\ln\tau)\tau^{-1/2}\|R^m\slashed\triangle\mu\|_{\tau,u}d\tau.
\end{align}

\begin{remark}
	Here we emphasize that due to $\slashed dZ^m\textrm{tr}\check\chi=\slashed dZ^m\textrm{tr}\chi$
by \eqref{errorv}, \eqref{dnnchi} gives also the $L^2$ estimate of $\slashed dZ^m\textrm{tr}\check\chi$.
\end{remark}

\subsection{Estimates on the derivatives of $\slashed\triangle\mu$}\label{trimu}

Although the main idea in this subsection parallels that in \cite[Section 9.2]{Ding4}, in order to
close the energy estimate at the beginning of Section \ref{YY} as $\varepsilon_0$ approaches $\varepsilon_k^*$,
we need to obtain more refined estimates for each term in the transport equation under
the general higher null condition (for instance, see \eqref{ZmL} below).

Similarly to the analysis in \cite[Section 9.2]{Ding4}, one can set
\begin{equation}\label{EEF}
\begin{split}
\tilde E=&-\f12\mu (G_{\mathring L\mathring L}^\gamma+2G_{\tilde T\mathring L}^\gamma)\slashed\triangle\varphi_\gamma
+\f12\mu^{-1}G_{\mathring L\mathring L}^\gamma T\underline{\mathring L}\varphi_\gamma,\\
\tilde{\mathcal E}=&-\f12G^\gamma_{\mathring L\mathring L}\mathring L\varphi_\gamma-G^\gamma_{\tilde T\mathring L}\mathring L\varphi_\gamma,\\
\tilde F=&\slashed\triangle\mu-\tilde E.
\end{split}
\end{equation}
Then it holds that
\begin{equation*}
\begin{split}
\mathring L\tilde F=&-2\textrm{tr}\check\chi\tilde F+(-\f2\varrho+\tilde{\mathcal E})\tilde F
+(\slashed d^X\textrm{tr}\chi)\mathcal Q_X+\tilde e
\end{split}
\end{equation*}
with
\begin{align*}
\mathcal Q_X=&-\slashed d_X\mu-\mu G^\gamma_{X\tilde T}\mathring L\varphi_\gamma
-\mu G^{\gamma}_{\tilde T\mathring L}\slashed d_X\varphi_\gamma
-\mu G^\gamma_{\mathring L\mathring L}\slashed d_X\varphi_\gamma+ G^\gamma_{X\mathring L}T\varphi_\gamma
\end{align*}
and
\begin{align}\label{te}
\tilde e=&2(\slashed d^X\mu)\slashed d_X\tilde{\mathcal E}+(-2\textrm{tr}\check\chi-\f2\varrho+\tilde{\mathcal E})\tilde E-\f12\mathring L(\mu^{-1}G_{\mathring L\mathring L}^\gamma)T\underline{\mathring L}\varphi_\gamma\no\\
&-\f12\mu^{-1}G_{\mathring L\mathring L}^\gamma\big\{T\mu(\slashed\triangle\varphi_\gamma)+\leftidx^{{(T)}}{\slashed\pi}_{\mathring L}^X\slashed d_X\underline{\mathring L}\varphi_\gamma-\f{1}{2\varrho^2}\underline{\mathring L}\varphi_\gamma-\f{1}{2\varrho}T\underline{\mathring L}\varphi_\gamma+T H_\gamma\big\}\no\\
&+f_1(\varphi, \mathring L^1, \mathring L^2,\slashed dx,\varrho\slashed\triangle x)\left(
\begin{array}{ccc}
\mu\mathring L\varphi\\
\mu\slashed d\varphi\\
T\varphi\\
\mu\varphi\textrm{tr}\chi
\end{array}
\right)
\left(\begin{array}{ccc}
\varphi^{k-2}\slashed\triangle\varphi\\
\varphi^{k-1}\varrho^{-1}\slashed\triangle x\\
\varphi^{k-1}\textrm{tr}\check\chi\slashed\triangle x
\end{array}
\right)\\
&+f_2(\varphi,\mathring L^1,\mathring L^2,\varrho\slashed d\mathring L^1,\varrho\slashed d\mathring L^2,\varrho\slashed\triangle x)\varphi^{k-3}\left(
\begin{array}{ccc}
\varrho^{-1}\varphi^{k+1}\\
\slashed d\varphi\\
\varphi\slashed d\mathring L^1\\
\varphi\slashed d\mathring L^2
\end{array}
\right)
\left(
\begin{array}{ccc}
\slashed d\varphi\\
\mathring L\varphi\\
\varphi\slashed d\mathring L^1\\
\varphi\slashed d\mathring L^2
\end{array}
\right)
\left(
\begin{array}{ccc}
\mu\mathring L\varphi\\
\mu\slashed d\varphi\\
T\varphi
\end{array}
\right)\no\\
&+f_3(\varphi,\mathring L^1,\mathring L^2,\slashed dx)\slashed d^X\mu\slashed d_X\varphi\left(
\begin{array}{ccc}
\varphi^{2(k-1)}\slashed d\varphi\\
\varphi^{2(k-1)}\mathring L\varphi\\
\varphi^{k-1}\textrm{tr}\chi
\end{array}
\right)
+f_4(\varphi,\mathring L^1,\mathring L^2,\slashed dx)\left(
\begin{array}{ccc}
\slashed dT\varphi\\
\mu\slashed d\mathring L\varphi
\end{array}
\right)
\left(
\begin{array}{ccc}
\varphi^{k-2}\slashed d\varphi\\
\varphi^{k-1}\slashed d\mathring L^1\\
\varphi^{k-1}\slashed d\mathring L^2
\end{array}
\right).\no
\end{align}

Analogously to the introduction of $F^k$ in Subsection \ref{trchi}, one can set $\tilde F^m=\bar Z^m\slashed\triangle\mu-\bar Z^m\tilde E$
with $\bar Z$ being any vector field in $\{T,R\}$.  Then an induction argument yields that for $m\geq 1$,
\begin{equation}\label{LbarF}
\begin{split}
\mathring L\tilde F^m=&(-2\textrm{tr}\check\chi-\f2\varrho+\tilde{\mathcal E})\tilde F^m+(\slashed d^X{\bar Z}^k\textrm{tr}\chi)\mathcal Q_X+\leftidx^{{(\bar Z)}}{\slashed\pi}_{\mathring L}^X(\slashed d_X\tilde F^{m-1})+\tilde e^m,
\end{split}
\end{equation}
where
\begin{equation}\label{bare}
\begin{split}
\tilde e^m=&\underbrace{\sum_{{\mbox{\tiny$\begin{array}{c}m_1+m_2=m-1\\m_1\geq 1\end{array}$}}}(\slashed{\mathcal L}_{\bar Z}^{m_1}\leftidx^{{(\bar Z)}}{\slashed\pi}_{\mathring L}^X)(\slashed d_X\tilde F^{m_2})}_{\textrm{vanish when}\ m=1}+\bar Z^{m}\t e\\
&+\sum_{{\mbox{\tiny$\begin{array}{c}m_1+m_2=m\\m_1\geq 1\end{array}$}}}\Big\{\bar Z^{m_1}(\textrm{tr}\check\chi-\f2\varrho+\tilde{\mathcal E})\tilde F^{m_2}+(\slashed d^X{\bar Z}^{m_2}\textrm{tr}\chi)\slashed{\mathcal L}_{\bar Z}^{m_1}\mathcal Q_X\Big\}.
\end{split}
\end{equation}
As \eqref{gLLL}, it follows from \eqref{null} and $T^2\varphi_\g=T(\mu\dl_\g^0\tilde T^i\mathring L\varphi_i
-\mathring L_\g\tilde T^iT\varphi_i+\mu\tilde T^ig_{\g j}\slashed d^Xx^j\slashed d_X\varphi_i)$ that
\begin{equation}\label{GTT}
	G_{\mathring L\mathring L}^\g T^2\varphi_\g=f(\varphi, \mathring L^1,\mathring L^2,\f{x}\varrho,\check\varrho)\left(
	\begin{array}{ccc}
	\varphi^kT^2\varphi\\
	\varphi^{k-1}T(\mu\tilde T^a\mathring L\varphi_a)\\
	\varphi^{k-1}T(\mathring L_\g\tilde T^a)T\varphi_a\\
	\varphi^{k-1}T(\mu\tilde T^a g_{\g j}\slashed d^Xx^j\slashed d_X\varphi_a)\\
	\check L^{1}\varphi^{k-1}T^2\varphi\\
    \check L^{2}\varphi^{k-1}T^2\varphi\\
	\check\varrho\varphi^{k-1}T^2\varphi\\
	\varphi^{k-2}(\mathring L\phi)T^2\varphi\\
	\varphi^{k-2}(\slashed d^Xx)(\slashed d_X\phi)T^2\varphi
	\end{array}
	\right).
\end{equation}
Thus, $\dl^l|\tilde F^m(t_0,u,\vartheta)|\lesssim\delta^{k(1-\varepsilon_0)-1-\ve_0}$ holds by \eqref{GTT}.
In addition, by virtue of Propositions \ref{L2chi}, \ref{8.2} and Corollary \ref{Zphi}, one can apply \eqref{GTT} again to obtain
\begin{align}\label{ZmGT}
&\dl^l\|\bar Z^m(\mu^{-1}G_{\mathring L\mathring L}^\g T^2\varphi_\g)\|_{s,u}\no\\
\lesssim&\dl^{k(1-\ve_0)-1}s^{-1}\dl^l\|TZ^{m+1}\varphi\|_{s,u}+\dl^{k(1-\ve_0)-\ve_0}s^{-5/2}\dl^{l_1}\|\slashed{\mathcal L}_{\bar Z}^{m_1}\leftidx{^{(R)}}{\slashed\pi}_T\|_{s,u}\no\\
&+\dl^{k(1-\ve_0)-2}s^{-1}\dl^{l_0}\big(\|\bar Z^{m_0}\check L^1\|_{s,u}+\|\bar Z^{m_0}\check L^2\|_{s,u}+\|\bar Z^{m_0}\check\varrho\|_{s,u}+\|Z^{m_0}\varphi\|_{s,u}\big)\no\\
&+\dl^{(k-1)(1-\ve_0)-2}s^{-3/2}\dl^{l_0}\|Z^{m_0}\phi\|_{s,u}+\dl^{(k+1)(1-\ve_0)-2}s^{-5/2}\dl^{l_{-1}}\|\bar Z^{m_{-1}}x\|_{s,u}\no\\
&+\dl^{k(1-\ve_0)-1}s^{-2}\dl^{l_0}\|\slashed{\mathcal L}_{\bar Z}^{m_0}\slashed g\|_{s,u}+\dl^{k(1-\ve_0)-1-\ve_0}s^{-3/2}\dl^{l_0}\|\bar Z^{m_0}\mu\|_{s,u}\no\\
\lesssim&\dl^{k(1-\ve_0)-1/2-\ve_0}s^{-1}+\dl^{k(1-\ve_0)-1}s^{-1}\big\{(1+\dl^{k(1-\ve_0)-1})s^{-\iota}\sqrt{\tilde E_{1,\leq m+2}}+\sqrt{\tilde E_{2,\leq m+2}}\big\}\no\\
&+\dl^{2k(1-\ve_0)-1-\ve_0}(\dl+\dl^{k(1-\ve_0)})s^{-1}\int_{t_0}^s\tau^{-3/2}\|R^m\slashed\triangle\mu\|_{\tau,u}d\tau\\
&+\dl^{(2k-1)(1-\ve_0)-2}s^{-1}\sqrt{\int_0^u\tilde F_{1,m+2}(s,u')du'}.\no
\end{align}
Thus, it follows from the definition of $\tilde E$ in \eqref{EEF} and \eqref{ZmGT} that
\begin{align}\label{ZmE}
&\dl^l\|\bar Z^m\tilde E\|_{s,u}\no\\
\lesssim&\dl^{k(1-\ve_0)-1/2-\ve_0}s^{-1}+\dl^{k(1-\ve_0)-1}s^{-1}\big\{(1+\dl^{k(1-\ve_0)-1})s^{-\iota}\sqrt{\tilde E_{1,\leq m+2}}+\sqrt{\tilde E_{2,\leq m+2}}\big\}\no\\
&+\dl^{2k(1-\ve_0)-1-\ve_0}(\dl+\dl^{k(1-\ve_0)})s^{-1}\int_{t_0}^s\tau^{-3/2}\|R^m\slashed\triangle\mu\|_{\tau,u}d\tau\\
&+\dl^{(2k-1)(1-\ve_0)-2}s^{-1}\sqrt{\int_0^u\tilde F_{1,m+2}(s,u')du'}.\no
\end{align}
Replacing $R^m\slashed\triangle \mu$ by $\tilde F^m+R^m\tilde E$ in \eqref{ZmE} and using Gronwall's inequality yield
\begin{align}\label{ZmtE}
&\dl^l\|\bar Z^m\tilde E\|_{s,u}\no\\
\lesssim&\dl^{k(1-\ve_0)-1/2-\ve_0}s^{-1}+\dl^{k(1-\ve_0)-1}s^{-1}\big\{(1+\dl^{k(1-\ve_0)-1})\sqrt{\tilde E_{1,\leq m+2}}+\sqrt{\tilde E_{2,\leq m+2}}\big\}\no\\
&+\dl^{2k(1-\ve_0)-1-\ve_0}(\dl+\dl^{k(1-\ve_0)})s^{-1}\int_{t_0}^s\tau^{-3/2}\dl^l\|\tilde F^m\|_{\tau,u}d\tau\\
&+\dl^{(2k-1)(1-\ve_0)-2}s^{-1}\sqrt{\int_0^u\tilde F_{1,m+2}(s,u')du'}.\no
\end{align}
Set $F(s,u,\vartheta)=\varrho^2\tilde F^m(s,u,\vartheta)-\varrho_0^2\tilde F^m(t_0,u,\vartheta)$ in \eqref{Ff}.
Then applying \eqref{LbarF} yields
\begin{align}\label{tildeF}
&\delta^l\varrho^{3/2}\|\tilde F^m\|_{s,u}\no\\
\lesssim&\delta^{k(1-\varepsilon_0)-1/2-\ve_0}+\int_{t_0}^s\tau^{3/2}\delta^l\|\tilde e^m\|_{\tau,u}d\tau+\delta^{k(1-\varepsilon_0)-1}\int_{t_0}^s\tau^{1/2}\delta^l\|\slashed d\bar Z^m\textrm{tr}\chi\|_{\tau,u}d\tau.
\end{align}
The last term on the right hand side above can be estimated by \eqref{d} and \eqref{ZmtE}
as follows
\begin{align}\label{Y-23}
&\delta^{k(1-\varepsilon_0)-1}\int_{t_0}^s\tau^{1/2}\dl^l\|\slashed d\bar Z^m\textrm{tr}\chi\|_{\tau,u}d\tau\no\\
\lesssim&\delta^{2k(1-\varepsilon_0)-1/2-\ve_0}+\delta^{(2k-1)(1-\varepsilon_0)-1}\sqrt{\tilde E_{1,\leq m+2}}+\delta^{2k(1-\varepsilon_0)-1}\sqrt{\tilde E_{2,\leq m+2}}\\
+&\dl^{2k(1-\ve_0)-1}\Big\{\int_{t_0}^s\tau^{-1}(\dl+\dl^{k(1-\ve_0)}\ln\tau)\dl^l\|\tilde F^m\|_{\tau,u}d\tau+\dl^{-1}\sqrt{\int_0^u\tilde F_{1,m+2}(s,u')du'}\Big\}.\no
\end{align}

Next, one estimates $\tilde e^m$. By \eqref{bare}, one starts with $\bar Z^m\tilde e$. Note that $\tilde e$ in \eqref{te} contains
the following term
\begin{equation}\label{LGT}
\begin{split}
&\mathring L(\mu^{-1}G_{\mathring L\mathring L}^\g)T^2\varphi_\g\\
=&-\mu^{-2}(\mathring L\mu)G_{\mathring L\mathring L}^\g T^2\varphi_\g+2\mu^{-1}G_{i\beta}^\g(\underbrace{\mathring L\mathring L^i}_{\eqref{LL}})\mathring L^\beta(T^2\varphi_\g)+\mu^{-1}(\p_{\varphi_\nu}G_{\al\beta}^\g)(\mathring L\varphi_\nu)\mathring L^\al\mathring L^\beta(T^2\varphi_\g),
\end{split}
\end{equation}
where
\begin{equation*}
\begin{split}
&(\p_{\varphi_\nu}G_{\al\beta}^\g)(\mathring L\varphi_\nu)\mathring L^\al\mathring L^\beta(T^2\varphi_\g)\\
=&-k(k-1)m_{\al\al'}m_{\beta\beta'}g^{\al'\beta',\g\nu\g_3\dots\g_k}\varphi_{\g_3}\cdots\varphi_{\g_k}(\mathring L\varphi_\nu)\mathring L^\al\mathring L^\beta(T^2\varphi_\g)+f(\varphi,\mathring L^1,\mathring L^2)\varphi^{k-1}(\mathring L\varphi)T^2\varphi.
\end{split}
\end{equation*}
It follows from $\mathring L\varphi_\nu=\dl_\nu^0\mathring L^\al\mathring L\varphi_\al-\mathring L_\nu\tilde T^i\mathring L\varphi_i+g_{\nu j}\slashed d^Xx^j(\mathring L^\al\slashed d_X\varphi_\al)$, $T^2\varphi_\g=T(\mu\dl_\g^0\tilde T^i\mathring L\varphi_i-\mathring L_\g\tilde T^iT\varphi_i+\mu\tilde T^ig_{\g j}\slashed d^Xx^j\slashed d_X\varphi_i)$ and \eqref{null} that
\begin{equation}\label{pGLT}
(\p_{\varphi_\nu}G_{\al\beta}^\g)(\mathring L\varphi_\nu)\mathring L^\al\mathring L^\beta(T^2\varphi_\g)=f(\varphi, \check L^1,\check L^2,\f{x}\varrho,\check\varrho)\left(
\begin{array}{ccc}
\varphi^{k-1}(\mathring L\varphi)T^2\varphi\\
\varphi^{k-2}(\mathring L\varphi)T(\mu\tilde T^a\mathring L\varphi_a)\\
\varphi^{k-2}(\mathring L\varphi)T(\mathring L_\g\tilde T^a)T\varphi_a\\
\varphi^{k-2}(\mathring L\varphi)T(\mu\tilde T^a g_{\g j}\slashed d^Xx^j\slashed d_X\varphi_a)\\
\check L^{1}\varphi^{k-2}(\mathring L\varphi)T^2\varphi\\
\check L^{2}\varphi^{k-2}(\mathring L\varphi)T^2\varphi\\
\check\varrho\varphi^{k-2}(\mathring L\varphi)T^2\varphi\\
\varphi^{k-3}(\mathring L\phi)(\mathring L\varphi)T^2\varphi\\
\varphi^{k-3}(\slashed d^Xx)(\slashed d_X\phi)(\mathring L\varphi)T^2\varphi\\
\varphi^{k-2}(\mathring L^\al\mathring L\varphi_\al)T^2\varphi\\
\varphi^{k-2}\slashed d^Xx(\mathring L^\al\slashed d_X\varphi_\al)T^2\varphi
\end{array}
\right).
\end{equation}
Inserting \eqref{pGLT} into \eqref{LGT}, utilizing \eqref{LL} and \eqref{dL} to derive $\mathring L^\al\mathring L\varphi_\al=\mathring L^2\phi+f(\varphi,\mathring L^1,\mathring L^2)\varphi^{k}\left(
\begin{array}{ccc}\mathring L\varphi\\
\slashed d^X\varphi\slashed d_Xx\end{array}
\right)$ and $\mathring L^\al\slashed d_X\varphi_\al=\slashed d_X\mathring L\phi-\textrm{tr}\chi\slashed d_X\phi+f(\varphi,\mathring L^1,\mathring L^2)\varphi^{k}\left(
\begin{array}{ccc}\slashed d_Xx\mathring L\varphi\\
\slashed d_X\varphi\end{array}
\right)$, one then can get with the help of Propositions \ref{L2chi}, \ref{8.2}, Corollary \ref{Zphi}, \eqref{ZmGT} and \eqref{ZmtE} that
\begin{align}\label{ZmL}
&\dl^l\|\bar Z^m\big(\mathring L(\mu^{-1}G_{\mathring L\mathring L}^\g)T^2\varphi_\g\big)\|_{s,u}\no\\
\lesssim&\dl^{k(1-\ve_0)-\ve_0}s^{-3/2}\dl^{l_1}\|\bar Z^{m_1}(\mu^{-1}G_{\mathring L\mathring L}^\g T^2\varphi_\g)\|_{s,u}+\dl^{k(1-\ve_0)-1-\ve_0}s^{-5/2}\dl^{l_0}\|\bar Z^{m_0}\mu\|_{s,u}\no\\
&+\dl^{k(1-\ve_0)-1}s^{-2}\dl^l\|TZ^{m+1}\varphi\|_{s,u}+\dl^{k(1-\ve_0)-\ve_0}s^{-7/2}\dl^{l_1}\|\slashed{\mathcal L}_{\bar Z}^{m_1}\leftidx{^{(R)}}{\slashed\pi}_T\|_{s,u}\no\\
&+\dl^{k(1-\ve_0)-2}s^{-2}\dl^{l_0}\big(\|\bar Z^{m_0}\check L^1\|_{s,u}+\|\bar Z^{m_0}\check L^2\|_{s,u}+\|\bar Z^{m_0}\check\varrho\|_{s,u}+\|Z^{m_0}\varphi\|_{s,u}\big)\no\\
&+\dl^{(k-1)(1-\ve_0)-2}s^{-5/2}\dl^{l_0}\|Z^{m_0}\phi\|_{s,u}+\dl^{(k+1)(1-\ve_0)-2}s^{-7/2}\dl^{l_{-1}}\|\bar Z^{m_{-1}}x\|_{s,u}\no\\
&+\dl^{(k-1)(1-\ve_0)-2}s^{-1/2}\dl^{l_1}\big(\|\bar Z^{m_1}\mathring L^2\phi\|_{s,u}+\|\slashed d\bar Z^{m_1}\mathring L\phi\|_{s,u}\big)\no\\
&+\dl^{k(1-\ve_0)-1}s^{-3}\dl^{l_0}\|\slashed{\mathcal L}_{\bar Z}^{m_0}\slashed g\|_{s,u}+\dl^{k(1-\ve_0)-1}s^{-2}\dl^{l_1}\|\slashed{\mathcal L}_{\bar Z}^{m_1}\check\chi\|_{s,u}\no\\
\lesssim&\dl^{k(1-\ve_0)-1/2-\ve_0}s^{-2}+\dl^{(k-1)(1-\ve_0)-1}s^{-3/2-\iota}\sqrt{\tilde E_{1,\leq m+2}}+\dl^{k(1-\ve_0)-1}s^{-2}\sqrt{\tilde E_{2,\leq m+2}}\\
&+\dl^{k(1-\ve_0)-1}(\dl+\dl^{k(1-\ve_0)})s^{-2}\dl^l\|\tilde F^m\|_{s,u}+\dl^{(2k-1)(1-\ve_0)-2}s^{-2}\sqrt{\int_0^u\tilde F_{1,m+2}(s,u')du'}\no\\
&+\dl^{2k(1-\ve_0)-1-\ve_0}(\dl+\dl^{k(1-\ve_0)})s^{-2}\int_{t_0}^s\tau^{-3/2}\dl^l\|\tilde F^m\|_{\tau,u}d\tau.\no
\end{align}
Note that \eqref{te} and \eqref{H} imply that $\bar Z^m\tilde e$ contains the term
$\f12\mu^{-1}(G_{\mathring L\mathring L}^\gamma T\varphi_\gamma)\bar Z^m T\text{tr}\check\chi$, which equals $\f12\mu^{-1}(G_{\mathring L\mathring L}^\gamma T\varphi_\gamma)\bar Z^m\slashed\triangle\mu+\cdots$ by \eqref{Tchi'}.
Thus,
it follows from Propositions \ref{L2chi}, \ref{8.2}, \eqref{ZmGT}, \eqref{ZmtE} and \eqref{ZmL} that
\begin{align*}
&\dl^l\|\bar Z^m\tilde e\|_{s,u}\\
\lesssim&\dl^{k(1-\ve_0)-1-\ve_0}\big(s^{-3}\dl^{l_0}\|\bar Z^{m_0}\mu\|_{s,u}+s^{-3/2}\dl^{l_1}\|\slashed{\mathcal L}_{\bar Z}^{m_1}\check\chi\|_{s,u}\big)+s^{-1}\dl^{l_1}\|\bar Z^{m_1}\tilde E\|_{s,u}\\
&+\dl^{l_1}\|\bar Z^{m_1}\big(\mathring L(\mu^{-1}G_{\mathring L\mathring L}^\g)T^2\varphi_\g\big)\|_{s,u}+s^{-1}\dl^{l_1}\|\bar Z^{m_1}(\mu^{-1}G_{\mathring L\mathring L}^\g T^2\varphi_\g)\|_{s,u}\\
&+\dl^{k(1-\ve_0)-1}s^{-3}\dl^{l_1}\|\slashed{\mathcal L}_{\bar Z}^{m_1}\slashed g\|_{s,u}+\dl^{k(1-\ve_0)-1}s^{-2}\dl^{l_1}\|\slashed{\mathcal L}_{\bar Z}^{m_1}\leftidx{^{(T)}}{\slashed\pi}_{\mathring L}\|_{s,u}\\
&+\dl^{k(1-\ve_0)-1}(\dl^{k(1-\ve_0)-1}+1)s^{-3}(\dl^{l_0}\|\bar Z^{m_0}\check L^1\|_{s,u}+\dl^{l_0}\|\bar Z^{m_0}\check L^2\|_{s,u}+s^{-1}\dl^{l_{-1}}\|\bar Z^{m_{-1}}x\|_{s,u})\\
&+\dl^{2k(1-\ve_0)-\ve_0}s^{-7/2}\ln s\dl^{l_1}\|\slashed{\mathcal L}_{\bar Z}^{m_1}\leftidx{^{(T)}}{\slashed\pi}\|_{s,u}+\dl^{k(1-\ve_0)-\ve_0}s^{-3/2}\dl^{l_1}\|\bar Z^{m_1}\slashed\triangle\mu\|_{s,u}\\
&+\dl^{(k-1)(1-\ve_0)-1}s^{-3/2-\iota}\sqrt{\tilde E_{1,\leq m+2}}+\dl^{k(1-\ve_0)-1}s^{-2}\sqrt{\tilde E_{2,\leq m+2}}\\
&+\dl^{k(1-\ve_0)}s^{-5}(\dl^{k(1-\ve_0)-1}\ln s+1)\dl^{l_1}\|\slashed{\mathcal L}_{\bar Z}^{m_1}\leftidx{^{(R)}}{\slashed\pi}_T\|_{s,u}\\
\lesssim&\dl^{k(1-\ve_0)-1/2-\ve_0}s^{-2}+\dl^{(k-1)(1-\ve_0)-1}s^{-3/2-\iota}\sqrt{\tilde E_{1,\leq m+2}}+\dl^{k(1-\ve_0)-1}s^{-2}\sqrt{\tilde E_{2,\leq m+2}}\\
&+\dl^{k(1-\ve_0)-\ve_0}s^{-3/2}\dl^l\|\tilde F^m\|_{s,u}+\dl^{2k(1-\ve_0)-1-\ve_0}(\dl+\dl^{k(1-\ve_0)})s^{-2}
\int_{t_0}^s\tau^{-3/2}\dl^l\|\tilde F^m\|_{\tau,u}d\tau\\
&+\dl^{(2k-1)(1-\ve_0)-2}s^{-2}\sqrt{\int_0^u\tilde F_{1,m+2}(s,u')du'}.
\end{align*}
Therefore, it holds that
\begin{align}\label{barek}
&\delta^l\|\tilde e^m\|_{s,u}\no\\
\lesssim&\dl^{k(1-\ve_0)-1/2-\ve_0}s^{-2}+\dl^{(k-1)(1-\ve_0)-1}s^{-3/2-\iota}\sqrt{\tilde E_{1,\leq m+2}}\no+\dl^{k(1-\ve_0)-1}s^{-2}\sqrt{\tilde E_{2,\leq m+2}}\no\\
&+\dl^{k(1-\ve_0)-\ve_0}s^{-\f32}\dl^l\|\tilde F^m\|_{s,u}+\dl^{2k(1-\ve_0)-1-\ve_0}(\dl+\dl^{k(1-\ve_0)})s^{-2}\int_{t_0}^s\tau^{-3/2}\dl^l\|\tilde F^m\|_{\tau,u}d\tau\no\\
&+\dl^{(2k-1)(1-\ve_0)-2}s^{-2}\sqrt{\int_0^u\tilde F_{1,m+2}(s,u')du'}.
\end{align}
Substituting \eqref{Y-23} and \eqref{barek}  into \eqref{tildeF} and then applying Gronwall's inequality yield
\begin{equation*}
\begin{split}
\delta^l\|\tilde F^m\|_{s,u}\lesssim&\delta^{k(1-\varepsilon_0)-1/2-\ve_0}s^{-1}+\delta^{(k-1)(1-\varepsilon_0)-1}s^{-1/2-\iota}\sqrt{\tilde E_{1,\leq m+2}}\\
+\dl^{k(1-\ve_0)-1}&s^{-1}\sqrt{\tilde E_{2,\leq m+2}}+\dl^{(2k-1)(1-\ve_0)-2}s^{-1}\sqrt{\int_0^u\tilde F_{1,m+2}(s,u')du'}
\end{split}
\end{equation*}
and further
\begin{equation*}
\begin{split}
\delta^l\|\bar Z^m\slashed\triangle\mu\|_{s,u}\lesssim&\delta^{k(1-\varepsilon_0)-1/2-\ve_0}s^{-1}+\delta^{(k-1)(1-\varepsilon_0)-1}s^{-1/2-\iota}\sqrt{\tilde E_{1,\leq m+2}}\\
+\dl^{k(1-\ve_0)-1}&s^{-1}\sqrt{\tilde E_{2,\leq m+2}}+\dl^{(2k-1)(1-\ve_0)-2}s^{-1}\sqrt{\int_0^u\tilde F_{1,m+2}(s,u')du'}
\end{split}
\end{equation*}
with the help of \eqref{ZmtE}.

For the other cases containing at least one $\varrho\mathring L$ in $Z^m$, one can make use of the
commutators $[\varrho\mathring L,\bar Z]$ and $[\varrho\mathring L,\slashed\triangle]$ and subsequently
utilize the transport equation \eqref{lmu}
to obtain  the related $L^2$ norm estimates.
Therefore, we eventually arrive at
\begin{equation}\label{Zmu}
\begin{split}
\delta^l\|Z^m\slashed\triangle\mu\|_{s,u}\lesssim&\delta^{k(1-\varepsilon_0)-1/2-\ve_0}s^{-1}
+\delta^{(k-1)(1-\varepsilon_0)-1}s^{-1/2-\iota}\sqrt{\tilde E_{1,\leq m+2}}\\
&+\dl^{k(1-\ve_0)-1}s^{-1}\sqrt{\tilde E_{2,\leq m+2}}+\dl^{(2k-1)(1-\ve_0)-2}s^{-1}\sqrt{\int_0^u\tilde F_{1,m+2}(s,u')du'}.
\end{split}
\end{equation}
And hence, it follows from \eqref{dnnchi} and \eqref{Zmu} that
\begin{equation}\label{dnchi}
\begin{split}
&\delta^l\|\slashed dZ^m\textrm{tr}\chi\|_{s,u}
+\delta^l\|\slashed\nabla{\slashed{\mathcal L}}_{Z}^m\check{\chi}\|_{s,u}\\
\lesssim&\delta^{k(1-\varepsilon_0)+1/2-\ve_0}s^{-2}
+\delta^{(k-1)(1-\varepsilon_0)}s^{-3/2-\iota}\sqrt{\tilde E_{1,\leq m+2}}+\delta^{k(1-\varepsilon_0)} s^{-2}\sqrt{\tilde E_{2,\leq m+2}}\\
&+\dl^{k(1-\ve_0)-1}s^{-3/2-\iota}\sqrt{\int_0^u\tilde F_{1,m+2}(s,u')du'}.
\end{split}
\end{equation}

\section{Estimates for the error terms}\label{ert}
After all the preparations for the optimal $L^2$ estimates on the related quantities in Sections \ref{EE} and \ref{L2chimu}, we are ready to
handle the error terms $\delta\int_{D^{s, u}}|\Phi\cdot\mathring{\underline L}\Psi|$ and
$\int_{D^{s, u}}\varrho^{2\iota}|\Phi\cdot \mathring L\Psi|$ in \eqref{e}, and then complete the final energy estimates for $\vp$.
Although the main strategy is analogous to that in \cite{Ding4}, due to the slow time decay of the
solutions to the 2D wave equations
and the requirement on the optimal smallness exponent of short pulse data, we still give  all the details since many precise estimates
in Sections \ref{EE}-\ref{L2chimu} derived from the higher order null condition \eqref{null} will be applied (for instance,
one can see \eqref{D22}, \eqref{D22L} and Remark \ref{5.1} below for details).

For $\Psi=Z^{m+1}\vp_\g$ in \eqref{gel}, then the corresponding $\Phi$ is just $\Phi_\g^{m+1}$ which has been explicitly
given in \eqref{Phik}.
One can now deal with each term in \eqref{Phik} as follows.

\subsection{Treatment on $J_1^{m+1}$}\label{tot}

This subsection deals with the term $J_1^{m+1}$ in \eqref{Phik}.
First, expand $Z^{m+1}$ as $Z_{m+1}Z_{m}\cdots Z_2Z_1$ with $Z_i\in\{\varrho\mathring L,T,R\}$ and set $\varphi_\gamma^n=\begin{cases}Z_n\cdots Z_1\varphi_\gamma,\ &n\geq 1,\\\varphi_\gamma,\ &n=0.\end{cases}$
 \begin{enumerate}[(1)]
  \item Due to form of $\Phi_\gamma^{m+1}$, it is necessary to estimate the derivatives of $\mu\mathscr D^\al{\leftidx{^{(Z)}}C_\gamma^{n}}_{,\al}$ $(0\leq n\leq m)$. To this end, one treats first $\leftidx{^{(Z)}}D_{\gamma,1}^{n}$ and $\leftidx{^{(Z)}}D_{\gamma,3}^{n}$ in \eqref{muZC}, which do not contain the top order derivatives of $\varphi_\gamma$.
      In fact, substituting \eqref{Lpi}-\eqref{Rpi} in Appendix A into \eqref{D1} and \eqref{D3} yields directly
\begin{align}
\leftidx{^{(T)}}D_{\gamma,1}^{n}=&(T\mu)\mathring L^2\varphi_\gamma^{n}+\mu(\slashed d_X\mu+2\mu\zeta_X)\slashed d^X\mathring L\varphi_\gamma^{n}+\f12\textrm{tr}\leftidx{^{(T)}}{\slashed\pi}(\mathring L\mathring{\underline L}\varphi_\gamma^{n}+\f12\textrm{tr}\chi\mathring{\underline L}\varphi_\gamma^{n})\no\\
&+(\slashed d_X\mu+2\mu\zeta_X)\slashed d^X\mathring{\underline L}\varphi_\gamma^{n}-(T\mu)\slashed\triangle\varphi_\gamma^{n}
+\f12\mu\text{tr}\leftidx{^{(T)}}{\slashed\pi}\slashed\triangle\varphi_\gamma^{n},\label{T1}\\
\leftidx{^{(T)}}D_{\gamma,3}^{n}=&\big\{\textrm{tr}\chi T\mu+\f14(\mu\textrm{tr}\chi+\textrm{tr}\leftidx{^{(T)}}{\slashed \pi})\textrm{tr}\leftidx{^{(T)}}{\slashed \pi}-\f12|\slashed d\mu|^2-\mu\zeta_X(\slashed d^X\mu)\big\}\mathring L\varphi_\gamma^{n}\no\\
&+(\f12\mathring L\mu-\mu\textrm{tr}\chi)(\slashed d_X\mu+2\mu\zeta_X)\slashed d^X\varphi_\gamma^{n},\label{T3}\\
\leftidx{^{(\varrho\mathring L)}}D_{\gamma,1}^{n}=&(2-\mu+\varrho\mathring L\mu)\mathring L^2\varphi_\gamma^{n}-2\varrho(\slashed d_X\mu+2\mu\zeta_X)\slashed d^X\mathring L\varphi_\gamma^{n}+\varrho\textrm{tr}\chi(\mathring L\mathring{\underline L}\varphi_\gamma^{n}+\f12\textrm{tr}\chi\mathring{\underline L}\varphi_\gamma^{n})\no\\
&-(\varrho\mathring L\mu-\varrho\mu\textrm{tr}\check\chi)\slashed\triangle\varphi_\gamma^{n},\label{rL1}\\
\leftidx{^{(\varrho\mathring L)}}D_{\gamma,3}^{n}=&\textrm{tr}\chi\big\{2-\mu+\varrho\mathring L\mu+\f12\varrho\text{tr}\leftidx{^{(T)}}{\slashed\pi}+\f12\varrho\mu\textrm{tr}\chi\big\}\mathring L\varphi_\gamma^{n}+2\varrho\textrm{tr}\chi(2\mu\zeta^X+\slashed d^X\mu)\slashed d_X\varphi_\gamma^n,\label{rL3}\\
\leftidx{^{(R)}}D_{\gamma,1}^{n}=&(R\mu)\mathring L^2\varphi_\gamma^{n}-\leftidx{^{(R)}}{\slashed\pi}_{\mathring{\underline L}X}\slashed d^X\mathring L\varphi_\gamma^{n}+\f12\textrm{tr}\leftidx{^{(R)}}{\slashed\pi}(\mathring L\mathring{\underline L}\varphi_\gamma^{n}+\f12\textrm{tr}\chi\mathring{\underline L}\varphi_\gamma^{n})-\leftidx{^{(R)}}{\slashed\pi}_{\mathring LX}\slashed d^X\mathring{\underline L}\varphi_\gamma^{n}\no\\
&-(R\mu)\slashed\triangle\varphi_\gamma^{n}
+\f12\mu\text{tr}\leftidx{^{(R)}}{\slashed\pi}\slashed\triangle\varphi_\gamma^{n},\label{R1}\\
\leftidx{^{(R)}}D_{\gamma,3}^{n}=&\big\{\textrm{tr}\chi R\mu+\f14(\textrm{tr}\leftidx{^{(T)}}{\slashed\pi}+\mu\textrm{tr}\chi)\textrm{tr}\leftidx{^{(R)}}{\slashed\pi}+\f12\slashed d^X\mu\leftidx{^{(R)}}{\slashed\pi}_{\mathring LX}\big\}\mathring L\varphi_\gamma^{n}
+\big\{\textrm{tr}\chi\leftidx{^{(R)}}{\slashed\pi}_{TX}\no\\
&+\textrm{tr}\leftidx{^{(R)}}{\slashed\pi}(\mu\zeta_X+\f12\slashed d_X\mu)+(-\f12\mathring L\mu+\mu\text{tr}\chi+\f12\text{tr}\leftidx{^{(T)}}{\slashed\pi})\leftidx{^{(R)}}{\slashed\pi}_{\mathring LX}\big\}\slashed d^X\varphi_\gamma^{n}.\label{R3}
\end{align}

Note that it follows from \eqref{T1}, \eqref{rL1}, and \eqref{R1} that each of $\leftidx{^{(Z)}}D_{\gamma,1}^{n}$ contains the factor $\mathring L\mathring{\underline L}\varphi_\gamma^{n}+\f12\textrm{tr}\chi\mathring{\underline L}\varphi_\gamma^{n}$ which can be estimated by using \eqref{fequation} as explained after \eqref{D3}.
It follows easily from the expression of $J_1^{m+1}$ that at most
$(m-n)-$ order derivatives appear in $\leftidx{^{(Z)}}D_{\gamma,i}^{n}$ $(i=1,3)$.
 Thus, the $L^2$ norms of all terms in $J_1^{m+1}$ can be estimated by
the corresponding $L^\infty$ estimates in Subsection \ref{BA}
and the related $L^2$ estimates in Proposition \ref{L2chi}, which do not depend on the estimates of top order derivatives
in Section \ref{L2chimu}.
Therefore,

\begin{align}\label{D11}
&\delta^{2l+1}|\int_{D^{s, u}}\sum_{p=1}^{m}\big(Z_{m+1}+\leftidx{^{(Z_{m+1})}}\lambda\big)\dots\big(Z_{m+2-p}
+\leftidx{^{(Z_{m+2-p})}}\lambda\big)\leftidx{^{(Z_{m+1-p})}}D_{\gamma,1}^{m-p}\cdot\mathring{\underline L}\varphi_\gamma^{m+1}|\no\\
\lesssim&\delta^{2l+1}\int_{t_0}^s\sum_{p=1}^{m}\|\big(Z_{m+1}+\leftidx{^{(Z_{m+1})}}\lambda\big)\dots\big(Z_{m+2-p}
+\leftidx{^{(Z_{m+2-p})}}\lambda\big)\leftidx{^{(Z_{m+1-p})}}D_{\gamma,1}^{m-p}\|_{\tau,u}\no\\
&\qquad\qquad\cdot\|\mathring{\underline L}Z^{m+1}\varphi_\gamma\|_{\tau,u}d\tau\no\\
\lesssim&\delta^{6-6\varepsilon_0}+\int_{t_0}^s\tau^{-1-\iota}\tilde E_{1,\leq m+2}(\tau,u)d\tau
+\delta\int_{t_0}^s\tau^{-1-\iota}\tilde E_{2,\leq m+2}(\tau,u)d\tau+\dl^{-1}{\int_0^u\tilde F_{1,m+2}(s,u')du'}.
\end{align}
Similarly,
\begin{align}\label{D33}
&\delta^{2l+1}|\int_{D^{s, u}}\sum_{p=1}^{m}\big(Z_{m+1}+\leftidx{^{(Z_{m+1})}}\lambda\big)\dots\big(Z_{m+2-p}
+\leftidx{^{(Z_{m+2-p})}}\lambda\big)\leftidx{^{(Z_{m+1-p})}}D_{\gamma,3}^{m-p}\cdot\mathring{\underline L}Z^{m+1}\varphi_\gamma|\no\\
\lesssim&\delta^{4-2\varepsilon_0}+\dl^{2(k+1)(1-\ve_0 )}+\int_{t_0}^s\tau^{-2}\tilde E_{1,\leq m+2}(\tau,u)d\tau+\delta\int_{t_0}^s\tau^{-2}\ln \tau\tilde E_{2,\leq m+2}(\tau,u)d\tau\\
&+\dl^{-1}{\int_0^u\tilde F_{1,m+2}(s,u')du'}.\no
\end{align}

The terms in $\delta^{2l}\int_{D^{s, u}}\varrho^{2\iota}|\Phi\cdot \mathring L\Psi|$
related to the integrand factors $\leftidx{^{(Z)}}D_{\gamma,i}^{n}$ $(i=1,3)$  can also be estimated as follows
\begin{align}\label{D13}
&\delta^{2l}\int_{D^{s, u}}|\sum_{p=1}^{m}\varrho^{2\iota}\big(Z_{m+1}+\leftidx{^{(Z_{m+1})}}\lambda\big)\dots\big(Z_{m+2-p}
+\leftidx{^{(Z_{m+2-p})}}\lambda\big)\big(\leftidx{^{(Z_{m+1-p})}}D_{\gamma,1}^{m-p}\no\\
&\qquad\qquad\qquad+\leftidx{^{(Z_{m+1-p})}}D_{\gamma,3}^{m-p}\big)\mathring L\varphi_\gamma^{m+1}|\no\\
\lesssim&\delta^{2l+1}\int_{D^{s, u}}\varrho^{2\iota}\Big\{\sum_{p=1}^{m}\big(Z_{m+1}+\leftidx{^{(Z_{m+1})}}\lambda\big)\dots\big(Z_{m+2-p}
+\leftidx{^{(Z_{m+2-p})}}\lambda\big)\big(\leftidx{^{(Z_{m+1-p})}}D_{\gamma,1}^{m-p}\no\\
&\qquad+\leftidx{^{(Z_{m+1-p})}}D_{\gamma,3}^{m-p}\big)\Big\}^2+\delta^{2l-1}\int_{D^{s, u}}\varrho^{2\iota}|\mathring LZ^{m+1}\varphi_\gamma|^2\no\\
\lesssim&\delta^{6-6\varepsilon_0}+\dl^{4-2\ve_0}+\int_{t_0}^s\tau^{-2}\tilde E_{1,\leq m+2}(\tau,u)d\tau+\delta\int_{t_0}^s\tau^{-4+2\iota}\ln^4\tau\tilde E_{2,\leq m+2}(\tau,u)d\tau\\
&+\delta^{-1}\int_0^u\tilde F_{1,m+2}(s,u')du'.\no
\end{align}

\item  We now estimate the terms involving $\leftidx{^{(Z)}}D_{\gamma,2}^{n}$ $(0\leq n\leq m)$ in $J_1^{m+1}$. Note that in the special case $n=0$, $j$ equals $m$ in $J_1^{m+1}$, and the order of the top derivatives in $\leftidx{^{(Z)}}D_{\gamma,2}^{0}$ is $m$, so that $\leftidx{^{(Z)}}D_{\gamma,2}^{0}$ contains terms involving the $(m+1)^{\text{th}}$ order derivatives of the deformation tensor.
This prevents one from using Proposition \ref{L2chi} to estimate the $L^2$ norm of $\leftidx{^{(Z)}}D_{\gamma,2}^0$
directly since the $L^2$ norm of the $(m+1)^{\text{th}}$ order derivatives of the deformation tensor can be controlled only by $\tilde E_{1,\leq m+3}(s,u)$ and $\tilde E_{2,\leq m+3}(s,u)$ in the energy estimates
(note that $\tilde E_{1,\leq m+3}(s,u)$ and $\tilde E_{2,\leq m+3}(s,u)$ cannot be absorbed directly by the energies
$\tilde E_{1,\leq m+2}(s,u)$ and $\tilde E_{2,\leq m+2}(s,u)$ on the left hand side of the resulting energy inequality by \eqref{e}).
Thus, we will
examine the expression of $\leftidx{^{(Z)}}D_{\gamma,2}^n$ carefully and apply the estimates in Section \ref{L2chimu}
to handle the top order derivatives of $\textrm{tr}\chi$ and $\mu$.
Indeed, it follows from direct computations that
\begin{align}
\leftidx{^{(T)}}D_{\gamma,2}^{n}=&\big\{\mathring L T\mu+\f14\uwave{\mathring{\underline L}(\text{tr}\leftidx{^{(T)}}{\slashed\pi})}+\boxed{\slashed\nabla_X\big(\f12\mu\slashed d^X\mu}+\mu^2\zeta^X\big)\big\}\mathring L\varphi_\gamma^n+\big\{\f14\mathring L(\textrm{tr}\leftidx{^{(T)}}{\slashed\pi})\no\\
&+\f12\boxed{\slashed\nabla_X(\slashed d^X\mu}+2\mu\zeta^X)\big\}\mathring{\underline L}\varphi_\gamma^n+\big\{\f12\slashed{\mathcal L}_{\mathring L}(2\mu^2\zeta_X+\mu\slashed d_X\mu)-\uline{\slashed d_XT\mu}\no\\
&+\underline{\f12\slashed{\mathcal L}_{\mathring{\underline L}}(\slashed d_X\mu}+2\mu\zeta_X)+\underbrace{\f12\slashed d_X(\mu\text{tr}\leftidx{^{(T)}}{\slashed\pi})}\big\}\slashed d^X\varphi_\gamma^n,\label{T2}\\
\leftidx{^{(\varrho\mathring L)}}D_{\gamma,2}^{n}=&\big\{\mathring L\big(\varrho\mathring L\mu-\mu\big)+\uwave{\f12\mathring{\underline L}\big(\varrho\textrm{tr}\check\chi\big)}-\varrho\boxed{\slashed\nabla_X\big(\slashed d^X\mu}+2\mu\zeta^X\big)\big\}\mathring L\varphi_\gamma^n+\f12\mathring L\big(\varrho\textrm{tr}\check\chi\big)\mathring{\underline L}\varphi_\gamma^n\no\\
&-\big\{\slashed{\mathcal L}_{\mathring L}\big(\varrho\slashed d_X\mu+2\varrho\mu\zeta_X\big)+\slashed d_X\big(\mu+\varrho\mathring L\mu\big)-\varrho\underbrace{\slashed d_X(\mu\textrm{tr}\chi)}\big\}\slashed d^X\varphi_\gamma^n,\label{rL2}\\
\leftidx{^{(R)}}D_{\gamma,2}^n=&\big\{\mathring LR\mu
-\f12\underbrace{\slashed\nabla^X\leftidx{^{(R)}}{\slashed\pi}_{\mathring{\underline L}X}}+\f14\uwave{\mathring{\underline L}(\textrm{tr}\leftidx{^{(R)}}{\slashed\pi})}\big\}\mathring L\Psi_\g^n+\big\{\f14\mathring L(\textrm{tr}\leftidx{^{(R)}}{\slashed{\pi}})\no\\
&-\f12\underbrace{\slashed\nabla^X\leftidx{^{(R)}}{\slashed\pi}_{\mathring LX}}\big\}\mathring{\underline L}\Psi_\gamma^n+\big\{-\f12\slashed{\mathcal L}_{\mathring L}\leftidx{^{(R)}}{\slashed\pi}_{\mathring{\underline L}X}-\f12\uwave{\slashed{\mathcal L}_{\mathring{\underline L}}\leftidx{^{(R)}}{\slashed\pi}_{\mathring LX}}
-\boxed{\slashed d_XR\mu}\no\\
&+\f12\underbrace{\slashed d_X(\mu\textrm{tr}\leftidx{^{(R)}}{\slashed\pi})}\big\}\slashed d^X\Psi_\gamma^n.\label{R2}
\end{align}

It is emphasized that special attentions are needed to handle the terms with underlines, wavy lines, boxes, or braces
in the \eqref{T2}-\eqref{R2}. In $\leftidx{^{(T)}}D_{\gamma,2}^{n}$,
due to $\f12\slashed{\mathcal L}_{\mathring{\underline L}}\slashed d_X\mu=\slashed d_XT\mu+\f12\mu\slashed d_X\mathring L\mu$ by $\mathring{\underline L}=\mu\mathring L+2T$, the corresponding underline part becomes
\begin{equation}\label{Y-25}
-\slashed d_XT\mu+\f12\slashed{\mathcal L}_{\mathring{\underline L}}\slashed d_X\mu=\f12\mu\slashed d_X\mathring L\mu,
\end{equation}
which can be treated by using \eqref{lmu}. It follows from \eqref{Rpi} and \eqref{theta} that all terms with wavy lines in \eqref{T2}-\eqref{R2} contain $\slashed{\mathcal L}_{\mathring{\underline L}}\check\chi$, which can be written as $2\slashed\nabla^2\mu+\mu\slashed{\mathcal L}_{\mathring{L}}\check\chi+\cdots$ by \eqref{Tchi'}. Note that the terms in $\mu\slashed{\mathcal L}_{\mathring{L}}\check\chi+\cdots$
may be estimated by \eqref{Lchi'} and Proposition \ref{L2chi} while $L^2$ norm of the terms corresponding to $\slashed\nabla^2\mu$
may be treated by \eqref{Zmu}. Meanwhile,
one can use \eqref{Zmu} and \eqref{dnchi} to estimate those terms with boxes and braces respectively.

On the other hand, it is noticed that there are some terms whose factors are the derivatives of the deformation tensors
with respect to $\mathring L$, for example, $\f14{\mathring L}(\text{tr}\leftidx{^{(T)}}{\slashed\pi})\underline{\mathring L}\varphi_\gamma^n$ appears in \eqref{T2}. In fact, these terms are not ``bad" in the sense that the derivatives of $\mathring L$
for the involved deformation tensors are
 actually equipped with the ``good" quantities in terms of \eqref{lmu} and \eqref{Lchi'} after examining
 each term in \eqref{Lpi} and \eqref{Rpi} in Appendix A.

In summary, using \eqref{Zmu}-\eqref{dnchi}, one can eventually arrive at
\begin{align}\label{D22}
&\delta^{2l+1}|\int_{D^{s, u}}\sum_{p=1}^{m}\big(Z_{m+1}+\leftidx{^{(Z_{m+1})}}\lambda\big)\dots\big(Z_{m+2-p}
+\leftidx{^{(Z_{m+2-p})}}\lambda\big)\leftidx{^{(Z_{m+1-p})}}D_{\gamma,2}^{m-p}\cdot\mathring{\underline L}\varphi_\gamma^{m+1}|\no\\
\lesssim&\dl\int_{t_0}^s\Big(\dl^{1-\ve_0}\tau^{-1/2}\dl^l\|Z^m\slashed\triangle\mu\|_{\tau,u}+\dl^{-\ve_0}\tau^{1/2}\dl^l\|\slashed dZ^m\textrm{tr}\chi\|_{\tau,u}+\cdots\Big)\sqrt{E_{2,m+2}(\tau,u)}d\tau\no\\
\lesssim&\delta^{2(k+1)(1-\varepsilon_0)-2\ve_0}+\int_{t_0}^s\tau^{-1-\iota}\tilde E_{1,\leq m+2}(\tau,u)d\tau
+\delta\int_{t_0}^s\tau^{-1-\iota}\tilde E_{2,\leq m+2}(\tau,u)d\tau\\
&+\dl^{2k(1-\ve_0)-2\ve_0-1}\int_0^u\tilde F_{1,m+2}(s,u')du'\no
\end{align}
and
\begin{align}\label{D22L}
&\delta^{2l}\int_{D^{s, u}}\sum_{p=1}^{m}|\varrho^{2\iota}\big(Z_{m+1}+\leftidx{^{(Z_{m+1})}}\lambda\big)\dots\big(Z_{m+2-p}
+\leftidx{^{(Z_{m+2-p})}}\lambda\big)\leftidx{^{(Z_{m+1-p})}}D_{\gamma,2}^{m-p}\cdot\mathring L\varphi_\gamma^{m+1}|\no\\
\lesssim&\delta^{2(k+1)(1-\varepsilon_0)-2\ve_0}+\int_{t_0}^s\tau^{-2}\tilde E_{1,\leq m+2}(\tau,u)d\tau
+\delta\int_{t_0}^s\tau^{-3+2\iota}\ln^2\tau\tilde E_{2,\leq m+2}(\tau,u)d\tau\\
&+\delta^{-1}\int_0^u\tilde F_{1,m+2}(s,u')du'.\no
\end{align}
\end{enumerate}

\begin{remark}\label{5.1}
It should be pointed out that the main reason for getting the precise improvements of the $L^2$ norm of $Z^{m+1}\mu$ in Proposition \ref{8.2} is to
obtain the optimal estimates of $Z^m\slashed\triangle\mu$ and $\slashed dZ^{m}\textrm{tr}\chi$ in  \eqref{Zmu} and \eqref{dnchi}, respectively. These optimal estimates lead to the smallness factor $\dl^{2(k+1)(1-\ve_0)-2\ve_0}$ in the right hand sides of \eqref{D22} and \eqref{D22L}. Note that $\dl^{2(k+1)(1-\ve_0)-2\ve_0}<\delta^{2-2\ve_0}$ holds
due to $\ve_0<\ve_k^*$. Based on this, one can close the energy estimates in Section \ref{YY} (see \eqref{E} for details).
\end{remark}

\subsection{Treatments on $J_2^{m+1}$ and $\Phi_\g^1$}\label{l}
This subsection deals with $J_2^{m+1}$ and $\Phi_\g^1$ in \eqref{Phik}.
Note that $J_2^{m+1}$ and $\Phi_\g^1$ do not contain the top order derivatives of $\textrm{tr}{\chi}$ and $\slashed\triangle\mu$. Therefore, according to Proposition \ref{L2chi} and the expressions
of $\leftidx{^{(Z)}}D_{\g, j}^n$ in \eqref{T1}-\eqref{R3} and \eqref{T2}-\eqref{R2}, one can obtain
\begin{align}\label{ZL}
&\delta^{2l+1}\int_{D^{s,u}}|\sum_{p=1}^3\leftidx{^{(Z_{m+1})}}D_{\gamma,p}^m\cdot \mathring{\underline L}{\varphi}_\gamma^{m+1}|\no\\
\lesssim&\delta^{2(k+1)(1-\varepsilon_0)}+\int_{t_0}^s\tau^{-1-\iota}\ln^2\tau\tilde E_{1,\leq m+2}(\tau,u)d\tau
+\delta\int_{t_0}^s\tau^{-1-\iota}\tilde E_{2,\leq m+2}(\tau,u)d\tau\\
&+\delta^{-1}\int_0^u\tilde F_{1,m+2}(s,u')du'.\no
\end{align}
and
\begin{align}\label{ZrL}
&\delta^{2l}\int_{D^{s,u}}|\sum_{p=1}^3\varrho^{2\iota}\leftidx{^{(Z_{m+1})}}D_{\gamma,p}^m\cdot \mathring L{\varphi}_\gamma^{m+1}|\no\\
\lesssim&\delta^{2(k+1)(1-\varepsilon_0)}+\int_{t_0}^s\tau^{-2}\ln^2\tau\tilde E_{1,\leq m+2}(\tau,u)d\tau
+\delta\int_{t_0}^s\tau^{-4+2\iota}\ln^2\tau\tilde E_{2,\leq m+2}(\tau,u)d\tau\\
&+\delta^{-1}\int_0^u\tilde F_{1,m+2}(s,u')du'.\no
\end{align}

In addition, $\Phi_\g^0=\mu\Box_g\varphi_\g$ is given explicitly in \eqref{ge}.
Then it follows from this, Proposition \ref{L2chi} and \eqref{lamda} that
\begin{align}\label{Phi0}
&\delta^{2l+1}\int_{D^{s,u}}|(Z_{m+1}+\leftidx{^{(Z_{m+1})}}\lambda)\cdots(Z_{1}
+\leftidx{^{(Z_{1})}}\lambda)\Phi_\gamma^0\cdot\mathring{\underline L}\varphi_\gamma^{m+1}|\no\\
\lesssim&\delta^{2(k+1)(1-\varepsilon_0)}+\int_{t_0}^s\tau^{-1-\iota}\tilde E_{1,\leq m+2}(\tau,u)d\tau
+\delta\int_{t_0}^s\tau^{-1-\iota}\tilde E_{2,\leq m+2}(\tau,u)d\tau\\
&+\delta^{-1}\int_0^u\tilde F_{1,m+2}(s,u')du'\no
\end{align}
and
\begin{equation}\label{rPhi}
\begin{split}
&\delta^{2l}\int_{D^{s,u}}\varrho^{2\iota}|(Z_{m+1}+\leftidx{^{(Z_{m+1})}}\lambda)\cdots(Z_{1}
+\leftidx{^{(Z_{1})}}\lambda)\Phi_\gamma^0\cdot\mathring L\varphi_\gamma^{m+1}|\\
\lesssim&\delta^{2(k+1)(1-\varepsilon_0)}+\int_{t_0}^s\tau^{-2}\tilde E_{1,\leq m+2}(\tau,u)d\tau
+\delta\int_{t_0}^s\tau^{-4+2\iota}\tilde E_{2,\leq m+2}(\tau,u)d\tau\\
&+\delta^{-1}\int_0^u\tilde F_{1,m+2}(s,u')du'.
\end{split}
\end{equation}

\section{Global estimates in $A_{2\dl}$}\label{YY}

Following the idea of \cite{Ding4}, we are now ready to prove the global estimates on the smooth solution $\phi$ to the equation \eqref{quasi} with \eqref{id},  \eqref{Y-0} and \eqref{Y-0-a}-\eqref{g00} near $C_0$ when $\ve_0<\ve_k^*$. To this end, one has to estimate the solution in $\t C_{2\dl}$ as in \cite{Ding4}. Compared with \cite[Section 11]{Ding4}, to obtain the behavior of the solution for $\ve_0$ approaching $\ve_k^*$, one needs the more precise estimates on all related quantities.

By substituting \eqref{D11}-\eqref{D13} and \eqref{D22}-\eqref{rPhi} into \eqref{e}, one gets from
Gronwall's inequality and $0<\ve_0<\ve_k^*$ that
\begin{equation}\label{E}
\delta\tilde E_{2,\leq 2N-4}(s,u)+\delta F_{2,\leq 2N-4}(s,u)+\tilde E_{1,\leq 2N-4}(s,u)+F_{1,\leq 2N-4}(s,u)
\lesssim\delta^{2-2\varepsilon_0}.
\end{equation}
This, together with \eqref{Zm1p} and \eqref{Zmu}, yields that for $m\leq 2N-5$,
\begin{equation}\label{Ep}
\delta^l\|Z^{m}\phi\|_{s,u}\lesssim\delta^{5/2-\varepsilon_0}.
\end{equation}

Based on \eqref{E} and \eqref{Ep}, one can close the bootstrap
assumptions $(\star)$ in Subsection \ref{BA} by using the analogous Sobolev-type embedding formula which follows form Proposition 18.10 in \cite{J}.
\begin{lemma}
 For any function $f\in H^2(S_{s,u})$, under the assumptions $(\star)$, if $\delta>0$ is small, then
 \begin{equation}\label{et}
 \|f\|_{L^\infty(S_{s,u})}\lesssim\f{1}{s^{1/2}}\sum_{p\leq 1}\|R^p f\|_{L^2(S_{s, u})}.
 \end{equation}
\end{lemma}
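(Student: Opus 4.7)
The plan is to reduce the statement to the standard one-dimensional Sobolev embedding $H^1(\mathbb{S}^1)\hookrightarrow L^\infty(\mathbb{S}^1)$. In our 2D setting, the curved circle $S_{s,u}$ is topologically a circle parametrized by $\vartheta\in\mathbb{S}^1$ (via \eqref{H0-6} and \eqref{H0-7}), so the crucial observation is that we are doing analysis on a one-dimensional submanifold, where the classical embedding
\[
\|F\|_{L^\infty(\mathbb{S}^1)}^2\lesssim\int_{\mathbb{S}^1}\bigl(F^2+(\p_\vartheta F)^2\bigr)d\vartheta
\]
holds unconditionally. Setting $F(\vartheta):=f(s,u,\vartheta)$ and recalling $X=\p/\p\vartheta$, this instantly gives a crude bound in terms of the flat $L^2$ norms of $f$ and $Xf$. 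What remains is to restate the right-hand side in terms of the geometric norms $\|\cdot\|_{L^2(S_{s,u})}$ and in terms of $R$ rather than $X$, picking up the correct weight $s^{-1/2}$.

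For the measure, I would use the estimates from Subsection \ref{BA} (specifically Proposition \ref{chi'} on $\check\chi$ and the smallness of the perturbation from the round sphere of radius $\varrho\sim s$) to get $\slashed g_{XX}\sim s^2$, so that $d\nu_{\slashed g}=\sqrt{\slashed g_{XX}}\,d\vartheta\sim s\,d\vartheta$; this gives $\int F^2\,d\vartheta\sim s^{-1}\|f\|_{L^2(S_{s,u})}^2$. For the derivative, I would appeal to \eqref{1-f} of Corollary \ref{12form} applied to the one-form $\xi=\slashed d f$, which yields $(Rf)^2\sim r^2\slashed g^{XX}(Xf)^2$; since $\slashed g^{XX}\sim s^{-2}$ and $r\sim s$, this simplifies to $(Rf)^2\sim (Xf)^2=(\p_\vartheta F)^2$, and thus $\int(\p_\vartheta F)^2 d\vartheta\sim s^{-1}\|Rf\|_{L^2(S_{s,u})}^2$. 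Putting these together and taking square roots produces exactly the claimed inequality $\|f\|_{L^\infty(S_{s,u})}\lesssim s^{-1/2}\sum_{p\leq 1}\|R^pf\|_{L^2(S_{s,u})}$.

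There is no substantial obstacle here: the proof is essentially a change-of-variables reduction to the flat circle combined with the geometric comparability of $(S_{s,u},\slashed g)$ to a round circle of radius $\sim s$, which in turn rests entirely on the bootstrap assumptions $(\star)$ and Proposition \ref{LTRh}. The only point requiring care is ensuring that the constants in $\slashed g_{XX}\sim s^2$ and in $(Rf)^2\sim(Xf)^2$ are uniform in $(s,u,\vartheta)$ throughout $D^{s,u}$; this uniformity follows from the fact that, for $\delta$ small, the error terms $\check\chi$, $\check L^i$, $\upsilon$, $\check\varrho$ established in Subsection \ref{BA} are all $o(1)$ with time-decay, so the deviation of the geometry of $S_{s,u}$ from a round circle is under uniform control.
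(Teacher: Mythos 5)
Your proof is correct and matches the spirit of the reference the paper invokes: the paper simply cites Proposition~18.10 of \cite{J} (the curved-sphere Sobolev embedding) without writing out the argument, and your reduction to the flat one-dimensional embedding $H^1(\mathbb{S}^1)\hookrightarrow L^\infty(\mathbb{S}^1)$ together with the comparisons $\slashed g_{XX}\sim\varrho^2\sim s^2$ and $(Rf)^2\sim r^2|\slashed df|^2$ from Corollary~\ref{12form} is exactly the $2$D analogue of what that proposition supplies. The only detail worth flagging explicitly (which you gesture at but do not spell out) is why $\slashed g_{XX}\sim s^2$: it follows from $\slashed{\mathcal L}_{\mathring L}\slashed g_{XX}=2\chi_{XX}=\frac{2}{\varrho}\slashed g_{XX}+2\check\chi_{XX}$, so that $\partial_s(\varrho^{-2}\slashed g_{XX})=2\varrho^{-2}\check\chi_{XX}$ is integrable in $s$ by \eqref{echi'}, and $\varrho^{-2}\slashed g_{XX}$ stays uniformly bounded above and below since it is $\sim 1$ at $t_0$.
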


It follows from \eqref{E}-\eqref{et} that for $0<\ve_0<\ve_k^*$ and $m\leq 2N-6$,
\begin{equation}\label{im}
\delta^l|Z^m\varphi_\gamma|\lesssim\f{\delta^l}{s^{1/2}} \sum_{p\leq 1}\|R^p Z^m\varphi_\gamma\|_{L^2(S_{s, u})}\overset{\eqref{SSi}}{\lesssim}\f{\delta^{1/2}}{s^{1/2}}\big(\sqrt{E_{1,\leq 2N-4}}+\sqrt{E_{2,\leq 2N-4}}\big)
\lesssim\delta^{1-\varepsilon_0} s^{-1/2};
\end{equation}
while for $m\leq 2N-7$,
\begin{equation}\label{imp}
\delta^l|Z^m\phi|\lesssim\delta^{1/2}s^{-1/2}\delta^l\sum_{p\leq 1}(\|\underline{\mathring L}R^p Z^m\phi\|_{s,u}
+\|{\mathring L}R^p Z^m\phi\|_{s,u})\lesssim\delta^{2-\varepsilon_0}s^{-1/2}.
\end{equation}
Replacing $(\star)$ by \eqref{im}-\eqref{imp} in Section \ref{BA} and following all procedures there yield the following
improved estimates over Proposition \ref{LTRh}: for $m\leq 2N-9$,
\begin{equation}\label{Z}
\begin{split}
&|\slashed{\mathcal{L}}_{ Z}^{m}\check{\chi}|\lesssim \delta^{k(1-\varepsilon_0)-l}s^{-2},\quad|\slashed{\mathcal{L}}_{Z}^{m+1}\slashed dx^j|\lesssim \delta^{-l},\quad |\slashed{\mathcal{L}}_{Z}^{m}\leftidx{^{(T)}}{\slashed\pi}_{\mathring L}|\lesssim \delta^{k(1-\varepsilon_0)-1-l}s^{-1},\\
&|Z^{m+1}\check L^j|+|\slashed{\mathcal{L}}_{Z}^{m}\leftidx{^{(R)}}{\slashed\pi}|+|\slashed{\mathcal{L}}_{Z}^{m}\leftidx{^{(R)}}{\slashed\pi}_{\mathring L}|+|Z^{m+1}\check\varrho|\lesssim \delta^{k(1-\varepsilon_0)-l}s^{-1}\ln s,\\
& |\slashed{\mathcal{L}}_{Z}^{m}R|+|Z^{m+1}\upsilon|\lesssim \delta^{k(1-\varepsilon_0)-l}\ln s,\quad|Z^{m+1}\mu|\lesssim \delta^{k(1-\varepsilon_0)-\varepsilon_0-l}s^{-(k-1)/2},\\
& |\slashed{\mathcal{L}}_{Z}^{m}\leftidx{^{(T)}}{\slashed\pi}|\lesssim \delta^{k(1-\varepsilon_0)-1-l}s^{-1}+s^{-l-1},\quad|\slashed{\mathcal{L}}_{Z}^{m}\leftidx{^{(R)}}{\slashed\pi}_{T}|\lesssim(\delta^{k(1-\ve_0)-l}
+\delta^{2k(1-\ve_0)-1-l})s^{-1}\ln s,
\end{split}
\end{equation}
where $l$ is the number of $T$ as before.

Since the estimates in \eqref{im} and \eqref{imp} are independent of the constant $M$ in $(\star)$,
the bootstrap assumptions $(\star)$ are proved. Therefore, the global estimates  on the solution
$\phi$ to \eqref{quasi} with \eqref{id}, \eqref{null} and \eqref{Y-0}
in the domain $D^{s,4\delta}$ (see Figure \ref{pic:p2} in Subsection \ref{p}) are established.

In addition, one can further update the estimates on $\check{\varrho}$ (see \eqref{rrho}), $\upsilon$ (see \eqref{omega}) and $g_{ij}\check L^i\o^j$, which will be used to obtain the more precise smallness orders and time decay rates for the $L^\infty$ norms of $\phi$ under the actions of $\Gamma\in\{L,\underline L,\O_{ij}\}$ in $A_{2\dl}$
and further be taken as the boundary values on $\t C_{2\dl}$ to study the global Goursat problem
of \eqref{quasi} in $B_{2\dl}$.

\begin{lemma}\label{orL}
	In $D^{s,4\dl}$, the quantities $\check L^i$ and $\check{\varrho}$ have the following estimates:
	\begin{equation}\label{rhoL}
    \delta^l|Z^m\check{\varrho}|
	+\delta^l|Z^m(g_{ij}\check L^i\o^j)|\lesssim\delta^{(k+1)(1-\ve_0)}s^{-1},\quad 	\delta^l|Z^m(g_{\al\beta}\o^\al\o^\beta)|\lesssim \delta^{(k+1)(1-\varepsilon_0)}s^{-3/2},
	\end{equation}
	where $l$ is the number of $T$ in $Z^m$ and $m\leq 2N-9$.
\end{lemma}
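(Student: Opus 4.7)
The plan is to establish the three bounds by exploiting a single algebraic mechanism: the leading $\delta^{k(1-\varepsilon_0)}$ contribution in each of these quantities has the form of the null-form in \eqref{null} evaluated at $\xi_\alpha=\omega_\alpha$, so it vanishes identically, and the residue is driven by the genuinely subleading pieces of $\varphi_\gamma$ which carry the extra smallness $\delta^{1-\varepsilon_0}s^{-1}$ coming from Corollary \ref{Zphi}. To make this quantitative, I would rather not work with transport equations directly (the identities are already algebraic) but derive pointwise expansions and then differentiate, using \eqref{Z} and Proposition \ref{LTRh} to control all factors.

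Step 1 (the quantity $g_{\al\beta}\o^\al\o^\beta$): writing $\o^\al=(1,\o^1,\o^2)$, one first notes $m_{\al\beta}\o^\al\o^\beta=0$, so
\[
g_{\al\beta}\o^\al\o^\beta=(g_{\al\beta}-m_{\al\beta})\o^\al\o^\beta=-(g^{\al'\beta'}-m^{\al'\beta'})\o_{\al'}\o_{\beta'}+O(|g-m|^2),
\]
with $\o_\al=m_{\al\beta}\o^\beta$. Inserting \eqref{g} and the decomposition $\varphi_{\g}=\delta^0_{\g}\mathring L\phi-\mathring L_{\g}\tilde T^a\varphi_a+g_{\g a}\slashed d^Xx^a\slashed d_X\phi$ of \eqref{pal}, the leading contribution is
\[
-(-1)^k(\tilde T^a\varphi_a)^k\sum g^{\al'\beta',\g_1\cdots\g_k}\o_{\al'}\o_{\beta'}\o_{\g_1}\cdots\o_{\g_k},
\]
which vanishes by the null condition \eqref{null}. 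Replacing any one $\varphi$-factor by either $\mathring L\phi$ or $g_{\g a}\slashed d^Xx^a\slashed d_X\phi$ (both $O(\delta^{2-\varepsilon_0}s^{-3/2})$ by Corollary \ref{Zphi}) produces an extra $\delta^{1-\varepsilon_0}s^{-1}$ relative to the would-be leading term, and replacing $\mathring L_{\g}$ by $\o_{\g}$ modulo \eqref{Z} is an even smaller error. One arrives at the pointwise bound $|g_{\al\beta}\o^\al\o^\beta|\lesssim \delta^{(k+1)(1-\varepsilon_0)}s^{-(k+2)/2}$, which is at least $\delta^{(k+1)(1-\varepsilon_0)}s^{-3/2}$ since $k\geq 2$.

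Step 2 ($\check\varrho$ and $g_{ij}\check L^i\o^j$): use the closed form \eqref{rrho}, with denominator $\approx 2$, and rewrite the numerator via
\[
1-g_{ij}\o^i\o^j=-g_{\al\beta}\o^\al\o^\beta+g_{00}+2g_{0i}\o^i,
\]
together with $\check T^i=-g^{0i}-\check L^i$. From \eqref{g00} one has $P^{00}=g^{00}-m^{00}=0$, and by the null condition, $-2P^{0i}\o^i+P^{ij}\o^i\o^j$ (which is $P^{\al\beta}\o_\al\o_\beta$) also vanishes at the leading $\delta^{k(1-\varepsilon_0)}$ level when the $\varphi$-factors are replaced by their leading parts $-\o_{\g}\tilde T^a\varphi_a$. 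Combining these two null cancellations with the already controlled $L^\infty$ estimates \eqref{Z} for $\check T^i$, $\check L^i$, and $g^{0i}$, the numerator of \eqref{rrho} is $O(\delta^{(k+1)(1-\varepsilon_0)}s^{-1})$, which yields the claim for $\check\varrho$. The analysis for $g_{ij}\check L^i\o^j$ is parallel: use $g_{ij}\check L^i\o^j=-g_{ij}\check T^i\o^j-g_{ij}g^{0i}\o^j$ and the same null-cancellation pattern, noting that $g_{0i}\o^i$ inherits the $(k+1)(1-\varepsilon_0)$ smallness from the vanishing of $P^{\al\beta}\o_\al\o_\beta$ at leading order together with the triviality $P^{00}=0$.

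Step 3 (the $Z^m$ derivatives and the main obstacle): for $m\geq 1$, apply $Z$ inductively to the expansions obtained above. Each $Z$ falls either on a factor of $\varphi$, $\mathring L\phi$, $\slashed d\phi$ (controlled by Corollary \ref{Zphi}), on $\mathring L^\al$, $\check L^i$, $\check\varrho$ or the coefficients $g^{\al\beta,\cdots}$ (controlled by \eqref{Z} and Proposition \ref{LTRh}), or on $\o^i=x^i/r$ (produces harmless geometric factors). The null-condition cancellation at the top level is preserved because the identity $g^{\al\beta,\g_1\cdots\g_k}\o_\al\o_\beta\o_{\g_1}\cdots\o_{\g_k}\equiv 0$ is a polynomial identity in $\o$, so differentiating it with vector fields acting on the $\o$'s still produces at least one factor of the subleading type which brings in the extra $\delta^{1-\varepsilon_0}s^{-1}$. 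The main obstacle is the careful bookkeeping: after one $T$ differentiation the smallness improves by $\delta^{-1}$ (expected, hence the $\delta^l$ factor), but one must verify that the null cancellation is not destroyed by the $T$-derivative hitting $\mathring L_\g$ or $\tilde T^a$, i.e.\ that the replacement $\varphi_{\g}\mapsto -\o_{\g}\tilde T^a\varphi_a$ inside the null-form continues to give a vanishing contraction to all orders. This is handled by differentiating the expansion of $\varphi_{\g}$ once and observing that every extra $Z$ either stays inside the cancellation pattern or produces a factor that is at least as small as required by \eqref{Z}. Summing over all terms closes the induction for $m\leq 2N-9$, which is the range in which all the quoted $L^\infty$ bounds are available.
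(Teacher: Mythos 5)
Your proposal takes a genuinely different route from the paper: you aim for a \emph{pointwise algebraic} expansion of $g_{\al\beta}\o^\al\o^\beta$, $\check\varrho$ and $g_{ij}\check L^i\o^j$ directly at each point of $D^{s,4\delta}$, whereas the paper proves the sharp bound only on the initial slice $\Sigma_{t_0}$ (using the explicit formulas \eqref{m}, \eqref{tL} and the \emph{flat} null decomposition $\p_\gamma\phi=-\tfrac12\o_\gamma\underline L\phi+\cdots$) and then \emph{propagates} it by a transport equation \eqref{LrLo} for $\varrho\,g_{ij}\check L^i\o^j$ along $\mathring L$, feeding in \eqref{gLL}--\eqref{GLLL}. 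The transport route is not a convenience — it is what allows the statement to have no $\ln s$ — and this is where your approach has a genuine gap.

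Concretely: at a general point the natural frame decomposition \eqref{pal} gives $\varphi_\gamma=\delta_\gamma^0\mathring L\phi-\mu^{-1}\mathring L_\gamma T\phi+g_{\gamma i}\slashed d^Xx^i\slashed d_X\phi$, so the ``would-be-cancelling'' term in your Step~1 is
$g^{\al\beta,\gamma_1\cdots\gamma_k}\o_\al\o_\beta\,\mathring L_{\gamma_1}\cdots\mathring L_{\gamma_k}$,
not the exactly-vanishing $g^{\al\beta,\gamma_1\cdots\gamma_k}\o_\al\o_\beta\,\o_{\gamma_1}\cdots\o_{\gamma_k}$. The replacement $\mathring L_\gamma\mapsto\o_\gamma$ costs one factor of $\check L$ or $\check\varrho$, and the only available pointwise bound, \eqref{chL}/\eqref{Z}, is $O(\delta^{k(1-\ve_0)}s^{-1}\ln s)$. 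The residual is therefore $O\big(\delta^{(2k-1)(1-\ve_0)}s^{-(k+1)/2}\ln s\big)$, and for $k=2$ this is $\delta^{3(1-\ve_0)}s^{-3/2}\ln s$, which is \emph{not} $\lesssim\delta^{(k+1)(1-\ve_0)}s^{-3/2}$: you lose a $\ln s$ with no margin in $\delta$ to absorb it, since $(2k-1)(1-\ve_0)=(k+1)(1-\ve_0)$ at $k=2$. The paper avoids this precisely because on $\Sigma_{t_0}$ the flat frame decomposition produces $\o_\gamma$ exactly (see \eqref{gLo}, where the coefficient $g^{0i,\gamma_1\cdots\gamma_k}\o_0\o_i+\tfrac12 g^{ij,\gamma_1\cdots\gamma_k}\o_i\o_j$ is killed outright by \eqref{null} and $g^{00,\gamma_1\cdots\gamma_k}=0$), and then the $s^{-3/2}$-decaying right-hand side of the $\mathring L$-transport equation integrates to a bounded quantity, absorbing any $\ln s$ that appears in lower-order terms. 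There is also a side error in your Step~2: the claim that ``$g_{0i}\o^i$ inherits the $(k+1)(1-\ve_0)$ smallness'' is not correct --- only the \emph{combination} $-2g^{0i}\o_i+g^{ij}\o_i\o_j$ is small via the null condition; $g^{0i}\o_i$ alone is generically only $O(\delta^{k(1-\ve_0)}s^{-k/2})$. (Also a minor mis-citation: the $L^\infty$ bounds on $\mathring L\phi$, $\slashed d\phi$ that you use are from $(\star)$/Proposition~\ref{LTRh}, not from the $L^2$-level Corollary~\ref{Zphi}.) To repair the argument you would essentially have to re-introduce the transport step the paper uses.
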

\begin{proof}
	On the initial time $t_0$,
	\begin{align}
	&\mu=\f{1}{\sqrt{(g^{0i}\o_i)^2+g^{ij}\o_i\o_j}},\label{m}\\
	&\tilde L^i=-g^{0i}\big(-g^{0j}\o_j+\sqrt{(g^{0j}\o_j)^2+g^{ab}\o_a\o_b}\big)+g^{ij}\o_j.\label{tL}
	\end{align}
	Due to $g^{\al\beta}=m^{\al\beta}+g^{\al\beta,\g_1\cdots\g_k}\vp_{\g_1}\cdots\vp_{\g_k}+O(\vp^{k+1})$, then it
follows from \eqref{errorv} that
	\begin{equation}\label{iL}
	\begin{split}
	\check L^i|_{t_0}
	=&\Big\{(-g^{0i,\g_1\cdots\g_k}-\f12g^{ab,\g_1\cdots\g_k}\o_a\o_b\o^i+g^{ij,\g_1\cdots\g_k}\o_j)\vp_{\g_1}\cdots\vp_{\g_k}
+f(\o,\vp)\vp^{k+1}\Big\}\Big|_{t_0}.
	\end{split}
	\end{equation}
In addition, by $\p_t=-\f12\o_0\underline L+\f12L$, $\p_i=-\f12\o_i\underline L+\f12\o_i L+\f1r\o^i_{\perp}\O$
	and \eqref{null}, one has
	\begin{equation}\label{gLo}
	\begin{split}
	g_{ij}\check L^i\o^j|_{t_0}&=\Big\{(-\f{\underline L\phi}2)^k( g^{0i,\g_1\cdots\g_k}\o_0\o_i+\f12g^{ij,\g_1\cdots\g_k}\o_i\o_j)\o_{\g_1}\cdots\o_{\g_k}+f(\varphi,\o)\left(
	\begin{array}{ccc}
	\vp^{k+1}\\
	\vp^{k-1}L\phi\\
	\vp^{k-1}\f1r\O\phi
	\end{array}
	\right)\Big\}\Big|_{t_0}\\
	&=f(\varphi,\o)\left(
	\begin{array}{ccc}
	\vp^{k+1}\\
	\vp^{k-1}L\phi\\
	\vp^{k-1}\f1r\O\phi
	\end{array}
	\right)\Bigg|_{t_0}.
	\end{split}
	\end{equation}
	On the other hand, it follows from \eqref{iL} that
	\begin{equation}\label{iu}
	\begin{split}
	\upsilon|_{t_0}=&\{-g_{0j}\epsilon_i^jx^i-g_{aj}\check L^a\epsilon_i^jx^i-(g_{aj}-m_{aj})\o^a\epsilon_i^jx^i\}|_{t_0}\\
	=&\epsilon_i^jx^i\Big\{(-g^{0j,\gamma_1,\cdots\g_k}+g^{aj,\g_1,\cdots\g_k}\o_a)\vp_{\g_1}\cdots\vp_{\g_k}-\check L^j+f(\varphi,\o)\vp^{k+1}\Big\}\Big|_{t_0}\\
	=&rf(\varphi,\o)\vp^{k+1}\Big|_{t_0}.
	\end{split}
	\end{equation}
	For any $\bar Z\in\{T=-\mu(g^{0i}+\mathring L^i)\p_i, R=\O-\upsilon\tilde T\}$, \eqref{gLo} and \eqref{iu} imply
	\begin{equation}\label{bZ}
	\delta^l|\bar Z^m(g_{ij}\check L^i\o^j)|_{t_0}\lesssim\delta^{(k+1)(1-\ve_0)}.
	\end{equation}
	Under the action of $\mathring L$, the term $\varrho g_{ij}\check L^i\o^j$ can be written as
	\begin{equation}\label{LrLo}
	\begin{split}
	\mathring L(\varrho g_{ij}\check L^i\o^j)=\big(G_{ij}^\gamma \mathring L\varphi_\gamma\big)\varrho\check L^i\o^j
	+\f \varrho rg_{ij}\mathring L(\varrho\check L^i)(\mathring L^j-\check L^j)+\f \varrho r g_{ij}\check L^i(\check L^j-\o^j\o_a\check L^a),
	\end{split}
	\end{equation}
	where
	\begin{equation}\label{gLL}
	g_{ij}\mathring L(\varrho\check L^i)\mathring L^j=
	-\f12\varrho G_{\mathring L\mathring L}^\gamma\mathring L\varphi_\gamma+f(\varphi,\mathring L^1,\mathring L^2)\varrho\vp^{2k-1}\left(
	\begin{array}{ccc}
	\mathring L\vp\\
	\slashed d_X\vp\slashed d^Xx
		\end{array}
	\right)
	\end{equation}
	follows from \eqref{LeL}, \eqref{LL}, and $g_{ij}(\slashed d^Xx^i)\mathring L^j=-g_{0j}(\slashed d^Xx^j)$.
	In addition, \eqref{null} implies
	\begin{equation}\label{GLLL}
	\begin{split}
	G_{\mathring L\mathring L}^\gamma\mathring L\varphi_\gamma=&
	-(\p_{\varphi_\gamma}g^{\kappa\lambda})\mathring L_\kappa\mathring L_\lambda\mathring L^{\nu}\p_\gamma\varphi_\nu\\
	=&-(\p_{\varphi_0}g^{\kappa\lambda})\mathring L_\kappa\mathring L_\lambda\big(\mathring L^2\phi-(\mathring L\mathring L^i)\varphi_i\big)+\big((\p_{\varphi_\gamma}g^{\kappa\lambda})\mathring L_\kappa\mathring L_\lambda\mathring L_{\gamma}\big)\tilde T^i\mathring L\varphi_i\\
	&-(\p_{\varphi_\gamma}g^{\kappa\lambda})\mathring L_\kappa\mathring L_\lambda g_{\gamma j}(\slashed d^Xx^j)\big(\slashed d_X\mathring L\phi-(\slashed d_X\mathring L^i)\varphi_i\big),
	\end{split}
	\end{equation}
	where $(\mathring L\mathring L^i)\vp_i=O(|\vp^k\mathring L\varphi|+|\vp^k\slashed d\varphi|)$ and $(\slashed d_X\mathring L^i)\varphi_i=\textrm{tr}\chi\slashed d_X\phi+O(|\vp^k\mathring L\varphi|+|\vp^k\slashed d\varphi|)$ due to \eqref{LL} and \eqref{dL} respectively, and   $(\p_{\varphi_\gamma}g^{\kappa\lambda})\mathring L_\kappa\mathring L_\lambda\mathring L_{\gamma}$ satisfies \eqref{gLLL}. Substituting \eqref{gLL}-\eqref{GLLL} into \eqref{LrLo} and using \eqref{im}-\eqref{Z} lead to
	\begin{equation}\label{ZrgLo}
	\dl^l |\bar Z^m\mathring L(\varrho g_{ij}\check L^i\o^j)|\lesssim\dl^{(k+1)(1-\ve_0)}s^{-3/2},\quad m\leq 2N-9.
	\end{equation}
	Therefore, for $m\leq 2N-9$, it follows from \eqref{Z} and \eqref{ZrgLo} that
	\begin{equation*}
	\begin{split}
	\delta^l|\mathring L\bar Z^m(\varrho g_{ij}\check L^i\o^j)|\lesssim&\delta^l\underbrace{\sum_{p_1+p_2=m-1}|\big(\slashed{\mathcal L}_{\bar Z}^{p_1}\leftidx{^{(\bar Z)}}{\slashed\pi}_{\mathring L}^X\big)\slashed d_X\bar Z^{p_2}(\varrho g_{ij}\check L^i\o^j)|}_{\text{vanish when}\ m=0}+\delta^l|\bar Z^m\mathring L(\varrho g_{ij}\check L^i\o^j)|\\
	\lesssim&\delta^{(k+1)(1-\ve_0)}s^{-3/2}.
	\end{split}
	\end{equation*}
	This, together with \eqref{bZ}, yields
	\begin{equation}\label{LbZ}
	\delta^l|\bar Z^m(g_{ij}\check L^i\o^j)|\lesssim\delta^{(k+1)(1-\ve_0)}s^{-1},\quad m\leq 2N-9.
	\end{equation}
	
	If there is at least one $\varrho\mathring L$ in $Z^m$ $(m\leq 2N-8)$, namely, $Z^m=Z^{p_1}(\varrho\mathring L)\bar Z^{p_2}$
	with $p_1+p_2=m-1$, then
	\begin{equation*}
	(\varrho\mathring L)\bar Z^{p_2}(g_{ij}\check L^i\o^j)=\underbrace{\sum_{q_1+q_2=p_2-1}\varrho\bar Z^{q_1}\leftidx{^{(\bar Z)}}{\slashed\pi}_{\mathring L}^X\cdot\slashed d_X\bar Z^{q_2}(g_{ij}\check L^i\o^j)}_{\text{vanish when}\ p_2=0}+\varrho\bar Z^{p_2}\big(\varrho^{-1}\mathring L(\varrho g_{ij}\check L^i\o^j)-\varrho^{-1}g_{ij}\check L^i\o^j\big).
	\end{equation*}
	With an induction argument on the number of $\varrho\mathring L$ in $Z^{p_1}$, it follows from the expression above, \eqref{Z} and \eqref{ZrgLo} that
	\begin{equation}\label{rLg}
	|Z^{p_1}(\varrho\mathring L)\bar Z^{p_2}(g_{ij}\check L^i\o^j)|\\
	\lesssim\delta^{(k+1)(1-\ve_0)-l}s^{-1}.
	\end{equation}
	Therefore, for any vector $Z\in\{\varrho\mathring L,T,R\}$, when $m\leq 2N-9$, by \eqref{LbZ} and \eqref{rLg}, one can get
	\begin{equation}\label{ZgLo}
	\begin{split}
	\delta^l|Z^m(g_{ij}\check L^i\o^j)|\lesssim\delta^{(k+1)(1-\ve_0)}s^{-1}.
	\end{split}
	\end{equation}

	In order to improve the estimate \eqref{cr} on $\check{\varrho}$, one needs to check the numerator in \eqref{rrho}.
	Due to $\vp_{\g_p}=\dl_{\g_p}^0\mathring L\phi-\mathring L_{\g_p}\tilde T^{i}\vp_i+g_{\g_pi}\slashed d_Xx^i\slashed d^X\phi$
and $g^{00}=-1$, then
	\begin{align}\label{1-go}
	&1-g_{ij}\o^i\o^j+2g_{ij}\check T^i\o^j=1-g_{ij}\o^i\o^j-2g_{0i}\o^i-2g_{ij}\check L^i\o^j\no\\
	=&g^{ij,\g_1\cdots\g_k}\o_i\o_j\vp_{\g_1}\cdots\vp_{\g_k}+2 g^{0i,\g_1\cdots\g_k}\o_0\o_i\vp_{\g_1}\cdots\vp_{\g_k}-2g_{ij}\check L^i\o^j+f(\varphi,\o)\vp^{k+1}\no\\
	=&(-1)^{k} g^{\al\beta,\g_1\cdots\g_k}\o_\al\o_\beta\mathring L_{\g_1}\cdots\mathring L_{\g_k}(\tilde T^{i_1}\vp_{i_1})\cdots(\tilde T^{i_k}\vp_{i_k})-2g_{ij}\check L^i\o^j+f(\varphi,\o)\left(
	\begin{array}{ccc}
	\vp^{k+1}\\
	\varphi^{k-1}\mathring L\phi\\
	\vp^{k-1}\slashed d^Xx\slashed d_X\phi
	\end{array}
	\right)\no\\
	=&-2g_{ij}\check L^i\o^j+f(\varphi,\mathring L^1,\mathring L^2,\f{x}\varrho,\check\varrho)\left(
	\begin{array}{ccc}
	\vp^{k+1}\\
	\varphi^{k-1}\mathring L\phi\\
	\vp^{k-1}\slashed d^Xx\slashed d_X\phi\\
	\check L^1\varphi^{k}\\
		\check L^2\varphi^{k}\\
	\check\varrho\vp^{k}
	\end{array}
	\right).
	\end{align}
	Substituting \eqref{1-go} into \eqref{rrho} and applying \eqref{ZgLo} yield that for $m\leq 2N-9$,
	\begin{equation*}
	\delta^l|Z^m\check\varrho|\lesssim\delta^{(k+1)(1-\ve_0)}s^{-1}.
	\end{equation*}	
	Since $g_{00}=-1+f(\vp)\vp^{k+1}$, then $g_{\al\beta}\o^\al\o^\beta=-1+2g_{0i}\o^i+g_{ij}\o^i\o^j+f(\vp,\o)\vp^{k+1}$.
This, together with the first identity in \eqref{1-go}, shows that
	\begin{equation*}
	g_{\al\beta}\o^\al\o^\beta
	=f(\varphi,\mathring L^1,\mathring L^2,\f{x}\varrho,\check\varrho)\left(
	\begin{array}{ccc}
	\vp^{k+1}\\
	\varphi^{k-1}\mathring L\phi\\
	\vp^{k-1}\slashed d^Xx\slashed d_X\phi\\
	\check L^1\varphi^{k}\\
	\check L^2\varphi^{k}\\
	\check\varrho\vp^{k}
	\end{array}
	\right),
	\end{equation*}
	which implies $\dl^l |Z^m(g_{\al\beta}\o^\al\o^\beta)|\lesssim\dl^{(k+1)(1-\ve_0)}s^{-3/2}$.
\end{proof}

\begin{remark}
	It follows from \eqref{rhoL} that $\check\varrho=\f{r}{\varrho}-1=O(\delta^{(k+1)(1-\ve_0)}s^{-1})$. This
shows that the distance between $C_0$ and $C_{4\delta}$ on the hypersurface $\Sigma_t$ is $4\delta
	+O(\delta^{(k+1)(1-\ve_0)})$. Thus, the characteristic
	surface $C_u$ $(0\leq u\leq 4\delta)$ is almost straight with the error $O(\delta^{(k+1)(1-\ve_0)})$ from
	the corresponding light conic surface due to $\dl^{(k+1)(1-\ve_0)}=o(\dl)$ for $\ve_0<\ve_k^*$.
\end{remark}

\begin{lemma}\label{n}
	In $D^{s,3\dl}$, for $m\leq 2N-8$, it holds that
	\begin{equation}\label{Zu}
	\delta^l|Z^m\upsilon|\lesssim\delta^{(k+1)(1-\ve_0)},
	\end{equation}
	where $l$ is the number of $T$ in $Z^m$.
\end{lemma}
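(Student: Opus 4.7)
My plan is to follow the transport-equation strategy used to prove Lemma \ref{orL}. Since the target bound $\dl^l|Z^m\upsilon|\lesssim \dl^{(k+1)(1-\ve_0)}$ is $s$-uniform, it will suffice to control $Z^m\upsilon$ on the initial slice $\Sigma_{t_0}$ and to show that $\mathring L(Z^m\upsilon)$ is integrable in $s$ with the same smallness order.

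On $\Sigma_{t_0}$, the identity \eqref{iu} from the proof of Lemma \ref{orL} already yields $\upsilon|_{t_0}=rf(\vp,\o)\vp^{k+1}|_{t_0}$: the leading contributions $\epsilon_i^j x^i g^{\al j,\g_1\cdots\g_k}\o_\al\vp_{\g_1}\cdots\vp_{\g_k}$ and $-\epsilon_i^j x^i\check L^j$ will cancel, up to the antisymmetric factor $\epsilon_i^j\o^i\o^j=0$, by the null condition \eqref{null}. Hence $|\upsilon|_{t_0}|\lesssim \dl^{(k+1)(1-\ve_0)}$, and applying $Z^m$ to this identity on $\Sigma_{t_0}$ together with the pointwise bounds \eqref{Z} for $\check L^i$, $\check\varrho$, $\slashed d x^i$ and the deformation tensors, combined with Theorem \ref{Th2.1}, will produce $\dl^l|Z^m\upsilon(t_0,u,\vartheta)|\lesssim \dl^{(k+1)(1-\ve_0)}$.

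Next I will derive a transport equation for $\upsilon$ along $\mathring L$. Writing $\upsilon = g_{aj}\tilde T^a\Omega^j$ with $\Omega^j = \epsilon_i^j x^i$, using $\tilde T^a = -g^{0a}-\mathring L^a$ and $\mathring L\Omega^j = \epsilon_i^j\mathring L^i$, and substituting \eqref{LL} for $\mathring L\mathring L^a$, the only potentially dangerous terms in $\mathring L\upsilon$ are the factors $G^\g_{\mathring L\mathring L}\mathring L\vp_\g$ and $G^\g_{\mathring L\mathring L}\slashed d_X\vp_\g$ inherited from $\mathring L\mathring L^a$, together with a remainder of type $(G^\g_{aj}\mathring L\vp_\g)\tilde T^a\Omega^j$. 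The first two will be rewritten via the null-condition identities \eqref{gLLL} and \eqref{Gdp}, extracting an additional $\mathring L\phi$ or $\slashed d^X x\,\slashed d_X\phi$ factor along with $\check L^i$, $\check\varrho$ corrections; the remainder will be handled as in the computation \eqref{LrLo}--\eqref{GLLL}, where the contraction of the null coefficients with $\epsilon_i^j x^i$ either collapses by $\epsilon_i^j\o^i\o^j=0$ or is absorbed into a $\vp^{k+1}$ error. Combining with the $L^\infty$ bounds \eqref{im}--\eqref{imp} and \eqref{Z} will then deliver $|\mathring L\upsilon|\lesssim \dl^{(k+1)(1-\ve_0)}s^{-3/2}$, and integrating in $s$ along $\mathring L$ will conclude the $m=0$ case.

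For $m\ge 1$, I will apply $Z^m$ to the transport equation and commute $Z^m$ with $\mathring L$ using the first-order commutator formulas from Appendix A (whose coefficients are controlled by \eqref{Z}); the already-established bounds of Lemma \ref{orL} will supply the estimates of $Z^m\check L^i$, $Z^m\check\varrho$, $Z^m(g_{ij}\check L^i\o^j)$ and $Z^m(g_{\al\beta}\o^\al\o^\beta)$ needed in the contractions. A $\varrho\mathring L$ factor inside $Z^m$ will be reduced as in \eqref{rLg} by substituting the transport equation and inducting on the number of such factors. The hardest part will be, as in Lemma \ref{orL}, the repeated invocation of the null condition: I must ensure that every new derivative re-exposes the null-structural cancellation, so that the naive $\dl^{k(1-\ve_0)}$ smallness is systematically promoted to the sharp $\dl^{(k+1)(1-\ve_0)}$. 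In particular, when $Z=T$, each $T\vp$ that arises must be paired with a null-condition-suppressed coefficient via \eqref{gLLL2} and the good-unknown identity \eqref{DA}, to prevent any loss beyond the $\dl^{-l}$ allowed by the statement.
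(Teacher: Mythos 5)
Your plan to transport $\upsilon$ along $\mathring L$ and integrate in $s$ from $\Sigma_{t_0}$ does not match the paper's mechanism and, as described, would fail to produce the extra $\delta^{1-\ve_0}$ gain. The decisive trick in the paper's proof is to factor $\upsilon=\varrho\mathscr F+\text{(h.o.t.)}$ with $\mathscr F$ as in \eqref{mE}, prove $\delta^l|TZ^m\mathscr F|\lesssim\delta^{k(1-\ve_0)-\ve_0}s^{-1}$ (where the dangerous $T\varphi$ terms produced by one more $T$ are null-suppressed via \eqref{GTd}--\eqref{TLi}), rewrite this as a bound on $(\p_t-\p_r)Z^m\mathscr F$ via \eqref{pal}, and then integrate along the \emph{inward} null direction $\p_t-\p_r$. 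Because the integral curve of $\p_t-\p_r$ from $C_0$ (where $\mathscr F$ vanishes) to any point of $D^{s,3\delta}$ has length $O(\delta)$, this integration converts $\delta^{k(1-\ve_0)-\ve_0}s^{-1}$ into $\delta^{(k+1)(1-\ve_0)}s^{-1}$, yielding \eqref{ZkA}. The entire $\delta^{1-\ve_0}$ improvement comes from the shortness of the integration interval, not from any decay of the integrand.

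By contrast, your proposed bound $|\mathring L\upsilon|\lesssim\delta^{(k+1)(1-\ve_0)}s^{-3/2}$ is not achievable. Already the single term $(\mathring L g_{aj})\tilde T^a\Omega^j=(G_{aj}^\gamma\mathring L\varphi_\gamma)\tilde T^a\Omega^j$ is of size $\sim r\cdot\delta^{(k-1)(1-\ve_0)}s^{-(k-1)/2}\cdot\delta^{1-\ve_0}s^{-3/2}=\delta^{k(1-\ve_0)}s^{-k/2}$ (for $k=2$, $\delta^{2(1-\ve_0)}s^{-1}$), and the term $g_{aj}\tilde T^a\epsilon_i^j\mathring L^i$ contributes $\sim\delta^{k(1-\ve_0)}s^{-1}\ln s$ after the leading $\epsilon_i^j\o^i\o^j=0$ cancellation. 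Both are far larger than your target. Crucially, the null condition \eqref{null} cannot rescue these terms: the index structure of $\upsilon$ forces a contraction with $\epsilon_i^jx^i\sim r\,\o_\perp^j$, i.e.\ the \emph{tangential} direction $\o_\perp$, whereas \eqref{null} only kills contractions in which both free indices of $g^{\al\beta,\g_1\cdots\g_k}$ hit $\o$. Likewise, the improved estimate $|Z^m(g_{ij}\check L^i\o^j)|\lesssim\delta^{(k+1)(1-\ve_0)}s^{-1}$ from Lemma~\ref{orL} concerns the $\o$-projection of $\check L$, while the piece of $\mathring L\upsilon$ you would need to control is essentially the $\o_\perp$-projection $\check L_j\o_\perp^j$, which is $\upsilon/r$ up to lower order — so invoking it here would be circular. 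Integrating $\delta^{k(1-\ve_0)}s^{-1}\ln s$ along $\mathring L$ over $[t_0,s]$ yields at best $\delta^{k(1-\ve_0)}\ln^2 s$, missing both the $\delta$-gain and the $\ln s$-free bound claimed by the lemma.
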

\begin{proof}
	Write $g_{\al\beta}=m_{\al\beta}+g_{\al\beta}^{\g_1\cdots\g_k}\vp_{\g_1}\cdots\vp_{\g_k}+h_{\al\beta}(\vp)$ in \eqref{g}.
Then \eqref{omega} implies
	\begin{equation}\label{upsilon}
	\begin{split}
	\upsilon=&-g_{0j}\epsilon_i^jx^i-g_{aj}\check L^a\epsilon_i^jx^i-g_{aj}\epsilon_i^j\f{x^ax^i}{\varrho}\\
	=&\varrho\big\{-(g_{0j}^{\gamma_1\cdots\g_k}+g_{ja}^{\g_1\cdots\g_k}\f{x^a}\varrho)\epsilon_i^j\f{x^i}{\varrho}\vp_{\g_1}\cdots\vp_{\g_k}
-m_{ja}\check L^a\epsilon_i^j\f{x^i}{\varrho}\big\}+\varrho f(\varphi,\f{x}\varrho)\left(
	\begin{array}{ccc}
	\vp^{k+1}\\
	\check L^1\vp^k\\
\check L^2\vp^k
	\end{array}
	\right).
	\end{split}
	\end{equation}
	Let
	\begin{equation}\label{mE}
	\mathscr  F=-(g_{0j}^{\gamma_1\cdots\g_k}+g_{ja}^{\g_1\cdots\g_k}\f{x^a}\varrho)\epsilon_i^j\f{x^i}{\varrho}\vp_{\g_1}\cdots\vp_{\g_k}-m_{ja}\check L^a\epsilon_i^j\f{x^i}{\varrho}.
	\end{equation}
	Then
	\begin{align}\label{TA}
	T\mathscr F=&-k(g_{0j}^{\gamma_1\cdots\g_k}+g_{ja}^{\g_1\cdots\g_k}\f{x^a}\varrho)
\epsilon_i^j\f{x^i}{\varrho}(T\vp_{\g_1})\vp_{\g_2}\cdots\vp_{\g_k}-m_{ja}\epsilon_i^j\big((T\mathring L^a)\f{x^i}\varrho+\check L^aT(\f{x^i}\varrho)\big)\no\\
	&-\big(g_{0j}^{\gamma_1\cdots\g_k}T(\f{x^i}\varrho)+g_{ja}^{\g_1\cdots\g_k}T(\f{x^ax^i}{\varrho^2})\big)
\epsilon_i^j\vp_{\g_1}\cdots\vp_{\g_k}+m_{ja}T(\f{x^a}\varrho)\epsilon_i^j\f{x^i}\varrho\no\\
	=&-k(g_{0j}^{\gamma_1\cdots\g_k}+g_{ja}^{\g_1\cdots\g_k}\f{x^a}\varrho)\epsilon_i^j\f{x^i}{\varrho}(T\vp_{\g_1})
\vp_{\g_2}\cdots\vp_{\g_k}-m_{ja}\epsilon_i^j(T\mathring L^a)\f{x^i}\varrho\\
	&+\f1\varrho f(\vp,\check L^1,\check L^2,\mu,\f{x}\varrho)\left(\begin{array}{ccc}
	\vp^k\\
	\check L^1\\
	\check L^2\\
	\mu-1
	\end{array}
	\right),\no
	\end{align}
	where $T(\f{x^a}{\varrho})=-\f{\mu}{\varrho}g^{0a}-\f\mu\varrho\check L^a+\f{(1-\mu)x^a}{\varrho^2}$ has been used.
	Note that there is a factor $T\mathring L^a$ in \eqref{TA} containing the term $-G_{X\mathring L}^\gamma(T\varphi_\gamma)\slashed d^Xx^a$
due to \eqref{TL}. It follows from the definition of $G_{X\mathring L}^\gamma$ in \eqref{FG}
and \eqref{gab} that
	\begin{align}\label{GTd}
	&-G_{X\mathring L}^\gamma(T\varphi_\gamma)\slashed d^Xx^a=-G_{i\beta}^\gamma(\slashed d_Xx^i\cdot\slashed d^Xx^a)\mathring L^\beta(T\varphi_\gamma)\no\\
	=&-G_{a\beta}^\gamma\mathring L^\beta(T\varphi_\gamma)-G_{\mathring L\mathring L}^\g(T\varphi_\gamma)\tilde T^a
	+G_{0\beta}^\gamma\mathring L^\beta\tilde T^a(T\varphi_\gamma)+f(\varphi,\mathring L^i)\vp^{2k-1}T\varphi_\gamma\no\\
	=&-k(g_{a0}^{\g\g_2\cdots\g_k}+g_{aj}^{\g\g_2\cdots\g_k}\f{x^j}\varrho+g_{0\beta}^{\g\g_2\cdots\g_k}\mathring L^\beta\f{x^a}\varrho)\vp_{\g_2}\cdots\vp_{\g_k}(T\varphi_\g)-G_{\mathring L\mathring L}^\g(T\varphi_\gamma)\tilde T^a\\
	&+f(\varphi,\mathring L^1,\mathring L^2)\left(\begin{array}{ccc}
	\vp^k\\
	\vp^{k-1}\check L^1\\
	\vp^{k-1}\check L^2
	\end{array}
	\right)T\varphi.\no
	\end{align}
	Examining \eqref{TL} carefully and using \eqref{GTd}, one can get
	\begin{equation}\label{TLi}
	\begin{split}
	T\mathring L^a=&-k(g_{a0}^{\g\g_2\cdots\g_k}+g_{aj}^{\g\g_2\cdots\g_k}\f{x^j}\varrho+g_{0\beta}^{\g\g_2\cdots\g_k}\mathring L^\beta\f{x^a}\varrho)\vp_{\g_2}\cdots\vp_{\g_k}(T\varphi_\g)\\
	&+f(\varphi,\mathring L^1,\mathring L^1)\left(\begin{array}{ccc}
	\vp^k(T\varphi)\\
	\vp^{k-1}\check L^1(T\varphi)\\
		\vp^{k-1}\check L^2(T\varphi)\\
	G_{\mathring L\mathring L}^\g(T\varphi_\gamma)\\
	\slashed d^X\mu\slashed d_Xx\\
	\vp^{k-1}(\mu\mathring L\vp)\\
	\vp^{k-1}(\mu\slashed d^X\vp\slashed d_Xx)
	\end{array}
	\right).
		\end{split}
	\end{equation}
	For $m\leq 2N-8$, substitute \eqref{TLi} into \eqref{TA} to yield
	\begin{equation*}
	\delta^l|Z^mT\mathscr F|\lesssim\delta^{k(1-\ve_0)-\ve_0}s^{-1},
	\end{equation*}
	where \eqref{GT} and \eqref{gLLL} have been used.
	Thus,
	\begin{equation*}
	\delta^l|TZ^m\mathscr F|\leq\delta^l|[T,Z^m]\mathscr F|+\delta^l|Z^mT\mathscr F|\lesssim\delta^{k(1-\ve_0)-\ve_0}s^{-1},
	\end{equation*}
	which, together with \eqref{pal}, implies
	\begin{equation}\label{uZA}
	\delta^l|(\p_t-\p_r)Z^m\mathscr F|\leq\delta^l|TZ^m\mathscr F|+\dl^l|\mathring L Z^m\mathscr F|+\delta^l|\slashed dZ^m\mathscr F|\lesssim\delta^{k(1-\ve_0)-\ve_0}s^{-1}.
	\end{equation}
 Integrating \eqref{uZA} along integral curves of $\p_t-\p_r$ yields
	\begin{equation}\label{ZkA}
	\delta^l|Z^m\mathscr F|\lesssim\delta^{(k+1)(1-\ve_0)}s^{-1},\qquad m\leq 2N-8.
	\end{equation}
	It follows from \eqref{upsilon}, \eqref{ZkA} and \eqref{orL} that for $m\leq 2N-8$,
	\begin{equation*}
	\delta^l|Z^m\upsilon|\lesssim\delta^{(k+1)(1-\ve_0)}.
	\end{equation*}
\end{proof}

Now we are ready to obtain the more precise orders of smallness of $\dl$ and time decay rates for
the $L^\infty$ norms of $\phi$ under the actions of $\Gamma\in\{L,\underline L,\O\}$.
Recall the standard vectors $\{L,\underline L,\O\}$ defined in the end of Section \ref{in}.
It follows from \eqref{pal} and \eqref{R} that
\begin{equation}\label{sL}
\begin{split}
&L=\mathring L-\mu^{-1}\o^\al\mathring L_\al T+\o^\al g_{\al j}(\slashed d^Xx^j)X,\\
&\underline L=\mathring L+\mu^{-1}(\o^i\mathring L_i-\mathring L_0)T+(g_{0j}\slashed d^Xx^j-\o^i g_{ij}\slashed d^Xx^j)X,\\
&\Omega=R+\mu^{-1}\upsilon T.
\end{split}
\end{equation}

The identity
\[
\o^\al\mathring L_\al=g_{\al\beta}\o^\al\o^\beta+g_{0i}\check L^i+g_{ij}\o^i\check L^j+\check\varrho(g_{0i}\o^i+g_{ij}\o^i\o^j)
\]
and \eqref{rhoL} imply that for $m\leq 2N-9$,
\begin{equation}\label{oL}
\delta^l|Z^m(\o^\al\mathring L_\al)|\lesssim\delta^{(k+1)(1-\varepsilon_0)}s^{-1}
\end{equation}
holds in $D^{s,4\dl}$.
Therefore, collecting \eqref{Zu}, \eqref{sL}, \eqref{oL} with \eqref{im} and \eqref{imp} yields that in $D^{s,3\dl}$,
\begin{equation}\label{gv}
|\Gamma^m\varphi_\gamma|\lesssim\delta^{1-\varepsilon_0-l}s^{-1/2},
\qquad|\Gamma^m\phi|\lesssim\delta^{2-\varepsilon_0-l}s^{-1/2},\quad m\leq 2N-8,
\end{equation}
where $\Gamma\in\{(s+r)L,\underline L,\Omega\}$ and $l$ is the number of $\underline L$ in $\Gamma^m$.

For any point $P\in\tilde C_{2\delta}=\{(t,x):t\geq t_0,t-|x|=2\delta\}$, there is an integral curve of
$L$ across this point, and the corresponding initial point is denoted by $P_0(t_0,x_0)$ on $\Sigma_{t_0}$ with $|x_0|=1$.
From \eqref{local3-3}, $|\underline L^pL^q\O^c\phi(P_0)|\lesssim\delta^{2-\varepsilon_0}$ holds.
Next we derive an improved estimate on $\underline L^pL^q\O^c\phi$ at the point $P$.
It is emphasized here that the condition of $0<\ve_0<\ve_k^*$ plays a key role
in the following proposition.

\begin{proposition}\label{11.1}
	For any $(t,x)\in\tilde C_{2\delta}$, when $N$ is large enough, it holds that
	\begin{equation}\label{LLOp}
	|\underline L^pL^q\O^c\phi(t,x)|\lesssim\delta^{2-\varepsilon_0}t^{-1/2-q}.
	\end{equation}
\end{proposition}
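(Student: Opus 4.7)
The argument proceeds by induction on the number $p$ of $\underline L$-derivatives, integrating along the integral curves of $L$ emanating from $\Sigma_{t_0}\cap \tilde C_{2\delta}$.

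For the base case $p=0$, apply the pointwise estimate \eqref{gv} with $\Gamma^m=((s+r)L)^q\Omega^c$. Since no $\underline L$ appears, the loss parameter $l$ vanishes and we obtain $|((s+r)L)^q\Omega^c\phi|\lesssim \delta^{2-\varepsilon_0}s^{-1/2}$. On $\tilde C_{2\delta}$ we have $s=t$ and $r=t-2\delta$, so $s+r\sim 2t$, and expanding the product in $((s+r)L)^q$ together with absorbing the lower-order commutator remainders (each of which has strictly fewer $L$'s and has already been estimated in \eqref{gv}) yields \eqref{LLOp}.

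For the inductive step $p\geq 1$, fix $P=(t,x)\in \tilde C_{2\delta}$ and let $\Lambda$ be the integral curve of $L$ through $P$, meeting $\Sigma_{t_0}$ at a point $P_0$ with $|x(P_0)|=1+O(\delta^{(k+1)(1-\varepsilon_0)})$ by Lemma \ref{orL}. Then $P_0$ lies in the range of \eqref{local3-3}, supplying $|\underline L^p L^q\Omega^c\phi(P_0)|\lesssim \delta^{2-\varepsilon_0}$, and
\[
\underline L^p L^q\Omega^c\phi(P)=\underline L^p L^q\Omega^c\phi(P_0)+\int_\Lambda L(\underline L^p L^q\Omega^c\phi)\,d\tau.
\]
To bound the integrand, rewrite $L\underline L\phi$ from the wave equation \eqref{quasi} in the standard frame via $L\underline L=\partial_t^2-\partial_r^2$ and the 2D identity $\triangle=\partial_r^2+r^{-1}\partial_r+r^{-2}\Omega^2$, which yields
\[
L\underline L\phi=\tfrac{1}{2r}(L-\underline L)\phi+\tfrac{1}{r^2}\Omega^2\phi+\mathcal N(\varphi,\partial\varphi),
\]
with $\mathcal N$ collecting the perturbative pieces from $g^{\alpha\beta}-m^{\alpha\beta}$. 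The higher order null condition \eqref{null}, applied as in \eqref{gLLL2}, forbids any pure $(\underline L\phi)^{k+1}$ term in $\mathcal N$; every surviving piece either carries a factor of $L\phi$ or $\slashed d\phi$ (yielding an extra $t^{-1/2}$ or $t^{-1}$ decay) or at least $k$ powers of $\varphi$ (yielding extra smallness $\delta^{k(1-\varepsilon_0)}t^{-k/2}$). Applying $\underline L^{p-1}L^q\Omega^c$ to this identity and commuting the outer $L$ past $\underline L^{p-1}$ reduces every term either to one with strictly fewer $\underline L$-derivatives (handled by the inductive hypothesis) or to a product already controlled by \eqref{gv} together with Lemmas \ref{orL}--\ref{n} and \eqref{sL}. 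The resulting pointwise bound on the integrand is $\lesssim \delta^{2-\varepsilon_0}\tau^{-3/2-q}$, whose $\tau$-integral yields \eqref{LLOp} at $P$.

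The principal difficulty is bookkeeping the smallness: a direct use of \eqref{gv} loses $\delta^{-1}$ per $\underline L$-derivative, whereas the target estimate permits no such loss. The identity above trades each extra $\underline L$ for either an $r^{-1}$ decay (from the principal part) or an additional $\delta^{k(1-\varepsilon_0)}$ through the null structure in $\mathcal N$; the assumption $\varepsilon_0<\varepsilon_k^*=k/(k+1)$ provides precisely the positive margin $k(1-\varepsilon_0)-\varepsilon_0>0$ needed to absorb the naive $\delta^{-\varepsilon_0}$ loss in a single $\underline L\phi$ factor and thereby close the induction on $p$.
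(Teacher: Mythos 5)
Your argument contains two genuine gaps, one structural and one quantitative, and neither can be repaired within the framework you have laid out.

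\textbf{The missing bootstrap.}  You claim that the integrand is $\lesssim \delta^{2-\varepsilon_0}\tau^{-3/2-q}$ after a single application of \eqref{gv}, Lemmas~\ref{orL}--\ref{n}, and the null structure.  Track the smallness in the worst nonlinear contribution: writing the null form as in \eqref{YHC-2}, a representative term is $f_1(\omega)\,L\phi\,(\partial\phi)^{k-1}\,\partial^2\phi$.  From \eqref{gv}, $|L\phi|\lesssim\delta^{2-\varepsilon_0}t^{-3/2}$, $|\partial\phi|\lesssim\delta^{1-\varepsilon_0}t^{-1/2}$, and the worst second derivative obeys $|\underline L\partial\phi|\lesssim\delta^{-\varepsilon_0}t^{-1/2}$, so the product carries smallness $\delta^{(k+1)(1-\varepsilon_0)-\varepsilon_0}$.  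One has $(k+1)(1-\varepsilon_0)-\varepsilon_0\ge 2-\varepsilon_0$ if and only if $\varepsilon_0\le\frac{k-1}{k+1}$, which is a \emph{strict} sub-interval of $(0,\varepsilon_k^*)=(0,\frac{k}{k+1})$.  The margin you quote, $k(1-\varepsilon_0)-\varepsilon_0>0$, is not the relevant quantity; it is the threshold $\varepsilon_0<\frac{k}{k+1}$ itself, not the condition under which a single pass closes.  For $\varepsilon_0\in(\frac{k-1}{k+1},\frac{k}{k+1})$ your one-step argument produces only $\delta^{a_1(\varepsilon_0)}t^{-1/2}$ with $a_1(\varepsilon_0)=(k+1)(1-\varepsilon_0)-\varepsilon_0<2-\varepsilon_0$.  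The paper resolves this by an iteration: the improved boundary estimate $|\bar\partial^m\Omega^c\underline L\phi|\lesssim\delta^{a_1-m}t^{-1/2}$ on $\tilde C_{2\delta}$ is fed back into the equation (which is possible because the $L$-integral curves through $\tilde C_{2\delta}$ lie in $\tilde C_{2\delta}$), producing a geometric recursion $a_{n+1}(\varepsilon)=(k+2)a_n(\varepsilon)-1$ whose orbit diverges for $\varepsilon_0<\frac k{k+1}$; the number of iterations needed grows without bound as $\varepsilon_0\uparrow\varepsilon_k^*$, which is precisely why the Proposition requires ``$N$ large enough.''  Your inductive step on $p$ alone does not replicate this: the $p$-induction only trades $\underline L$-derivatives, not smallness.

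\textbf{The missing weight.}  In your transport identity $\underline L^pL^q\Omega^c\phi(P)=\underline L^pL^q\Omega^c\phi(P_0)+\int_\Lambda L(\underline L^pL^q\Omega^c\phi)$ the integrand contains the principal term $-\frac1{2r}\underline L^pL^q\Omega^c\phi$ (all of $\underline L^{p-1}L^q\Omega^c$ falling on $\underline L\phi$).  This term has the same number of $\underline L$'s as the quantity you are estimating, so it is not covered by your inductive hypothesis; and \eqref{gv} bounds it only by $\delta^{2-p-\varepsilon_0}t^{-3/2-q}$, losing $\delta^{-p}$.  Gronwall does not rescue you either, since $\exp\bigl(\int_{t_0}^t\frac{d\tau}{2r}\bigr)\sim t^{1/2}$ destroys the claimed decay.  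The paper's fix is to integrate the weighted quantity $r^{1/2}\underline L\phi$: from $L(r^{1/2}f)=r^{1/2}Lf+\frac12 r^{-1/2}f$ the extra $\frac12 r^{-1/2}\underline L\phi$ exactly cancels the $-\frac12 r^{-1/2}\underline L\phi$ coming from the wave equation, leaving only source terms.  Without this cancellation the induction on $p$ does not start, because even the base step $p=1$ fails.  (As a side note, the intersection point $P_0$ of $\tilde C_{2\delta}$ with $\Sigma_{t_0}$ has $|x_0|=1$ exactly, since $\tilde C_{2\delta}$ is the Euclidean cone $t-r=2\delta$ and $t_0=1+2\delta$; there is no $O(\delta^{(k+1)(1-\varepsilon_0)})$ correction.)
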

\begin{proof}
According to \eqref{me} in Appendix B and \eqref{gv}, one has
\begin{equation}\label{Y-28}
| L(r^{1/2}\bar\p^m\O^c{\underline L}\phi)|\lesssim\delta^{2-\varepsilon_0-m}t^{-2}+\delta^{(k+1)(1-\varepsilon_0)-\varepsilon_0-m} t^{-3/2},
\end{equation}
where $\bar\p\in\{\p_t,\p_r\}$.
Integrating \eqref{Y-28} along integral curves of $L$ and using the estimate \eqref{local3-3} on initial data at $t_0$,
one then can get
that on $\tilde C_{2\delta}$,
\begin{equation}\label{poL}
|\bar\p^m\O^c{\underline L}\phi|\lesssim\delta^{2-\varepsilon_0-m}t^{-1/2}+\delta^{(k+1)(1-\varepsilon_0)-\varepsilon_0-m}t^{-1/2},\ m+c\leq 2N-10.
\end{equation}
Next we construct a sequence of functions inductively as follows:
\begin{align*}
a_0(\epsilon)&=2-\epsilon,\\
a_1(\epsilon)&=(k+1)(1-\epsilon)-\epsilon,\\
a_{n+1}(\epsilon)&=(k+2)a_{n}(\epsilon)-1,\qquad n\geq 1.
\end{align*}
Then for $n\geq 1$,
\begin{equation*}
a_{n+1}(\epsilon)=(k+2)^na_1(\epsilon)-\f{(k+2)^n-1}{k+1}.
\end{equation*}
Let $\epsilon(n)$ and $\bar\epsilon(n)$ be the solutions to $a_n(\epsilon)=a_0(\epsilon)$ and $ka_n(\epsilon)+1-\epsilon=a_{n+1}(\epsilon)$ respectively, so that
\begin{align*}
\epsilon(n)&=\f k{k+1}-\f1{(k+2)^n-1},\\
\bar\epsilon(n)&=\f k{k+1}-\f k{(k+1)\big(2(k+2)^n-1\big)}.
\end{align*}
Thus,
\begin{align*}
&\lim_{n\rightarrow\infty}\epsilon(n)=\lim_{n\rightarrow\infty}\bar\epsilon(n)=\f{k}{k+1},\\
&\epsilon(n)<\bar\epsilon(n)<\epsilon(n+1).
\end{align*}
For any fixed number $\ve_0\in (0,\ve_k^*)$, there exists an $n\in\mathbb N$ such that $\epsilon(n-1)<\ve_0\leq\epsilon(n)$
with the convention that $\epsilon(0)=0$.
\begin{enumerate}
	\item When $n=1$, i.e., $0<\ve_0\leq\epsilon(1)$, $(k+1)(1-\ve_0)-\ve_0\geq 2-\ve_0$ holds.
It follows from \eqref{poL} that on $\tilde C_{2\delta}$,
	\begin{equation*}
	|\bar\p^m\O^c{\underline L}\phi|\lesssim\delta^{2-\varepsilon_0-m}t^{-1/2},\ m+c\leq 2N-10.
	\end{equation*}
	\item When $n=2$ and $\ve_0\leq\bar\epsilon(1)$, one has $(k+1)(1-\ve_0)-\ve_0< 2-\ve_0$ and $2-\ve_0<1-\ve_0+ka_1(\ve_0)\leq a_2(\ve_0)$.
Then \eqref{poL} implies that on $\tilde C_{2\delta}$,
	\begin{equation}\label{pOuL}
	|\bar\p^m\O^c{\underline L}\phi|\lesssim\dl^{a_1(\ve_0)-m}t^{-1/2},\ m+c\leq 2N-10.
	\end{equation}
	Using \eqref{pOuL} and \eqref{me} in Appendix B again, then one has that for $m+c\leq 2N-11$,
	\begin{equation*}
	|L\big(r^{1/2}\bar\p^m\O^c{\underline L}\phi\big)|\lesssim\dl^{2-\ve_0-m}t^{-2}+\dl^{1-\ve_0+ka_1(\ve_0)-m}t^{-2}+\dl^{a_2(\ve_0)-m}t^{-3/2}\lesssim\dl^{2-\ve_0-m}t^{-3/2},
	\end{equation*}
	which implies that on $\tilde C_{2\delta}$,
	\begin{equation*}
	|\bar\p^m\O^c{\underline L}\phi|\lesssim\delta^{2-\varepsilon_0-m}t^{-1/2},\ m+c\leq 2N-11.
	\end{equation*}
	When $n=2$ and $\bar\epsilon(1)<\ve_0\leq\epsilon(2)$, one has $(k+1)(1-\ve_0)-\ve_0< 2-\ve_0$ and $2-\ve_0\leq a_2(\ve_0)<1-\ve_0+ka_1(\ve_0)$. Exactly similar to the case for $n=2$ and $\ve_0\leq\bar\epsilon(1)$, one has
that on $\tilde C_{2\delta}$,
		\begin{equation*}
	|\bar\p^m\O^c{\underline L}\phi|\lesssim\delta^{2-\varepsilon_0-m}t^{-1/2},\ m+c\leq 2N-11.
	\end{equation*}
	\item For general $n\geq 3$, we now focus on the case of $j\in\mathbb N\cap [1,n-1]$ and $\ve_0>\epsilon(j)$.
First, it follows from $\ve_0>\epsilon(1)$ and \eqref{pOuL} that on $\tilde C_{2\delta}$,
		\begin{equation*}
	|\bar\p^m\O^c{\underline L}\phi|\lesssim\dl^{a_1(\ve_0)-m}t^{-1/2},\ m+c\leq 2N-10.
	\end{equation*}	
Then, an induction argument on $j$ can be used to show that on $\tilde C_{2\delta}$,
\begin{equation}\label{aj}
|\bar\p^m\O^c{\underline L}\phi|\lesssim\dl^{a_j(\ve_0)-m}t^{-1/2}
\end{equation}
for $m+c\leq 2N-9-j$ ($1\leq j\leq n-1$). In fact, if \eqref{aj} holds true for all $j$ with $1\leq j\leq j_0\leq n-2$, then when $m+c\leq 2N-10-j_0$, it holds that
	$$|L(r^{1/2}\bar\p^m\O^c\underline L\phi)|\lesssim\dl^{2-\ve_0-m}t^{-2}+\dl^{1-\ve_0-m+ka_{j_0}(\ve_0)}t^{-2}+\dl^{a_{j_0+1}(\ve_0)-m}t^{-3/2}.$$
	It follows from $\ve_0>\epsilon(j_0+1)>\bar\epsilon(j_0)$ that $a_{j_0+1}(\ve_0)<2-\ve_0$ and $a_{j_0+1}(\ve_0)<1-\ve_0+ka_{j_0}(\ve_0)$
hold, and hence $|L(r^{1/2}\bar\p^m\O^c\underline L\phi)|\lesssim\dl^{a_{j_0+1}(\ve_0)-m}t^{-3/2}$, which implies \eqref{aj}.
Thus, on $\tilde C_{2\delta}$,
	\begin{equation}\label{bpO}
	|\bar\p^m\O^c{\underline L}\phi|\lesssim\dl^{a_{n-1}(\ve_0)-m}t^{-1/2},\ m+c\leq 2N-8-n.
	\end{equation}
	By \eqref{bpO} and \eqref{me} in Appendix B, we obtain that for $m+c\leq 2N-9-n$,
	\begin{equation}\label{2N-9-n}
	|L(r^{1/2}\bar\p^m\O^c\underline L\phi)|\lesssim\dl^{2-\ve_0-m}t^{-2}+\dl^{1-\ve_0+ka_{n-1}(\ve_0)-m}t^{-2}+\dl^{a_{n}(\ve_0)-m}t^{-3/2}.
	\end{equation}
	Since $\epsilon(n-1)<\ve_0\leq\epsilon(n)$, then $2-\ve_0\leq a_n(\ve_0)$ and $2-\ve_0<ka_{n-1}(\ve_0)+1-\ve_0$
hold. Therefore, integrating \eqref{2N-9-n} along integral curves of $L$ yields that on $\tilde C_{2\delta}$,
	\begin{equation*}
	|\bar\p^m\O^c{\underline L}\phi|\lesssim\dl^{2-\ve_0-m}t^{-1/2},\ m+c\leq 2N-9-n.
	\end{equation*}
\end{enumerate}
Consequently, for any $n\in\mathbb N$ and $m+c\leq 2N-9-n$, it holds that
\begin{align}
&|\bar\p^m\O^c{\underline L}\phi|_{\tilde C_{2\dl}}\lesssim\dl^{2-\ve_0-m}t^{-1/2},\label{bp}\\
&|L\bar\p^m\O^c{\underline L}\phi|_{\tilde C_{2\dl}}\lesssim\dl^{2-\ve_0-m}t^{-3/2}.\label{Lbp}
\end{align}
With an induction argument on the power of $\underline L$, one can show
that for $c+m+2p\leq 2N-7-n$ and on $\tilde C_{2\delta}$,
\begin{align}
&|\bar\p^m\O^c\underline L^p\phi|\lesssim\delta^{2-\varepsilon_0-m}t^{-1/2},\label{pOL}\\
&|L\bar\p^m\O^c\underline L^p\phi|\lesssim\delta^{2-\varepsilon_0-m}t^{-3/2}.\label{LpOL}
\end{align}

Based on \eqref{LpOL}, we now assume that for some positive integer $q_0$, when $1\leq q\leq q_0$ and $q-1+c+m+2p\leq 2N-7-n$,
there holds that on $\tilde C_{2\delta}$,
\begin{equation}\label{ass}
|L^q\bar\p^m\O^c\underline L^p\phi|\lesssim\delta^{2-\varepsilon_0-m}t^{-1/2-q}.
\end{equation}
For $q_0+c+m+2p\leq 2N-7-n$ and $p\geq 1$, by \eqref{gv}, \eqref{pOL} and the assumption \eqref{ass},
it follows from \eqref{me} in Appendix B  that on $\tilde C_{2\delta}$,
\begin{equation*}
\begin{split}
&\delta^m|L^{q_0+1}\bar\p^m\O^c\underline L^{p}\phi|=\delta^m| L^{q_0}\bar\p^m\O^c\underline L^{p-1}(L\underline L\phi)|\\
\lesssim&\delta^{2-\varepsilon_0}t^{-3/2-q_0}
+\sum_{c_1+m_1\leq c+m+1}\delta^{m_1}|L^{q_0+1}\bar\p^{m_1}\O^{c_1}\underline L^{\leq p-1}\phi|t^{-1}.
\end{split}
\end{equation*}
By reducing the power of $\underline L$ gradually, one can eventually obtain that on $\tilde C_{2\delta}$,
\begin{equation*}
\begin{split}
\delta^m|L^{q_0+1}\bar\p^m\O^c\underline L^{p}\phi|\lesssim&\delta^{2-\varepsilon_0}t^{-3/2-q_0}+\sum_{c_1+m_1\leq c+m+p}\delta^{m_1}|L^{q_0+1}\bar\p^{m_1}\O^{c_1}\phi|t^{-1}\\
\lesssim&\delta^{2-\varepsilon_0}t^{-3/2-q_0}.
\end{split}
\end{equation*}
This shows that \eqref{ass} holds for any positive $q_0$ by induction.

Then \eqref{LLOp} follows by setting $m=0$ in \eqref{pOL} and \eqref{ass} together with the second estimate in \eqref{gv}.
\end{proof}

\section{Global estimates in $B_{2\dl}$ and the proof of Theorem \ref{main}}\label{inside}

In this section, we will derive the global estimates on the solution $\phi$ to the equation \eqref{quasi} in $B_{2\dl}$.
Note that although the general procedure is exactly same as the last section in \cite{Ding3}, yet
due to the requirements of the precise orders of smallness of $\dl$ and the time decay rates  together
with the higher order null condition \eqref{null}, some details are still given here.

Set
$$
D_{t}:=\{(\bar t,x): \bar t-|x|\geq 2\delta, t_0\leq \bar t\leq t\},
$$
see Figure \ref{pic:p2} below. As explained in \cite{Ding3}, different from \cite{MPY} and \cite{Wang}, the solution
$\phi$ to \eqref{quasi} in $D_{t}$ may be large here due to its initial data on time $t_0$ (see Theorem \ref{Th2.1}).
By the way, when $\delta>0$ is small, we know that the $L^\infty$ norms of $\phi$ and its derivatives are small on the
boundary $\tilde C_{2\delta}$ of $D_{t}$ (see \eqref{LLOp}).

		\begin{figure}[htbp]
	\centering
	\includegraphics[scale=0.3]{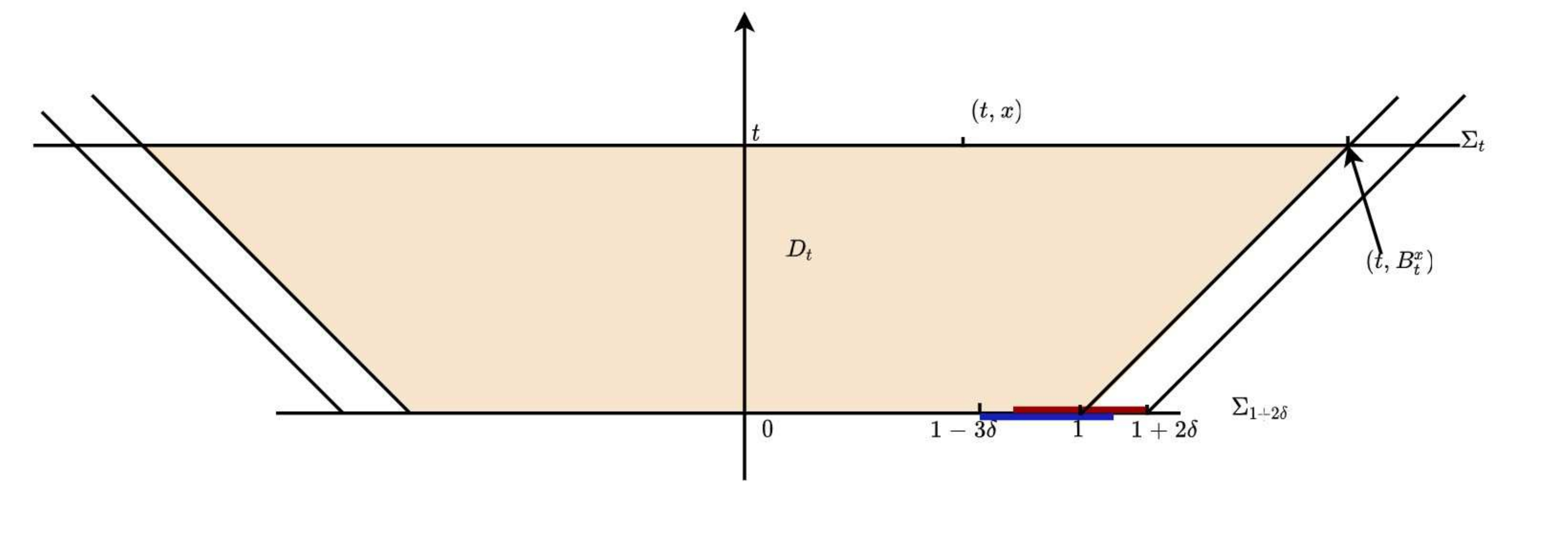}
	\caption{The domain $D_{t}$}\label{pic:p2}
\end{figure}

As in \cite{Ding3}, we use the following modified Klainerman-Sobolev Lemma (see \cite[Lemma ll.1]{Ding3})
to solve the Goursat problem here.

\begin{lemma}\label{KS}
	For any function $f(t,x)\in C^\infty(\mathbb R^{1+2})$, $t\geq 1$, $(t,x)\in D_{T}=
	\{(t,x): t-|x|\geq 2\delta, t_0\leq t\leq T\}$, the following inequalities hold:
	\begin{equation}\label{leq}
	\mid f(t,x)\mid\lesssim\sum_{i=0}^2 t^{-1}\delta^{(i-1)\mathtt{s}}\|\bar{\Gamma}^{i}f(t,\cdot)\|_{L^2(r\leq t/2)},\ |x|\leq\f14t,
	\end{equation}
	\begin{equation}\label{geq}
	\mid f(t,x)\mid\lesssim \mid f(t,B_t^x)\mid+\sum_{a\leq 1,|\beta|\leq 1}t^{-1/2}
	\|\Omega^a\p^\beta f(t,\cdot)\|_{L^2(t/4\leq r\leq t-2\delta)},\ |x|\geq\f14t,
	\end{equation}
	where $\bar{\Gamma}\in\{S,H_i,\Omega\}$, $(t,B_t^x)$ is the intersection point of
	the boundary $\tilde C_{2\delta}$ and the ray crossing $(t,x)$ which emanates from $(t,0)$,
	and $\mathtt s$ is the any nonnegative constant in \eqref{leq}.
\end{lemma}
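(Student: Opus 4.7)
The plan is to split the argument into the two regions $|x|\le t/4$ and $|x|\ge t/4$, and then to apply standard Sobolev embedding in a suitably rescaled form, taking advantage of the fact that the conformal vector fields $\bar\Gamma=\{S,H_i,\Omega\}$ differ from the coordinate derivatives by factors of order $t$ in the interior, and that circles of radius $r\sim t$ have length $\sim t$.

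For the interior bound \eqref{leq}, I would first rescale $x=ty$ so that the ball $\{r\le t/2\}$ is mapped onto a fixed ball $\{|y|\le 1/2\}$. Setting $\tilde f(y):=f(t,ty)$, the Jacobian in $2$D contributes $\|\tilde f\|_{L^2_y}= t^{-1}\|f\|_{L^2_x}$. Then I would apply the standard $2$D Sobolev embedding $H^2(B_{1/2})\hookrightarrow L^\infty(B_{1/2})$ to $\tilde f$; the corresponding derivatives $\p_y^i\tilde f$ carry factors $t^i$ when written in $x$-variables, but since on $\{r\le t/2\}$ one has $|H_i|+|S|+|\Omega|\sim t|\p|$, each $t^i\p^i$ converts into $\bar\Gamma^i$ with a bounded multiplier. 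Collecting constants gives $|f(t,x)|\lesssim t^{-1}\sum_{i\le 2}\|\bar\Gamma^i f\|_{L^2(r\le t/2)}$, which is the case $\mathtt s=0$ of \eqref{leq}; the $\delta^{(i-1)\mathtt s}$ weighting for arbitrary $\mathtt s\ge 0$ is then incorporated by noting that the same basic rescaling argument, applied after partitioning the unit ball on the $\delta$-scale in a way adapted to the short-pulse geometry, produces the $\delta$-weighted interpolation family (equivalently, the three individual bounds with $i=0,1,2$ follow directly and then one forms the convenient sum).

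For the exterior bound \eqref{geq}, I would integrate radially from the boundary: for $|x|\ge t/4$ write $\omega=x/|x|$ and express
\begin{equation*}
f(t,x)=f(t,B_t^x)+\int_{|B_t^x|}^{|x|}\p_r f(t,r\omega)\,dr,
\end{equation*}
then take absolute values and bound the radial integral by Cauchy--Schwarz. To pass from a pointwise-in-$\omega$ bound to the uniform one claimed, I would apply a $1$D Sobolev embedding on the circle $\{r=\rho\}$ for each fixed $\rho\in[t/4,t-2\delta]$ using the rotation $\Omega$; since that circle has length $\sim t$, a rescaling to a circle of unit length costs a factor $t^{-1/2}$ in the resulting $L^2\to L^\infty$ inequality. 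Combining with the radial Cauchy--Schwarz and the built-in factor $\Omega^a$ with $a\le 1$ and $\p^\beta$ with $|\beta|\le 1$ gives exactly the stated inequality.

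The main obstacle I expect is not the interior or the exterior estimate in isolation, which are fairly standard consequences of rescaling plus $2$D Sobolev embedding, but the bookkeeping of the $\delta^{(i-1)\mathtt s}$ weights in \eqref{leq}: one must verify that the constant implicit in the Sobolev embedding is independent of $\mathtt s$ and $\delta$, and that the weighting is produced (rather than merely absorbed into a worse constant) by the short-pulse rescaling. A further technical point is confirming that $B_t^x$ is well defined and lies on $\tilde C_{2\delta}$ for every $(t,x)\in D_T$ with $|x|\ge t/4$, so that the radial integration in the exterior argument never leaves the region where $f$ is controlled.
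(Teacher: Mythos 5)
Your overall route---interior bound by rescaling $x=ty$ to a unit ball plus $2$D Sobolev embedding, exterior bound by radial fundamental theorem of calculus from $\tilde C_{2\delta}$ combined with Cauchy--Schwarz and a circle Sobolev estimate---is the standard Klainerman--Sobolev argument, and it coincides with how the cited source \cite[Lemma~11.1]{Ding3} establishes the lemma (the present paper merely quotes it). Two points need tightening in the interior part. First, when you assert $|H_i|+|S|+|\Omega|\sim t|\p|$ on $\{r\le t/2\}$, what is actually used is that the $3\times3$ matrix relating $(S,H_1,H_2)$ to $(\p_t,\p_1,\p_2)$ has determinant $t(t^2-r^2)\gtrsim t^3$ there, so its inverse has entries $O(t^{-1})$; note $\Omega$ is not needed for this linear-algebraic step (it degenerates at $r=0$) and only enters through commutator and lower-order terms when converting $t^2\p^2 f$ into $\bar\Gamma^{\le 2}f$. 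Second, and more substantively, the parenthetical "the three individual bounds with $i=0,1,2$ follow directly and then one forms the convenient sum" is false: the $i=0$ assertion $|f(t,x)|\lesssim t^{-1}\delta^{-\mathtt{s}}\|f(t,\cdot)\|_{L^2}$ cannot hold on its own for any $\mathtt{s}\ge 0$, since $L^2\not\hookrightarrow L^\infty$ in two dimensions. The correct mechanism is the first alternative you mention, but applied at scale $\delta^{\mathtt{s}}$ rather than $\delta$: after rescaling to $B_{1/2}$, cover $B_{1/4}$ by balls of radius $\lambda=\delta^{\mathtt{s}}$ and apply the unit-scale embedding $H^2\hookrightarrow L^\infty$ on each such ball after a secondary rescaling; this produces exactly the factor $\lambda^{i-1}=\delta^{(i-1)\mathtt{s}}$ in front of the $i$-th order term. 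Equivalently one may invoke the Agmon-type bound $\|g\|_{L^\infty}^2\lesssim\|g\|_{L^2}\|g\|_{H^2}$ on the rescaled ball and then Young's inequality $ab\le\tfrac12(\delta^{-\mathtt{s}}a^2+\delta^{\mathtt{s}}b^2)$. With this replacement your sketch is complete; the exterior half is correct as written once you note that $|B_t^x|=t-2\delta\ge|x|$ for $(t,x)\in D_T$, so the radial integration runs outward from $x$ to the cone and never leaves the annulus on which the $L^2$ norm is taken.
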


The following inequality will also be needed (see \cite[Lemma 11.2]{Ding3}).

\begin{lemma}\label{L2}
	For $f(t,x)\in C^\infty(\mathbb R^{1+2})$ and $t\geq 1$, it holds that for $1\leq\bar t\leq t-2\delta$,
	\begin{equation}\label{L2L2}
	\|\f{f(t,\cdot)}{1+t-|\cdot|}\|_{L^2(\bar t\leq |x|\leq t-2\delta)}\lesssim t^{1/2}
	\|f(t,B_t^{\cdot})\|_{L^\infty(\bar t\leq |x|\leq t-2\delta)}+\|\p f(t,\cdot)\|_{L^2(\bar t\leq |x|\leq t-2\delta)}.
	\end{equation}
\end{lemma}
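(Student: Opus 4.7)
The plan is to reduce \eqref{L2L2} to a one-dimensional radial estimate for each fixed angular direction, carry out a sharp integration by parts tailored to the weight $(1+t-|x|)^{-2}$, and then absorb the resulting mixed term by a weighted Cauchy--Schwarz together with Young's inequality. In polar coordinates $(r,\omega)\in[\bar t,t-2\delta]\times\mathbb S^1$, the squared left hand side of \eqref{L2L2} reads
\begin{equation*}
\int_{\mathbb S^1}\int_{\bar t}^{t-2\delta}\f{f(t,r\omega)^2}{(1+t-r)^2}\,r\,dr\,d\omega,
\end{equation*}
and the decisive observation is the radial identity $\f{1}{(1+t-r)^2}=\f{d}{dr}\f{1}{1+t-r}$, which invites an integration by parts in $r$. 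For each fixed $\omega$, the boundary contribution at $r=t-2\delta$ equals $\f{(t-2\delta)\,f(t,(t-2\delta)\omega)^2}{1+2\delta}$, and since $f(t,(t-2\delta)\omega)$ is exactly $f$ evaluated at the boundary point $B_t^{r\omega}$, it is bounded above by $Ct\,\|f(t,B_t^\cdot)\|_{L^\infty}^2$; after angular integration and a square root this produces the desired $t^{1/2}\|f(t,B_t^\cdot)\|_{L^\infty}$ term. The boundary term at $r=\bar t$ carries the favorable sign and will simply be discarded.

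The interior contribution from the integration by parts equals $-\int_{\bar t}^{t-2\delta}\f{2f(\p_rf)r+f^2}{1+t-r}\,dr$. The pure $f^2$ piece again has favorable sign, so the main effort goes into the mixed term, which by Cauchy--Schwarz is controlled as
\begin{equation*}
\Big|\int_{\bar t}^{t-2\delta}\f{2f(\p_rf)r}{1+t-r}\,dr\Big|\le 2\Big(\int_{\bar t}^{t-2\delta}\f{f^2 r}{(1+t-r)^2}\,dr\Big)^{1/2}\Big(\int_{\bar t}^{t-2\delta}(\p_rf)^2\,r\,dr\Big)^{1/2}.
\end{equation*}
Denoting the two integrals on the right by $A(\omega)$ and $B(\omega)$, the outcome is the pointwise (in $\omega$) estimate $A(\omega)\le Ct\,\|f(t,B_t^\cdot)\|_{L^\infty}^2+2A(\omega)^{1/2}B(\omega)^{1/2}$. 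Applying the elementary bound $2A^{1/2}B^{1/2}\le \tfrac{1}{2}A+2B$ allows one to absorb $\tfrac{1}{2}A(\omega)$ into the left hand side, leaving $A(\omega)\le 2Ct\,\|f(t,B_t^\cdot)\|_{L^\infty}^2+4B(\omega)$.

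Integrating this inequality over $\omega\in\mathbb S^1$, taking the square root, and using $\|\p_rf\|_{L^2(\bar t\le|x|\le t-2\delta)}\le\|\p f\|_{L^2(\bar t\le|x|\le t-2\delta)}$ then yields \eqref{L2L2}. I do not anticipate any real obstacle here: the whole argument is a weighted one-dimensional Hardy-type identity, and the only mildly delicate step is the absorption constant produced by Young's inequality. Because $1+t-r\ge 1+2\delta$ uniformly on the integration interval, no loss in the small parameter $\delta$ is incurred, which is precisely the feature needed when this lemma is applied inside the global Goursat energy estimates in $B_{2\delta}$.
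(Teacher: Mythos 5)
Your argument is correct and is the natural Hardy-type proof of this estimate; the paper itself does not prove Lemma~\ref{L2} but simply cites its earlier work (\cite[Lemma 11.2]{Ding3}), where essentially this same radial integration-by-parts argument appears. One small remark worth recording, since it is the point on which such absorption arguments sometimes go wrong: the step ``$A(\omega)\le Ct\|f(t,B_t^\cdot)\|_{L^\infty}^2+\f12A(\omega)+2B(\omega)\Rightarrow \f12A(\omega)\le\dots$'' is legitimate here only because $A(\omega)<\infty$ a priori, which holds since $f$ is smooth, the radial interval is compact, and $1+t-r\ge 1+2\delta>0$ keeps the weight bounded; you implicitly used this but did not say it. Also, to be precise, $f(t,B_t^{r\omega})=f(t,(t-2\delta)\omega)$ is independent of $r$, so the $L^\infty$ norm you invoke is really a sup over the circle $|x|=t-2\delta$, which is exactly what \eqref{L2L2} means by $\|f(t,B_t^\cdot)\|_{L^\infty(\bar t\le|x|\le t-2\delta)}$; your bound $\f{t-2\delta}{1+2\delta}\le t$ is the right way to produce the factor $t^{1/2}$ after taking square roots, and discarding the two favorable-sign contributions (the boundary term at $r=\bar t$ and the $-\int f^2/(1+t-r)\,dr$ piece) is exactly the mechanism that makes the constant independent of $\bar t$ and $\delta$.
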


We intend to use the energy method to prove global estimates of the solution $\phi$ to \eqref{quasi} in $B_{2\dl}$.
Set
\begin{equation}\label{Linfty}
E_{p,l}(t)=\sum_{\tilde\Gamma\in\{\p,S,H_1,H_2\}}\Big\{\|\p\tilde\Gamma^p\Omega^l\phi(t,\cdot)\|_{L^2(\Sigma_t\cap D_{t})}^2+\sum_{i=1}^2\iint_{D_t}\f{|\tilde Z_i\tilde\Gamma^p\Omega^l\phi|^2(t',x)}{1+(t'-|x|)^{3/2}}dxdt'\Big\},
\end{equation}
where $\tilde Z_i=\omega_i\p_t+\p_i, i=1,2$ and $p+l\leq 4$. As in \cite{Ding3},
the introduction of the term $\sum_{i=1}^2\iint_{D_t}\f{|\tilde Z_i\tilde\Gamma^p\Omega^l\phi|^2(t',x)}{1+(t'-|x|)^{3/2}}dxdt'$
in \eqref{Linfty} is motivated by the works on global existence of small data solutions to the 3D nonlinear wave equations
satisfying the weak null condition in \cite{Lin2}.

Thanks to
the estimate \eqref{local3-2} on $\Sigma_{t_0}$, one makes the following bootstrap assumptions:

For $t\ge t_0$, there exists a uniform constant $M_0$ such that
\begin{equation}\label{EA}
\begin{split}
&E_{0,l}(t)\leq {M_0}^2\delta^{4-2\varepsilon_0},\qquad E_{1,l}(t)\leq {M_0}^2\delta^{\f {17}6-2\varepsilon_0},\qquad E_{2,l}(t)\leq {M_0}^2\delta^{1-2\varepsilon_0},\\
&E_{3,l}(t)\leq {M_0}^2\delta^{-1-2\varepsilon_0},\qquad E_{4,l}(t)\leq {M_0}^2\delta^{-3-2\varepsilon_0}.
\end{split}
\end{equation}

According to Lemma \ref{KS} and the assumptions in \eqref{EA}, similarly to the last section of \cite{Ding3}, one can establish
the following $L^\infty$ estimates.

\begin{proposition}\label{P4.1}
	Under the assumptions \eqref{EA}, when $\delta>0$ is small, it holds that
	\begin{equation}\label{pphi}
	\begin{split}
	&|\p\Omega^{\leq 2}\phi|\lesssim M_0\delta^{5/4-\varepsilon_0}t^{-1/2},\qquad\quad |\p\tilde\Gamma\Omega^{\leq 1}\phi|\lesssim M_0\delta^{11/24-\varepsilon_0}t^{-1/2},\\
	&|\p\tilde\Gamma^2\phi|\lesssim M_0\delta^{-1/2-\varepsilon_0}t^{-1/2}
	\end{split}
	\end{equation}
	and
		\begin{equation}\label{tZ}
	\begin{split}
	&|\t Z\Omega^{\leq 2}\phi|\lesssim M_0\delta^{5/4-\varepsilon_0}t^{-3/2}(1+t-r),\qquad\quad |\t Z\tilde\Gamma\Omega^{\leq 1}\phi|\lesssim M_0\delta^{11/24-\varepsilon_0}t^{-3/2}(1+t-r),\\
	&|\t Z\tilde\Gamma^2\phi|\lesssim M_0\delta^{-1/2-\varepsilon_0}t^{-3/2}(1+t-r),
	\end{split}
	\end{equation}
	where $\t Z\in\{\t Z_1,\t Z_2\}$.
\end{proposition}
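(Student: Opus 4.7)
The strategy is to combine the two branches of the modified Klainerman--Sobolev Lemma~\ref{KS} with the boundary data furnished by Proposition~\ref{11.1} and the bootstrap hierarchy \eqref{EA}, choosing the interpolation exponent $\mathtt{s}$ in \eqref{leq} to balance the opposite growth rates in $\delta$ of successive energies. I would first decompose $\{(t,x)\in D_t\}$ into the inner region $|x|\le t/4$ and the outer region $t/4\le |x|\le t-2\delta$, and establish the pointwise bounds separately in each piece.

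\emph{Inner region.} For a function of the form $f=\partial\tilde\Gamma^p\Omega^l\phi$ with $p+l\le 2$, apply \eqref{leq}. Since each $\bar\Gamma\in\{S,H_1,H_2,\Omega\}$ is a linear combination of $\tilde\Gamma$'s (and on $|x|\le t/4$ the coefficients of $\partial$'s are $O(t)$), an extra $\bar\Gamma$ produces at most one extra $\tilde\Gamma$, so
\begin{equation*}
|f(t,x)|\lesssim t^{-1}\bigl(\delta^{-\mathtt{s}}\sqrt{E_{p,l}}+\sqrt{E_{p+1,l}}+\delta^{\mathtt{s}}\sqrt{E_{p+2,l}}\bigr).
\end{equation*}
Plugging in \eqref{EA} and choosing $\mathtt{s}=3/4$ for $p=0$, $\mathtt{s}=23/24$ for $p=1$, $\mathtt{s}=1$ for $p=2$ balances the outer two terms and yields exactly the exponents $5/4,\ 11/24,\ -1/2$ appearing in \eqref{pphi}; in the inner region $t-r\ge 3t/4$, so $t^{-1}\lesssim t^{-1/2}$ and the desired decay is recovered with room to spare.

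\emph{Outer region.} Here I would use \eqref{geq}. The boundary term $|f(t,B_t^x)|$ on $\tilde C_{2\delta}$ is controlled by Proposition~\ref{11.1}: expressing $\tilde\Gamma=S$ or $H_i$ in terms of $L,\underline L,\Omega$ via the identities in the notation list, and noting that the coefficients of $L$ grow like $(t+r)$ while those of $\underline L$ grow like $(t-r)=2\delta$ on $\tilde C_{2\delta}$, one extracts from $|\underline L^pL^q\Omega^c\phi|\lesssim\delta^{2-\varepsilon_0}t^{-1/2-q}$ the bound $|\partial\tilde\Gamma^p\Omega^l\phi(t,B_t^x)|\lesssim\delta^{2-\varepsilon_0-p}t^{-1/2}$, which is much stronger than what is claimed. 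For the $L^2$ piece in \eqref{geq}, write $\partial\tilde\Gamma^p\Omega^l\phi$ in the null frame $L,\underline L,\Omega/r$; the $L,\Omega/r$ components are of $\tilde Z$-type and can be estimated from the weighted spacetime integral in the definition of $E_{p,l}$ together with Lemma~\ref{L2}, while the $\underline L$ component is a standard $\partial$-derivative already bounded in $L^2$ by $\sqrt{E_{p,l}}$. A further interpolation in $\delta$ of the type above then produces the required exponents.

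\emph{The $\tilde Z$-estimates \eqref{tZ}.} Since $\tilde Z_i=\omega^i L+\frac{\omega^i_\perp}{r}\Omega$ is tangential to the outgoing cone, each $\tilde Z\tilde\Gamma^p\Omega^l\phi$ enjoys an extra $(1+t-r)$-weight inherited either from the boundary bound in Proposition~\ref{11.1} (the factor $\underline L\phi$ carries an extra $\delta$, hence an extra $(1+t-r)$ on $\tilde C_{2\delta}$) or, away from the boundary, from the weighted integrand $(1+t'-|x|)^{-3/2}|\tilde Z\tilde\Gamma^p\Omega^l\phi|^2$ already built into $E_{p,l}$. I would therefore apply Klainerman--Sobolev to $(1+t-r)^{3/4}\tilde Z\tilde\Gamma^p\Omega^l\phi$; the weighted integral directly controls the relevant $L^2$ norms, and a repetition of the $\mathtt{s}$-balancing used for \eqref{pphi} delivers \eqref{tZ}.

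\textbf{Main obstacle.} The delicate step is the outer-region argument: one must show that the full $\partial$-derivative can be decomposed into good ($\tilde Z$-type) and bad ($\underline L$-type) pieces in a way compatible with both the tangential boundary bound on $\tilde C_{2\delta}$ and the weighted spacetime integral in \eqref{Linfty}, with no loss in the $\delta$-exponent after the interpolation. In particular, matching the $\delta^{11/24-\varepsilon_0}$ and $\delta^{-1/2-\varepsilon_0}$ thresholds in \eqref{pphi} requires choosing the interpolation parameter $\mathtt{s}$ \emph{consistently} with how the commutators $[\partial,\tilde\Gamma]$ redistribute factors of $t$ and $(1+t-r)$, and this bookkeeping---rather than any new ingredient---is where care is needed.
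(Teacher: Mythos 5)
The interpolation exponents you choose ($\mathtt{s}=3/4,\,23/24,\,1$) and the way you balance the energy hierarchy \eqref{EA} against the $\delta^{(i-1)\mathtt{s}}$ factors in \eqref{leq} match the paper exactly (the paper's proof is a one-line deferral to the corresponding place in \cite{Ding3} with precisely these $\mathtt{s}$-values), and the inner-region reasoning is correct. The concern is your argument for \eqref{tZ}.

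You propose to apply Lemma~\ref{KS} to $(1+t-r)^{3/4}\tilde Z\tilde\Gamma^p\Omega^l\phi$ and assert that ``the weighted integral directly controls the relevant $L^2$ norms.'' This does not go through as stated: the weighted quantity in \eqref{Linfty} is the \emph{spacetime} integral $\iint_{D_t}|\tilde Z_i\tilde\Gamma^p\Omega^l\phi|^2\big(1+(t'-|x|)^{3/2}\big)^{-1}\,dx\,dt'$, whereas Lemma~\ref{KS} requires fixed-time spatial norms $\|\bar\Gamma^i f(t,\cdot)\|_{L^2(\Sigma_t)}$. A control of the spacetime integral gives no pointwise-in-time bound without a trace argument or an auxiliary differential inequality, and you supply neither. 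Moreover, hitting the weight $(1+t-r)^{3/4}$ with $\bar\Gamma\in\{S,H_i,\Omega\}$ produces commutator terms $\bar\Gamma(1+t-r)^{3/4}$ growing like $t\cdot(1+t-r)^{-1/4}$, which would destroy the balancing you set up. In fact the fixed-time $L^2$ norm of $\tilde Z\tilde\Gamma^p\Omega^l\phi$ is already trivially controlled by $\sqrt{E_{p,l}}$ since $\tilde Z_i=\omega_i\partial_t+\partial_i$ is a combination of $\partial$'s; this alone reproduces \eqref{pphi}, not the extra $(1+t-r)/t$ gain of \eqref{tZ}. That gain must instead be extracted from the algebraic identity $\tilde Z_i=\frac{\omega^i(S+\omega^jH_j)}{t+r}+\frac{\omega^i_\perp}{r}\Omega$, the boundary decay on $\tilde C_{2\delta}$ provided by Proposition~\ref{11.1}, and a careful use of Lemma~\ref{L2} to absorb the second-order $\partial$-derivatives in the $|\beta|=1$ part of \eqref{geq}; the weighted spacetime integral in \eqref{Linfty} only serves, as in Alinhac's ghost-weight method, to close the energy inequality \eqref{W}, not to feed a Sobolev embedding.
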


\begin{proof}
	Following the proof of \cite[Proposition 11,1]{Ding3}, one can choose $s=\f34$ for $\t Z\O^{\leq 2}\phi$, $s=\f{23}{24}$
		for $\tilde Z\tilde\Gamma\O^{\leq 1}\phi$ and $s=1$ for $\tilde Z\tilde\G^2\phi$ in (11.11) of \cite{Ding3}
to obtain \eqref{pphi}-\eqref{tZ}, respectively. The details are omitted here.
\end{proof}

As in the proof of \cite[Corollary 11.1]{Ding3}, it follows from Proposition \ref{P4.1} that
\begin{corollary}\label{C4.1}
	Under the same assumptions in Proposition \ref{P4.1}, it holds that
	\begin{equation}
	\begin{split}
	&|\p^2\Omega^{\leq 1}\phi|\lesssim M_0\delta^{11/24-\varepsilon_0}t^{-1/2}(1+t-r)^{-1},\\
	&|\p^2\tilde\Gamma\phi|\lesssim M_0\delta^{-1/2-\varepsilon_0}t^{-1/2}(1+t-r)^{-1}
	\end{split}
	\end{equation}
	and
	\begin{equation}
	\begin{split}
	&|\tilde Z\p\Omega^{\leq 1}\phi|\lesssim M_0\delta^{11/24-\varepsilon_0}t^{-3/2},\\
	&|\tilde Z\p\tilde\Gamma\phi|\lesssim M_0\delta^{-1/2-\varepsilon_0}t^{-3/2}.
	\end{split}
	\end{equation}
\end{corollary}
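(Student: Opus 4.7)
The four bounds arise directly by combining Proposition \ref{P4.1} with the Minkowski vector-field algebra. The key preliminary observation is that since $\tilde\Gamma\in\{\partial, S, H_1, H_2\}$ in \eqref{Linfty}, Proposition \ref{P4.1} already contains the pointwise estimates $|\partial^2\Omega^{\le 1}\phi|\lesssim M_0\delta^{11/24-\varepsilon_0}t^{-1/2}$ and $|\partial^2\tilde\Gamma\phi|\lesssim M_0\delta^{-1/2-\varepsilon_0}t^{-1/2}$ (by specializing the $\partial\tilde\Gamma$-bounds of \eqref{pphi} to the case $\tilde\Gamma=\partial$). Thus the task of the corollary is to improve these by extracting an extra factor $(1+t-r)^{-1}$ for the $\partial^2$-bounds, and to remove the factor $(1+t-r)$ present in the $\tilde Z\tilde\Gamma$-bound of \eqref{tZ} for the $\tilde Z\partial$-bounds.

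For the $\partial^2$-estimates, the plan is to use the standard identity $(t-r)\partial_\alpha = \sum_{\tilde\Gamma\in\{S,H_i,\Omega\}} c_\alpha^{\tilde\Gamma}(t,x)\,\tilde\Gamma$ with smooth, uniformly bounded coefficients, obtained from $(t\pm r)(\partial_t\pm\partial_r) = S\pm\omega^i H_i$ and the angular decomposition $\partial_j = \omega_j\partial_r + r^{-1}\omega_j^\perp\Omega$. Applying this to $\partial\Omega^{\le 1}\phi$ and commuting one obtains
$(t-r)\,\partial^2\Omega^{\le 1}\phi = \sum c_\alpha^{\tilde\Gamma}\bigl(\partial\tilde\Gamma\Omega^{\le 1}\phi + [\tilde\Gamma,\partial]\Omega^{\le 1}\phi\bigr),$
and since $[\tilde\Gamma,\partial]$ is a bounded linear combination of $\partial$-operators (the $\tilde\Gamma$ being conformal Killing for the Minkowski metric), \eqref{pphi} yields $|(t-r)\partial^2\Omega^{\le 1}\phi|\lesssim M_0\delta^{11/24-\varepsilon_0}t^{-1/2}$. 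Combining this with the direct bound $|\partial^2\Omega^{\le 1}\phi|\lesssim M_0\delta^{11/24-\varepsilon_0}t^{-1/2}$ by taking the minimum gives $|\partial^2\Omega^{\le 1}\phi|\lesssim M_0\delta^{11/24-\varepsilon_0}t^{-1/2}\min\{1,(t-r)^{-1}\}\lesssim M_0\delta^{11/24-\varepsilon_0}t^{-1/2}(1+t-r)^{-1}$. The bound on $\partial^2\tilde\Gamma\phi$ follows identically, now invoking the $|\partial\tilde\Gamma^2\phi|$-estimate of \eqref{pphi}.

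For the $\tilde Z\partial$-estimates, the tangential decomposition $\tilde Z_i = \omega_i L + r^{-1}\omega_i^\perp\Omega$ together with $L = (S+\omega^j H_j)/(t+r)$ yields
$\tilde Z_i\,\partial\Omega^{\le 1}\phi = \frac{\omega_i}{t+r}(S+\omega^j H_j)\,\partial\Omega^{\le 1}\phi + \frac{\omega_i^\perp}{r}\,\Omega\,\partial\Omega^{\le 1}\phi.$
The first term is bounded by $(t+r)^{-1}|\partial\tilde\Gamma\Omega^{\le 1}\phi|$ up to commutators, and \eqref{pphi} gives $\lesssim M_0\delta^{11/24-\varepsilon_0}t^{-3/2}$. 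For the second term, one combines two complementary bounds: $r^{-1}|\Omega\partial\Omega^{\le 1}\phi|\le |\partial^2\Omega^{\le 1}\phi|\lesssim M_0\delta^{11/24-\varepsilon_0}t^{-1/2}(1+t-r)^{-1}$ (from the $\partial^2$-estimate just established, exploiting that $|\Omega f|\le r|\partial f|$ uniformly), and $r^{-1}|\Omega\partial\Omega^{\le 1}\phi|\lesssim M_0\delta^{5/4-\varepsilon_0}r^{-1}t^{-1/2}$ (from $|\partial\Omega^{\le 2}\phi|$ of \eqref{pphi}). Using the first bound where $1+t-r\gtrsim t$ and the second where $r\gtrsim t$ covers all of $D_t$ (since $r+(1+t-r)\ge t$), producing the desired $\lesssim M_0\delta^{11/24-\varepsilon_0}t^{-3/2}$. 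The bound on $\tilde Z\partial\tilde\Gamma\phi$ follows by the identical scheme, now with the $|\partial\tilde\Gamma^2\phi|$- and $|\partial\tilde\Gamma\Omega^{\le 1}\phi|$-bounds in the roles above.

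The only technically delicate point is the region-splitting for the angular term $r^{-1}\omega_i^\perp\Omega\,\partial\Omega^{\le 1}\phi$, since neither of the two available estimates is uniformly sufficient across $D_t$; their combination however matches exactly through the elementary inequality $r+(1+t-r)\gtrsim t$. Beyond this bookkeeping, no new analytic ingredient is required, and the entire argument rests on Proposition \ref{P4.1} together with the standard Minkowski vector-field identities.
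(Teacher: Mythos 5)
Your proposal is correct and is the standard vector-field argument that the paper defers to via Ding3's Corollary 11.1. The two key identities, $(t-r)\partial_\alpha = \sum c^{\tilde\Gamma}_\alpha\tilde\Gamma$ with bounded coefficients and $\tilde Z_i = \omega_i L + r^{-1}\omega_i^\perp\Omega$ together with $(t+r)L = S+\omega^jH_j$, are exactly what is needed, and your handling of the commutators $[\tilde\Gamma,\partial]$ (constant-coefficient combinations of $\partial$) and of the region split $r + (1+t-r) = 1+t \ge t$ is clean and fills the one spot where naive application of a single estimate would not be uniformly sufficient.
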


We now derive the global weighted energy estimates on $\phi$ in $B_{2\dl}$. As in \cite{Ding3}, after choosing
the smooth ghost weight function $W=e^{2(1+t-r)^{-1/2}}$ introduced first in \cite{A},
then direct computations show
 \begin{equation}\label{Wvv}
 \begin{split}
 &W(\p_tv)g^{\al\beta}\p_{\al\beta}^2v\\
 =&\f12\p_t\big\{W\big((g^{00}(\p_t v)^2-g^{ij}\p_iv\p_jv)\big)\big\}+\p_i\big\{W\big((g^{0i}(\p_tv)^2
 +g^{ij}\p_tv\p_jv)\big)\big\}\\
 &+\big(-(\p_ig^{0i})(\p_tv)^2-(\p_ig^{ij})\p_tv\p_jv+\f12(\p_tg^{ij})\p_iv\p_jv\big)W\\
 &+\f{W}{(1+t-r)^{3/2}}\big(\f12g^{00}(\p_tv)^2-\f12g^{ij}(\p_iv)(\p_jv)-g^{0i}\o_i(\p_tv)^2-g^{ij}\o_i(\p_tv)(\p_jv)\big).
 \end{split}
 \end{equation}
Due to $\p_i=\t Z_i-\o_i\p_t$, then
 \begin{equation}\label{Dt2}
 \begin{split}
 &-(\p_ig^{0i})(\p_tv)^2-(\p_ig^{ij})\p_tv\p_jv+\f12(\p_tg^{ij})\p_iv\p_jv\\
 =&-k\big\{g^{0i,\g_1\cdots\g_k}(\p_i\p_{\g_1}\phi)(\p_tv)^2+g^{ij,\g_1\cdots\g_k}(\p_i\p_{\g_1}\phi)(\p_tv)(\p_jv)\\
 &-\f12g^{ij,\g_1\cdots\g_k}(\p_t\p_{\g_1}\phi)(\p_iv)(\p_jv)\big\}(\p_{\g_2}\phi)\cdots(\p_{\g_k}\phi)+f(\p\phi)(\p\phi)^k\p^2\phi(\p v)^2\\
 =&f(\o, \p\phi)\left(\begin{array}{ccc}
 \p^2\phi(\p\phi)^k(\p v)^2\\
 \t Z\p\phi(\p\phi)^{k-1}(\p v)^2\\
 \p^2\phi(\t Z\phi)(\p\phi)^{k-2}(\p v)^2\\
 \p^2\phi(\p\phi)^{k-1}(\t Zv)\p v
 \end{array}
 \right).
 \end{split}
 \end{equation}
 Note that integrating \eqref{Wvv} over $D_t$ and using the integration by parts give rise to a term on the lateral
 boundary $\tilde C_{2\delta}$ as
 \begin{equation}\label{lb}
 \begin{split}
 &\Big\{-\f12(g^{00}(\p_t v)^2-g^{ij}\p_iv\p_jv)+(g^{0i}(\p_tv)^2+g^{ij}\p_tv\p_jv)\o_i\Big\}W\\
 =&\Big\{\f12(Lv)^2+\f12(\f{1}{r}\O v)^2\Big\}W+f(\o, \p\phi)W\left(\begin{array}{ccc}
 (\p\phi)^{k+1}(\p v)^2\\
 \t Z\phi(\p\phi)^{k-1}(\p v)^2\\
 (\p\phi)^{k}(\t Zv)\p v
 \end{array}
 \right).
 \end{split}
 \end{equation}
This can be controlled on $\tilde C_{2\delta}$ by
\begin{equation}\label{C2}
\Big\{(Lv)^2+(\f{1}{r}\O v)^2+\delta^{k(2-\varepsilon_0)}t^{-3/2}(\p v)^2\Big\}W
\end{equation}
with the help of \eqref{LLOp} and $\tilde Z_i=\omega^iL+\f{\omega^i_\perp}{r}\O$, where one has neglected the
unimportant constant coefficients in \eqref{C2}.

Integrating \eqref{Wvv} on $D_t$ and
utilizing the estimates in Proposition \ref{P4.1} and Corollary \ref{C4.1},
 then one can get by \eqref{Dt2}-\eqref{C2} and Gronwall's inequality that for small $\delta>0$,
\begin{equation}\label{W}
\begin{split}
&\int_{\Sigma_t\cap D_t}W\big((\p_tv)^2+|\nabla v|^2\big)+\iint_{D_t}\sum_i\f{|\tilde Z_iv|^2}{(1+\tau-r)^{3/2}}W\\
\lesssim&\int_{\Sigma_{t_0}\cap D_t}W\big((\p_\tau v)^2+|\nabla v|^2\big)+\iint_{D_t}W| \p_\tau vg^{\al\beta}\p_{\al\beta}^2v|\\
&+\int_{\tilde C_{2\delta}\cap D_t}W\big\{(Lv)^2+(\f{1}{r}\O v)^2
+\delta^{k(2-\varepsilon_0)}\tau^{-3/2}|\p v|^2\big\}.
\end{split}
\end{equation}

To close the bootstrap assumptions \eqref{EA}, one sets $v=\tilde\Gamma^p\Omega^l\phi$ $(p+l\leq 4)$ in \eqref{W}.
 By \eqref{LLOp}, $(L\tilde\Gamma^p\Omega^l\phi)^2+(\f{1}{r}\O\tilde\Gamma^p\Omega^l\phi)^2
 +\delta^{k(2-\varepsilon_0)}t^{-3/2}|\p \tilde\Gamma^p\Omega^l\phi|^2\lesssim\delta^{4-2\varepsilon_0}t^{-5/2}$ holds
 on $\tilde C_{2\delta}$. Therefore,
\[
\int_{\tilde C_{2\delta}\cap D_t}W\big\{(Lv)^2+(\f{1}{r}\O v)^2+\delta^{k(2-\varepsilon_0)}\tau^{-3/2}|\p v|^2\big\}\lesssim\delta^{4-2\varepsilon_0}.
\]
In addition, on the initial hypersurface $\Sigma_{t_0}\cap D_t$,
one has $|\p\tilde\Gamma^p\Omega^l\phi|\lesssim\delta^{2-p-\varepsilon_0}$
for $0\leq p\leq 4-l$ by \eqref{local3-2}. Hence, \eqref{W} gives that
\begin{align}
&E_{0,l}(t)\lesssim\delta^{4-2\varepsilon_0}+\iint_{D_t}| (\p_\tau\Omega^l\phi)(g^{\al\beta}\p_{\al\beta}^2\Omega^l\phi)|,
\quad l\leq 4\label{E01},\\
&E_{p,l}(t)\lesssim\delta^{5-2p-2\varepsilon_0}+\iint_{D_t}| (\p_\tau\tilde\Gamma^p\Omega^l\phi)(g^{\al\beta}\p_{\al\beta}^2\tilde\Gamma^p\Omega^l\phi)|,\quad 1\leq k\leq 4-l.\label{E2-6}
\end{align}

It remains to estimate $\iint_{D_t}|(\p_\tau\tilde\Gamma^p\Omega^l\phi)(g^{\al\beta}\p_{\al\beta}^2\tilde\Gamma^p\Omega^l\phi)|$
in the right hand sides of \eqref{E01}-\eqref{E2-6} and further obtain the global estimates of $\phi$ in $B_{2\dl}$.

\begin{theorem}\label{T4.1}
	When $\delta>0$ is small, \eqref{EA} in $B_{2\dl}$ holds true.
\end{theorem}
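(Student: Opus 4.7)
The plan is to close the bootstrap assumptions \eqref{EA} by bounding the spacetime nonlinear integrals appearing on the right-hand sides of \eqref{E01}--\eqref{E2-6} strictly below the target exponents of $\delta$, and then invoking a standard continuity argument starting from the initial hypersurface $\Sigma_{t_0}$ (where \eqref{local3-2} gives the base case with a constant much smaller than $M_0$). The first step is to commute the reduced wave equation $g^{\alpha\beta}(\partial\phi)\partial_{\alpha\beta}^2\phi=0$ with $\tilde\Gamma^p\Omega^l$ for $\tilde\Gamma\in\{\partial,S,H_1,H_2\}$ and $p+l\le 4$, arriving at
\begin{equation*}
g^{\alpha\beta}\partial_{\alpha\beta}^2(\tilde\Gamma^p\Omega^l\phi)=F_{p,l},
\end{equation*}
where $F_{p,l}$ is schematically a sum of terms of the form $(\partial\phi)^{k-q}\prod_j\partial\tilde\Gamma^{p_j}\Omega^{l_j}\phi\cdot\partial^2\tilde\Gamma^{p'}\Omega^{l'}\phi$ together with $h^{\alpha\beta}$-type higher-order remainders. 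The $k$-th order null condition \eqref{null} is used next: decomposing $\partial_\alpha=-\tfrac12\omega_\alpha\underline L+\tfrac12\omega_\alpha L+\tfrac{\omega_\alpha^\perp}{r}\Omega$ (with $\omega_0=-1$) in each factor, the only purely ``bad'' contraction $g^{\alpha\beta,\gamma_1\cdots\gamma_k}\omega_\alpha\omega_\beta\omega_{\gamma_1}\cdots\omega_{\gamma_k}(\underline L\phi)^k\,\underline L^2(\tilde\Gamma^p\Omega^l\phi)$ is identically zero by \eqref{null}, so every surviving term in $F_{p,l}$ carries at least one ``good'' factor belonging to $\{L\phi,\ r^{-1}\Omega\phi,\ \tilde Z_i\phi\}$ or its higher order analogue.

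Equipped with this null decomposition, I would distribute derivatives according to the usual trichotomy. In the case where all derivatives fall on one factor with total order $\le 4$, Proposition \ref{P4.1} and Corollary \ref{C4.1} provide the $L^\infty$ bounds $|\partial\tilde\Gamma^{\leq 2}\Omega^{\leq 2}\phi|\lesssim \delta^{c}t^{-1/2}$ and $|\tilde Z\tilde\Gamma^{\leq 2}\Omega^{\leq 2}\phi|\lesssim \delta^{c}t^{-3/2}(1+t-r)$; pairing with the remaining $L^2$-factor estimated by $E_{p,l}(t)^{1/2}$ and Cauchy--Schwarz gives a spacetime weight of size $\delta^{k(2-\varepsilon_0)}t^{-(k+1)/2}$, which is integrable in $(t,x)$ and dominated by $\delta^{5-2p-2\varepsilon_0}$ exactly in the regime $\varepsilon_0<\varepsilon_k^*=\frac{k}{k+1}$. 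When derivatives split between two factors of order $\le 3$, the Klainerman--Sobolev embedding \eqref{leq}--\eqref{geq} trades one derivative for a $t^{-1/2}$ decay, and the resulting $L^2\times L^\infty$ estimate is again summable. The top-order case $p+l=4$ is handled by routing the bad derivative $T\phi$ (or $\underline L\phi$) factor into the $L^\infty$ slot via Proposition \ref{P4.1}, so that the remaining high-order factor is paired with the good derivative $\tilde Z\tilde\Gamma^p\Omega^l\phi$, whose weighted integral is exactly the second term on the left of \eqref{W}; the $(1+\tau-r)^{-1}$ weights produced by the null decomposition are processed via Lemma \ref{L2}, which upgrades them into a boundary flux on $\tilde C_{2\delta}$ (already controlled by Proposition \ref{11.1}) plus an honest gradient $L^2$ norm.

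The main obstacle is the endpoint behavior as $\varepsilon_0\uparrow\varepsilon_k^*$: the factor $\delta^{k(1-\varepsilon_0)}$ harvested from the null structure only just beats the required $\delta^{5-2p-2\varepsilon_0}$ thresholds, so the bookkeeping must be sharp and cannot afford any logarithmic slack. To avoid this, I would balance the $L^\infty$ weights in Proposition \ref{P4.1} (namely $\delta^{5/4-\varepsilon_0}$, $\delta^{11/24-\varepsilon_0}$, and $\delta^{-1/2-\varepsilon_0}$ corresponding to the three Sobolev exponents $s=\tfrac{3}{4},\tfrac{23}{24},1$) against the ghost-weight good-derivative integral, which is precisely why $\tfrac{17}{6}$ appears on the right of the $E_{1,l}$ bootstrap: it is the unique exponent making the interpolation close when one factor of $\tilde Z\tilde\Gamma\Omega^{\leq 1}\phi$ and two factors of lower-order derivatives are combined with the $\delta^{(k+1)(1-\varepsilon_0)}$ null gain. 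Finally, after summing the contributions and applying Gronwall to \eqref{W}, each $E_{p,l}(t)$ inherits a bound of the form $C\delta^{5-2p-2\varepsilon_0}+o(M_0^2)\cdot(\text{target})$; choosing $M_0$ large and $\delta$ small then yields a strict improvement of \eqref{EA}, completing the bootstrap and hence proving Theorem \ref{T4.1}.
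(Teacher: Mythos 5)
Your overall strategy matches the paper's: commute $\tilde\Gamma^p\Omega^l$ through the equation, invoke the $k$-th null condition so that each surviving nonlinear term carries at least one good derivative factor $\tilde Z_i$ or $L$, feed the $L^\infty$ bounds of Proposition \ref{P4.1} and Corollary \ref{C4.1} into the ghost-weight identity \eqref{W}, and close by Gronwall. The case analysis you sketch (highest-order factor isolated, vs.\ split between several factors) corresponds to the paper's dichotomy between $l_0\geq\max\{l_1,\dots,l_k\}$ and $l_0<\max\{l_1,\dots,l_k\}$ and the analogous $j_0\geq j_1$ / $j_0<j_1$ for the $h^{\alpha\beta}$ remainder. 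One caution on the bookkeeping: the null gain you quote as $\delta^{k(2-\varepsilon_0)}$ is too strong — the pointwise factors coming out of Proposition \ref{P4.1} are $|\p\O^{\le 2}\phi|\lesssim M_0\delta^{5/4-\varepsilon_0}t^{-1/2}$ and $|\p\tilde\Gamma\O^{\le 1}\phi|\lesssim M_0\delta^{11/24-\varepsilon_0}t^{-1/2}$, so the actual small parameter harvested is of size $\delta^{k(5/4-\varepsilon_0)}$ (or worse in mixed cases); it still beats the targets $\delta^{4-2\varepsilon_0},\delta^{17/6-2\varepsilon_0},\dots$ precisely because $\varepsilon_0<\frac{k}{k+1}$, but the margin is narrower than your statement suggests and one must keep the three separate decay/smallness levels in \eqref{pphi}--\eqref{tZ} in play rather than a single exponent. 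Also, the $(1+\tau-r)^{-1}$ weight is absorbed pointwise via $(1+t-r)|\p^2 f|\lesssim|\tilde\Gamma\p f|+|\O\p f|$ rather than through Lemma \ref{L2}, though either route would work; the boundary term on $\tilde C_{2\delta}$ is controlled directly from Proposition \ref{11.1} in \eqref{C2}--\eqref{W}. With the exponents adjusted the argument closes as you describe.
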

\begin{proof}
	Acting the operator $\tilde\Gamma^k\Omega^l$ on \eqref{quasi} and commuting it with $g^{\al\beta}\p_{\al\beta}^2$ yield
	\begin{align*}
	|g^{\al\beta}\p_{\al\beta}^2&\tilde\Gamma^p\Omega^l\phi|\lesssim\sum_{\mbox{\tiny$\begin{array}{cc}p_0+\cdots+p_k\leq p,\\
			l_0+\cdots+l_k\leq l\\p_0+l_0<p+l\end{array}$}}\Big\{|\t Z\t\G^{p_1}\O^{l_1}\phi|\cdot|\p\t\G^{p_2}\O^{l_2}\phi|\cdots|\p\t\G^{p_k}\O^{l_k}\phi|\cdot|\p^2\tilde\Gamma^{p_0}\O^{l_0}\phi|\\
		&+|\p\t\G^{p_1}\O^{l_1}\phi|\cdots|\p\t\G^{p_k}\O^{l_k}\phi|\cdot|\t Z\p\t\G^{p_0}\O^{l_0}\phi|\Big\}+\sum_{\mbox{\tiny$\begin{array}{cc}q_0+q_1\leq p,\\j_0+j_1\leq l\\q_0+j_0<p+l\end{array}$}}|\t\G^{q_1}\O^{j_1}h^{\al\beta}|\cdot|\p^2\t\G^{q_0}\O^{j_0}\phi|,
	\end{align*}
		which can be treated case by case as follows.
\begin{enumerate}

\vskip 0.1 true cm
	\item {\bf The case of $p=0$ and $l\leq 4$}
\vskip 0.1 true cm
	\begin{enumerate}
		\item If $l_0\geq\max\{l_1,\cdots,l_k\}$, then $l_1,\cdots,l_k\leq 2$ and $l_0\leq 3$. Then by Proposition \ref{P4.1},
		\begin{align*}
		&|\t Z\O^{l_1}\phi|\cdot|\p\O^{l_2}\phi|\cdots|\p\O^{l_k}\phi|\cdot|\p^2\O^{l_0}\phi|\lesssim M_0^k\dl^{k(5/4-\ve_0)}t^{-2}(1+t-r)|\p^2\O^{l_0}\phi|,\\
		&|\p\O^{l_1}\phi|\cdots|\p\O^{l_k}\phi|\cdot|\t Z\p\O^{l_0}\phi|\lesssim M_0^k\dl^{k(5/4-\ve_0)}t^{-1}|\t Z\p\O^{l_0}\phi|.
		\end{align*}
		In addition, due to $(1+t-r)|\p^2\O^{l_0}\phi|\lesssim|\t\G\p\O^{l_0}\phi|+|\O\p\O^{l_0}\phi|$ and $|\t Z\p\O^{l_0}\phi|\lesssim t^{-1}|\t\G\p\O^{l_0}\phi|+t^{-1}|\O\p\O^{l_0}\phi|$, one has by the assumptions \eqref{EA} and $\ve_0\in(0,\ve_k^*)$ that
\begin{equation}\label{l1}
\begin{split}
&\iint_{D_t}|\p\Omega^l\phi|\Big\{|\t Z\t\G^{p_1}\O^{l_1}\phi|\cdot|\p\t\G^{p_2}\O^{l_2}\phi|\cdots|\p\t\G^{p_k}\O^{l_k}\phi|\cdot|\p^2\tilde\Gamma^{p_0}\O^{l_0}\phi|\\
&\qquad\qquad\qquad+|\p\t\G^{p_1}\O^{l_1}\phi|\cdots|\p\t\G^{p_k}\O^{l_k}\phi|\cdot|\t Z\p\t\G^{p_0}\O^{l_0}\phi|\Big\}\\
\lesssim&\int_{t_0}^t\tau^{-2}E_{0,\leq 4}(\tau)d\tau+(M_0\dl^{5/4-\ve_0})^{2k}\int_{t_0}^t\tau^{-2}E_{1,\leq 3}(\tau)d\tau\\
\lesssim&\int_{t_0}^t\tau^{-2}E_{0,\leq 4}(\tau)d\tau+\dl^{4-2\ve_0}.
\end{split}
\end{equation}

\vskip 0.2 true cm
		\item If $l_0<\max\{l_1,\cdots,l_k\}$, then $l_0\leq 1$. It follows from Corollary \ref{C4.1} that
		\begin{align*}
		&|\p^2\Omega^{l_0}\phi|\lesssim M_0\delta^{11/24-\varepsilon_0}t^{-1/2}(1+t-r)^{-1},\\
		&|\t Z\p\O^{l_0}\phi|\lesssim M_0\delta^{11/24-\varepsilon_0}t^{-3/2}.
		\end{align*}
		Similarly, with the help of \eqref{EA} one can get
		\begin{equation}\label{l2}
		\begin{split}
		&\iint_{D_t}|\p\Omega^l\phi|\Big\{|\t Z\t\G^{p_1}\O^{l_1}\phi|\cdot|\p\t\G^{p_2}\O^{l_2}\phi|\cdots|\p\t\G^{p_k}\O^{l_k}\phi|\cdot|\p^2\tilde\Gamma^{p_0}\O^{l_0}\phi|\\
		&\qquad\qquad\qquad+|\p\t\G^{p_1}\O^{l_1}\phi|\cdots|\p\t\G^{p_k}\O^{l_k}\phi|\cdot|\t Z\p\t\G^{p_0}\O^{l_0}\phi|\Big\}\\
		\lesssim&M_0^k\delta^{\f{5(k-1)}4+\f{11}{24}-k\varepsilon_0}\Big\{\iint_{D_t}\tau^{-1}|\p\Omega^l\phi|\cdot|\f{\t Z\O^{\leq l}\phi}{1+\tau-r}|+\iint_{D_t}\tau^{-2}|\p\O^{\leq l}\phi|^2\Big\}\\
		\lesssim&\int_{t_0}^t\tau^{-2}E_{0,\leq 4}(\tau)d\tau+\dl^{4-2\ve_0}.
		\end{split}
		\end{equation}
		\item If $j_0\geq j_1$, then $j_1\leq 2$ and $j_0\leq 3$. Similarly to \eqref{l1}, one can show
		\begin{equation}\label{j0}
		\begin{split}
		&\iint_{D_t}|\p\Omega^l\phi|\cdot|\O^{j_1}h^{\al\beta}|\cdot|\p^2\O^{j_0}\phi|\\
		\lesssim&(M_0\dl^{5/4-\ve_0})^{k+1}\int_{t_0}^t\tau^{-(k+1)/2}\sqrt{E_{0,\leq 4}(\tau)E_{1,\leq 3}(\tau)}d\tau\\
		\lesssim&\int_{t_0}^t\tau^{-3/2}E_{0,\leq 4}(\tau)d\tau+\dl^{4-2\ve_0}.
		\end{split}
		\end{equation}
		\item When $j_0<j_1$, then it holds that
		\begin{equation}\label{j1}
		\begin{split}
		&\iint_{D_t}|\p\Omega^l\phi|\cdot|\O^{j_1}h^{\al\beta}|\cdot|\p^2\O^{j_0}\phi|\\
		\lesssim&(M_0\dl^{5/4-\ve_0})^{k}(M_0\dl^{11/24-\ve_0})\int_{t_0}^t\tau^{-(k+1)/2}E_{0,\leq 4}(\tau)d\tau\lesssim\int_{t_0}^t\tau^{-3/2}E_{0,\leq 4}(\tau)d\tau.
		\end{split}
		\end{equation}
	\end{enumerate}
Inserting \eqref{l1} and \eqref{j1} into \eqref{E01} yields
\begin{equation}\label{E0}
E_{0,l}(t)\lesssim\delta^{4-2\varepsilon_0},\quad l\leq 4.
\end{equation}
\vskip 0.1 true cm

\item {\bf The case $p+l\leq 4$ and $p\geq 1$}

\vskip 0.1 true cm

For $p+l\leq 4$ $(p\geq 1)$, as in the above case, one can make use of Proposition \ref{P4.1} and Corollary \ref{C4.1} to estimate the
related integrals in \eqref{E2-6} for $\max\{p_1+l_1,\cdots,p_k+l_k\}\leq p_0+l_0$ and $q_1+j_1\leq q_0+j_0$ ($p_1+l_1,\cdots,p_k+l_k\leq2$, $p_0+l_0\leq3$, $q_1+j_1\leq 2$ and $q_0+j_0\leq 3$) and the rest terms ($p_0+l_0\leq 1$ or $q_0+j_0\leq 1$). In the whole process,
one can use repeatedly the bootstrap assumptions \ref{EA} and the restricted condition $\varepsilon_0\in(0,\ve_k^*)$ for $k\geq 2$.
\end{enumerate}

Consequently, by Grownwall's inequality, one can arrive at
\begin{equation}\label{Y-34}
\begin{split}
&E_{0,l}(t)\lesssim \delta^{4-2\varepsilon_0},\qquad E_{1,l}(t)\lesssim\delta^{17/6-2\varepsilon_0},\qquad E_{2,l}(t)\lesssim \delta^{1-2\varepsilon_0},\\
&E_{3,l}(t)\lesssim \delta^{-1-2\varepsilon_0},\qquad E_{4,l}(t)\lesssim \delta^{-3-2\varepsilon_0}.
\end{split}
\end{equation}
Since the constants in \eqref{Y-34} are all independent of $M_0$, the bootstrap
assumptions  \eqref{EA} are proved as long as \eqref{EA} holds for the time $t=t_0$
by the continuous induction argument.

\end{proof}

We are now ready to prove Theorem \ref{main}.
\begin{proof}
By Theorem \ref{Th2.1}, the local existence of the smooth solution $\phi$ to \eqref{quasi} with \eqref{id},
\eqref{Y-0} and
\eqref{Y-0-a}-\eqref{g00}  has been obtained.
On the other hand,  the global estimates of $\phi$ in $A_{2\dl}$
and $B_{2\dl}$  have been established in Section \ref{YY} and Section \ref{inside} respectively.
Then it follows from the local well-posedness of the smooth solution to \eqref{quasi} with \eqref{id} (see \cite{H} or \cite{J2})
and the continuous induction argument
that $\phi\in C^\infty([1,+\infty)\times\mathbb R^2)$ exists globally.
In addition, $|\p\phi|\lesssim\delta^{1-\varepsilon_0}t^{-1/2}$ follows from \eqref{Lle} in Appendix B, \eqref{imp} and
the first inequality in \eqref{pphi}. Meanwhile, $|\phi|\lesssim\delta^{1-\varepsilon_0}t^{1/2}$ holds true by integration
on $\p\phi$ from the conic surface $C_0$.
Thus Theorem \ref{main} is proved.
\end{proof}

\appendix
\setcounter{equation}{1}
\section{Computations on the deformation tensor and commutators relations}

Let $\leftidx{^{(V)}}{\slashed\pi}_{UX}:=\leftidx{^{(V)}}{\pi}_{UX}$ for $U\in\{\mathring L,\underline{\mathring L},X\}$.
As in \cite[Proposition 7.7]{J} for 3D case or in \cite[(2.43)-(2.45)]{Ding4} for 4D case,
it follows from direct computations that

\begin{enumerate}[(1)]
	\item for $V=T$,
	\begin{equation}\label{Lpi}
	\begin{split}
	&\leftidx{^{(T)}}\pi_{\mathring L\mathring L}=0,\quad \leftidx{^{(T)}}\pi_{T\tilde T}=2T\mu,\quad \leftidx{^{(T)}}\pi_{\mathring LT}=-T\mu,\quad
	\leftidx{^{(T)}}{\slashed\pi}_{TX}=0,\\
	&\leftidx{^{(T)}}{\slashed\pi}_{\mathring LX}=-2\mu\zeta_X-\slashed d_X\mu,\quad \leftidx{^{(T)}}{\slashed\pi}_{XX}=2\mu\sigma_{XX};
	\end{split}
	\end{equation}
	
	\item for $V=\mathring L$,
	\begin{equation}\label{uLpi}
	\begin{split}
	&\leftidx{^{(\mathring L)}}\pi_{\mathring L\mathring L}=0,\quad \leftidx{^{(\mathring L)}}\pi_{T\tilde T}=2\mathring L\mu,\quad \leftidx{^{(\mathring L)}}\pi_{\mathring LT}=-\mathring L\mu,\quad \leftidx{^{(\mathring L)}}{\slashed\pi}_{\mathring LX}=0,\\
	&\leftidx{^{(\mathring L)}}{\slashed\pi}_{TX}=2\mu\zeta_X+\slashed d_X\mu,\quad \leftidx{^{(\mathring L)}}{\slashed\pi}_{XX}=2\chi_{XX};
	\end{split}
	\end{equation}
	
	\item for $V=R$,
	\begin{equation}\label{Rpi}
	\begin{split}
	&\leftidx{^{(R)}}\pi_{\mathring L\mathring L}=0,\quad \leftidx{^{(R)}}\pi_{T\tilde T}=2R\mu,\quad \leftidx{^{(R)}}\pi_{\mathring LT}=-R\mu,\\
	&\leftidx{^{(R)}}{\slashed\pi}_{\mathring LX}=-{R}^X\check{\chi}_{XX}-\upsilon\{G_{\mathring L\tilde T}^\gamma\slashed d_X\varphi_\gamma+\f12G_{\tilde T\tilde T}^\gamma\slashed d_X\varphi_\gamma-G_{X\tilde T}^\gamma{\mathring L}\varphi_\gamma\}\\
	&\qquad\qquad\quad+\f12{R}^XG_{XX}^\gamma\mathring L\varphi_\gamma+\epsilon_i^jg_{ja}\check L^i\slashed d_Xx^a,\\
	&\leftidx{^{(R)}}{\slashed\pi}_{TX}=\mu{R}^X\check\chi_{XX}+\upsilon\slashed d_X\mu+\mu G_{X\mathring L}^\gamma R\varphi_{\gamma}-\f12\mu R^XG_{XX}^\gamma{\mathring L}\varphi_{\gamma}\\
	&\qquad\qquad\quad+\mu G_{X\tilde T}^\gamma R\varphi_{\gamma}+\upsilon\{G_{X\tilde T}^\gamma T\varphi_\gamma-\f12\mu G_{\tilde T\tilde T}^\gamma\slashed d_X\varphi_{\gamma}\}+\mu\epsilon_i^j g_{ja}\check T^i\slashed d_Xx^a,\\
	&\leftidx{^{(R)}}{\slashed\pi}_{XX}=2\upsilon\chi_{XX}+\upsilon\{2G_{X\mathring L}^\gamma\slashed d_X\varphi_{\gamma}
+2G_{X\tilde T}^\gamma\slashed d_X\varphi_{\gamma}-G_{XX}^\gamma\mathring L\varphi_{\gamma}\}\\
&\qquad\qquad\quad+G_{XX}^\gamma R\varphi_{\gamma}
+2\epsilon_i^j\check g_{ja}(\slashed d_Xx^a)\slashed d_Xx^i,
	\end{split}
	\end{equation}
where $\check g_{ia}:=g_{ia}-m_{ia}$, and the definitions of $\check T^i$, $\check L^i$ are given in \eqref{errorv}.	
\end{enumerate}

In addition, the following two results hold for commutators,
which are given in Lemma 4.10, 8.9 and 8.11 of \cite{J}.

\begin{lemma}\label{com}
	It holds that
	\begin{equation}\label{c}
	\begin{split}
	[\mathring L, R]&={\leftidx{^{(R)}}{\slashed\pi}_{\mathring L}}^XX,\quad
	[\mathring L, T]={\leftidx{^{(T)}}{\slashed\pi}_{\mathring L}}^XX,\quad
	[T,R]={\leftidx{^{(R)}}{\slashed\pi}_{T}}^XX.
	\end{split}
	\end{equation}
\end{lemma}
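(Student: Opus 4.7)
All three commutator identities follow from a standard two-step pattern in the null frame geometry: first show each bracket is tangent to the curved circle $S_{s,u}$, hence proportional to $X$; then identify the scalar coefficient as the appropriate raised-index component of the deformation tensor.

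\textbf{Step 1: tangentiality.} I would work in the coordinates $(s,u,\vartheta)$ of \eqref{H0-7}, where $\mathring L = \partial_s$, $X = \partial_\vartheta$, $T = \partial_u - \Xi^X X$, and $R = R^X X$ since $R$ is tangent to $S_{s,u}$ by \eqref{R}. A commutator $[U,V]$ equals $cX$ exactly when $[U,V]s = [U,V]u = 0$. The relevant scalar actions are $\mathring L s = 1$, $\mathring L u = 0$, $Ts = 0$, $Tu = 1$, $Rs = Ru = 0$, $Xs = Xu = 0$, and all three bracket identities reduce to trivial arithmetic: for instance $[\mathring L, R]u = \mathring L(0) - R(0) = 0$ and $[\mathring L, R]s = \mathring L(0) - R(1) = 0$; the same check works for $[\mathring L, T]$ and $[T, R]$.

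\textbf{Step 2: coefficient via deformation tensor.} Write $[\mathring L, R] = \lambda X$; then $\lambda = \slashed g^{XX} g([\mathring L, R], X)$. Using the torsion-free property of $\mathscr D$,
\begin{equation*}
g([\mathring L, R], X) = g(\mathscr D_{\mathring L} R, X) - g(\mathscr D_R \mathring L, X).
\end{equation*}
Metric compatibility applied to $g(\mathring L, X) \equiv 0$ gives $g(\mathscr D_R \mathring L, X) = -g(\mathring L, \mathscr D_R X)$, and the torsion-free property once more yields $\mathscr D_R X = \mathscr D_X R + [R, X]$. Since $[R, X] = -(X R^X)\, X$ is tangent to $S_{s,u}$ and $g(\mathring L, X) = 0$, the bracket term drops, producing
\begin{equation*}
g([\mathring L, R], X) = g(\mathscr D_{\mathring L} R, X) + g(\mathscr D_X R, \mathring L) = \leftidx{^{(R)}}{\pi}_{\mathring L X}
\end{equation*}
by \eqref{dt}. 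Hence $[\mathring L, R] = \leftidx{^{(R)}}{\slashed\pi}_{\mathring L}{}^X\, X$. The computation for $[\mathring L, T] = \leftidx{^{(T)}}{\slashed\pi}_{\mathring L}{}^X X$ is literally identical (again using $g(\mathring L, X) = 0$), while the one for $[T, R] = \leftidx{^{(R)}}{\slashed\pi}_T{}^X X$ uses the orthogonality $g(T, X) = 0$ instead, which holds because $X$ is tangent to $\Sigma_s$ so $X^0 = 0$, and a direct expansion gives $g(T, X) = -\mu X^0 = 0$.

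\textbf{Main difficulty.} There is no real obstacle here; the argument is a short exercise in Lorentzian differential geometry. The only subtlety is the asymmetry between the Lie bracket and the (symmetric) deformation tensor: one must carefully apply metric compatibility and torsion-freeness so that the extra boundary term collapses precisely to restore the symmetric combination in \eqref{dt}. Once the orthogonality identities $g(\mathring L, \mathring L) = g(\mathring L, X) = g(T, X) = 0$ are in place, the identification is automatic.
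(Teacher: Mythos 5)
Your proof is correct and complete. The paper itself does not prove Lemma~\ref{com}; it simply cites Lemmas~4.10, 8.9, and~8.11 of Speck's book~\cite{J}, so your argument supplies the derivation the paper omits. The two-step structure you use---first showing each bracket is $S_{s,u}$-tangent by computing its action on $s$ and $u$ in the adapted coordinates $(s,u,\vartheta)$, then recovering the scalar coefficient from metric compatibility, torsion-freeness, and the null-frame orthogonality relations---is the standard route in this geometric framework and almost certainly matches what the cited reference does. One small precision point in the $[T,R]$ case: the expansion $g(T,X)=\mu\,g(\tilde T,X)=-\mu X^{0}-\mu\,g(\mathring L,X)$ also produces the term $-\mu\,g(\mathring L,X)$, which vanishes by the same orthogonality $g(\mathring L,X)=0$ you invoked for the first two brackets; your displayed formula $g(T,X)=-\mu X^{0}$ is correct but silently folds that term in.
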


\begin{lemma}\label{commute}
For any vector field $Z\in\{\mathring L,T,R\}$,

(a) if $f$ is a smooth function, then
 \begin{align}
 \big([\slashed\nabla^2,\slashed{\mathcal L}_Z]f\big)_{XX}&
 =\f12{\slashed\nabla}_X\big(\textrm{tr}\leftidx{^{(Z)}}{\slashed\pi}\big)\slashed d_Xf,\label{nZf}\\
 [\slashed\triangle, Z]f&=\leftidx{^{(Z)}}{\slashed\pi}^{XX}\slashed\nabla_{X}^2f
 +\f12({\slashed\nabla}_X\leftidx{^{(Z)}}{\slashed\pi}^{XX})\slashed d_Xf;\label{LZf}
 \end{align}

(b) if $\xi$ is a one-form on $S_{s, u}$, then
\begin{equation}\label{nZx}
([\slashed\nabla_X,\slashed{\mathcal L}_Z]\xi)_X=\f12{\slashed\nabla}_X(\textrm{tr}\leftidx{^{(Z)}}{\slashed\pi})\xi_X;
\end{equation}

(c) if  $\xi$ is a $(0,2)$-type tensor on $S_{s, u}$, then
\begin{align}
([\slashed\nabla_X,\slashed{\mathcal L}_Z]\xi)_{XX}&={\slashed\nabla}_X\big(\textrm{tr}\leftidx{^{(Z)}}{\slashed\pi}\big)\xi_{XX},\label{nZxi}\\
([\slashed\nabla_X,\slashed{\mathcal L}_Z]\slashed\nabla\xi)_{XXX}&=\f32{\slashed\nabla}_X\big(\textrm{tr}\leftidx{^{(Z)}}{\slashed\pi}\big)\slashed\nabla_X\xi_{XX}.\label{nZxX}
\end{align}
\end{lemma}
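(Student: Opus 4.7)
The plan is to derive all five identities by (i) establishing a single master formula for the difference of connections $\slashed{\mathcal L}_Z\slashed\nabla - \slashed\nabla\slashed{\mathcal L}_Z$ and then (ii) exploiting that $S_{s,u}$ is one-dimensional so the general tensorial formula collapses to a single scalar correction. Concretely, since $\slashed\nabla$ is the Levi-Civita connection of $\slashed g$, the tensor
\begin{equation*}
P(Z)_{AB}^{C}:=\tfrac12\slashed g^{CD}\bigl(\slashed\nabla_A(\slashed{\mathcal L}_Z\slashed g)_{DB}+\slashed\nabla_B(\slashed{\mathcal L}_Z\slashed g)_{DA}-\slashed\nabla_D(\slashed{\mathcal L}_Z\slashed g)_{AB}\bigr)
\end{equation*}
measures exactly how much $\slashed{\mathcal L}_Z\slashed\nabla$ fails to coincide with $\slashed\nabla$ as a connection on $S_{s,u}$, where $\slashed{\mathcal L}_Z\slashed g=\leftidx{^{(Z)}}{\slashed\pi}$ by definition of the deformation tensor (and since $\slashed\Pi$ projects $\mathcal L_Z\slashed g$ onto its $S_{s,u}$-tangential part without affecting the identities below). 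On a $1$-dimensional manifold every index is forced to be $X$, so all three terms in $P(Z)_{XX}^{X}$ coalesce into a single one, and a direct contraction gives $P(Z)_{XX}^{X}=\tfrac12\slashed g^{XX}\slashed\nabla_X\leftidx{^{(Z)}}{\slashed\pi}_{XX}=\tfrac12\slashed\nabla_X(\mathrm{tr}\leftidx{^{(Z)}}{\slashed\pi})$, since $\mathrm{tr}\leftidx{^{(Z)}}{\slashed\pi}=\slashed g^{XX}\leftidx{^{(Z)}}{\slashed\pi}_{XX}$ and $\slashed\nabla\slashed g=0$.

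From this one-dimensional correction tensor, the remaining identities are routine contractions. For a scalar $f$, $\slashed{\mathcal L}_Zf=Zf$ already commutes with one $\slashed\nabla_X$, while the second derivative picks up exactly $P(Z)_{XX}^{X}\slashed d_X f$, giving \eqref{nZf}; then \eqref{nZxi} and \eqref{nZx} follow the same pattern applied to a one-form $\xi_X$ and to $\xi_{XX}$ respectively, where the number of tensorial slots (and hence correction terms $P(Z)_{XX}^{X}$) determines the prefactor ($\tfrac12$ for one slot, $1$ for two slots, $\tfrac32$ for three slots, whence \eqref{nZxX}). The identity \eqref{LZf} will then be deduced by expanding
\begin{equation*}
[\slashed\triangle,Z]f=[\slashed{\mathcal L}_Z,\slashed g^{XX}\slashed\nabla_X^2]f=(\slashed{\mathcal L}_Z\slashed g^{XX})\slashed\nabla_X^2f+\slashed g^{XX}[\slashed{\mathcal L}_Z,\slashed\nabla^2]f,
\end{equation*}
combining the formula $\slashed{\mathcal L}_Z\slashed g^{XX}=-\slashed g^{XA}\slashed g^{XB}\leftidx{^{(Z)}}{\slashed\pi}_{AB}=-\leftidx{^{(Z)}}{\slashed\pi}^{XX}$ (first term produces $\leftidx{^{(Z)}}{\slashed\pi}^{XX}\slashed\nabla_X^2f$, up to an overall sign convention absorbed in the ordering) with \eqref{nZf} for the second term, and repackaging the resulting correction $\tfrac12\slashed g^{XX}\slashed\nabla_X(\mathrm{tr}\leftidx{^{(Z)}}{\slashed\pi})\slashed d_Xf$ as $\tfrac12(\slashed\nabla_X\leftidx{^{(Z)}}{\slashed\pi}^{XX})\slashed d_X f$, which is permissible because $\slashed g^{XX}\mathrm{tr}\leftidx{^{(Z)}}{\slashed\pi}=\leftidx{^{(Z)}}{\slashed\pi}^{XX}$ in one dimension and $\slashed\nabla\slashed g=0$.

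The main obstacle I anticipate is not any single computation but rather the bookkeeping of the projection $\slashed\Pi$: since $Z$ is a spacetime vector field (in particular $Z\in\{\mathring L,T,R\}$ is not generally tangent to $S_{s,u}$), one must verify that $\slashed{\mathcal L}_Z$ (defined as $\slashed\Pi\mathcal L_Z$ on $S_{s,u}$-tensors) genuinely acts as a derivation compatible with the master formula, and in particular that the normal components of $\mathcal L_Z\slashed g$ do not contaminate the identities. This is handled by noting that on any $S_{s,u}$-tangential tensor the projected Lie derivative $\slashed{\mathcal L}_Z$ satisfies the Leibniz rule with respect to $\slashed g$ and that $\slashed\Pi$ acts trivially on $\slashed g$ and its covariant derivatives; once this point is clean, everything else reduces to the one-dimensional simplification above. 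I will therefore first record the projection lemma, then prove the master formula \eqref{nZf} in one line, and finally derive \eqref{LZf}--\eqref{nZxX} as direct corollaries, exactly as in \cite[Lemma 4.10, 8.9, 8.11]{J}.
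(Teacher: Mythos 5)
The paper itself gives no proof of this lemma; it cites Speck's monograph \cite[Lemma 4.10, 8.9, 8.11]{J}, so there is nothing to compare against directly. Your strategy — introduce the connection-deformation tensor
$P(Z)_{AB}^C=\tfrac12\slashed g^{CD}\bigl(\slashed\nabla_A\leftidx{^{(Z)}}{\slashed\pi}_{DB}+\slashed\nabla_B\leftidx{^{(Z)}}{\slashed\pi}_{DA}-\slashed\nabla_D\leftidx{^{(Z)}}{\slashed\pi}_{AB}\bigr)$, collapse it in one dimension to $P(Z)_{XX}^X=\tfrac12\slashed\nabla_X(\mathrm{tr}\,\leftidx{^{(Z)}}{\slashed\pi})$ using $\slashed\nabla\slashed g=0$, and then count the tensor slots to get the prefactors $\tfrac12, 1, \tfrac32$ — is the standard route and reproduces all five identities correctly, including \eqref{LZf} once you combine $\slashed{\mathcal L}_Z\slashed g^{XX}=-\leftidx{^{(Z)}}{\slashed\pi}^{XX}$ with \eqref{nZf} and the one-dimensional relation $\slashed g^{XX}\slashed\nabla_X(\mathrm{tr}\,\leftidx{^{(Z)}}{\slashed\pi})=\slashed\nabla_X\leftidx{^{(Z)}}{\slashed\pi}^{XX}$. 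You also correctly observe that $[\slashed\nabla^2,\slashed{\mathcal L}_Z]f$ reduces to $[\slashed\nabla,\slashed{\mathcal L}_Z]\slashed d f$ because the commutator vanishes on scalars.

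The one place your proposal is still a promissory note is exactly the point you flag: for $Z\in\{\mathring L,T\}$ the vector field is transverse to $S_{s,u}$, and the usual derivation of the formula $[\slashed\nabla_A,\mathcal L_Z]\xi_{B_1\cdots B_k}=\sum_i P(Z)^C_{AB_i}\xi_{B_1\cdots C\cdots B_k}$ relies on $\mathcal L_Z$ being a genuine Lie derivative on the submanifold. For the projected operator $\slashed{\mathcal L}_Z=\slashed\Pi\circ\mathcal L_Z$ one must check (i) that $\slashed{\mathcal L}_Z\slashed g=\leftidx{^{(Z)}}{\slashed\pi}$ — which requires the identity $\slashed\Pi(\mathcal L_Z\slashed\Pi)\slashed\Pi=0$, itself a consequence of $\slashed\Pi^2=\slashed\Pi$ — and (ii) that the normal components of $\mathcal L_Z$ acting on $S_{s,u}$-tangential tensors do not introduce extra terms in the commutator. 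In Speck's treatment this is handled by working in the adapted $(s,u,\vartheta)$ coordinates of \eqref{H0-7}, where $\mathring L=\p_s$, $T=\p_u-\Xi$ and $X=\p_\vartheta$, so that $\slashed{\mathcal L}_Z$ becomes an ordinary coordinate Lie derivative plus explicit $\Xi$-corrections that cancel. You say you "will first record the projection lemma"; that is where the nontrivial content is, and it should be spelled out (or cited precisely) rather than asserted, since it is the only step that does not follow from generalities about symmetric connections. Your side remark that the sign of $\slashed{\mathcal L}_Z\slashed g^{XX}$ is "absorbed in the ordering" should also be replaced by the one-line check $[\slashed\triangle,Z]f=\slashed g^{XX}[\slashed\nabla^2,\slashed{\mathcal L}_Z]f-(\slashed{\mathcal L}_Z\slashed g^{XX})\slashed\nabla_X^2 f$, which makes the sign transparent.
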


\section{The proof  of Theorem \ref{Th2.1}}

In this Appendix, we utilize the energy method and the special structure of the initial data \eqref{id}
together with \eqref{Y-0} and
\eqref{Y-0-a}-\eqref{g00}
to prove Theorem \ref{Th2.1}.
Although the procedure is similar
to that in \cite[Theorem 2.1]{Ding3},
some details are required to be given due to the general form of the quasilinear wave
equation \eqref{quasi} with the higher order null condition structure here.

\begin{proof}
	 Denote by $Z_g$ any fixed vector field in $\{S, H_i, i=1,2\}$.
Then analogously to the proof of (2.5) in \cite{Ding3}, one can have that for $1\leq t\leq t_0$, $\nu\in \mathbb N_0^3$
	and $N_0\in\mathbb N_0$ with $N_0\ge 3$,
	\begin{equation}\label{ba}
	|\p^\kappa\O^pZ_g^\nu\phi|\leq\delta^{2-\epsilon_0-|\kappa|}\quad (|\kappa|+p+|\nu|\leq N_0,\quad |\nu|\leq 1).
	\end{equation}

In addition, by $\ds L=\f{S+\o^iH_i}{t+r}$, it can be shown that for $|\kappa|+p+m\leq 2N_0-3$ and $m\leq 1$,
\begin{equation}\label{Lle}
|L^m\p^\kappa\O^p\phi(t,x)|\lesssim\sum_{|\nu|\leq 1}|Z_g^\nu\p^\kappa\O^p\phi(t,x)|\lesssim\delta^{2-|\kappa|-\varepsilon_0}.
\end{equation}

		\begin{figure}[htbp]
	\centering
	\includegraphics[scale=0.23]{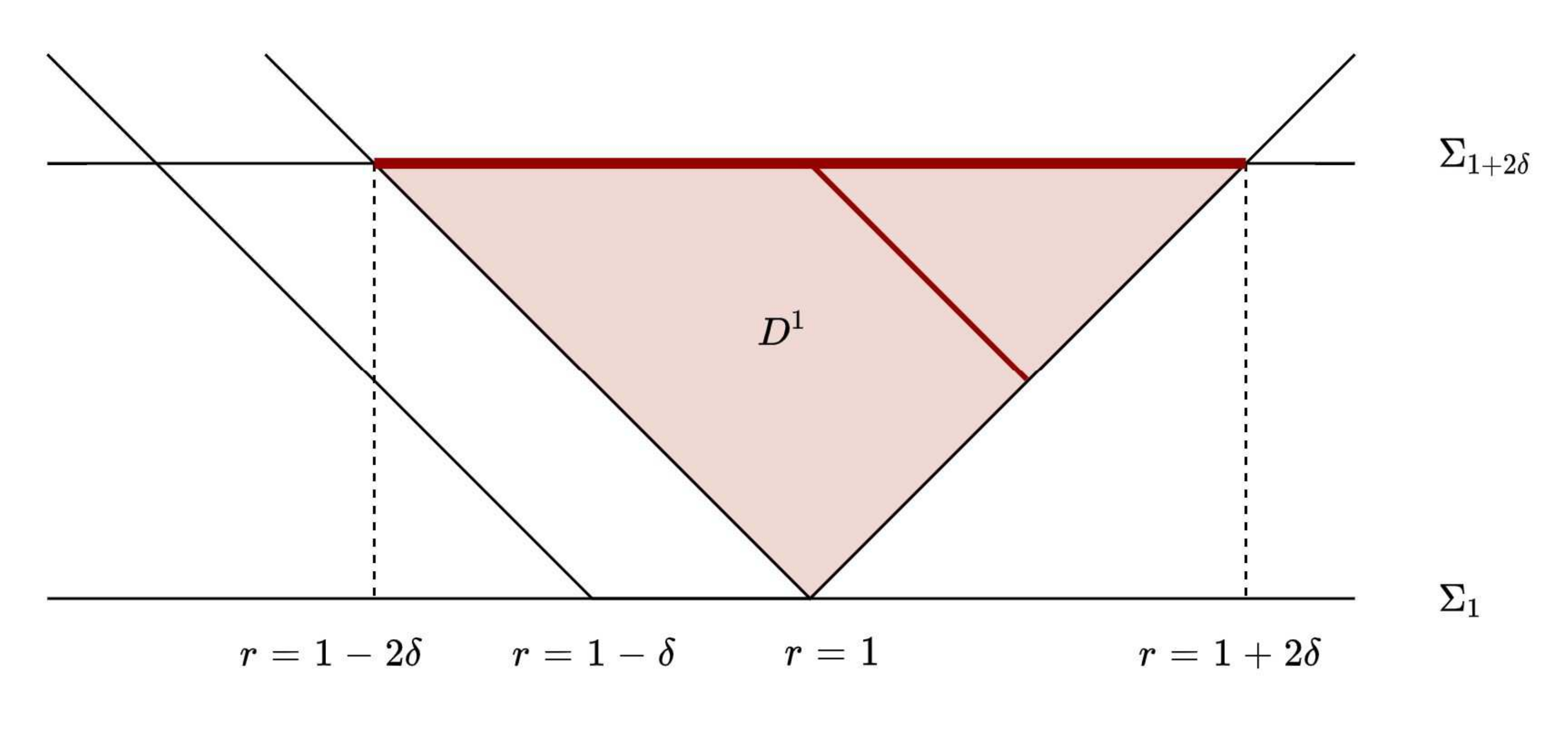}
	\caption{Space-time domain $D^1=\{(t,r): 1\le t\le t_0, 2-t\le r\le t\}$}\label{pic:p3}
\end{figure}

Now we start to improve the $L^\infty$ estimate of $\phi(t_0,x)$ on some special space domains. To this end,
one rewrites \eqref{quasi} as
\begin{equation}\label{me}
L\underline L\phi=\f{1}{2r}L\phi-\f{1}{2r}\underline L\phi+\f{1}{r^2}\Omega^2\phi+g^{\al\beta,
	\gamma_1,\cdots,\gamma_k}(\p_{\gamma_1}\phi)\cdots(\p_{\gamma_k}\phi)\p^2_{\al\beta}\phi+h^{\al\beta}(\p\phi)\p^2_{\al\beta}\phi.
\end{equation}

Applying the operator $L\bar\p^{\iota}\O^p$ to both sides of \eqref{me}, where $\bar\p\in\{\p_t,\p_r\}$ and $\iota\in\mathbb N_0^2$, then one can obtain an expression of $\underline LL^2\bar\p^{\iota}\O^p\phi$ by using $\underline LL^2\bar\p^{\iota}\O^p=L\bar\p^{\iota}\O^pL\underline L$
and a direct computation.
This, together with \eqref{Lle}, yields that for $|\iota|+p\leq 2N_0-6$,
\begin{equation}\label{YHC-1}
|\underline L(L^2\bar\p^{\iota}\O^p\phi)|\lesssim\delta^{1-\varepsilon_0-|\iota|}+\delta^{(k+1)(1-\ve_0)-1-|\iota|}.
\end{equation}
Due to the vanishing property of $\phi$ on $\{(t, x): t\ge 1, t=r\}$,
integrating \eqref{YHC-1} along integral curves of $\underline L$ yields that for $(t,r)\in D^1$
(see Figure \ref{pic:p3}) and for $|\iota|+p\leq 2N_0-6$,
\begin{equation}\label{D}
|L^2\bar\p^{\iota}\O^p\phi(t,x)|\lesssim\delta^{2-\varepsilon_0-|\iota|}+\delta^{(k+1)(1-\ve_0)-|\iota|}.
\end{equation}
Note that the null condition \eqref{null} implies
\begin{equation}\label{YHC-2}
\begin{split}
g^{\al\beta,\gamma_1\cdots\gamma_k}\p_{\gamma_1}\phi\cdots\p_{\gamma_k}\phi\p_{\al\beta}^2\phi=&f_1(\o)L\phi(\p\phi)^{k-1}\p^2\phi
+f_2(\o)r^{-1}\O\phi(\p\phi)^{k-1}\p^2\phi\\
&+f_3(\o)(\p\phi)^kL\p\phi+f_4(\o)r^{-1}(\p\phi)^k\O\p\phi,
\end{split}
\end{equation}
where  $f_i$ $(i=1,2,3,4)$ are some smooth functions.
Using the equation for $\underline LL^2\bar\p^{\iota}\O^p\phi$ again, together with \eqref{YHC-2} and
\eqref{D}, gives $|\underline L(L^2\bar\p^{\iota}\O^p\phi) |\lesssim\delta^{1-\varepsilon_0-|\iota|}$ for $0<\varepsilon_0<\ve_k^*$ and $|\iota|+p\leq2N_0-7$, which yields that for $(t,x)\in D^1$,
\begin{equation}\label{D-1}
|L^2\bar\p^{\iota}\O^p\phi(t,x)|\lesssim\delta^{2-\varepsilon_0-|\iota|},\quad|\iota|+p\leq2N_0-7.
\end{equation}
Then an induction argument and  \eqref{me} show that for $(t,x)\in D^1$,
 \begin{equation}\label{oi}
|LL^m\bar\p^\iota\O^p\phi(t,x)|\lesssim\delta^{2-|\iota|-\varepsilon_0},\quad 3m+|\iota|+p\leq 2N_0-4,
\end{equation}
which implies \eqref{local1-2} due to $\p_i=\o^i\p_r+\f{\o_\perp^i}r\O$.

\begin{figure}[htbp]
\centering
\includegraphics[scale=0.23]{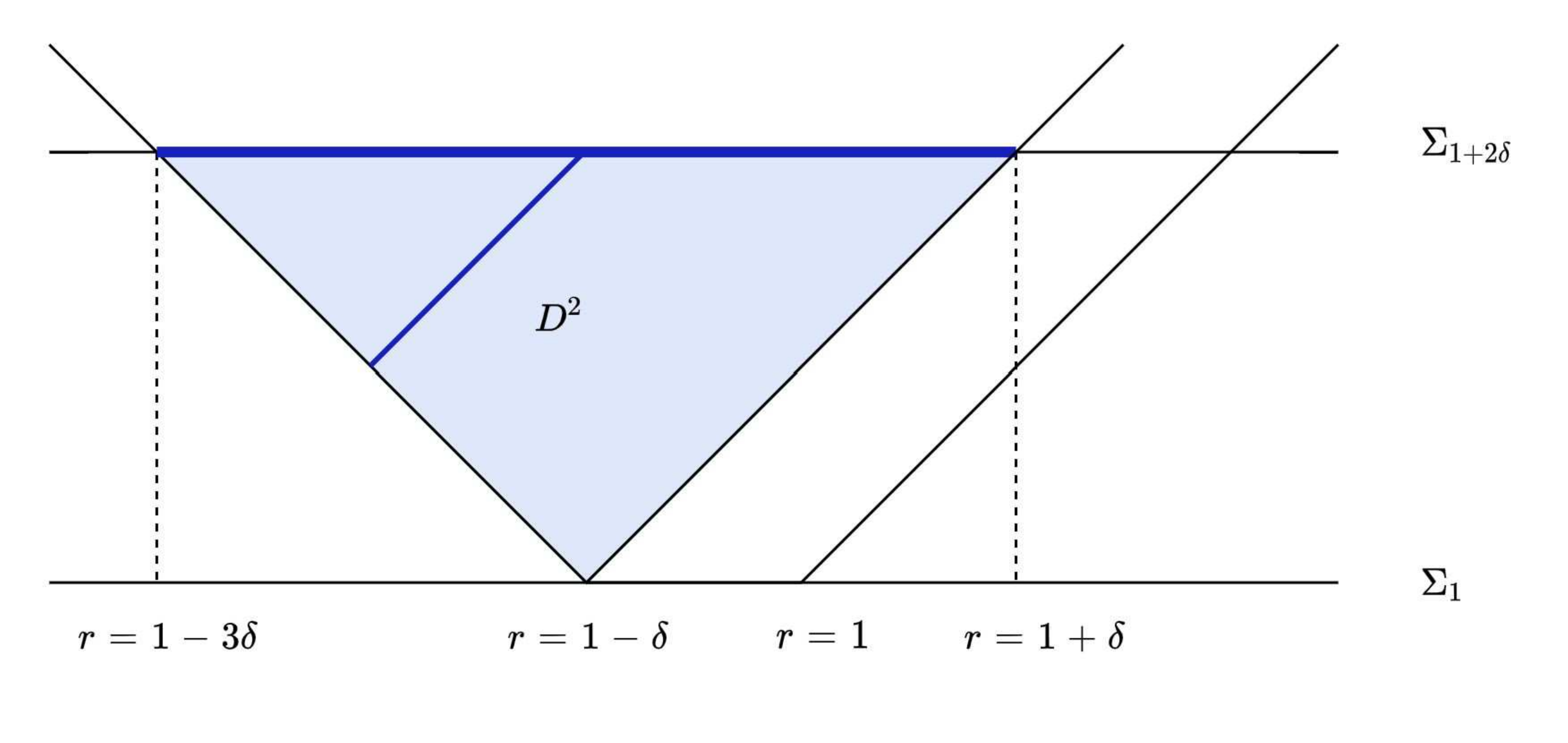}
\caption{Spatial domain for $1-3\delta\le r\le 1+\delta$ on $\Sigma_{1+2\dl}$}\label{pic:p4}
\end{figure}

Similarly, by the expression of $L\underline L^n\bar\p^\iota\O^p\phi$, integrating it
along integral curves of $L$ yields that for $r\in [1-3\delta,1+\delta]$ (see Figure \ref{pic:p4}),
\begin{equation}\label{ii}
|\underline L^n\bar\p^\iota\O^p\phi(t_0,x)|\lesssim\delta^{2-|\iota|-\varepsilon_0},\quad 2n+|\iota|+p\leq 2N_0-3,
\end{equation}
which implies \eqref{local2-2}.
In addition, it follows from \eqref{Lle} that
\begin{equation}\label{Op}
|\O^p\phi|_{D^2}\lesssim\dl^{3-\ve_0},\ p\leq 2N_0-4
\end{equation}
after integrating $L\O^k\phi$ along integral curves of $L$ in $D^2$.
Furthermore, using $\p_t=\f12(L+\underline L)$,
$\p_i=\f{\o^i}{2}(L-\underline L)+\f{\o^i_{\perp}}{r}\O$,
\eqref{ii}, \eqref{Op}, \eqref{me} and \eqref{Lle}, one concludes that for $|\kappa|+p\leq 2N_0-5$
and $r\in [1-3\delta, 1+\delta]$,
\begin{equation}\label{iid}
|\p^\kappa\O^p\phi(t_0,x)|\lesssim
\delta^{3-|\kappa|-\varepsilon_0},
\end{equation}
and then \eqref{local3-2} follows.

Next we prove the estimate \eqref{local3-3}. It is already shown that on the surface $\Sigma_{t_0}$, when $r\in[1-2\delta,1+\delta]$ and $2n+|\kappa|+p\leq 2N_0-3$, $|\underline L^{n}\p^\kappa\O^p\phi|\lesssim\delta^{2-\varepsilon_0-|\kappa|}$ holds. Then $|L\underline L^n\p^\kappa\O^p\phi|\lesssim\delta^{2-\varepsilon_0-|\kappa|}$ is derived by \eqref{me} and a direct computation.
Meanwhile, when $r\in[1-2\delta,1+\delta]$ and $3m+|\kappa|+p\leq 2N_0-4$,
one has $|L^{1+m}\p^\kappa\O^p\phi|\lesssim\delta^{2-\varepsilon_0-|\kappa|}$.
Furthermore, one can obtain
\begin{equation}\label{LLO}
|L^m\underline L^n\p^\kappa\O^p\phi|\lesssim\delta^{2-\varepsilon_0-|\kappa|}
\quad\text{for}\ 3m+2n+|\kappa|+p\leq 2N_0-3\ \textrm{and $n\geq 1$}.
\end{equation}
Indeed, by an induction argument, it suffices to prove \eqref{LLO} for $3(m+1)+2n+|\kappa|+p\leq 2N_0-3$ and $n\geq 1$.
It follows from \eqref{me} and the assumption \eqref{LLO} that
\begin{equation}\label{La}
\begin{split}
&|L^{m+1}\underline L^{n}\bar\p^\iota\O^p\phi|=|L^{m}\underline L^{n-1}\bar\p^\iota\O^p(L\underline L\phi)|\\
\lesssim&\delta^{2-\varepsilon_0-|\iota|}+\sum_{\tiny\begin{array}{c}|\iota_1|+p_1\leq|\iota|+p+1\\
|\iota_1|\leq|\iota|+1,p_1\leq p+1\end{array}}\delta^{|\iota_1|-|\iota|}|L^{m+1}\underline L^{\leq n-1}\bar\p^{\iota_1}\O^{p_1}\phi|.
\end{split}
\end{equation}
Applying the same argument as for \eqref{La} to treat $|L^{m+1}\underline L^{\leq n-1}\bar\p^{\iota_1}\O^{p_1}\phi|$,
one can reduce the number of $\underline L$ gradually to get
\begin{equation}\begin{split}
|L^{m+1}\underline L^{n}\bar\p^\iota\O^p\phi|&\lesssim\delta^{2-\varepsilon_0-|\iota|}+\sum_{\tiny\begin{array}{c}|\iota_1|
+p_1\leq|\iota|+p+n\\
	|\iota_1|\leq|\iota|+n,p_1\leq n+p\end{array}}\delta^{|\iota_1|-|\iota|}|L^{m+n}\bar\p^{\iota_1}\O^{p_1}\phi|\\
&\lesssim\delta^{2-\varepsilon_0-|\iota|},
\end{split}\end{equation}
which yields \eqref{LLO}. Then \eqref{local3-3} is  deduced through utilizing \eqref{LLO} with $\kappa=0$.
Therefore, the proof of Theorem \ref{Th2.1} is completed.
\end{proof}

\end{document}